\documentclass{article}
\usepackage{amsmath}
\usepackage{amsfonts}
\usepackage{amsthm}
\usepackage{amssymb}
\usepackage{enumerate}
\usepackage{ytableau}

\newtheorem{thm}{Theorem}[section]
\newtheorem{prop}[thm]{Proposition}
\newtheorem{lem}[thm]{Lemma}
\newtheorem{cor}[thm]{Corollary}

\theoremstyle{definition}
\newtheorem{defn}[thm]{Definition}
\newtheorem{rmk}[thm]{Remark}
\newtheorem{ex}[thm]{Example}

\begin{document}

\title{Compatibility of Higher Specht Polynomials and Decompositions of Representations}

\author{Shaul Zemel}

\maketitle


\section*{Introduction}

The quotient $R_{n}$ of the ring of polynomials in $n$ variables modulo non-constant symmetric functions is of great importance in many subjects. Indeed, it is shown in the seminal work \cite{[B]} to be the cohomology ring of the flag variety, which is the starting point of the theory of Schubert polynomials, in Algebraic Geometry and later Algebraic Combinatorics as well.

The action of the symmetric group $S_{n}$ on the polynomial ring is inherited by $R_{n}$, yielding a representation that is known to be isomorphic to the regular one of $S_{n}$. Inside it, the minimal polynomials satisfying the anti-symmetry relations with respect to column groups inside $S_{n}$ were shown (see, e.g, \cite{[Pe]}) to span an irreducible representation that is isomorphic to the classical Specht module, and are thus known as \emph{Specht polynomials}.

As the regular representation carries each Specht module with multiplicity that equals its dimension, it became natural to ask how to construct other copies of the Specht module by polynomials that project non-trivially onto $R_{n}$. This was established in \cite{[ATY]}, with their construction of the \emph{higher Specht polynomials}.

More recently, the quotients $R_{n}$ were generalized in \cite{[HRS]} to quotients $R_{n,k}$, which replace the (regular) action of $S_{n}$ on ordered $n$-tuples of different elements from the set $\mathbb{N}_{n}$ of integers between 1 and $n$ by the action on the coarser set, of ordered partitions of $\mathbb{N}_{n}$ into $k$ sets. The paper \cite{[PR]} constructs a space whose cohomology ring is $R_{n,k}$, and there are also relations to the Delta conjecture from \cite{[HRW]}. These quotients are the grossest in an even more general sequence of quotients $R_{n,k,s}$, also considered in these references. The construction of a basis of higher Specht polynomials for $R_{n,k}$ (as well as some other rings) is dealt with in \cite{[GR]}.

\medskip

This work is the first in a series of three papers, which explore these notions further, construct some new and interesting objects, and establish relations between them. We now describe these notions, indicating our goals in this paper, and hint at those of the sequels \cite{[Z1]} and \cite{[Z2]}.

Unlike the regular action, the action of $S_{n}$ on ordered partitions into $k$ sets is not transitive, as the sizes of the ordered sets is an invariant of this action. The first goal of this paper is to decompose the higher Specht basis from \cite{[GR]} into parts, each of which corresponding to the representation of a single orbit inside that action. This is achieved in Theorem \ref{main} below, with a parallel construction of homogeneous representations appearing in Proposition \ref{homrep}.

Next, recall from \cite{[HRS]} that there are relations between partitions of $n$ and partitions of $n+1$, by the operations known as star and bar insertions. In the spirit of these operations, the next goal was to do the same for the corresponding representations. For this we renormalized the higher Specht polynomials in a way that respects the embedding of $S_{n}$ into $S_{n+1}$ in an optimal way.

Also, when we consider the star and bar insertions, we focus only at doing them at the last location. Then the bar insertion lifts the induction showing up in the Branching Rule, also in the homogeneous setting (see Proposition \ref{embInd} below), and the star insertion becomes, for large enough $n$, a bijection between some representations of $S_{n}$ and some of $S_{n+1}$ (see Proposition \ref{embreps} and Corollary \ref{RnIstab} below).

To make this more explicit, recall that higher Specht polynomials carry two indices, that are standard Young tableaux of the same shape. Such pairs are known to be in bijection with the elements of $S_{n}$ via the RSK algorithm. In our normalization we do not work with RSK as it is, but rather apply the Sch\"{u}tzenberger evacuation process from, e.g., Section 3.9 of \cite{[Sa]}, to the $Q$-tableau (which is equivalent to conjugating the element of $S_{n}$ by the longest element before taking the $Q$-tableau, but using the $P$-tableau of the original element). By dividing by the scalar multiplier showing up in the original definition, we get the desired compatibility, in the sense that if $w \in S_{n}$, $F$ is the higher Specht polynomial in $n$ variables that is associated with $w$, and $\tilde{F}$ is the one in $n+1$ variables that corresponds to the image of $w$ under the embedding into $S_{n+1}$, then substituting $x_{n+1}=0$ in $\tilde{F}$ produces $F$ back again (see Proposition \ref{incnSpecht} below).

This opens the door for constructing stable higher Specht polynomials, in infinitely many variables, and their representations. They are presented in Theorem \ref{stabSpecht} below, and will be further studied in \cite{[Z2]}.

\medskip

Back to the orbits in the action of $S_{n}$ on the ordered partitions of $\mathbb{N}_{n}$ into $k$ sets, the set of sizes form a composition of $n$, which is associated, by a classical construction, with a subset of $\mathbb{N}_{n-1}$, of size $k-1$ (see Lemmas \ref{setscomp} and \ref{Snparts} below). This subset is then related to the parameters of the permutations, or the Young tableaux, participating in the corresponding sub-representation of $R_{n,k}$). The decomposition of $R_{n,k}$ in Theorem \ref{main} is into representations $R_{n,I}$ for such subsets $I$.

Some relations become more natural to state and prove for the parts $R_{n,I}$ than for $R_{n,k}$ itself (though we obtain a consequence for these representations as well, in Corollary \ref{Rnkind} below). Indeed, when moving from $n$ to $n+1$, the bar insertion adds $n$ to $I$ and $R_{n+1,I\cup\{n\}}$ is an appropriate induction of $R_{n,I}$ from $S_{n}$ to $S_{n+1}$, lifting the branching rule (see Proposition \ref{embInd} below). The star insertion keeps the set $I$, and the construction of $R_{n+1,I}$ from $R_{n,I}$, detailed in Proposition \ref{embreps} below, stabilizes, after applying it enough times, in a sense that will be made clear in Lemma \ref{Extstab} and Corollary \ref{RnIstab} below. This is the basis for defining, in \cite{[Z2]}, representations of infinite symmetric groups that involve the stable higher Specht polynomials from Theorem \ref{stabSpecht}.

In fact, considering the more general quotients $R_{n,k,s}$, we can obtain a similar construction but using multi-sets, which will be detailed, with some applications, in \cite{[Z1]}. This sequel to the current work will show how to use these notions, and generalize them, in order to decompose the representation of $S_{n}$ on homogeneous polynomials of degree $d$, for any $n$ and $d$, in two ways, each lifts a different formula for the character of this representation.

\medskip

As another point of view, we recall that \cite{[CF]} initiated the theory of representation stability, which was soon after related to the categorical construction of FI-modules in \cite{[CEF]} (see also \cite{[Fa]} for a broader survey of the subject, as well as \cite{[SS]} for a deeper investigation of the relations with category theory, and \cite{[Pu]} for the closely related notion of central stability). This means, in our context, a collection $\{U_{n}\}_{n}$, where $U_{n}$ is a representation of $S_{n}$ for each $n$ (say large enough), with a natural embedding of $U_{n}$ into $U_{n+1}$ such that the image generates $U_{n+1}$ as a representation of $S_{n+1}$ for each large enough $n$. As Remark \ref{repstab} below indicates, the representations $\{R_{n,I}\}_{n}$, for a fixed set $I$ of integers, have this property in an explicit manner (as do their homogeneous counterparts), and this explicit construction is what allows taking the limit, as will be done in \cite{[Z2]}. In \cite{[Z1]} we will prove the same property for multi-sets $I$, where the limit from \cite{[Z2]} only exists in the homogeneous setting.

\medskip

This paper is divided into 4 sections. Section \ref{ASCRSK} sets the notations and basic results for tableaux and permutations. Section \ref{HighSpecht} provides the definition of the normalized version of the higher Specht polynomials and constructs the stable ones. In Section \ref{RepDecom} we consider the representations of $S_{n}$ on higher Specht polynomials, and Section \ref{MapsRep} constructs various maps between these representations.

\medskip

I am grateful to B. Sagan, B. Rhoades, M. Gillespie, and S. van Willigenburg, for their interest, for several encouraging conversations on this subject, and for helpful comments. I am indebted, in particular, to D. Grinberg for a detailed reading of previous versions, for introducing me to several references, and for numerous suggestions which drastically improved the presentation of this paper.

\section{Ascents, Descents, Cocharges, and RSK \label{ASCRSK}}

We will use $\mathbb{N}_{n}$ for the set of integers between 1 and $n$. As usual, $S_{n}$ stands for the symmetric group on $\mathbb{N}_{n}$. We make the following definition, as in, e.g., Subsection 2.1 of \cite{[HRS]}.
\begin{defn}
Let $w$ be an element of $S_{n}$, written in one-line notation with entries $w_{i}:=w(i)$ for $1 \leq i \leq n$. Then we say that $1 \leq i \leq n-1$ is an \emph{ascending location} in $w$ if $w_{i+1}>w_{i}$, and a \emph{descending location} in $w$ when $w_{i+1}<w_{i}$. We denote by $\operatorname{Asl}(w)$ and $\operatorname{Dsl}(w)$ the sets of ascending and descending locations of $w$ respectively. \label{asperm}
\end{defn}
These sets are denoted by $\operatorname{Asc}(w)$ and $\operatorname{Des}(w)$ in \cite{[RW]}, but some references (like \cite{[GR]}) use these notations for slightly different sets, so we keep our own notation. It is clear that $\mathbb{N}_{n-1}$ is the disjoint union of the sets $\operatorname{Asl}(w)$ and $\operatorname{Dsl}(w)$ from Definition \ref{asperm} for every $w \in S_{n}$.

\medskip

We write $\lambda \vdash n$ to state that $\lambda$ is a partition of $n$, and denote the length of such $\lambda$ by $\ell(\lambda)$. We will write Ferrers diagrams (which we will, as is often done, identify with the corresponding partitions) and Young tableaux using the English notation. As usual, $\operatorname{sh}(T)$ is the shape of the standard Young tableau $T$ (as a partition), and for any $\lambda \vdash n$ we will write $\operatorname{SYT}(\lambda)$ for the set of standard Young tableaux of shape $\lambda$. Given $n$, $\lambda \vdash n$, $T\in\operatorname{SYT}(\lambda)$, and an index $i\in\mathbb{N}_{n}$, we will write $R_{T}(i)$ and $C_{T}(i)$ for the row and column respectively in which $i$ shows up (as in \cite{[CZ]}), and $v_{T}(i)$ is the box $\big(R_{T}(i),C_{T}(i)\big)$ containing $i$ in $T$ (no confusion should arise between $R_{T}(i)$ and the representations $R_{n}$ or $R_{n,k}$).
\begin{defn}
Take $\lambda \vdash n$, $T\in\operatorname{SYT}(\lambda)$, and $1 \leq i \leq n-1$. We say that $i$ is an \emph{ascending index} of $T$ in case $R_{T}(i+1) \leq R_{T}(i)$ and $C_{T}(i+1)>C_{T}(i)$, and a \emph{descending index} of $T$ when $R_{T}(i+1)>R_{T}(i)$ and $C_{T}(i+1) \leq C_{T}(i)$. We set $\operatorname{Asi}(T)$ and $\operatorname{Dsi}(T)$ to be the sets of ascending and descending indices of $T$ respectively. \label{asSYT}
\end{defn}
Note that $\operatorname{Asi}(T)$ is the set of descents of $T$ from Definition 2.1 of \cite{[GR]}, with the inversion of terminology arising from that reference using the French notation for standard Young tableaux.
\begin{rmk}
It is clear that the sets $\operatorname{Asi}(T)$ and $\operatorname{Dsi}(T)$ from Definition \ref{asSYT} are disjoint for any such $T$, and we claim that their union is $\mathbb{N}_{n-1}$. Indeed, the fact that $T$ is standard implies that one cannot have $R_{T}(i+1) \leq R_{T}(i)$ and $C_{T}(i+1) \leq C_{T}(i)$ simultaneously. Moreover, in case both $R_{T}(i+1)>R_{T}(i)$ and $C_{T}(i+1)>C_{T}(i)$ hold, the boxes $\big(R_{T}(i),C_{T}(i+1)\big)$ and $\big(R_{T}(i+1),C_{T}(i)\big)$ must contain, by the standard condition, numbers that lies strictly between $i$ and $i+1$, which is impossible. Thus for every $i\in\mathbb{N}_{n-1}$ precisely one of the conditions from Definition \ref{asSYT} has to be satisfied, as desired. \label{boxnear}
\end{rmk}

\medskip

We will denote, for $w \in S_{n}$, the two standard Young tableaux arising from the Robinson--Schensted--Knuth (RSK) algorithm as $P(w)$ and $Q(w)$, as in \cite{[Sa]}. Definitions \ref{asperm} and \ref{asSYT} are related via the RSK algorithm in the following close analogue of the Row Bumping Lemma from Section 1.1 of \cite{[Fu]} (see also Lemma 7.23.1 of \cite{[St2]}).
\begin{lem}
For any $w \in S_{n}$ we have the equalities $\operatorname{Asl}(w)=\operatorname{Asi}\big(Q(w)\big)$ and $\operatorname{Dsl}(w)=\operatorname{Dsi}\big(Q(w)\big)$. \label{RSKcor}
\end{lem}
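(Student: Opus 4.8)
The plan is to prove the statement by induction on $n$, tracking how the $Q$-tableau changes when we insert the final entry $w_n$ into the partial word $w_1 \cdots w_{n-1}$. Write $w' \in S_{n-1}$ for the permutation (after order-preserving relabeling, though here $w_1,\dots,w_{n-1}$ are already a permutation of a subset of $\mathbb{N}_n$, so one should phrase things for words of distinct entries) corresponding to $w_1 \cdots w_{n-1}$, and let $P' = P(w')$, $Q' = Q(w')$. The RSK step that reads $w_n$ row-inserts $w_n$ into $P'$, and the new box that appears at the end of this insertion receives the label $n$ in $Q(w)$; all other entries of $Q(w)$ agree with those of $Q'$. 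Since $\operatorname{Asi}$ and $\operatorname{Dsi}$ only compare the positions of consecutive values $i$ and $i+1$, for $i \leq n-2$ the classification of $i$ as ascending or descending in $Q(w)$ is inherited directly from $Q'$, and by the induction hypothesis this matches whether $i \in \operatorname{Asl}(w')=\operatorname{Asl}(w)$ or $i \in \operatorname{Dsl}(w')=\operatorname{Dsl}(w)$ (the ascending/descending status of locations $1,\dots,n-2$ is unaffected by appending $w_n$). So the entire content of the lemma reduces to the single new index $i = n-1$: I must show that $n-1 \in \operatorname{Asl}(w)$ iff $n-1 \in \operatorname{Asi}(Q(w))$, equivalently that $w_n > w_{n-1}$ iff the box $v_{Q(w)}(n)$ lies strictly to the right of $v_{Q(w)}(n-1)$ in a weakly higher row, i.e. iff $R_{Q(w)}(n) \leq R_{Q(w)}(n-1)$ and $C_{Q(w)}(n) > C_{Q(w)}(n-1)$.

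The key step is therefore a comparison of the last two insertion paths. The box labeled $n-1$ in $Q(w)$ is the box created when $w_{n-1}$ is inserted into $P(w_1\cdots w_{n-2})$; the box labeled $n$ is the box created when $w_n$ is inserted into $P(w_1\cdots w_{n-1})$. This is exactly the situation governed by the Row Bumping Lemma (Section 1.1 of \cite{[Fu]}, or Lemma 7.23.1 of \cite{[St2]}): if $w_n > w_{n-1}$, then the insertion path of $w_n$ stays strictly to the right of, and weakly above, the insertion path of $w_{n-1}$, so the new box for $n$ is weakly above and strictly right of the new box for $n-1$ — giving $n-1 \in \operatorname{Asi}(Q(w))$. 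If instead $w_n < w_{n-1}$, the insertion path of $w_n$ stays weakly to the left of, and strictly below, that of $w_{n-1}$, so the new box for $n$ is strictly below and weakly left of the box for $n-1$ — giving $n-1 \in \operatorname{Dsi}(Q(w))$. Combined with Remark \ref{boxnear} (which guarantees exactly one of the two alternatives holds) this yields the claimed equivalence for $i = n-1$, completing the induction.

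The main obstacle is making the Row Bumping Lemma comparison fully rigorous in the form needed here: the classical statement compares inserting two values into the \emph{same} tableau, whereas we insert $w_{n-1}$ into $P(w_1\cdots w_{n-2})$ and $w_n$ into the \emph{larger} tableau $P(w_1\cdots w_{n-1}) = (P(w_1\cdots w_{n-2}) \leftarrow w_{n-1})$. One must check that adding the single box from the $w_{n-1}$-insertion does not disturb the monotonic relationship between the two insertion paths — intuitively clear since that extra box sits at the end of the earlier path, but it should be spelled out, perhaps by a direct induction down the rows comparing, at each row, the column bumped by $w_n$ with the column bumped by $w_{n-1}$, using that $P(w_1\cdots w_{n-1})$ and $P(w_1\cdots w_{n-2})$ differ only in that one terminal box. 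Once this path-comparison bookkeeping is in place, everything else is routine. (One should also note at the outset that $\operatorname{Asl}$, $\operatorname{Dsl}$ make sense for words of distinct integers, not just honest permutations, so that the induction step — which passes to a word on a subset of $\mathbb{N}_n$ — is legitimate; alternatively, standardize and observe all the relevant quantities are invariant under order-preserving relabeling.)
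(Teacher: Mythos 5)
Your argument is correct and is essentially the paper's: the heart of both is the row-by-row comparison of the bumping paths of two consecutively inserted letters, and the paper carries out exactly the ``direct induction down the rows'' that you propose in your final paragraph, just applied at an arbitrary index $i$ rather than reduced to $i=n-1$ by induction on $n$. Note that the obstacle you flag is not actually one: the Row Bumping Lemma of Section 1.1 of \cite{[Fu]} is stated precisely for two \emph{successive} insertions, the second into the tableau produced by the first, so it applies verbatim to your situation.
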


\begin{proof}
The fact that $i$ shows up in the row $r=R_{Q(w)}(i)$ means that at the $i$th step of the RSK algorithm, the construction of the $P$-tableau had a sequence of $r-1$ boxes, in the first consecutive rows, that contain numbers, say $k_{j}$, $1 \leq j \leq r-1$, that increase with the row, and that $w_{i}$ was smaller than all of them and shoved each one to the next row until the last one remained as the rightmost box of the $r$th row. Thus, after the $i$th step $w_{i}$ shows up in the first row, and $k_{j}$ appears in row number $j+1$.

If $i\in\operatorname{Dsl}(w)$ then $w_{i+1}<w_{i}$, meaning that in the first row there is a number $\ell_{1} \leq w_{i}<k_{1}$ such that $w_{i+1}$ pushes $\ell_{1}$, to the second row. This is the first step, and we work by induction on $j$, where in the $j$th step we begin with a number $\ell_{j}<k_{j}$ in the $j$th row, and deduce that there is a number $\ell_{j+1} \leq k_{j}$ in row $j+1$ such that $\ell_{j}$ pushes $\ell_{j+1}$ to the row number $j+2$. Doing so inductively until $j=r-1$, we deduce that we reached row number $r+1$ (and perhaps even more). Hence $R_{Q(w)}(i+1)>r=R_{Q(w)}(i)$ and we get $i\in\operatorname{Dsi}\big(Q(w)\big)$.

Assuming now that $i\in\operatorname{Asl}(w)$, we find that either $w_{i+1}$ is put at the end of the first row, or it pushes some number $\ell_{1}>w_{i+1}$ to the next row. The fact that $w_{i}$ did not push $\ell_{1}$ at the $i$th step implies that $\ell_{1}>k_{1}$, and hence, by induction on $j$, either $\ell_{j}$ sits at the end of the $j$th row, or it pushes $\ell_{j+1}$, which is larger that $k_{j+1}$ it was not pushed by $k_{j}$. If we do not stop until $j=r-1$, then $\ell_{r-1}>k_{r-1}$, and then it lies at the end of the $r$th row. In any case we get $R_{Q(w)}(i+1) \leq r=R_{Q(w)}(i)$, so that $i\in\operatorname{Asi}\big(Q(w)\big)$.

Since both $\operatorname{Asl}(w)\cup\operatorname{Dsl}(w)$ and $\operatorname{Asi}\big(Q(w)\big)\cup\operatorname{Dsi}\big(Q(w)\big)$ are disjoint unions that produce $\mathbb{N}_{n-1}$, this proves the lemma.
\end{proof}
Recall that the reading word of a standard Young tableau $T$ produces an element $w \in S_{n}$ with $P(w)=T$, and Lemma \ref{RSKcor} works with $Q$. This is related to the fact that the construction of the cocharge is based on increasing the cocharge of an entry $i+1$ with respect to that of $i$ if and only if $i+1$ shows up before $i$ in the inverse permutation (see Definition 1.3 of \cite{[GR]}), so that the set $\operatorname{Dsl}(w)$ from Definition \ref{asperm} is the set of descents of $w^{-1}$ in the terminology implied from \cite{[GR]}. The fact that inverting $w$ amounts to interchanging $P$ and $Q$ by Theorem 3.6.6 of \cite{[Sa]} explains how the two conventions are related.

In addition, as it is standard in RSK to write $T=P(w)$ and $S=Q(w)$, and most of our operations will be based on $Q(w)$, we change the notation of a general standard Young tableau from $T$ to $S$ for the rest of this section.

\medskip

Recall that a \emph{composition} of $n$ is any sequence $\alpha$ of positive integers (not necessarily non-increasing) whose sum is $n$, a situation that we denote by $\alpha \vDash n$, and the \emph{length} $\ell(\alpha)$ is the number of elements in $\alpha$. It will turn out convenient to index the elements of such a composition as $\{\alpha_{h}\}_{h=0}^{k-1}$. We recall the following relations.
\begin{lem}
For an integer $n$ and a subset $J\subseteq\mathbb{N}_{n-1}$, of size $k-1$, write the elements of $J$ as $\{j_{h}\}_{h=1}^{k-1}$ in increasing order, and set $j_{0}=0$ and $j_{k}=n$. Then there exists a unique composition $\operatorname{comp}_{n}J \vDash n$, of length $k$, whose $h$th element is $j_{h+1}-j_{h}$ for any $0 \leq h<k$. Conversely, if $\alpha \vDash n$ and $\ell(\alpha)=k$ then by setting $j_{0}=0$ and inductively $j_{h+1}=j_{h}+\alpha_{h}$ and intersecting the resulting set with $\mathbb{N}_{n-1}$ yields the unique subset $J\subseteq\mathbb{N}_{n-1}$ such that $\alpha=\operatorname{comp}_{n}J$. \label{setscomp}
\end{lem}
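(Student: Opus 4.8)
The plan is to check directly that the two explicitly described passages — ``take the consecutive differences of the sorted elements of $J$ together with the endpoints $0$ and $n$'' and ``take the partial sums of $\alpha$'' — are well defined and mutually inverse. Since in each direction the output is prescribed entrywise, uniqueness will be immediate once well-definedness is verified; there is essentially no nontrivial idea involved, only bookkeeping.

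First I would treat the forward map. Given $J\subseteq\mathbb{N}_{n-1}$ of size $k-1$, its elements listed in increasing order as $j_{1}<\cdots<j_{k-1}$ satisfy $1\le j_{h}\le n-1$, so adjoining $j_{0}=0$ and $j_{k}=n$ produces a strictly increasing chain $0=j_{0}<j_{1}<\cdots<j_{k-1}<j_{k}=n$. Putting $\alpha_{h}:=j_{h+1}-j_{h}$ for $0\le h<k$ yields positive integers (by the strict increase), and $\sum_{h=0}^{k-1}\alpha_{h}=j_{k}-j_{0}=n$ by telescoping, so $\operatorname{comp}_{n}J:=(\alpha_{0},\ldots,\alpha_{k-1})$ is a composition of $n$ of length $k$, and it is the unique one whose $h$th entry is $j_{h+1}-j_{h}$ because that entry is spelled out. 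Conversely, given $\alpha\vDash n$ with $\ell(\alpha)=k$, set $j_{0}=0$ and $j_{h+1}=j_{h}+\alpha_{h}$, so $j_{h}=\alpha_{0}+\cdots+\alpha_{h-1}$; positivity of the $\alpha_{h}$ forces $0=j_{0}<j_{1}<\cdots<j_{k}=n$, hence exactly $j_{1},\ldots,j_{k-1}$ lie in $\mathbb{N}_{n-1}$, and $J:=\{j_{0},\ldots,j_{k}\}\cap\mathbb{N}_{n-1}=\{j_{1},\ldots,j_{k-1}\}$ has size $k-1$ and is already sorted. Feeding this $J$ back into the forward map recovers $j_{h+1}-j_{h}=\alpha_{h}$, i.e. $\operatorname{comp}_{n}J=\alpha$. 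For uniqueness of $J$: if $J'\subseteq\mathbb{N}_{n-1}$ also has size $k-1$ with $\operatorname{comp}_{n}J'=\alpha$, list its sorted elements with the same endpoint convention as $j'_{0}=0,j'_{1},\ldots,j'_{k-1},j'_{k}=n$; then $j'_{h+1}-j'_{h}=\alpha_{h}$ for all $h$, and induction on $h$ starting from $j'_{0}=0=j_{0}$ gives $j'_{h}=j_{h}$ throughout, so $J'=J$.

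I do not expect any real obstacle here; the argument is purely combinatorial bookkeeping. The only point that deserves a moment's attention is the interaction of the endpoint conventions $j_{0}=0$ and $j_{k}=n$ with the intersection against $\mathbb{N}_{n-1}$: one must note that $0$ and $n$ are automatically discarded by this intersection (which needs only $n\ge 1$), so that the size count indeed comes out to $k-1$ and the two passages are genuinely inverse to each other.
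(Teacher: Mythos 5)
Your proof is correct and is precisely the standard stars-and-bars verification that the paper itself omits, referring the reader to Stanley and calling the lemma "very straightforward to verify." The bookkeeping (telescoping for the sum, strict monotonicity of partial sums, discarding the endpoints $0$ and $n$ under intersection with $\mathbb{N}_{n-1}$, and the inductive uniqueness argument) is all in order.
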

Lemma \ref{setscomp} is very straightforward to verify (see, e.g., pages 17--18 of \cite{[St1]}), and is sometimes known as the stars-and-bars argument. We thus denote the subset $J\subseteq\mathbb{N}_{n-1}$ for which $\alpha=\operatorname{comp}_{n}J$ via this lemma as $\operatorname{comp}_{n}^{-1}\alpha$.

Recall that a \emph{multi-set} is a set where each element shows up with a finite, integral multiplicity, and the size of the multi-set is the sum of these multiplicities. If $\alpha \vDash n$ and $\ell(\alpha)=n$, then we define the \emph{content represented by $\alpha$} to be the multi-set $\mu$, whose elements are the integers $0 \leq h<k$, and where the multiplicity of $h$ in $\mu$ is $\alpha_{h}$. The size of $\mu$ is therefore $n$, and it is clear that $\mu$ determines $\alpha$ as the sequence of its multiplicities.

\begin{ex}
For $n=8$ and $J:=\{1,2,4,5,7\}$ we have $\operatorname{comp}_{8}J=112121\vDash8$, of length 6, respective cumulative sums as in $J\cup\{0,8\}$, and the represented content is 01223445 as a non-decreasing sequence (where the maximal entry is $6-1=5$). The composition associated with the subset $\{2,4,7\}$ is $2231\vDash8$, with the appropriate cumulative sums, the length is 4, and the content is 00112223, ending in $4-1=3$. We can write these contents, using multiplicities, as $012^{2}34^{2}5$ and $0^{2}1^{2}2^{3}3$ respectively. \label{excomp}
\end{ex}

We will allow semi-standard Young tableaux to have non-positive integers, including 0, as their entries (but for standard Young tableaux of shape $\lambda \vdash n$ we require that the entries are the numbers from $\mathbb{N}_{n}$, without 0), and the \emph{content} of a Young tableau is the multi-set of its entries. The explanation for our terminology is given in the following generalization of the familiar cocharge tableaux from Definition \ref{cocharge}.
\begin{defn}
We define a \emph{generalized cocharge tableau} to be a semi-standard Young tableau whose content $\mu$ involves all the integers $0 \leq h<k$ for some number $k$, all with positive multiplicities. If $\alpha \vDash n$ is the composition for which $\mu$ is the content represented by $\alpha$, then the \emph{type} of the generalized cocharge tableau is defined to be $J:=\operatorname{comp}_{n}^{-1}\alpha\subseteq\mathbb{N}_{n-1}$. \label{gencct}
\end{defn}
It is clear from Lemma \ref{setscomp} and Definition \ref{gencct} that the size of the type $J$ of a generalized cocharge tableau is the maximal element $k-1$ showing up in it, or equivalently in its content $\mu$.

\medskip

Recall the standardization process from Definition A1.5.5 of \cite{[St2]} or the end of Section 2.1 of \cite{[vL]}, which takes a semi-standard Young tableau to a standard one of the same shape. The process producing cocharge tableaux is some kind of inverse to it, namely a destandardization, similar to the one showing up in the proof of Lemma 2.2 of \cite{[GR]}. We shall need the following properties of these constructions.
\begin{lem}
An element $S\in\operatorname{SYT}(\lambda)$ is the standardization of a generalized cocharge tableau of shape $\lambda$ and type $J\subseteq\mathbb{N}_{n-1}$, with $|J|=k-1$, if and only if $\operatorname{Dsi}(S) \subseteq J$. The standardization is a bijection between the set of these generalized cocharge tableaux and the set $\{S\in\operatorname{SYT}(\lambda)\;|\;\operatorname{Dsi}(S) \subseteq J\}$, so that the number of elements in the latter set is the Kostka number $K_{\lambda,\alpha}$, where $\alpha$ is the composition $\operatorname{comp}_{n}J$. \label{ctJinv}
\end{lem}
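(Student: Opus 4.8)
The plan is to make the destandardization explicit and check, essentially by unwinding definitions, that it is a two-sided inverse of the standardization between the two sets in the statement. Write the elements of $J$ as $j_{1}<\cdots<j_{k-1}$ and set $j_{0}=0$, $j_{k}=n$, so that by Lemma \ref{setscomp} the composition $\alpha:=\operatorname{comp}_{n}J$ has parts $\alpha_{h}=j_{h+1}-j_{h}>0$ for $0\le h<k$. For $S\in\operatorname{SYT}(\lambda)$ define a filling $P(S)$ of $\lambda$ by putting, in the box $v_{S}(i)$ that contains $i$ in $S$, the value $h(i):=|\{j\in J\;|\;j<i\}|$, that is, the unique $h$ with $j_{h}<i\le j_{h+1}$. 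I will show that $P(S)$ is a generalized cocharge tableau of shape $\lambda$ and type $J$ precisely when $\operatorname{Dsi}(S)\subseteq J$, and that in this case $S\mapsto P(S)$ is the inverse of the standardization; the bijectivity statement and the Kostka count then follow at once, since by Definition \ref{gencct} a generalized cocharge tableau of shape $\lambda$ and type $J$ is exactly a semistandard tableau of shape $\lambda$ whose content is the one represented by $\alpha$, and such tableaux number $K_{\lambda,\alpha}$ by the definition of the Kostka number (the $0$-indexing of entries being a harmless relabeling of the values $0,\dots,k-1$).

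The easy half of the claim is that $i\mapsto h(i)$ is weakly increasing, so the rows of $P(S)$ weakly increase because those of $S$ strictly increase, and that value $h$ is used exactly $\alpha_{h}$ times, so once $P(S)$ is semistandard its content is the one represented by $\alpha$ and hence its type is $J$. The substantive point is the column-strict condition, and this is where the hypothesis enters: arguing by contradiction, if two boxes in one column of $P(S)$ held the same value $h$, with $S$-entries $a<b$, then $a,a+1,\dots,b$ would all lie in the interval $(j_{h},j_{h+1}]$, so none of $a,\dots,b-1$ would lie in $J$; assuming $\operatorname{Dsi}(S)\subseteq J$, Remark \ref{boxnear} would then force each of $a,\dots,b-1$ into $\operatorname{Asi}(S)$, giving $C_{S}(a)<C_{S}(a+1)<\cdots<C_{S}(b)$ and contradicting $C_{S}(a)=C_{S}(b)$. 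Thus $\operatorname{Dsi}(S)\subseteq J$ makes $P(S)$ a generalized cocharge tableau of shape $\lambda$ and type $J$.

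For the converse direction of the ``if and only if'' I would use one fact about the cited standardization, namely that it relabels the boxes carrying a fixed value of a semistandard tableau by consecutive integers in order of increasing column (equivalently, in reading-word order; the two orders coincide on the horizontal strip formed by equal entries, since a box lying in the same row as one of them and the same column as another would, by semistandardness, have to contain a value that is simultaneously $\le h$ and $>h$). Hence, if $S$ is the standardization of a generalized cocharge tableau $Q$ of shape $\lambda$ and type $J$, then, because standardization gives the boxes of $Q$ with entry $\le a$ the labels $1,\dots,j_{a+1}$, the $Q$-entry of the box relabeled $i$ is precisely $h(i)$; for $i\notin J$ we get $h(i)=h(i+1)$, so $v_{S}(i)$ and $v_{S}(i+1)$ carry the same value in $Q$ and, being consecutive among that value's boxes, satisfy $C_{S}(i)<C_{S}(i+1)$, whence $i\in\operatorname{Asi}(S)$ by Remark \ref{boxnear}, i.e. $i\notin\operatorname{Dsi}(S)$. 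As $i\notin J$ was arbitrary this gives $\operatorname{Dsi}(S)\subseteq J$.

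It remains to see that the two maps are mutually inverse. Standardizing $P(S)$ (for $S$ with $\operatorname{Dsi}(S)\subseteq J$) relabels its value-$h$ boxes, namely the $v_{S}(m)$ with $j_{h}<m\le j_{h+1}$, by $j_{h}+1,\dots,j_{h+1}$ in order of increasing column; since $\operatorname{Dsi}(S)\subseteq J$ forces $C_{S}(j_{h}+1)<\cdots<C_{S}(j_{h+1})$, this returns $S$. Conversely, writing $S$ for the standardization of a generalized cocharge tableau $Q$ of shape $\lambda$ and type $J$, the computation of the previous paragraph shows that the value $P(S)$ places in the box relabeled $i$ is $h(i)$, which is the original $Q$-entry of that box, so $P(S)=Q$. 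This establishes the bijection, and counting its domain yields $|\{S\in\operatorname{SYT}(\lambda)\;|\;\operatorname{Dsi}(S)\subseteq J\}|=K_{\lambda,\alpha}$. The main obstacle I anticipate is not the combinatorial idea but pinning down the standardization convention precisely enough to marry it with the explicit destandardization above; the one genuinely load-bearing computation is the column-strictness argument, where $\operatorname{Dsi}(S)\subseteq J$ is used through Remark \ref{boxnear}.
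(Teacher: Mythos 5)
Your proof is correct and follows essentially the same route as the paper: the same explicit destandardization (replacing the entry $p$ of $S$ by the number of elements of $J$ below $p$), with semistandardness shown to be equivalent to $\operatorname{Dsi}(S)\subseteq J$ via the ascent/descent dichotomy of Remark \ref{boxnear}, and the Kostka count read off from the bijection. The only cosmetic difference is that you argue column-strictness by contradiction (equal column entries force a chain of ascents), where the paper argues directly that a descent between an entry and the one below it lies in $J$ and so strictly increases the value.
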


\begin{proof}
Assume that $S$ is the standardization of the generalized cocharge tableau $C$. As the standardization replaces the boxes containing $h$ in $C$ by the numbers $j_{h}<p \leq j_{h+1}$, we deduce that if $v_{S}(p)$ has the entry $h$ in $C$, then $h$ is precisely the number of elements $j \in J$ that satisfy $j<p$. Moreover, for every $0 \leq h<k$ the number of such values of $p$ is $j_{h+1}-j_{h}=\alpha_{h}$, where $\alpha=\operatorname{comp}_{n}J$ via Lemma \ref{setscomp}. Therefore the destandardization, reconstructing $C$ from $S$, amounts to replacing every $p$ by this number, which will produce the only tableau $C$ of the content represented by $\operatorname{comp}_{n}J$ whose standardization equals $S$.

It therefore suffices to prove that if $C$ and $S$ are thus related, then $C$ is semi-standard if and only if $\operatorname{Dsi}(S) \subseteq J$. Indeed, this will prove the first assertion, the fact that the standardization and destandardization are inverse maps will thus produce the second one (because the domains and ranges of both will become the asserted sets, and the latter one will directly follow from the definition of the Kostka number $K_{\lambda,\alpha}$ as counting semi-standard Young tableaux of shape $\lambda$ and content represented by $\alpha$.

So assume that $\operatorname{Dsi}(S) \subseteq J$, and note that the box to the right of $v_{S}(p)$ and the one below $v_{S}(p)$ contain in $S$ numbers that are larger than $p$, so that the values they produce by destandardization equal at least the value obtained by $p$. Moreover, for the box below $v_{S}(p)$, say $v_{S}(q)$ for some $q>p$, there must be, by Definition \ref{asSYT}, at least one index $p \leq i<q$ such that $i\in\operatorname{Dsi}(S)$. Our assumption on $J$ yields $i \in J$, so that the value associated to $q$ by our destandardization is strictly larger than the one for $q$ (since it counts the number of elements of a set with at least one element more). We have thus established that if $\operatorname{Dsi}(S) \subseteq J$ then $C$ is semi-standard.

Conversely, the construction of the standardization uses the fact that in a semi-standard tableau, every column contains at most one box with a given entry, and if two boxes contain the same value then they are either in the same row, or one is above and strictly to the right of the other (this follows from an argument as in Remark \ref{boxnear}). So fix some $0 \leq h<k$, and recall that among the boxes containing $h$ in $C$, the values of the corresponding entries in the standardization $S$ of $C$ (which were seen to be the numbers $j_{h}<p \leq j_{h+1}$) increase from the leftmost box among them to the rightmost one.

But when we take $j_{h}<i<j_{h+1}$ and compare the locations of $v_{S}(i)$ and $v_{S}(i+1)$, this implies, via Definition \ref{asSYT}, that none of these numbers can be in $\operatorname{Dsi}(S)$, hence they are all in $\operatorname{Asi}(S)$. Since every element $i$ of the complement of $J$ in $\mathbb{N}_{n-1}$ satisfies this inequality for some $0 \leq h<k$, we deduce that this complement is contained in $\operatorname{Asi}(S)$. Therefore the complement $\operatorname{Dsi}(S)$ of $\operatorname{Asi}(S)$ in $\mathbb{N}_{n-1}$ must be contained in the set of remaining elements, which is $J$, namely we get $\operatorname{Dsi}(S) \subseteq J$ as desired. This completes the proof the lemma.
\end{proof}

We thus make the following definition.
\begin{defn}
Take $\lambda \vdash n$, $J\subseteq\mathbb{N}_{n-1}$, and $S\in\operatorname{SYT}(\lambda)$ for which $\operatorname{Dsi}(S) \subseteq J$. Then we denote the destandardization of $S$, as in Lemma \ref{ctJinv}, by $\operatorname{ct}_{J}(S)$. The standardization of a generalized cocharge tableau $C$ of type $J$ will be denoted by $\operatorname{ct}_{J}^{-1}(C)$, being the inverse of $\operatorname{ct}_{J}$. The \emph{cocharge tableau} $\operatorname{ct}(S)$ of $S$ is defined to be $\operatorname{ct}_{J}(S)$ when $J=\operatorname{Dsi}(S)$, and the \emph{total cocharge}, or just \emph{cocharge}, of $S$, denoted by $\operatorname{cc}(S)$, is the sum of the entries in $\operatorname{ct}(S)$. \label{cocharge}
\end{defn}
Up to the difference between starting from 0 vs. starting from 1, the cocharge tableaux from Definition \ref{cocharge} are the same as the quasi-Yamanouchi tableaux from Definition 2.4 of \cite{[AS]}, Definition 2.15 in the recent pre-print \cite{[BCDS]}, and other references. The map $\operatorname{ct}$ (with no index) is related to Definition 2.5 and Lemma 2.6 of \cite{[AS]}, Lemma 2.16 of \cite{[BCDS]}, and similar results.

In fact, $\operatorname{ct}_{J}(S)$ from Definition \ref{cocharge} can be constructed as follows. Begin with putting, in $\operatorname{ct}_{J}(S)$, the value 0 in the box $v_{S}(1)$ (which is always the upper left box). Assume that we filled, in $\operatorname{ct}_{J}(S)$, the box $v_{S}(i)$ by the number $j$. Then $v_{S}(i+1)$ will contain $j+1$ in $\operatorname{ct}_{J}(S)$ in case $i \in J$ (this will always be the case when $\operatorname{Dsi}(S)$), and we put $j$ in that box for $\operatorname{ct}_{J}(S)$ when $i \not\in J$ (and then in particular $i\in\operatorname{Asi}(S)$). The fact that $\operatorname{ct}_{J}$ generalizes the usual construction of the cocharge tableau in this manner, and produces a generalized cocharge tableau via Definition \ref{gencct} and Lemma \ref{ctJinv}, is the reason for our notation and terminology.

The cocharge parameter from Definition \ref{cocharge}, which will be useful below as well, is given by the following formula.
\begin{cor}
For $S\in\operatorname{SYT}(\lambda)$ and $\operatorname{Dsi}(S) \subseteq J\subseteq\mathbb{N}_{n-1}$, the sum of the entries in $\operatorname{ct}_{J}(S)$ is $\sum_{i \in J}(n-i)$, and in particular $\operatorname{cc}(S)=\sum_{i\in\operatorname{Dsi}(S)}(n-i)$. \label{sumcc}
\end{cor}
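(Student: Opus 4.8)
The plan is to reduce everything to the explicit box-by-box description of $\operatorname{ct}_{J}(S)$ given just before the statement (equivalently, to the formula extracted in the proof of Lemma \ref{ctJinv}). Recall that $\operatorname{ct}_{J}(S)$ is built by placing $0$ in $v_{S}(1)$ and, once $v_{S}(i)$ carries the value $j$, placing $j+1$ in $v_{S}(i+1)$ if $i\in J$ and $j$ in $v_{S}(i+1)$ if $i\notin J$. Unwinding this recursion, the first step I would take is to observe that for every $p\in\mathbb{N}_{n}$ the entry of $\operatorname{ct}_{J}(S)$ in the box $v_{S}(p)$ equals the number of increments performed while passing from $1$ up to $p$, that is,
\[
\big(\operatorname{ct}_{J}(S)\big)_{v_{S}(p)} \;=\; \big|\{\, i\in J \;:\; i<p \,\}\big| \;=\; \big|J\cap\mathbb{N}_{p-1}\big|,
\]
which is exactly the count of elements of $J$ lying below $p$ that already appeared in the proof of Lemma \ref{ctJinv}. (This requires $\operatorname{Dsi}(S)\subseteq J$ only to know that $\operatorname{ct}_{J}(S)$ is well defined as a generalized cocharge tableau, which is Lemma \ref{ctJinv}; the displayed identity itself is purely a consequence of the filling rule.)

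Next I would sum this over all $n$ boxes of $S$. Since the entries of $S$ run over $\mathbb{N}_{n}$, summing over boxes is the same as summing over $p=1,\dots,n$, and interchanging the two (finite) sums — a stars-and-bars / double-counting step — gives
\[
\sum_{p=1}^{n}\big|\{\, i\in J : i<p \,\}\big|
=\sum_{p=1}^{n}\sum_{i\in J}[\,i<p\,]
=\sum_{i\in J}\big|\{\, p : i<p\leq n \,\}\big|
=\sum_{i\in J}(n-i),
\]
which is precisely the claimed value of the sum of the entries of $\operatorname{ct}_{J}(S)$.

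Finally, for the last assertion I would specialize $J=\operatorname{Dsi}(S)$, which is permissible since $\operatorname{Dsi}(S)\subseteq\operatorname{Dsi}(S)$; by Definition \ref{cocharge} one has $\operatorname{ct}(S)=\operatorname{ct}_{\operatorname{Dsi}(S)}(S)$ and $\operatorname{cc}(S)$ is the sum of its entries, so the general formula yields $\operatorname{cc}(S)=\sum_{i\in\operatorname{Dsi}(S)}(n-i)$. I do not expect any genuine obstacle here: the only point that needs a moment's care is verifying the per-box formula for $\big(\operatorname{ct}_{J}(S)\big)_{v_{S}(p)}$, and this follows immediately by induction on $p$ from the inductive rule defining $\operatorname{ct}_{J}$, after which the result is just a swap of summation order.
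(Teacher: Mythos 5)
Your proposal is correct and follows essentially the same route as the paper: both rest on the fact (from the proof of Lemma \ref{ctJinv}, or equivalently from the inductive filling rule) that the entry of $\operatorname{ct}_{J}(S)$ in $v_{S}(p)$ is $|\{i \in J : i<p\}|$, followed by the same interchange of summation to obtain $\sum_{i \in J}(n-i)$. The specialization $J=\operatorname{Dsi}(S)$ for the final assertion is also exactly as in the paper.
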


\begin{proof}
We recall from the proof of Lemma \ref{ctJinv} that if $p\in\mathbb{N}_{n}$ then the value showing up in $\operatorname{ct}_{J}(S)$ is the number of elements of $J$ that are smaller than $p$. It follows that the sum in question is \[\sum_{p=1}^{n}\big|\{i \in J\;|\;i<p\}\big|=\sum_{p=1}^{n}\sum_{p>i \in J}1=\sum_{i \in J}\sum_{p=i+1}^{n}1=\sum_{i \in J}(n-i),\] as desired. The result for $\operatorname{cc}(S)$ is just the case $J=\operatorname{Dsi}(S)$ in the latter formula. This proves the corollary.
\end{proof}

\begin{ex}
Take $n=8$ and $J:=\{1,2,4,5,7\}$ as in Example \ref{excomp}, and with $\lambda=431\vdash8$ we get the tableaux \[S=\begin{ytableau} 1 & 2 & 4 & 7 \\ 3 & 6 & 8 \\ 5 \end{ytableau},\ \operatorname{ct}_{J}(S)=\begin{ytableau} 0 & 1 & 2 & 4 \\ 2 & 4 & 5 \\ 3 \end{ytableau},\mathrm{\ and\ }\operatorname{ct}(S)=\begin{ytableau} 0 & 0 & 1 & 2 \\ 1 & 2 & 3 \\ 2 \end{ytableau},\] where the subset in the index of the latter is $\operatorname{Dsi}(S)=\{2,4,7\}$. The contents are indeed the ones showing up in Example \ref{excomp} for these sets, and their respective sums are $0+1+2+2+3+4+4+5=21$ and $0+0+1+1+2+2+2+3=11$ respectively. Replacing the elements of $J$ and of $\operatorname{Dsi}(S)$ by their complements to $n=8$ yields sets whose sums of elements are $1+3+4+6+7=21$ and $1+4+6=11$ respectively, yielding the same two values as from the tableaux, exemplifying Corollary \ref{sumcc}. \label{exct}
\end{ex}

\medskip

To every $w \in S_{n}$ we would attach, using \cite{[ATY]}, a higher Specht polynomial, by applying a rescaled Young idempotent to a monomial, whose degree is a certain cocharge parameter. The idempotent will be based on the tableau $P(w)$, but we would like the monomial to arise from $Q(w)$ but in a manner that leaves the degree unchanged when we view $w$ as an element of $S_{n+1}$. We will thus set our conventions as follows.
\begin{defn}
Given a permutation $w \in S_{n}$, we denote by $\tilde{Q}(w)$ the standard Young tableau obtained from $Q(w)$ by the Sch\"{u}tzenberger evacuation process, as defined in, e.g., Section 3.9 of \cite{[Sa]}, namely we set $\tilde{Q}(w):=\operatorname{ev}Q(w)$ in the notation from \cite{[Sa]}. Thus to $w \in S_{n}$ the modified RSK algorithm attaches the pair $\big(P(w),\tilde{Q}(w)\big)$ of standard Young tableaux of the same shape. \label{modRSK}
\end{defn}

\begin{prop}
For $w \in S_{n}$ we have $\operatorname{cc}\big(\tilde{Q}(w)\big)=\sum_{i\in\operatorname{Dsl}(w)}i$. This parameter remains the same when we consider $w$ as an element of $S_{n+1}$. \label{evkeepsdeg}
\end{prop}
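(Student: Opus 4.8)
The plan is to evaluate $\operatorname{cc}\big(\tilde{Q}(w)\big)$ by feeding it into Corollary \ref{sumcc} and then identifying the relevant index set. Since $\tilde{Q}(w)=\operatorname{ev}Q(w)$ lies in $\operatorname{SYT}(\lambda)$ for the common shape $\lambda\vdash n$, Corollary \ref{sumcc} gives
$\operatorname{cc}\big(\tilde{Q}(w)\big)=\sum_{i\in\operatorname{Dsi}(\tilde{Q}(w))}(n-i)$, so the whole computation reduces to understanding $\operatorname{Dsi}\big(\operatorname{ev}Q(w)\big)$ in terms of $w$.

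The key input is the classical identity $\operatorname{ev}Q(w)=Q(w_{0}ww_{0})$ for the longest element $w_{0}\in S_{n}$ (this is exactly the reformulation of the evacuation already mentioned in the introduction, and can be quoted from Section 3.9 of \cite{[Sa]}; one may equivalently phrase it as the descent-complementation property $\operatorname{Dsi}(\operatorname{ev}S)=\{n-i\mid i\in\operatorname{Dsi}(S)\}$ of evacuation). Granting it, apply Lemma \ref{RSKcor} to $w_{0}ww_{0}$ to get $\operatorname{Dsi}(\tilde{Q}(w))=\operatorname{Dsl}(w_{0}ww_{0})$. Now the one-line notation of $w_{0}ww_{0}$ has $i$-th entry $n+1-w_{n+1-i}$, so $i$ is a descending location of $w_{0}ww_{0}$ exactly when $w_{n+1-i}<w_{n-i}$, i.e.\ exactly when $n-i$ is a descending location of $w$; hence $\operatorname{Dsl}(w_{0}ww_{0})=\{n-j\mid j\in\operatorname{Dsl}(w)\}$. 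Substituting this back and replacing $i$ by $n-i$ in the sum from Corollary \ref{sumcc} yields $\operatorname{cc}\big(\tilde{Q}(w)\big)=\sum_{j\in\operatorname{Dsl}(w)}\big(n-(n-j)\big)=\sum_{j\in\operatorname{Dsl}(w)}j$, which is the first assertion. (If a self-contained route is preferred one can instead prove $\operatorname{Dsi}(\operatorname{ev}S)=n-\operatorname{Dsi}(S)$ directly from the box-by-box description of evacuation and combine it with Lemma \ref{RSKcor} for $w$ itself.)

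For the second assertion, observe that under the standard embedding $S_{n}\hookrightarrow S_{n+1}$ the one-line notation of $w$ merely gains the entry $w_{n+1}=n+1$ at the end. Because $w_{n}\leq n<n+1$, the location $n$ is an ascending location in the sense of Definition \ref{asperm}, so no new descending location appears and $\operatorname{Dsl}(w)$ is literally the same subset of $\mathbb{N}_{n-1}$ whether $w$ is viewed in $S_{n}$ or in $S_{n+1}$. Applying the first assertion once in $S_{n}$ and once in $S_{n+1}$ shows that $\operatorname{cc}\big(\tilde{Q}(w)\big)$ equals $\sum_{i\in\operatorname{Dsl}(w)}i$ in both cases, hence is unchanged.

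The only genuinely non-routine point is the behaviour of Sch\"{u}tzenberger evacuation on the descending-index set, i.e.\ the identity $\operatorname{ev}Q(w)=Q(w_{0}ww_{0})$ (or its equivalent $\operatorname{Dsi}(\operatorname{ev}S)=n-\operatorname{Dsi}(S)$); everything else is bookkeeping with Lemma \ref{RSKcor} and Corollary \ref{sumcc}. I would expect the main care to lie in stating that property in precisely this orientation and matching it against the conventions fixed here (English notation, and $\operatorname{Asi}$/$\operatorname{Dsi}$ versus the various "descent" conventions flagged after Definitions \ref{asperm} and \ref{asSYT}), since an off-by-reflection error there would be easy to commit and hard to spot.
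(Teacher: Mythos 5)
Your proposal is correct and follows essentially the same route as the paper: both reduce to Corollary \ref{sumcc} via the identity $\tilde{Q}(w)=Q(w_{0}ww_{0})$ together with Lemma \ref{RSKcor}, and both compute $\operatorname{Dsl}(w_{0}ww_{0})=\{n-i\;|\;i\in\operatorname{Dsl}(w)\}$ before handling the $S_{n+1}$ statement by noting that $\operatorname{Dsl}$ is unchanged under the embedding. The only cosmetic difference is that the paper assembles $Q(w_{0}ww_{0})=\operatorname{ev}Q(w)$ from Theorems 3.2.3, 3.9.4, and 3.6.6 of \cite{[Sa]} (computing the conjugation in two stages), whereas you quote it as a known fact and compute the one-line notation of the conjugate directly, which is equally valid.
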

The statistic from Proposition \ref{evkeepsdeg} is the \emph{major index} of $w$, in the terminology from \cite{[RW]}, \cite{[HRS]}, \cite{[GR]}, and others.

\begin{proof}
Let $w_{0}$ be the longest element of $S_{n}$, interchanging 1 with $n$, 2 with $n-1$ etc., and then the reversion of $w$ as a word (denoted by $w^{r}$ in \cite{[Sa]}) is the product $ww_{0}$. Theorems 3.2.3 and 3.9.4 of \cite{[Sa]} thus mean that $P(ww_{0})=P(w)^{t}$ (namely the transpose of $P(w)$) and $Q(ww_{0})=\operatorname{ev}Q(w)^{t}$ (note that transposition of tableaux clearly commutes with evacuation, so that this notation is not ambiguous). As Theorem 3.6.6 of \cite{[Sa]} implies that inversion of $w$ interchanges the $P$ and $Q$ tableaux, we deduce that $P(w_{0}w)=\operatorname{ev}P(w)^{t}$ and $Q(w_{0}w)=Q(w)^{t}$. In total, the conjugate $w_{0}ww_{0}$ of $w$ satisfies $Q(w_{0}ww_{0})=\operatorname{ev}Q(w)=\tilde{Q}(w)$ via Definition \ref{modRSK} (as well as $P(w_{0}ww_{0})=\operatorname{ev}P(w)$, which we shall not require). This is also visible in Theorem A1.4.3 of \cite{[St2]}.

Now, in $w_{0}w$ the $i$th entry is $n+1-w_{i}$, and the negative sign easily implies that $\operatorname{Asl}(w_{0}w)=\operatorname{Dsl}(w)$ and $\operatorname{Dsl}(w_{0}w)=\operatorname{Asl}(w)$. Similarly, in $ww_{0}$ the $i$th and $(i+1)$-st entries are $w_{n+1-i}$ and $w_{n-i}$ respectively (ordered in the opposite manner), so by considering the directions of the inequalities in Definition \ref{asperm} we get $\operatorname{Asl}(ww_{0})=\{n-i\;|\;i\in\operatorname{Dsl}(w)\}$ and $\operatorname{Dsl}(ww_{0})=\{n-i\;|\;i\in\operatorname{Asl}(w)\}$. We deduce that $\operatorname{Dsl}(w_{0}ww_{0})=\{n-i\;|\;i\in\operatorname{Dsl}(w)\}$ by composing both constructions (and similarly for $\operatorname{Asl}$, which we shall not need). Applying this together with Corollary \ref{sumcc} and Lemma \ref{RSKcor} yields the equality \[\operatorname{cc}\big(\tilde{Q}(w)\big)=\operatorname{cc}\big(Q(w_{0}ww_{0})\big)=\sum_{i\in\operatorname{Dsi}(Q(w_{0}ww_{0}))}(n-i)= \sum_{i\in\operatorname{Dsi}(Q(w))}i=\sum_{i\in\operatorname{Dsl}(w)}i,\] as desired. The last assertion follows immediately from the fact that viewing as an element of $S_{n+1}$ adds $n$ to the set $\operatorname{Asl}(w)$, and leaves $\operatorname{Dsl}(w)$ as the same set. This proves the proposition.
\end{proof}
The theorems from \cite{[Sa]} that are used in the proof of Proposition \ref{evkeepsdeg}, or Theorem A1.4.3 of \cite{[St2]}, show how inversion and left and right multiplication by $w_{0}$ act on the tableaux obtained by the RSK algorithm. The only one for which the $Q$-matrix is generically $\operatorname{ev}Q(w)$ is $w_{0}ww_{0}$, where the $P$-tableau is also replaced by its evacuation tableau. It would be interesting to see whether there is a nice description, in terms of $w$, of the permutation producing $\big(P(w),\tilde{Q}(w)\big)$ directly via RSK in general.

\begin{ex}
For the permutation $w=35271486 \in S_{8}$, the tableau $Q(w)$ is $S$ from Example \ref{exct}, and we have \[P(w)=\begin{ytableau} 1 & 4 & 6 & 8 \\ 2 & 5 & 7 \\ 3 \end{ytableau},\mathrm{\ \ and\ }\tilde{Q}(w)=\begin{ytableau} 1 & 3 & 4 & 8 \\ 2 & 5 & 6 \\ 7 \end{ytableau}\mathrm{\ with\ }\operatorname{ct}=\begin{ytableau} 0 & 1 & 1 & 3 \\ 1 & 2 & 2 \\ 3 \end{ytableau}.\] Note that $\tilde{Q}(w)$ is indeed $Q(w_{0}ww_{0})$ for $w_{0}ww_{0}=31582746 \in S_{8}$, and also equals $\operatorname{ev}S$ by the evacuation process, and the corresponding cocharge tableau has entry sum $0+1+1+1+2+2+3+3=13$, which is the same as the sum $2+4+7=13$ of the entries of $\operatorname{Dsi}(S)=\{2,4,7\}$, as Proposition \ref{evkeepsdeg} predicts. \label{exev}
\end{ex}

\medskip

We now show how to detect, among the generalized cocharge tableaux from Definition \ref{gencct}, which one are obtained using $\operatorname{ct}$ from Defintion \ref{cocharge}, with no index. Recall that in a semi-standard tableau $C$, no column contains more than one box with a given entry $h$, so that if $h$ shows up in $C$ then the leftmost and rightmost boxes containing $h$ in $C$ are well-defined (and they lie in the maximal and minimal rows among the boxes containing $h$ respectively).
\begin{lem}
The generalized cocharge tableau $C$ of shape $\lambda \vdash n$ is $\operatorname{ct}(S)$ for some $S\in\operatorname{SYT}(\lambda)$ if and only if the following condition holds: For every $h>0$, if $v$ is the leftmost box containing $h$ in $C$, then there is some box in $C$ lying in a row above that of $v$, that contains $h-1$. In particular any non-zero value showing up in $C$ appears also away from the first row of $C$. \label{cctconst}
\end{lem}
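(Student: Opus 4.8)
The plan is to reduce, via the destandardization bijection of Lemma \ref{ctJinv}, to a statement about the descending indices of the standardization of $C$, and then to translate the geometric condition of the lemma into that language box by box. Write $J$ for the type of $C$ and $S\in\operatorname{SYT}(\lambda)$ for its standardization; by Lemma \ref{ctJinv} we have $C=\operatorname{ct}_{J}(S)$ and $\operatorname{Dsi}(S)\subseteq J$. Since the standardization of $\operatorname{ct}(S')=\operatorname{ct}_{\operatorname{Dsi}(S')}(S')$ is $S'$ for every $S'\in\operatorname{SYT}(\lambda)$ (again by Lemma \ref{ctJinv}), the tableau $C$ equals $\operatorname{ct}(S')$ for some $S'$ if and only if $C=\operatorname{ct}(S)$; and as the type of $\operatorname{ct}(S)=\operatorname{ct}_{\operatorname{Dsi}(S)}(S)$ is $\operatorname{Dsi}(S)$ while the type of $C$ is $J$ (equivalently, the entry of $\operatorname{ct}_{J'}(S)$ in the box $v_{S}(n)$ equals $|J'|$ whenever $\operatorname{Dsi}(S)\subseteq J'\subseteq\mathbb{N}_{n-1}$), this happens exactly when $J=\operatorname{Dsi}(S)$, hence --- since $\operatorname{Dsi}(S)\subseteq J$ always --- exactly when $J\subseteq\operatorname{Dsi}(S)$. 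It therefore remains to show that $J\subseteq\operatorname{Dsi}(S)$ holds if and only if $C$ satisfies the stated condition.

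Write the elements of $J$ as $j_{1}<\dots<j_{k-1}$ and put $j_{0}:=0$, $j_{k}:=n$. By the construction of $\operatorname{ct}_{J}$ (as in the proof of Lemma \ref{ctJinv}), for each $0\le h<k$ the boxes of $C$ containing $h$ are precisely $v_{S}(j_{h}+1),\dots,v_{S}(j_{h+1})$, and the $S$-entries of these boxes increase from the leftmost of them to the rightmost. Moreover, as noted in the proof of Lemma \ref{ctJinv} (by an argument like the one in Remark \ref{boxnear}), among the boxes of $C$ carrying a fixed entry, one lying in a strictly higher row lies strictly to the right; hence the leftmost box carrying a given entry is the one in the maximal row among them, and the rightmost is the one in the minimal row. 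Consequently, for $h>0$ the leftmost box $v$ of $C$ containing $h$ is $v_{S}(j_{h}+1)$, whereas $v_{S}(j_{h})$ is the rightmost box of $C$ containing $h-1$ and lies in the minimal row among all boxes of $C$ containing $h-1$.

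It follows that $C$ has a box containing $h-1$ in a row strictly above $v=v_{S}(j_{h}+1)$ if and only if $v_{S}(j_{h})$ itself lies strictly above $v_{S}(j_{h}+1)$, i.e. $R_{S}(j_{h})<R_{S}(j_{h}+1)$; and by Remark \ref{boxnear} this inequality is equivalent to $j_{h}\notin\operatorname{Asi}(S)$, that is, to $j_{h}\in\operatorname{Dsi}(S)$. Letting $h$ run over $\{1,\dots,k-1\}$, so that $j_{h}$ runs over $J$, we obtain that the condition in the lemma is equivalent to $J\subseteq\operatorname{Dsi}(S)$, hence, by the first paragraph, to $C=\operatorname{ct}(S)$. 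Finally, if $C=\operatorname{ct}(S)$ and $h>0$ occurs in $C$, then the leftmost box of $C$ containing $h$ has a box above it and therefore lies in row at least $2$, which yields the ``in particular'' assertion.

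I expect the only genuine content to lie in the box-level dictionary of the second paragraph: identifying the leftmost box with entry $h$ as $v_{S}(j_{h}+1)$, the relevant box with entry $h-1$ as $v_{S}(j_{h})$, and then reading ``there is a box with entry $h-1$ in a higher row'' as the descent condition $j_{h}\in\operatorname{Dsi}(S)$. The structural facts this requires about the positions of equal entries in a semistandard tableau are exactly the ones already isolated in Remark \ref{boxnear} and in the proof of Lemma \ref{ctJinv}, so once the dictionary is set up the remaining steps are routine; the one subtlety is the reduction in the first paragraph, where one must observe that $C$ can equal $\operatorname{ct}(S')$ for at most the single tableau $S'=S$.
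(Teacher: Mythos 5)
Your proposal is correct and follows essentially the same route as the paper's proof: reduce via Lemma \ref{ctJinv} to the question of whether the type $J$ equals $\operatorname{Dsi}(S)$ for the standardization $S$, identify the leftmost box with entry $h$ as $v_{S}(j_{h}+1)$ and the rightmost box with entry $h-1$ as $v_{S}(j_{h})$, and translate the geometric condition into $j_{h}\in\operatorname{Dsi}(S)$ using Remark \ref{boxnear}. The only cosmetic difference is that you make explicit the observation that the rightmost box with a given entry sits in the minimal row, which the paper handles by the equivalent case analysis of $j_{h}\in\operatorname{Asi}(S)$ versus $j_{h}\in\operatorname{Dsi}(S)$.
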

By the semi-standard condition, the box in Lemma \ref{cctconst} must lie either in the same row as $v$, or to the right of it.

\begin{proof}
Let $J$ be the type of $C$ as in Definition \ref{gencct}, and set $S:=\operatorname{ct}_{J}^{-1}(C)$ via Definition \ref{cocharge}, so that $\operatorname{Dsi}(S) \subseteq J$ by Lemma \ref{ctJinv}. Then the only possible construction of $C$ as a generalized cocharge tableau is as $\operatorname{ct}_{J}(S)$, and we recall that the equality $C=\operatorname{ct}(S)$ amounts to the latter inclusion being an equality.

Now, the construction of the standardization $\operatorname{ct}_{J}^{-1}$ implies the leftmost box $v$ containing $h$ in $C$ is $v_{S}(j_{h}+1)$, while $v_{S}(j_{h})$ is the rightmost box containing $h-1$ in $C$. Definition \ref{asSYT} now shows that if $j_{h}\in\operatorname{Asi}(S)$ then $R_{S}(j_{h}+1) \leq R_{S}(j_{h})$ and $C_{S}(j_{h}+1)>C_{S}(j_{h})$, and thus all of the boxes containing $h-1$ lie strictly to the left of $v$ and in its row or below it, while in case $j_{h}\in\operatorname{Dsi}(S)$ at least the box $v_{S}(j_{h})$ satisfies $R_{S}(j_{h}+1)>R_{S}(j_{h})$ and $C_{S}(j_{h}+1) \leq C_{S}(j_{h})$, namely it lies above $v$ and at the same column or to the right of it.

Altogether, the equality $\operatorname{Dsi}(S)=J$, namely $j_{h}\in\operatorname{Dsi}(S)$ for every $h>0$, which was seen to be equivalent to $C=\operatorname{ct}(S)$, holds if and only if the asserted condition is satisfied. As it implies that the leftmost box containing a non-zero value must lie below another box, such value must appear also away from the first row. This proves the lemma.
\end{proof}
One easily verifies that the cocharge tableaux from Examples \ref{exct} and \ref{exev} satisfy the condition from Lemma \ref{cctconst}, while the generalized cocharge tableau $\operatorname{ct}_{J}(S)$ in the former example does not (indeed, the values $h=1$ and $h=4$ for which this condition is not satisfied are precisely those for which the element $j_{h}$ of $J$, namely $j_{1}=1$ and $j_{4}=5$, does not lie in $\operatorname{Dsi}(S)$).

The proof of Proposition \ref{evkeepsdeg} shows that if $S=\tilde{Q}(w)=Q(w_{0}ww_{0})$ as in Definition \ref{modRSK}, then $\operatorname{Dsl}(w)$, which equals $\operatorname{Dsi}(\operatorname{ev}S)$ via Lemma \ref{RSKcor}, is given by $\{n-i\;|\;i\in\operatorname{Dsi}(S)\}$ and the same with $\operatorname{Dsl}$ and $\operatorname{Asi}$. Based on this, and on Lemma \ref{cctconst}, we make the following definition.
\begin{defn}
Fix a partition $\lambda \vdash n$.
\begin{enumerate}[$(i)$]
\item For an element $S\in\operatorname{SYT}(\lambda)$ we set $\operatorname{Dsi}^{c}(S):=\{n-i\;|\;i\in\operatorname{Dsi}(S)\}$ and $\operatorname{Asi}^{c}(S)=\{n-i\;|\;i\in\operatorname{Asi}(S)\}$.
\item A generalized cocharge tableau satisfying the condition from Lemma \ref{cctconst} is called simply a \emph{cocharge tableau}. The set of cocharge tableaux of shape $\lambda$ will be denoted by $\operatorname{CCT}(\lambda)$.
\item If $C\in\operatorname{CCT}(\lambda)$ and $J$ is the type of $C$ then we write $\operatorname{Dsp}^{c}(C)$ for the set $\{n-i\;|\;i \in J\}$, and $\operatorname{Asp}^{c}(C)$ for the complement of $\operatorname{Dsp}^{c}(C)$ in $\mathbb{N}_{n-1}$.
\item For $C$ and $J$ as in part $(iii)$, we denote the element $S:=\operatorname{ct}_{J}^{-1}(C)\in\operatorname{SYT}(\lambda)$ simply by $\operatorname{ct}^{-1}(C)$.
\item For $C\in\operatorname{CCT}(\lambda)$, write $\Sigma(C)$ for the entry sum of $C$.
\end{enumerate} \label{Dsic}
\end{defn}
It is clear that the sets from part $(i)$ of Definition \ref{Dsic} are complements inside $\mathbb{N}_{n-1}$, and that $\operatorname{ct}$ is a bijection between $\operatorname{SYT}(\lambda)$ and $\operatorname{CCT}(\lambda)$ (whence the notation from part $(iv)$ for its inverse). Since if $C=\operatorname{ct}(S)$ then the type $J$ of $C$ equals $\operatorname{Dsi}(S)$, we immediately deduce that $\operatorname{Dsp}^{c}(C)=\operatorname{Dsi}^{c}(S)$ and $\operatorname{Asp}^{c}(C)=\operatorname{Asi}^{c}(S)$.

We will use the following properties of the sets from Definition \ref{Dsic}.
\begin{lem}
The following assertions hold:
\begin{enumerate}[$(i)$]
\item If the content of a tableau $C\in\operatorname{CCT}(\lambda)$ involves the numbers between 0 and $k-1$ then $|\operatorname{Dsp}^{c}(C)|=k-1$.
\item If we write the elements of that set as $\{i_{g}\;|\;1 \leq g<k\}$ in increasing order, then $C$ contains precisely $i_{g}$ entries that equal at least $k-g$.
\item We have the equality $\Sigma(C)=\sum_{i\in\operatorname{Dsp}^{c}(C)}i$.
\item Determining $C$ is equivalent to filling all the rows except the first one by positive integers in a semi-standard manner such that the box with coordinates $(2,1)$ contains 1, plus indicating how many times each number between 1 and $k-1$ shows up in the first row, plus the value of $n$.
\end{enumerate} \label{Dspc}
\end{lem}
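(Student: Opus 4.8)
The plan is to deduce parts $(i)$--$(iii)$ quickly from the machinery already in place and to concentrate the work on $(iv)$.

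For $(i)$, recall from the paragraph after Definition~\ref{gencct} that the type $J$ of $C$ satisfies $|J|=k-1$, and since $\operatorname{Dsp}^{c}(C)=\{n-i\mid i\in J\}$ with $i\mapsto n-i$ injective on $\mathbb{N}_{n-1}$, we get $|\operatorname{Dsp}^{c}(C)|=k-1$. For $(ii)$, writing $J=\{j_{h}\}_{h=1}^{k-1}$ in increasing order with $j_{0}=0$ and $j_{k}=n$, Lemma~\ref{setscomp} gives that the multiplicity of $h$ in the content of $C$ is $j_{h+1}-j_{h}$, so for $h\geq 1$ the number of entries of $C$ that are $\geq h$ telescopes to $n-j_{h}$; since $i\mapsto n-i$ reverses order, the increasing enumeration of $\operatorname{Dsp}^{c}(C)$ is $i_{g}=n-j_{k-g}$, and the count of entries $\geq k-g$ is therefore $n-j_{k-g}=i_{g}$. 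For $(iii)$, as noted after Definition~\ref{Dsic} the tableau $S:=\operatorname{ct}^{-1}(C)$ has $\operatorname{Dsi}(S)=J$ and $C=\operatorname{ct}(S)$, so $\Sigma(C)=\operatorname{cc}(S)$; Corollary~\ref{sumcc} with this $J$ yields $\operatorname{cc}(S)=\sum_{i\in J}(n-i)=\sum_{i\in\operatorname{Dsp}^{c}(C)}i$ (alternatively this is a direct Abel summation on the multiplicities $j_{h+1}-j_{h}$).

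For $(iv)$ I would first isolate three structural facts about any $C\in\operatorname{CCT}(\lambda)$, $\lambda\vdash n$, with content involving $0,\dots,k-1$: \emph{(a)} the value $0$ occurs in $C$ only in the first row, and occurs there at least once, in the box $(1,1)$ (an entry $0$ below the first row would need an entry smaller than $0$ directly above it); \emph{(b)} the box $(2,1)$ contains $1$ --- by Lemma~\ref{cctconst} the leftmost box $v$ containing the value $1$ is not in row~$1$, the box directly above $v$ has a smaller, hence zero, entry, so by \emph{(a)} it lies in row~$1$ and $v$ lies in row~$2$; as row~$2$ is weakly increasing, $(2,1)\geq 2$ would prevent the value $1$ from appearing in row~$2$, a contradiction; \emph{(c)} each of $1,\dots,k-1$ occurs in some row below the first (apply Lemma~\ref{cctconst} to its leftmost box). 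From \emph{(a)} and \emph{(c)}, deleting the first row of $C$ leaves a semi-standard tableau of shape $(\lambda_{2},\lambda_{3},\dots)$ with all entries positive, entry set exactly $\{1,\dots,k-1\}$, and value $1$ in its own top-left box by \emph{(b)}; thus $k$ is recovered as one more than the largest entry of this sub-tableau. The first row of $C$, being weakly increasing and beginning with its block of $0$'s, is the word $0^{m_{0}}1^{m_{1}}\cdots(k-1)^{m_{k-1}}$ with $m_{h}$ the number of $h$'s in row~$1$; and since $\sum_{h}m_{h}=\lambda_{1}=n-\sum_{j\geq 2}\lambda_{j}$, the value $m_{0}$ is recovered from $n$, the sub-tableau, and $m_{1},\dots,m_{k-1}$. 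This shows $C$ both determines and is determined by the indicated data.

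The non-automatic point, which I expect to be the main obstacle, is the converse direction: when one reassembles an array from a semi-standard sub-tableau on rows $\geq 2$ with top-left entry $1$, multiplicities $m_{1},\dots,m_{k-1}\geq 0$, and a value $n$, the result need not be semi-standard --- for a column $c\leq\lambda_{2}$, strict increase between $(1,c)$ and $(2,c)$ does not follow merely from $(2,1)=1$ and row monotonicity (one also needs $m_{0}\geq 1$ together with enough $0$'s to dominate the top of the second row). Hence the equivalence in $(iv)$ should be read as: the map $C\mapsto(\text{sub-tableau on rows }{\geq}2,\ (m_{h})_{1\leq h<k},\ n)$ is injective with explicitly reconstructible inverse, and its image consists exactly of the tuples for which the reassembled array is semi-standard; one then checks, using \emph{(a)}--\emph{(c)}, that the condition of Lemma~\ref{cctconst} holds automatically for such an array, since its only non-zero entries lying strictly below the first row already occur in the sub-tableau and the value $1$ sits directly below $(1,1)$. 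The remaining work is the bookkeeping around these admissibility conditions rather than any single hard step.
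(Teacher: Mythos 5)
Parts $(i)$--$(iii)$ of your argument coincide with the paper's proof: $(i)$ from the size of the type, $(ii)$ from the multiplicities $j_{h+1}-j_{h}$ and the telescoping count $n-j_{k-g}$, and $(iii)$ from $\Sigma(C)=\operatorname{cc}(S)$ together with Corollary \ref{sumcc}. For $(iv)$ the paper proves only the determination statement---the first row is recovered from $n$ and the multiplicities of its non-zero entries via weak increase---and your facts (a)--(c) are a more detailed version of exactly this; note also the remark following the lemma, which extends $(iv)$ to arbitrary generalized cocharge tableaux once the type and $n$ are known, confirming that ``equivalent'' is meant as ``$C$ and the listed data determine one another'', not as a bijection onto all abstract tuples. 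So for the statement as intended and as used later, your proof is complete and takes essentially the same route, with welcome extra detail (the observations that $0$ occurs only in the first row, that $(2,1)=1$, and that every value $1,\dots,k-1$ occurs below the first row are all correct consequences of semi-standardness and Lemma \ref{cctconst}).

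The one genuine error is the image characterization you append at the end: it is \emph{not} true that whenever the reassembled array is semi-standard the condition of Lemma \ref{cctconst} holds automatically. Take the sub-tableau on the rows below the first to be the single row with entries $1,2$, take $m_{1}=m_{2}=0$, and $n=4$; the reassembled array has first row $0,0$ and second row $1,2$, it is semi-standard with $(2,1)=1$ and with all of $1,\dots,k-1$ (here $k=3$) appearing below the first row, yet the leftmost box containing $2$ has no $1$ in any higher row. Concretely, its standardization is the standard tableau with rows $1,2$ and $3,4$, whose $\operatorname{Dsi}$-set is $\{2\}$, while the type of the array is $\{2,3\}$, so it is a generalized cocharge tableau that does not lie in $\operatorname{CCT}(\lambda)$ for $\lambda=(2,2)$. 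Your justification (``the non-zero entries below the first row occur in the sub-tableau and $1$ sits below $(1,1)$'') only handles $h=1$; for $h\geq2$ the condition of Lemma \ref{cctconst} genuinely constrains the first-row multiplicities, so the image of your map is cut out by the CCT condition itself, not by semi-standardness of the reassembly alone. This does not affect the lemma as stated and used in the paper, but that sentence should be deleted or replaced by the correct description of the admissible tuples.
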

Note that by setting $i_{0}=0$ and $i_{k}=n$, part $(ii)$ of Lemma \ref{Dspc} holds also for $g=0$ and for $g=k$.

\begin{proof}
Part $(i)$ follows immediately from the fact that the type $J$ of $C$ in Definition \ref{gencct} has $k-1$ elements, since $\operatorname{Dsp}^{c}(C)$ from Definition \ref{Dsic} clearly has the same size as $J$.

The proof of Lemma \ref{ctJinv} now shows that if $J=\{j_{h}\;|\;1 \leq h<k\}$ in increasing order, then $C$ contains the entry $h$ precisely $j_{h+1}-j_{h}$ times (with $j_{0}=0$ and $j_{k}=n$ as always). Summing over $h$ between $k-g$ and $k-1$, we deduce that the number of entries of $C$ that equal at least $k-g$ is $n-j_{k-g}$. As this is just $i_{g}$ by our numbering of the set from Definition \ref{Dsic} as desired, part $(ii)$ is also established.

Next, by writing $C=\operatorname{ct}(S)$ for some $S\in{SYT}(\lambda)$, we get $\Sigma(C)=\operatorname{cc}(S)$ by Definition \ref{cocharge}, which then equals $\sum_{i\in\operatorname{Dsi}(S)}(n-i)$ via Corollary \ref{sumcc}, which is the same as $\sum_{i\in\operatorname{Dsi}^{c}(S)}i$ by Definition \ref{Dsic}. Recalling that $\operatorname{Dsp}^{c}(C)=\operatorname{Dsi}^{c}(S)$ in this case, part $(iii)$ follows.

Finally, for part $(iv)$ we note that the first row is determined by $n$ and the number of appearances of all the non-zero entries in it, via the semi-standard condition. This proves the lemma.
\end{proof}
In fact, part $(iv)$ of Lemma \ref{Dspc} is valid for every generalized cocharge tableau, once the type $J$ and the size $n$ are known, by the same argument. We will be interested essentially only in the cocharge tableaux from Definition \ref{Dsic} in this paper (and in \cite{[Z1]} we will use either them or general semi-standard Young tableaux), but the generalized ones still help in some technical considerations.

\section{Higher Specht Polynomials \label{HighSpecht}}

Let $T$ be any Young tableau of some shape $\lambda \vdash n$, and content $\mathbb{N}_{n}$. Then $R(T)$ (resp. $C(T)$) is the group of permutations in $S_{n}$ that preserve the rows (resp. columns) of $T$, namely if $\lambda$ is the partition of $n$ into $\lambda_{i}$, $1 \leq i\leq\ell(\lambda)$ (with $\ell(\lambda)$ being the length of $\lambda$ as usual) then $R(T)$ is the copy of $\prod_{i=1}^{\ell(\lambda)}S_{\lambda_{i}}$ inside $S_{n}$ in which the $i$th group $S_{\lambda_{i}}$ acts on the entries of the $i$th row in $T$ (and similarly for $C(T)$, with the transpose partition $\lambda^{t}$). Explicitly, if for every $a$ there are $t_{a}$ columns of length $a$ (so that $t_{a}$ is the multiplicity of $a$ in $\lambda^{t}$), then $C(T)$ is isomorphic to $\prod_{a}S_{a}^{t_{a}}$,

We will, however, need the following larger subgroup of $S_{n}$.
\begin{lem}
Let $q_{a}\geq0$ be such that the columns of length $a$ in $\lambda^{t}$ are those between $q_{a}+1$ and $q_{a}+t_{a}$.
\begin{enumerate}[$(i)$]
\item Given $\eta \in S_{t_{a}}$ and such a tableau $T$, the element $\eta_{T} \in S_{n}$ taking an element showing up in the box $(q_{a}+i,j)$ of $T$ for $1 \leq i \leq t_{a}$ and $1 \leq j \leq a$ to the one appearing in $\big(q_{a}+\eta(i),j\big)$ and leaves the other number invariant lies in $R(T)$. Gathering all the values of $a$, we obtain an embedding of $\prod_{a}S_{t_{a}}$ into $R(T)$.
\item The subgroup of $R(T)$ from part $(i)$ normalizes $C(T)$ and intersects it trivially. Hence together with $C(T)$ it produces a semi-direct product $\tilde{C}(T)$ inside $S_{n}$.
\item The map taking an element of the group from part $(i)$ to 1 and an element of $C(T)$ to its sign produces a character $\widetilde{\operatorname{sgn}}:\tilde{C}(T)\to\{\pm1\}$.
\end{enumerate} \label{tildeCT}
\end{lem}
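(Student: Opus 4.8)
The plan is to verify the three assertions in order, since each builds on the previous one, and all of them come down to unwinding the definitions of $R(T)$, $C(T)$, and how permutations act on the entries of $T$.

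For part $(i)$, I would first observe that the element $\eta_T$ is well-defined: the boxes $(q_a+i,j)$ with $1 \le i \le t_a$, $1 \le j \le a$ are exactly the boxes lying in the columns of length $a$ (using that column $c$ has length $a$ precisely when $q_a+1 \le c \le q_a+t_a$, and that such a column has exactly $a$ boxes, in rows $1$ through $a$), and since $\eta$ permutes $\{1,\dots,t_a\}$ the assignment $(q_a+i,j) \mapsto (q_a+\eta(i),j)$ is a bijection of this set of boxes to itself that preserves the second coordinate $j$, i.e. the row. Hence $\eta_T$ permutes the entries within each fixed row of $T$ and fixes all other entries, so $\eta_T \in R(T)$. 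To get the embedding of $\prod_a S_{t_a}$, I would note that the boxes moved for different values of $a$ are disjoint (they lie in columns of different lengths), so the various $\eta_T$ for different $a$ commute and their product is faithful; the group law $(\eta\eta')_T = \eta_T\eta'_T$ for a fixed $a$ is immediate from the definition. This part is essentially bookkeeping.

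For part $(ii)$, write $N$ for the subgroup from part $(i)$. To show $N$ normalizes $C(T)$, I would use the standard fact that for $\sigma \in S_n$ and any Young tableau $T$, one has $\sigma C(T) \sigma^{-1} = C(\sigma T)$ (and similarly for $R$), where $\sigma T$ is the tableau obtained by applying $\sigma$ to every entry of $T$. The key geometric point is that an element $\eta_T \in N$ has the property that $\eta_T T$ has exactly the same set of entries in each column as $T$ does — because $\eta_T$ only permutes entire rows within each block of equal-length columns, so within a single column of length $a$, the entry in box $(i,j)$ of $\eta_T T$ is the entry that was in box $(\eta^{-1}(\text{appropriate index}),j)$ of $T$, which is still in column $j$. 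Thus $\eta_T$ permutes, as a set, the entries of each column among themselves, giving $C(\eta_T T) = C(T)$, hence $\eta_T C(T)\eta_T^{-1} = C(T)$. For the trivial intersection: a nonidentity $\eta_T$ moves some entry from a box $(q_a+i,j)$ to a box $(q_a+\eta(i),j)$ with $\eta(i)\neq i$, i.e. it changes the row of that entry while keeping the column; an element of $C(T)$ preserves every column but, being a product of permutations within columns, when it moves an entry it keeps it in the same column — that alone does not immediately force the intersection to be trivial, so I need the sharper observation: an element of $C(T)$ permutes entries only within individual columns, while a nonidentity element of $N$ sends the entry in box $(q_a+i,j)$ to the entry formerly in box $(q_a+\eta^{-1}(i),j)$, and for the composite to be in $C(T)$ one checks column-by-column that $\eta$ must fix all indices. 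Concretely: if $\eta_T \in C(T)$, then since $\eta_T$ preserves rows (part $(i)$) and $C(T)$-elements that preserve rows must be the identity (rows and columns of a standard-content tableau meet in at most one box), we get $\eta_T = 1$. The semidirect product $\tilde C(T) = N \ltimes C(T)$ then follows formally.

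For part $(iii)$, every element of $\tilde C(T)$ factors uniquely as $\eta_T c$ with $\eta_T \in N$, $c \in C(T)$; define $\widetilde{\operatorname{sgn}}(\eta_T c) := \operatorname{sgn}(c)$. To see this is a homomorphism, I would use the multiplication rule in the semidirect product: $(\eta_T c)(\eta'_T c') = \eta_T \eta'_T \big((\eta'_T)^{-1} c \,\eta'_T\big) c'$, and since $N$ normalizes $C(T)$ via conjugation by a permutation, $(\eta'_T)^{-1} c\, \eta'_T \in C(T)$ has the same sign as $c$ (sign is a class function, or simply: conjugation in $S_n$ preserves sign). Hence $\widetilde{\operatorname{sgn}}$ of the product is $\operatorname{sgn}\big((\eta'_T)^{-1}c\eta'_T \cdot c'\big) = \operatorname{sgn}(c)\operatorname{sgn}(c') = \widetilde{\operatorname{sgn}}(\eta_T c)\widetilde{\operatorname{sgn}}(\eta'_T c')$. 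Equivalently, $\widetilde{\operatorname{sgn}}$ is the composite $\tilde C(T) \to \tilde C(T)/N \cong C(T) \xrightarrow{\operatorname{sgn}} \{\pm 1\}$, which is manifestly well-defined once $N$ is a normal subgroup with the stated quotient.

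The main obstacle I anticipate is getting the trivial-intersection and normalization arguments in part $(ii)$ genuinely clean rather than hand-wavy — in particular being careful that "element of $N$ changes a row" and "element of $C(T)$ preserves a column" combine correctly, which is cleanest if one consistently uses the identity $\sigma C(T)\sigma^{-1} = C(\sigma T)$ together with the explicit description of which boxes $\eta_T$ moves. Everything else is routine once the action on boxes versus the action on entries is kept carefully distinct.
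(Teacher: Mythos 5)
Your proof is correct and follows essentially the same route as the paper: part $(i)$ is the same bookkeeping, your part $(ii)$ argument via $\sigma C(T)\sigma^{-1}=C(\sigma T)$ together with the observation that $\eta_{T}$ permutes the columns of $T$ as sets is just a structural rephrasing of the paper's element-by-element conjugation computation, the trivial intersection via $R(T) \cap C(T)=\{1\}$ is identical, and part $(iii)$ is the paper's computation verbatim. One correction, though: your closing "equivalently" remark in part $(iii)$ is false, since the subgroup $N$ from part $(i)$ is in general not normal in $\tilde{C}(T)$ (already for two columns of length $2$, $\tilde{C}(T)$ is dihedral of order $8$ and $N$ is a non-central subgroup of order $2$), so there is no quotient $\tilde{C}(T)/N \cong C(T)$ through which $\widetilde{\operatorname{sgn}}$ could factor; the direct computation you give beforehand is the correct (and sufficient) argument, so simply delete that sentence.
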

Alternatively, if $C(T)$ is the subgroup preserving the partition of $\mathbb{N}_{n}$ into the ordered subsets which are the columns of $T$, then $\tilde{C}(T)$ from LEmma \ref{tildeCT} is the subgroup preserving this partition as a non-ordered partition (namely sets of the same size can be interchanged).

\begin{proof}
The definition of $\eta\mapsto\eta_{T}$ makes it obvious that the map from $S_{t_{a}}$ into $S_{n}$ is an injective group homomorphism, and since the images of different $S_{t_{a}}$'s act non-trivially on disjoint sets of numbers, they commute and the image is the group from part $(i)$.

For part $(ii)$, take $\eta_{T}$ in the image of $S_{t_{a}}$, $\sigma \in C(T)$, and $p\in\mathbb{N}_{n}$. If $C_{T}(p)$ has length different from $a$, then $\eta_{T}p=p$ and $\eta_{T}^{-1}\sigma p=\sigma p$ (since $\sigma p$ is in the same column), and when $v_{T}(p)=(q_{a}+i,j)$ for some $1 \leq i \leq t_{a}$ and $1 \leq j \leq a$, the box $v_{T}(\eta_{T}p)$ is $\big(q_{a}+\eta(i),j\big)$, applying $\sigma$ sends it to the element in the box $\big(q_{a}+\eta(i),\tilde{j}\big)$ for (possibly different) $1\leq\tilde{j} \leq a$, and after $\eta_{T}^{-1}$ we get to $(q_{a}+i,\tilde{j})$. Hence the action of $\eta_{T}^{-1}\sigma\eta_{T}$ preserves the columns of $T$, and thus lies in $C(T)$. Since $C(T)$ and $R(T)$ intersect trivially, so do $C(T)$ and the (smaller) subgroup from part $(i)$, which is thus sufficient for establishing part $(ii)$.

For part $(iii)$, any element of $\tilde{C}(T)$ can be written uniquely as $\eta_{T}\sigma$ with $\eta_{T}$ in the image of $\prod_{a}S_{a}^{t_{a}}$ and $\sigma \in C(T)$, and we get $\widetilde{\operatorname{sgn}}(\eta_{T}\sigma)=\operatorname{sgn}(\sigma)$ by definition. The product of this element with another one, say $\omega_{T}\rho$ with $\widetilde{\operatorname{sgn}}$-value of $\operatorname{sgn}(\rho)$, equals $\eta_{T}\omega_{T}\cdot(\omega_{T}^{-1}\sigma\omega_{T})\rho$, and since the sign is invariant under conjugation in $S_{n}$, the $\widetilde{\operatorname{sgn}}$-value of that element is the product of $\operatorname{sgn}(\sigma)$ and $\operatorname{sgn}(\rho)$. Thus $\widetilde{\operatorname{sgn}}$ is a character, as desired. This proves the lemma.
\end{proof}
The group $\tilde{C}(T)$ from part $(ii)$ of Lemma \ref{tildeCT} is isomorphic to the product $\prod_{a}(S_{t_{a}} \rtimes S_{a}^{t_{a}})$, where each multiplier is the wreath product of $S_{t_{a}}$ and $S_{a}$. Note that the character from part $(iii)$ there does not, in general, coincide with the restriction of the sign character of $S_{n}$ to $\tilde{C}(T)$.

\medskip

This paper investigates explicit representations of $S_{n}$ on polynomials. For this we set the following notations and definitions.
\begin{defn}
Take any two integers $n\geq1$ and $d\geq0$.
\begin{enumerate}[$(i)$]
\item The shorthand $\mathbb{Q}[\mathbf{x}_{n}]$ denotes the algebra $\mathbb{Q}[x_{1},\ldots,x_{n}]$. Similarly, $\mathbb{Z}[\mathbf{x}_{n}]$ stands for $\mathbb{Z}[x_{1},\ldots,x_{n}]$.
\item The space of polynomials in $\mathbb{Q}[\mathbf{x}_{n}]$ that are homogeneous of degree $d$ will be denoted by $\mathbb{Q}[\mathbf{x}_{n}]_{d}$, and $\mathbb{Z}[\mathbf{x}_{n}]_{d}$ is its intersection with $\mathbb{Z}[\mathbf{x}_{n}]$.
\item The \emph{content} of a monomial in $\mathbb{Q}[\mathbf{x}_{n}]$ is the multi-set of exponents showing up in it. We identify two contents that are the same except for the multiplicity of 0, since they correspond to the same monomials. If we omit the multiplicity of 0 altogether, a content can be viewed, e.g., as a partition of the degree of the monomial.
\item For any $\mu \vdash d$, we denote by $\mathbb{Q}[\mathbf{x}_{n}]_{\mu}$ the space spanned by the monomials in $\mathbb{Q}[\mathbf{x}_{n}]$ whose content is $\mu$, which intersects $\mathbb{Z}[\mathbf{x}_{n}]$ in $\mathbb{Z}[\mathbf{x}_{n}]_{\mu}$.
\end{enumerate} \label{Qxnd}
\end{defn}
It is clear that $\mathbb{Q}[\mathbf{x}_{n}]_{\mu}$ from part $(iv)$ of Definition \ref{Qxnd} is a subspace of $\mathbb{Q}[\mathbf{x}_{n}]_{d}$ from part $(ii)$ when $\mu \vdash d$. Moreover, the action of $S_{n}$ preserves homogeneity degree, making each $\mathbb{Q}[\mathbf{x}_{n}]_{d}$ a representation of $S_{n}$, and since it also preserves contents of monomials, each $\mathbb{Q}[\mathbf{x}_{n}]_{\mu}$ is a sub-representation.

Recall that to a tableau $T$ of some shape $\lambda \vdash n$ and content $\mathbb{N}_{n}$ is associated the \emph{Young symmetrizer} $\varepsilon_{T}:=\sum_{\sigma \in C(T)}\sum_{\tau \in R(T)}\operatorname{sgn}(\sigma)\sigma\tau\in\mathbb{Z}[S_{n}]$ (note that some authors interchange the multipliers from $C(T)$ and from $R(T)$, and get a different element, but this convention will be much more appropriate for our purposes). It is a multiple of an idempotent, namely there is a scalar $c$, depending only on $\lambda$, such that $\varepsilon_{T}^{2}=c\varepsilon_{T}$, or equivalently $\frac{1}{c}\varepsilon_{T}$ is an idempotent in $\mathbb{Q}[S_{n}]$. In fact, Theorem 3.10 of \cite{[JK]} shows that if $\mathcal{S}^{\lambda}$ is the Specht module associated with $\lambda$ (as considered below) then this scalar is $n!/\dim\mathcal{S}^{\lambda}$ (in fact, that theorem is stated for the opposite convention of the Young symmetrizer, but it applies for our $\varepsilon_{T}$ in the same manner).

For any element of an $S_{n}$-module, $\varepsilon_{T}$ takes it to another element of that module, and the equality $\sigma\varepsilon_{T}=\operatorname{sgn}(\sigma)\varepsilon_{T}$ implies that $C(T)$ acts on the latter element via the sign representation. We now turn to the higher Specht polynomials.
\begin{defn}
Take $\lambda \vdash n$, $S\in\operatorname{SYT}(\lambda)$, $T$ of shape $\lambda$ and content $\mathbb{N}_{n}$, and $C\in\operatorname{CCT}(\lambda)$.
\begin{enumerate}[$(i)$]
\item For every $i\in\mathbb{N}$ we write $h_{i}$ for the value showing up in the box $v_{T}(i)$ of the tableau $\operatorname{ct}(S)$ from Definition \ref{cocharge} (resp. in $C$), and set $p_{T}^{S}$ (resp. $p_{C,T}$) to be $\prod_{i=1}^{n}x_{i}^{h_{i}}\in\mathbb{Q}[\mathbf{x}_{n}]$.
\item The stabilizer in $R(T)$ of $p_{T}^{S}$ (resp. $p_{C,T}$) is the same as the stabilizer of $\operatorname{ct}(S)$ (resp. $C$) there, by acting on the rows. We denote its size by $s_{T}^{S}$ (resp. $s_{C,T}$).
\item The \emph{higher Specht polynomial} $F_{T}^{S}$ (resp. $F_{C,T}$) that is associated with $T$ and $S$ (resp. $C$) is $\varepsilon_{T}p_{T}^{S}/s_{T}^{S}$ (resp. $\varepsilon_{T}p_{C,T}/s_{C,T}$).
\item For any $w \in S_{n}$ we write $F_{w}$ for the higher Specht polynomial will be $F_{P(w)}^{\tilde{Q}(w)}$, with $\tilde{Q}(w)=\operatorname{ev}Q(w)$ from Definition \ref{modRSK}.
\end{enumerate} \label{Spechtdef}
\end{defn}
See Example \ref{FTScoeff2} below for how this construction, as well as some of the initials results about it, work.

In fact, when we change $T$ to a different tableau with the same shape, the group $R(T)$ is conjugated, but the boxes are also interchanged in such a manner that the action of that group on the boxes of $\operatorname{ct}(S)$ or $C$ becomes the same. Hence the parameters $s_{T}^{S}$ and $s_{C,T}$ from Definition \ref{Spechtdef} are independent of $T$. However, we keep it in the notation in what follows. Note that they are omitted in \cite{[ATY]}, \cite{[GR]}, and other references, but dividing by them turns out to produce better properties---see Lemma \ref{operRT} and Propositions \ref{Spechtpols} and \ref{incnSpecht} below.

\begin{rmk}
It is clear from Definition \ref{Spechtdef} that if $C=\operatorname{ct}(S)$ then $p_{C,T}=p_{T}^{S}$, $s_{C,T}=s_{T}^{S}$, and $F_{C,T}=F_{T}^{S}$. The reason for this double notation (which becomes triple via part $(iv)$ there, in case $T$ is also standard), is as follows. Using $S\in\operatorname{SYT}(\lambda)$ we obtain the notation $F_{T}^{S}$ in the original terminology of \cite{[ATY]}. The construction from $C$ is more straightforward, and will later be seen to be more adapted to the stable case, in infinitely many variables. Viewed as $F_{w}$ with $w \in S_{n}$, the fact that $S_{n}$ is naturally contained in $S_{n+1}$ is the basis for relating higher Specht polynomials with different numbers of variables. \label{samepols}
\end{rmk}

\medskip

The normalization from Definition \ref{Spechtdef} yields the following result.
\begin{lem}
The expression $\sum_{\tau \in R(T)}\tau p_{T}^{S}/s_{T}^{S}$ yields a sum of monomials, all of degree $d:=\operatorname{cc}(S)$ and with the same content, each with the coefficient 1, including the original monomial $p_{T}^{S}$. None of the other monomials is related to $p_{T}^{S}$ via the action of $C(T)$. \label{operRT}
\end{lem}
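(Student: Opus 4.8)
The plan is to treat the two assertions separately. For the first, view $\sum_{\tau\in R(T)}\tau p_{T}^{S}$ as a sum over the orbit $\mathcal{O}:=R(T)\cdot p_{T}^{S}$. Permutations send monomials to monomials, so each $\tau p_{T}^{S}$ is a monomial of total degree $\sum_{i=1}^{n}h_{i}$, which is exactly the entry sum $\operatorname{cc}(S)=d$ of $\operatorname{ct}(S)$ by Definition \ref{cocharge}; and since $\tau\in R(T)$ only reshuffles, within the rows of $T$, which variable carries which exponent, the multiset of exponents is unchanged, so all these monomials have one common content. By orbit--stabilizer, for each $m\in\mathcal{O}$ the set $\{\tau\in R(T)\ :\ \tau p_{T}^{S}=m\}$ is a coset of the stabilizer of $p_{T}^{S}$ in $R(T)$, hence has exactly $s_{T}^{S}$ elements (Definition \ref{Spechtdef}$(ii)$). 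Therefore $\sum_{\tau\in R(T)}\tau p_{T}^{S}=s_{T}^{S}\sum_{m\in\mathcal{O}}m$, and after dividing by $s_{T}^{S}$ we obtain $\sum_{m\in\mathcal{O}}m$, every monomial with coefficient $1$, with $p_{T}^{S}$ appearing as the $\tau=\mathrm{id}$ term.

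For the second assertion I would read monomials back onto $T$: for $m=\prod_{j}x_{j}^{e_{j}}$ let $M(m)$ be the filling of $\lambda$ carrying $e_{j}$ in the box $v_{T}(j)$, so that $M(p_{T}^{S})=\operatorname{ct}(S)$. If $\tau\in R(T)$, then $\tau p_{T}^{S}$ assigns to each variable $x_{k}$ an exponent of the form $h_{j}$ with $j$ in the same row of $T$ as $k$, so each row of $M(\tau p_{T}^{S})$ is a rearrangement of the corresponding row of $\operatorname{ct}(S)$; likewise, if $\sigma\in C(T)$, then each column of $M(\sigma p_{T}^{S})$ is a rearrangement of the corresponding column of $\operatorname{ct}(S)$. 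Hence a monomial $m$ lying simultaneously in $\mathcal{O}$ and in $C(T)\cdot p_{T}^{S}$ yields an $M(m)$ whose rows agree as multisets with those of $\operatorname{ct}(S)$ and whose columns agree as multisets with those of $\operatorname{ct}(S)$, and it suffices to show that this forces $M(m)=\operatorname{ct}(S)$, i.e. $m=p_{T}^{S}$.

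I would establish this rigidity by peeling $\operatorname{ct}(S)$ row by row, using that it is semi-standard --- rows weakly increasing, columns strictly increasing --- by Definition \ref{gencct} and Lemma \ref{ctJinv}. Write $c_{i,j}$ for the entry of $\operatorname{ct}(S)$ in box $(i,j)$. Inductively suppose rows $1,\dots,i-1$ of $M(m)$ coincide with those of $\operatorname{ct}(S)$; then in every column $j$ meeting row $i$ the values $c_{1,j},\dots,c_{i-1,j}$ are already exhausted, so the entry of $M(m)$ in box $(i,j)$ lies in the multiset $\{c_{i,j},c_{i+1,j},\dots\}$ and is therefore $\ge c_{i,j}$. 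Since row $i$ of $M(m)$ and row $i$ of $\operatorname{ct}(S)$ have the same multiset, hence the same sum, the nonnegative differences $M(m)_{i,j}-c_{i,j}$ over that row sum to $0$, so all of them vanish and row $i$ of $M(m)$ equals row $i$ of $\operatorname{ct}(S)$; the induction gives $M(m)=\operatorname{ct}(S)$, whence $m=p_{T}^{S}$. The one genuinely non-formal point is exactly this last step --- that the collection of row multisets together with the collection of column multisets determines a semi-standard tableau --- while the rest is routine orbit bookkeeping.
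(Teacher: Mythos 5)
Your proof is correct, and its first half (degree, content, and the orbit--stabilizer count giving coefficient $1$ after dividing by $s_{T}^{S}$) matches the paper. For the crucial second assertion, however, you take a genuinely different route. You reduce the claim to the statement that the $R(T)$-orbit and the $C(T)$-orbit of $p_{T}^{S}$ meet only in $p_{T}^{S}$, by reading each monomial back as a filling $M(m)$ of $\lambda$ and proving a rigidity fact: a filling whose row multisets and column multisets agree with those of the column-strict tableau $\operatorname{ct}(S)$ must equal $\operatorname{ct}(S)$. Your row-peeling induction is sound -- at row $i$, the column multisets force $M(m)_{i,j}\ge c_{i,j}$ once the earlier rows are pinned down, and equality of row sums collapses all the slacks -- and in fact it only uses that columns of $\operatorname{ct}(S)$ are (weakly) increasing, so it is a clean, elementary argument. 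The paper instead attaches to each monomial the vector $\deg_{T}$ of column degrees and orders these vectors reverse-lexicographically: $C(T)$ preserves the vector, while semi-standardness of $\operatorname{ct}(S)$ forces any non-stabilizing $\tau\in R(T)$ to strictly decrease it, so no other summand can be $C(T)$-related to $p_{T}^{S}$. What each buys: your argument is more self-contained and symmetric (a direct orbit-intersection statement about tableaux), whereas the paper's produces a monotone statistic preserved by $C(T)$, i.e.\ a triangularity-type tool of the kind that is often reusable for leading-term arguments; for the purposes of Proposition \ref{Spechtpols}, which only invokes the statement of the lemma, the two are interchangeable.
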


\begin{proof}
The degree of the monomial $p_{T}^{S}$ from Definition \ref{Spechtdef} is $\sum_{i=1}^{n}h_{i}$, and as the $h_{i}$'s are the entries of $\operatorname{ct}(S)$, it has degree $d=\operatorname{cc}(S)$ via Definition \ref{cocharge}. The fact that all the monomials in the sum have the same content and degree now follows from the fact that the content and the degree of monomials are preserved under the action of $S_{n}$ hence of $R(T)$.

Next, the sum $\sum_{\tau \in R(T)}\tau p_{T}^{S}$ produces the sum of all the monomials in the orbit of $p_{T}^{S}$ under $R(T)$, and the number of times each one is obtained is the size of the stabilizer in $R(T)$ of any one of these monomials. As this size is $s_{T}^{S}$ for $p_{T}^{S}$ by definition, in the asserted quotient they all show up with the coefficient 1 as desired.

We now attach, to every monomial $y\in\mathbb{Z}[\mathbf{x}_{n}]$, the vector $\deg_{T}y$ of length $\lambda_{1}$, whose $j$th entry $\deg_{T}^{j}y$ is the degree of $y$ in the variables $x_{i}$ with $C_{T}(i)=j$. When $y$ and $z$ are two monomials, we shall write $y>_{T}z$ to indicate that $\deg_{T}y$ is (strictly) larger than $\deg_{T}z$ in the reverse lexicographic order (this order might be related to some of the orders showing up at the beginning of \cite{[M]}). As the action of $C(T)$ preserves the columns of $T$ by definition, the vector $\deg_{T}y$ remains invariant if we change $y$ by one of its images under this group.

It thus follows from Definition \ref{Spechtdef} that $\deg_{T}^{j}p_{T}^{S}$ is the sum of the entries in the $j$th column of $\operatorname{ct}(S)$. Similarly, if we replace this monomial by its image under $\tau \in R(T)$, then the corresponding vector contains the sums of the degrees of the columns of $\tau\operatorname{ct}(S)$. But $\operatorname{ct}(S)$ is semi-standard (by Lemma \ref{ctJinv}), so that the action of any element of $R(T)$ can only take smaller numbers, showing up in a column more to the left, and put them in a column that is more to the right of their original location.

So take any $\tau \in R(T)$, and assume that it does not stabilize $p_{T}^{S}$, or equivalently $\operatorname{ct}(S)$. Let $j$ be the maximal column of an entry of $\operatorname{ct}(S)$ that is not stabilized by $\tau$. Hence the value of some box in that column of $\tau\operatorname{ct}(S)$ is smaller than its value in $\operatorname{ct}(S)$, with the others being at most the same as those in $\operatorname{ct}(S)$, with those in all the columns to the right of $j$ being the same in $\operatorname{ct}(S)$ and in $\tau\operatorname{ct}(S)$.

We deduce that every other monomial in our sum, which is $\tau p_{T}^{S}$ for $\tau \in R(T)$ that does not stabilize $p_{T}^{S}$, satisfies
$p_{T}^{S}>_{T}\tau p_{T}^{S}$, which is a strict inequality. As we saw that the action of $C(T)$ preserves the vector on which this inequality is based, it indeed cannot relate any such summand to $p_{T}^{S}$. This completes the proof of the lemma.
\end{proof}
The proof of Lemma \ref{operRT} shows, via Remark \ref{samepols}, that $\sum_{\tau \in R(T)}\tau p_{C,T}/s_{C,T}$ can be described similarly with the degree $d:=\Sigma(C)$. The higher Specht polynomials themselves now have the following properties.
\begin{prop}
The polynomial $F_{T}^{S}$ from Definition \ref{Spechtdef} is an element of $\mathbb{Z}[\mathbf{x}_{n}]_{d}$, where $d:=\operatorname{cc}(S)$, which contains the monomial $p_{T}^{S}$ with the multiplier 1. The group $\tilde{C}(T)$ from Lemma \ref{tildeCT} acts on it via the character $\widetilde{\operatorname{sgn}}$. \label{Spechtpols}
\end{prop}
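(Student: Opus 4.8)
The plan is to split the Young symmetrizer $\varepsilon_{T}=\sum_{\sigma\in C(T)}\sum_{\tau\in R(T)}\operatorname{sgn}(\sigma)\sigma\tau$ into its ``row half'' and ``column half'' and to feed the row half into Lemma \ref{operRT}. Writing $q:=\tfrac{1}{s_{T}^{S}}\sum_{\tau\in R(T)}\tau p_{T}^{S}$, so that $F_{T}^{S}=\sum_{\sigma\in C(T)}\operatorname{sgn}(\sigma)\,\sigma q$, Lemma \ref{operRT} tells us that $q$ is a sum of pairwise distinct monomials, each with coefficient $1$, all homogeneous of degree $d=\operatorname{cc}(S)$, with $p_{T}^{S}$ among them, and that no other monomial of $q$ lies in the $C(T)$-orbit of $p_{T}^{S}$. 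Since each $\sigma\in C(T)$ merely permutes the variables, $\sigma q\in\mathbb{Z}[\mathbf{x}_{n}]_{d}$, and an integer combination of such polynomials stays in $\mathbb{Z}[\mathbf{x}_{n}]_{d}$; this already yields $F_{T}^{S}\in\mathbb{Z}[\mathbf{x}_{n}]_{d}$.

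Next I would pin down the coefficient of $p_{T}^{S}$ in $F_{T}^{S}$. The monomial $p_{T}^{S}$ can occur in the expansion of $\sum_{\sigma}\operatorname{sgn}(\sigma)\sigma q$ only through pairs $(\sigma,y)$ with $y$ a monomial of $q$ and $\sigma y=p_{T}^{S}$; such a $y$ equals $\sigma^{-1}p_{T}^{S}$, hence lies in the $C(T)$-orbit of $p_{T}^{S}$, so by Lemma \ref{operRT} we must have $y=p_{T}^{S}$. It then remains to see that $\sigma p_{T}^{S}=p_{T}^{S}$ forces $\sigma=\operatorname{id}$: since $\sigma\in C(T)$ only permutes entries within columns of $T$, and $\operatorname{ct}(S)$ is semi-standard by Lemma \ref{ctJinv}, hence strictly increasing down columns, the exponent vector of $p_{T}^{S}$ is fixed by no non-identity element of $C(T)$. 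Therefore the coefficient of $p_{T}^{S}$ in $F_{T}^{S}$ is $\operatorname{sgn}(\operatorname{id})\cdot1=1$, which completes the first assertion.

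For the $\tilde{C}(T)$-equivariance I would work directly with $\varepsilon_{T}$. For $\sigma\in C(T)$ the identity $\sigma\varepsilon_{T}=\operatorname{sgn}(\sigma)\varepsilon_{T}$, recalled in the paragraph preceding Definition \ref{Spechtdef}, gives $\sigma F_{T}^{S}=\operatorname{sgn}(\sigma)F_{T}^{S}=\widetilde{\operatorname{sgn}}(\sigma)F_{T}^{S}$. For $\eta_{T}$ in the image of $\prod_{a}S_{t_{a}}$ from part $(i)$ of Lemma \ref{tildeCT} I would show $\eta_{T}\varepsilon_{T}=\varepsilon_{T}$: using $\eta_{T}\sigma\tau=(\eta_{T}\sigma\eta_{T}^{-1})(\eta_{T}\tau)$ together with the fact that $\eta_{T}$ normalizes $C(T)$ and preserves signs (part $(ii)$ of Lemma \ref{tildeCT}) while $\eta_{T}\in R(T)$, the substitution $(\sigma,\tau)\mapsto(\eta_{T}\sigma\eta_{T}^{-1},\eta_{T}\tau)$ is a sign-preserving bijection of $C(T)\times R(T)$ onto itself, so the defining sum for $\varepsilon_{T}$ is unchanged. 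Hence $\eta_{T}F_{T}^{S}=F_{T}^{S}=\widetilde{\operatorname{sgn}}(\eta_{T})F_{T}^{S}$. Since by part $(iii)$ of Lemma \ref{tildeCT} every element of $\tilde{C}(T)$ is uniquely of the form $\eta_{T}\sigma$ with $\widetilde{\operatorname{sgn}}(\eta_{T}\sigma)=\operatorname{sgn}(\sigma)$, combining the two cases gives $(\eta_{T}\sigma)F_{T}^{S}=\operatorname{sgn}(\sigma)F_{T}^{S}=\widetilde{\operatorname{sgn}}(\eta_{T}\sigma)F_{T}^{S}$ for every element of $\tilde{C}(T)$.

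I expect the only genuinely delicate point to be the coefficient count in the second step: one must combine the ``no other monomial of $q$ is $C(T)$-related to $p_{T}^{S}$'' part of Lemma \ref{operRT} with the triviality of the $C(T)$-stabilizer of $p_{T}^{S}$ (semi-standardness of $\operatorname{ct}(S)$), so that after forming the alternating sum $\sum_{\sigma}\operatorname{sgn}(\sigma)\sigma$ exactly one contribution to $p_{T}^{S}$ survives and no sign cancellation can occur. Everything else is formal manipulation of $\varepsilon_{T}$ and of the normalizing subgroup $\tilde{C}(T)$.
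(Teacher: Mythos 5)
Your proposal is correct and follows essentially the same route as the paper: degree and integrality come from applying the column alternating operator to the expression of Lemma \ref{operRT}, the coefficient of $p_{T}^{S}$ is isolated using the ``no other monomial is $C(T)$-related'' clause of that lemma together with the triviality of the $C(T)$-stabilizer of $p_{T}^{S}$ forced by semi-standardness of $\operatorname{ct}(S)$, and the $\tilde{C}(T)$-equivariance is obtained from $\sigma\varepsilon_{T}=\operatorname{sgn}(\sigma)\varepsilon_{T}$ together with $\eta_{T}\varepsilon_{T}=\varepsilon_{T}$ via the normalization of $C(T)$ and membership in $R(T)$. The only cosmetic difference is that you phrase the invariance under $\eta_{T}$ as a sign-preserving re-indexing of $C(T)\times R(T)$, where the paper states it as commutation of $\eta_{T}$ with the column operator; these are the same argument.
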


\begin{proof}
The expression from Lemma \ref{operRT} was seen there to be in $\mathbb{Z}[\mathbf{x}_{n}]_{d}$. Since $F_{T}^{S}$ is obtained, via Definition \ref{Spechtdef}, from that expression under the action of the operator $\sum_{\sigma \in C(T)}\operatorname{sgn}(\sigma)\sigma\in\mathbb{Z}[S_{n}]$, whose elements preserve degrees, we get that $F_{T}^{S}\in\mathbb{Z}[\mathbf{x}_{n}]_{d}$ as required.

Next, it is clear that multiplying the latter element of $\mathbb{Z}[S_{n}]$ from the left by an element of $C(T)$ is the same as multiplying it by a sign. Hence letting such an element act of $F_{T}^{S}$ multiplies it by its sign. For obtaining the action of the larger group $\tilde{C}(T)$, it suffices to consider an element $\rho$ in the image of $\prod_{a}S_{a}^{t_{a}}$ from part $(i)$ of Lemma \ref{tildeCT}. Since it normalizes $C(T)$ by part $(ii)$ there, it commutes with $\sum_{\sigma \in C(T)}\operatorname{sgn}(\sigma)\sigma\in\mathbb{Z}[S_{n}]$, and the fact that it lies in $R(T)$ implies that $\rho\varepsilon_{T}=\varepsilon_{T}$ and thus it preserves $F_{T}^{S}$. Hence $\tilde{C}(T)$ acts on the higher Specht polynomial $F_{T}^{S}$ as the asserted character.

Finally, Lemma \ref{operRT} implies that the only contributions to the multiplicity of $p_{T}^{S}$ in $F_{T}^{S}$ can be from $\sum_{\sigma \in C(T)}\operatorname{sgn}(\sigma)\sigma p_{T}^{S}$ (since all the other summands in the expression there are not in the $C(T)$-orbit of that monomial). But as $\operatorname{ct}(S)$ is semi-standard, each of the monomials $\sigma P_{T}^{S}$ are distinct (with the action of $\sigma \in C(T)$ on the $j$th column being easily read from the orderings of the exponents of the variables $x_{i}$ with $C_{T}(i)=j$ in $\sigma P_{T}^{S}$), and the only contribution is from the term with trivial $\sigma$, yielding the asserted coefficient 1. This completes the proof of the proposition.
\end{proof}
Remark \ref{samepols} implies the same for $F_{C,T}$, where the degree $d$ being $\Sigma(C)$. In fact, if $\mu$ is the content of $C$ (or of $\operatorname{ct}(S)$), with or without the zeros omitted, then $F_{C,T}$ (or $F_{T}^{S}$) lies, in fact, in $\mathbb{Z}[\mathbf{x}_{n}]_{\mu}$, as does the sum from Lemma \ref{operRT} (also from $p_{C,T}$).

\begin{ex}
Consider the case where $n=4$, $\lambda=31\vdash4$, $w=2134$, and we have the tableaux \[T:=\begin{ytableau} 1 & 3 & 4 \\ 2 \end{ytableau},\ \tilde{S}:=\begin{ytableau} 1 & 3 & 4 \\ 2 \end{ytableau},\ S:=\begin{ytableau} 1 & 2 & 3 \\ 4 \end{ytableau},\mathrm{\ and\ }C:=\begin{ytableau} 0 & 0 & 0 \\ 1 \end{ytableau}.\] In this case we have $T=P(w)$, $\tilde{S}=Q(w)$, $S=\operatorname{ev}\tilde{S}=\tilde{Q}(w)$, and $C=\operatorname{ct}(S)$, and we get the monomial $p_{T}^{S}=p_{C,T}=x_{2}$. It is stabilized by $R(T)$, which has size $s_{T}^{S}=s_{C,T}=6$, so that the expression from Lemma \ref{operRT} is again $x_{2}$. After the application of the expression associated with $C(T)$ we get $F_{T}^{S}=F_{C,T}=x_{2}-x_{1}$ (note that the convention from \cite{[GR]} would have given $6(x_{2}-x_{1})$, while the normalization of \cite{[ATY]} produces $\frac{3}{4}(x_{2}-x_{1})$, because $\varepsilon_{T}^{2}=8\varepsilon_{T}$). Take now $n=5$, $\lambda=32\vdash5$, $w=24153$, and set \[T:=\begin{ytableau} 1 & 3 & 5 \\ 2 & 4 \end{ytableau},\ \tilde{S}:=\begin{ytableau} 1 & 2 & 4 \\ 3 & 5 \end{ytableau},\ S:=\begin{ytableau} 1 & 3 & 5 \\ 2 & 4 \end{ytableau},\mathrm{\ and\ }C:=\begin{ytableau} 0 & 1 & 2 \\ 1 & 2 \end{ytableau}.\] Then $T=P(w)$, $\tilde{S}=Q(w)$, $S=\operatorname{ev}\tilde{S}=\tilde{Q}(w)$, and $C=\operatorname{ct}(S)$, and we get the monomial $p_{T}^{S}=p_{C,T}=x_{2}x_{3}x_{4}^{2}x_{5}^{2}$. Here $s_{T}^{S}=s_{C,T}=1$, the group $C(T)$ has order 4, and applying the summation of the images under $R(T)$ yields \[x_{2}x_{3}x_{4}^{2}x_{5}^{2}+x_{1}^{2}x_{2}x_{3}x_{4}^{2}+x_{1}^{2}x_{2}x_{4}^{2}x_{5}+x_{2}^{2}x_{3}^{2}x_{4}x_{5}+x_{1}x_{2}^{2}x_{4}x_{5}^{2}+x_{1}x_{2}^{2}x_{3}^{2}x_{4}+\] \[+x_{2}x_{3}^{2}x_{4}^{2}x_{5}+x_{1}x_{2}x_{4}^{2}x_{5}^{2}+x_{1}x_{2}x_{3}^{2}x_{4}^{2}+x_{2}^{2}x_{3}x_{4}x_{5}^{2}+x_{1}^{2}x_{2}^{2}x_{3}x_{4}+x_{1}^{2}x_{2}^{2}x_{4}x_{5},\] indeed a sum of monomials as in Lemma \ref{operRT}. The first summand has $\deg_{T}$ vector of $(1,3,2)$, while the others yield $(2,2,2)$ (twice), $(3,1,2)$, $(1,4,1)$, $(2,3,1)$, $(3,2,1)$, $(4,1,1)$, $(2,4,0)$, $(3,3,0)$ (twice), and $(4,2,0)$, exemplifying the proof of the last assertion in that lemma. When applying the operator corresponding to $C(T)$, all the summands in the second line vanish (as they have non-trivial stabilizers there), and the value of $F_{T}^{S}=F_{C,T}$ is \[x_{5}^{2}[x_{2}x_{3}x_{4}^{2}-x_{1}x_{3}x_{4}^{2}-x_{2}x_{3}^{2}x_{4}+x_{1}x_{3}^{2}x_{4}+x_{1}x_{2}^{2}x_{4}-x_{1}^{2}x_{2}x_{4}-x_{1}x_{2}^{2}x_{3}+x_{1}^{2}x_{2}x_{3}]+\] \[-x_{5}[x_{1}x_{2}^{2}x_{4}^{2}-x_{1}^{2}x_{2}x_{4}^{2}-x_{1}x_{2}^{2}x_{3}^{2}+x_{1}^{2}x_{2}x_{3}^{2}+x_{2}^{2}x_{3}x_{4}^{2}-x_{2}^{2}x_{3}^{2}x_{4}-x_{1}^{2}x_{3}x_{4}^{2}+x_{1}^{2}x_{3}^{2}x_{4}]+\]
\[-2x_{1}x_{2}^{2}x_{3}x_{4}^{2}+2x_{1}^{2}x_{2}x_{3}x_{4}^{2}+2x_{1}x_{2}^{2}x_{3}^{2}x_{4}-2x_{1}^{2}x_{2}x_{3}^{2}x_{4}.\] Note that $\tilde{C}(T)$ contains $C(T)$ with index 2, and the permutation interchanging 1 with 3 and 2 with 4 simultaneously lies in it and preserves this polynomial. \label{FTScoeff2}
\end{ex}
The second case in Example \ref{FTScoeff2} shows that the higher Specht polynomials can contain monomials with coefficients that are not $\pm1$, and is, in fact, the minimal one (in terms of the value of $n$) exemplifying this.

\smallskip

The classical Specht polynomials are defined as follows.
\begin{defn}
Assume that $\lambda \vdash n$ is given. Then $S^{0}\in\operatorname{SYT}(\lambda)$ is the element whose entries increase consecutively first along the rows and then along the columns, and in $C^{0}\in\operatorname{CCT}(\lambda)$ all the entries in the $i$th row are equal to $i-1$. Then $C^{0}=\operatorname{ct}(S^{0})$, and $F_{T}^{S^{0}}=F_{C^{0},T}$ is called simply a \emph{Specht polynomial}. \label{clasSpecht}
\end{defn}
The first polynomial in Example \ref{FTScoeff2} is a Specht polynomial as in Definition \ref{clasSpecht}. Its degree is clearly minimal among all the higher Specht polynomials arising from the shape $\lambda$. In fact, we deduce the following stronger consequence.
\begin{cor}
For $T$, $S$, and $C$ as in Definition \ref{Spechtdef}, let $S^{0}$ and $C^{0}$ be given in Definition \ref{clasSpecht} for the same shape $\lambda$. Then the quotients $Q_{T}^{S}:=F_{T}^{S}/F_{T}^{S^{0}}$ and $Q_{C,T}:=F_{C,T}/F_{C^{0},T}$ are homogeneous polynomials in $\mathbb{Z}[\mathbf{x}_{n}]$ that are invariant under the group $\tilde{C}(T)$ from Lemma \ref{tildeCT}. \label{quotSpecht}
\end{cor}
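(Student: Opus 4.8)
The plan is to show that $F_T^{S^0}$ divides $F_T^S$ in $\mathbb{Z}[\mathbf{x}_n]$, with homogeneous $\tilde C(T)$-invariant quotient, by comparing the two polynomials factor by factor according to the columns of $T$. First I would recall that both $F_T^{S^0}$ and $F_T^S$ are obtained by applying the same operator $\sum_{\sigma\in C(T)}\operatorname{sgn}(\sigma)\sigma$ to a sum of monomials (Lemma \ref{operRT}), and that by Proposition \ref{Spechtpols} both transform under $\tilde C(T)$ via the character $\widetilde{\operatorname{sgn}}$. The key structural observation is that the Young symmetrizer factors through the columns: $C(T)=\prod_j C_j$, where $C_j$ is the symmetric group on the entries of the $j$th column of $T$, so that $\varepsilon_T$ applied to $p_T^S$ factors, on the level of the leading term, as a product over $j$ of Vandermonde-type antisymmetrizations in the variables $x_i$ with $C_T(i)=j$. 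For the classical tableau $S^0$, the cocharge tableau $C^0$ has constant value $i-1$ along row $i$, so the exponent vector it assigns to the $j$th column is exactly $(0,1,2,\ldots)$ up to the column length, and antisymmetrizing $x_{i_1}^0 x_{i_2}^1\cdots$ over $C_j$ gives precisely the Vandermonde determinant $\prod_{p<q}(x_{i_p}-x_{i_q})$ in those variables. Hence $F_T^{S^0}$ is, up to sign, the product over columns $j$ of these Vandermonde determinants.

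Next I would argue that $F_T^S$ is divisible by each of these column Vandermondes. Fix a column $j$ of $T$, with entries giving variables $x_{i_1},\ldots,x_{i_a}$. Since $C(T)\supseteq C_j$ and the operator $\sum_{\sigma\in C(T)}\operatorname{sgn}(\sigma)\sigma$ factors as $\big(\sum_{\sigma\in C_j}\operatorname{sgn}(\sigma)\sigma\big)\cdot(\text{rest})$, the polynomial $F_T^S$ lies in the image of the antisymmetrizer over $C_j$; therefore it is antisymmetric in $x_{i_1},\ldots,x_{i_a}$, so it vanishes whenever two of these variables coincide, and thus is divisible by $\prod_{p<q}(x_{i_p}-x_{i_q})$. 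Since the variable sets attached to distinct columns are disjoint, these Vandermonde factors are pairwise coprime, so their product — which is $\pm F_T^{S^0}$ — divides $F_T^S$ in $\mathbb{Z}[\mathbf{x}_n]$. (That the quotient has integer, not merely rational, coefficients follows because $\mathbb{Z}[\mathbf{x}_n]$ is a UFD and each Vandermonde is primitive.) The quotient $Q_T^S$ is then homogeneous of degree $\operatorname{cc}(S)-\operatorname{cc}(S^0)\ge 0$ because both numerator and denominator are homogeneous by Proposition \ref{Spechtpols}.

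For the $\tilde C(T)$-invariance of the quotient: both $F_T^S$ and $F_T^{S^0}$ transform under $\tilde C(T)$ via the single character $\widetilde{\operatorname{sgn}}$ by Proposition \ref{Spechtpols}, so for any $g\in\tilde C(T)$ we have $g\cdot(F_T^S/F_T^{S^0}) = (\widetilde{\operatorname{sgn}}(g)F_T^S)/(\widetilde{\operatorname{sgn}}(g)F_T^{S^0}) = F_T^S/F_T^{S^0}$; the two copies of $\widetilde{\operatorname{sgn}}(g)=\pm1$ cancel. The identical argument with $C$ and $C^0$ in place of $S$ and $S^0$ gives the statement for $Q_{C,T}$, using Remark \ref{samepols} to identify the $C$-version of everything with the $S$-version and Definition \ref{clasSpecht} for the equality $C^0=\operatorname{ct}(S^0)$.

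The main obstacle I anticipate is making the "factors through the columns" step rigorous at the level of the actual polynomial rather than just its leading term: $\varepsilon_T p_T^S$ is not literally a product of column-Vandermondes times something (the row-symmetrization $R(T)$ mixes the columns), so one should phrase divisibility purely via the antisymmetry/vanishing criterion — $F_T^S$ is antisymmetric in each column's variables because $C_j\le C(T)$ — rather than via an explicit factorization. This is clean for the divisibility of $F_T^S$ by each single Vandermonde; the only remaining care is to confirm that $F_T^{S^0}$ really equals (up to sign) the full product of column-Vandermondes and is not zero, which follows from Lemma \ref{operRT} and Proposition \ref{Spechtpols} (the monomial $p_T^{S^0}$ survives with coefficient $1$), together with the classical fact that the Specht polynomial of shape $\lambda$ is exactly this product of Vandermondes.
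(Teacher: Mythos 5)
Your proposal is correct, and its key step differs from the paper's. You and the paper agree on the two bookends: identifying $F_{T}^{S^{0}}=F_{C^{0},T}$ with the product over columns of the Vandermonde factors $\prod_{i<j\in A_{k}}(x_{j}-x_{i})$ (using that $C^{0}$ is $R(T)$-invariant, so only the column antisymmetrization acts), and deducing $\tilde{C}(T)$-invariance of the quotient from the fact that numerator and denominator both transform by $\widetilde{\operatorname{sgn}}$ (Proposition \ref{Spechtpols}). Where you diverge is the divisibility-with-integrality step. The paper argues term by term: each monomial $\tau p_{T}^{S}$ in the $R(T)$-orbit is either killed by $\sum_{\sigma\in C(T)}\operatorname{sgn}(\sigma)\sigma$ (repeated exponents in some column) or is sent to a product over columns of alternants, each of which factors via the bialternant formula as the column Vandermonde times a sign times a Schur polynomial in $\mathbb{Z}[\mathbf{x}_{n}]$; summing gives $Q_{T}^{S}\in\mathbb{Z}[\mathbf{x}_{n}]$ directly and even exhibits it as an integer combination of products of Schur polynomials. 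You instead argue globally: since $C(T)\supseteq C_{j}$ acts on $F_{T}^{S}$ by sign, $F_{T}^{S}$ vanishes on each diagonal $x_{i_{p}}=x_{i_{q}}$ within a column, so each linear factor divides it, the factors are pairwise non-associate irreducibles so their product (which is $\pm F_{T}^{S^{0}}$) divides it in $\mathbb{Q}[\mathbf{x}_{n}]$, and Gauss's lemma (primitivity of the Vandermonde product, $\mathbb{Z}[\mathbf{x}_{n}]$ a UFD) upgrades this to divisibility in $\mathbb{Z}[\mathbf{x}_{n}]$; homogeneity follows since both polynomials are homogeneous. Your route is more elementary and sidesteps the term-by-term analysis entirely, at the cost of losing the extra structural information the paper's proof yields (the explicit Schur-polynomial description of the quotient); both are complete proofs of the corollary as stated.
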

The degree of $Q_{T}^{S}$ from Corollary \ref{quotSpecht} is $\operatorname{cc}(S)-\operatorname{cc}(S^{0})$, while for $Q_{C,T}$ it is the sum of the entries of $C$ minus those of $C^{0}$. When $C=\operatorname{ct}(S)$ we get $Q_{C,T}=Q_{T}^{S}$, and if $T$ and $S$ are given as $P(w)$ and $\tilde{Q}(w)$ respectively for some $w \in S_{n}$ as in Definitions \ref{modRSK} and \ref{Spechtdef}, then we may denote the quotient $Q_{T}^{S}$ from Corollary \ref{quotSpecht} also by $Q_{w}$. This will be useful in Remark \ref{stabquot} below.

\begin{proof}
Recalling the form of $C^{0}=\operatorname{ct}(S^{0})$ from Definition \ref{clasSpecht}, it is clearly invariant under the action of $R(T)$, so that $s_{T}^{S}=|R(T)|$ and the polynomial $F_{T}^{S^{0}}=F_{C^{0},T}$ is just the image of $p_{T}^{S^{0}}=p_{C^{0},T}$ under $\sum_{\sigma \in C(T)}\operatorname{sgn}(\sigma)\sigma$. But as the variable $x_{i}$ shows up in this monomial with the exponent $R_{T}(i)-1$, we deduce that if we denote by $A_{k}$ the set of entries $i$ with $C_{T}(i)=k$ then the polynomial from that definition becomes the product $\prod_{k=1}^{\lambda_{1}}\prod_{i<j \in A_{k}}(x_{j}-x_{i})$ (this product assumes that the columns of $T$ are ordered increasingly, which is always the case when $T$ is standard, but we keep this assumption for ease of notation, and the argument works in the same manner in general).

Turning our attention to $F_{T}^{S}$ or $F_{C,T}$, any monomial $\tau p_{T}^{S}$ or $P_{C,T}$ showing up in the proof of Proposition \ref{Spechtpols} either contains two different elements in $A_{k}$ for some $k$ with the same exponent, or it does not. The first terms are annihilated by the operator $\sum_{\sigma \in C(T)}\operatorname{sgn}(\sigma)\sigma$, and this operator takes any term of the second type to the product over $k$ of a determinant involving the variables $\{x_{i}\;|\;i \in A_{k}\}$ with different $A_{k}$ exponents. But this determinant is $\prod_{i<j \in A_{k}}(x_{j}-x_{i})$ times a sign (arising from the fact that the exponents need not match the indices in increasing order) times Schur polynomial, and the latter is in $\mathbb{Z}[\mathbf{x}_{n}]$. As the first multiplier combine to $F_{T}^{S^{0}}=F_{C^{0},T}$, and the second one produces a polynomial over $\mathbb{Z}$, the second product, which becomes $Q_{T}^{S}$ or $Q_{C,T}$ by definition, is a polynomial in $\mathbb{Z}[\mathbf{x}_{n}]$ (whose homogeneity and degree follows from those of the other two multipliers).

Finally, we recall from Proposition \ref{Spechtpols} that the group $\tilde{C}(T)$ acts on both $F_{T}^{S}$ and $F_{T}^{S^{0}}$, or $F_{C,T}$ and $F_{C^{0},T}$, via the same character $\operatorname{sgn}$ from Lemma \ref{tildeCT}. Hence the quotient is invariant under all of $\tilde{C}(T)$. This proves the corollary.
\end{proof}

\begin{ex}
As the first case in Example \ref{FTScoeff2} is an ordinary Specht polynomial, the corresponding quotient from Corollary \ref{quotSpecht} is just the constant 1 there. In the second case from that example, the polynomial from Definition \ref{clasSpecht} is $(x_{2}-x_{1})(x_{4}-x_{3})$, and the quotient $Q_{T}^{S}=Q_{C,T}$ equals \[x_{5}^{2}(x_{1}x_{2}+x_{3}x_{4})-x_{5}[x_{1}x_{2}(x_{3}+x_{4})+(x_{1}+x_{2})x_{3}x_{4}]-2x_{1}x_{2}x_{3}x_{4},\] which is invariant under interchanging $x_{1}$ with $x_{2}$, under interchanging $x_{3}$ with $x_{4}$, and interchanging $x_{1}$ and $x_{2}$ with $x_{3}$ and $x_{4}$, namely under all of $\tilde{C}(T)$. \label{quotex}
\end{ex}

\medskip

It follows from Definition \ref{Spechtdef} and Proposition \ref{Spechtpols} that for $w \in S_{n}$, the higher Specht polynomial $F_{w}$ is homogeneous of degree $\operatorname{cc}\big(\tilde{Q}(w)\big)$, and this parameter equals $\sum_{i\in\operatorname{Dsl}(w)}i$ and remains the same when we view $w$ as an element of $S_{n+1}$ (see Proposition \ref{evkeepsdeg}). In order to relate $F_{w}$ for $w \in S_{n}$ and $w \in S_{n+1}$, we now introduce some notation.
\begin{defn}
Let $n\geq1$ be an integer.
\begin{enumerate}[$(i)$]
\item If $w \in S_{n}$, let $w_{+} \in S_{n+1}$ be the element acting on $\mathbb{N}_{n}$ like $w$ and fixing $n+1$.
\item For a partition $\lambda \vdash n$, let $\lambda_{+}$ be the partition of $n+1$ obtained by adding a box to the first row of $\lambda$.
\item If $S$ is any tableau of shape $\lambda$ and content $\mathbb{N}_{n}$, then we denote by $\iota S$ the tableau of shape $\lambda_{+}$ that is obtained by filling $\lambda$ according to $S$ and putting $n+1$ in the new box of $\lambda_{+}$.
\item When $S\in\operatorname{SYT}(\lambda)$, we set $\tilde{\iota}S:=\operatorname{ev}\iota(\operatorname{ev}S)$.
\item Given a generalized cocharge tableau $C$ of shape $\lambda$ and type $J\subseteq\mathbb{N}_{n-1}$, we write $J_{+}:=\{j+1\;|\;j \in J\}$ and set $\hat{\iota}C:=\operatorname{ct}_{J_{+}}\big(\tilde{\iota}\operatorname{ct}_{J}^{-1}(C)\big)$.
\end{enumerate} \label{iotadef}
\end{defn}
The permutation $w_{+}$ from Definition \ref{iotadef} is the one denoted by $w\times1$ in \cite{[PR]} and others.

\begin{rmk}
It is clear that if $S\in\operatorname{SYT}(\lambda)$ then $\iota S\in\operatorname{SYT}(\lambda_{+})$, so that we can apply $\operatorname{ev}$ in part $(iv)$ of Definition \ref{iotadef}. In this case we also have the equality $\operatorname{Dsi}(\iota S)=\operatorname{Dsi}(S)$, so that the proof of Proposition \ref{evkeepsdeg} implies that $\operatorname{Dsi}(\tilde{\iota}S)=\{i+1\;|\;i\in\operatorname{Dsi}(S)\}$. Hence in part $(v)$ of that definition, we have $\operatorname{Dsi}(S) \subseteq J$ for $S:=\operatorname{ct}_{J}^{-1}(C)$ by Lemma \ref{ctJinv}, so that $\operatorname{Dsi}(\tilde{\iota}S) \subseteq J_{+}$ by the latter equality and $\hat{\iota}C$ is also well-defined. \label{iotawelldef}
\end{rmk}

The properties of $\tilde{\iota}$ and $\hat{\iota}$ from Definition \ref{iotadef} are as follows.
\begin{lem}
Let $S\in\operatorname{SYT}(\lambda)$ and a generalized cocharge tableau $C$ be given.
\begin{enumerate}[$(i)$]
\item Set $S_{+}$ to be the tableau obtained by adding 1 to each of the entries of $S$, add the new box for $\lambda_{+}$, push all the entries of the first row of $S_{+}$ one box to the right, and put 1 in the remaining upper left box. The resulting tableau is $\tilde{\iota}S\in\operatorname{SYT}(\lambda_{+})$.
\item We have the equality $\operatorname{Dsi}^{c}(\tilde{\iota}S)=\operatorname{Dsi}^{c}(S)$ in this case.
\item Consider the tableau $C$, add the box for $\lambda_{+}$, shove all the entries of the first row to the right, and complete with 0 the upper left corner. This produces $\hat{\iota}C$.
\item If $C\in\operatorname{CCT}(\lambda)$ then $\hat{\iota}C$ is in $\operatorname{CCT}(\lambda_{+})$ and can be expressed also as $\operatorname{ct}\big(\tilde{\iota}\operatorname{ct}^{-1}(C)\big)$, and we have $\operatorname{Dsp}^{c}(\hat{\iota}C)=\operatorname{Dsp}^{c}(C)$ and $\Sigma(\hat{\iota}C)=\Sigma(C)$.
\item The data from part $(iv)$ of Lemma \ref{Dspc} is the same for $\hat{\iota}C$ from Definitions \ref{iotadef} and \ref{Dsic}, except for replacing $n$ by $n+1$.
\end{enumerate} \label{iotaexp}
\end{lem}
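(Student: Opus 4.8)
I would prove part $(i)$ first --- the explicit description of $\tilde\iota S$ --- and then obtain parts $(ii)$--$(v)$ from it together with the combinatorics of $\operatorname{ct}_J$ recorded earlier. For $(i)$, I would choose any $w\in S_n$ with $P(w)=S$ (a reading word of $S$ works). By the computation in the proof of Proposition \ref{evkeepsdeg} (see also Theorem A1.4.3 of \cite{[St2]}), $\operatorname{ev}S=P(w_0ww_0)$ with $w_0$ the longest element of $S_n$; set $v:=w_0ww_0$. Since row-inserting the maximal letter $n+1$ simply appends it to the first row, $\iota(\operatorname{ev}S)=P(v_+)$ with $v_+\in S_{n+1}$ as in Definition \ref{iotadef}$(i)$; applying the same identity inside $S_{n+1}$, with $\tilde w_0$ its longest element, gives $\tilde\iota S=\operatorname{ev}\iota\operatorname{ev}S=P(\tilde w_0v_+\tilde w_0)$. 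A short one-line-notation computation, using $w_0vw_0=w$ (immediate since $v=w_0ww_0$ and $w_0^2=1$), then shows $\tilde w_0v_+\tilde w_0=(1,w_1+1,\dots,w_n+1)$. Inserting this word, the leading $1$ lands in the corner box $(1,1)$ and is never bumped thereafter (every later letter is larger), so the boxes with entries $\ge 2$ evolve exactly as in the insertion of $(w_1+1,\dots,w_n+1)$ --- whose insertion tableau is $S$ with each entry raised by $1$, since RSK only sees relative order. Hence $\tilde\iota S$ is $S$ with all entries increased by $1$, its first row pushed one box to the right, and $1$ in the vacated corner: this is exactly $S_+$, and in particular $\tilde\iota S\in\operatorname{SYT}(\lambda_+)$.

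For part $(ii)$, I would invoke Remark \ref{iotawelldef}, which gives $\operatorname{Dsi}(\tilde\iota S)=\{i+1\mid i\in\operatorname{Dsi}(S)\}$ (also visible from the form of $S_+$, since each entry $j\ge 2$ of $S_+$ occupies the box of $S$ holding $j-1$ up to the first-row shift, while $1$ is always an ascending index); then $\operatorname{Dsi}^c(\tilde\iota S)=\{(n+1)-j\mid j\in\operatorname{Dsi}(\tilde\iota S)\}=\{n-i\mid i\in\operatorname{Dsi}(S)\}=\operatorname{Dsi}^c(S)$ by Definition \ref{Dsic}$(i)$. For part $(iii)$, put $S:=\operatorname{ct}_J^{-1}(C)$, so $\operatorname{Dsi}(S)\subseteq J$ and $C=\operatorname{ct}_J(S)$; by $(i)$ we have $\tilde\iota S=S_+$, and $\operatorname{ct}_{J_+}(S_+)$ is defined by Remark \ref{iotawelldef}. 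Recalling from the proof of Lemma \ref{ctJinv} that the entry of $\operatorname{ct}_J(S)$ in the box holding $p$ is $|\{j\in J\mid j<p\}|$, I would apply this both to $C$ and to $\hat\iota C=\operatorname{ct}_{J_+}(S_+)$ (with $J_+=\{j+1\mid j\in J\}$) to see that the corner box $(1,1)$ of $S_+$ gets $0$, while for $p\ge 2$ the box of $S_+$ holding $p$ gets the value that $C$ places in the box of $S$ holding $p-1$. Since that box of $S$ is the same as the box of $S_+$ holding $p$ except that first-row boxes move one step to the right, this is precisely the statement that $\hat\iota C$ is obtained from $C$ by pushing the first row one box right and inserting $0$ in the corner.

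Parts $(iv)$ and $(v)$ should then be bookkeeping. If $C\in\operatorname{CCT}(\lambda)$, its type is $J=\operatorname{Dsi}(S)$ for $S=\operatorname{ct}^{-1}(C)$, so $J_+=\operatorname{Dsi}(\tilde\iota S)$ by $(ii)$, whence $\hat\iota C=\operatorname{ct}_{J_+}(\tilde\iota S)=\operatorname{ct}(\tilde\iota S)=\operatorname{ct}(\tilde\iota\operatorname{ct}^{-1}(C))$, which lies in $\operatorname{CCT}(\lambda_+)$ because $\operatorname{ct}$ always does. The type of $\hat\iota C$ being $J_+$, we get $\operatorname{Dsp}^c(\hat\iota C)=\{(n+1)-(i+1)\mid i\in J\}=\{n-i\mid i\in J\}=\operatorname{Dsp}^c(C)$, and then $\Sigma(\hat\iota C)=\Sigma(C)$ by Lemma \ref{Dspc}$(iii)$ (or directly from $(iii)$, since inserting a $0$ into a row leaves the entry sum unchanged). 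Finally, $(iii)$ shows $\hat\iota C$ has the same rows below the first as $C$, the same multiplicities of the positive values $1,\dots,k-1$ in its first row (only the multiplicity of $0$ grows by one), and a type of the same size $k-1$; hence the data of Lemma \ref{Dspc}$(iv)$ is identical for $C$ and $\hat\iota C$ apart from replacing $n$ by $n+1$. I expect the only non-formal step to be part $(i)$ --- threading the two evacuations through RSK and identifying $P(1,w_1+1,\dots,w_n+1)$ with $S_+$; once that explicit form is established, $(ii)$--$(v)$ reduce to the bookkeeping above.
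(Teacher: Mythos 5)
Your proposal is correct, and parts $(ii)$--$(v)$ run along essentially the same lines as the paper (Remark \ref{iotawelldef} for the shift of $\operatorname{Dsi}$, the counting description of $\operatorname{ct}_{J}$ from the proof of Lemma \ref{ctJinv} for part $(iii)$, and Lemma \ref{Dspc} for the entry sums and the data in part $(v)$); only your citation of part $(ii)$ for $\operatorname{Dsi}(\tilde{\iota}S)=J_{+}$ is a hair loose, since $(ii)$ is phrased for $\operatorname{Dsi}^{c}$, but taking complements in $\mathbb{N}_{n}$ recovers exactly what you need, and calling the final tableau ``$S_{+}$'' clashes mildly with the statement's use of that symbol for the intermediate tableau. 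The genuine difference is in part $(i)$: the paper stays at the level of tableaux, first checking directly that the asserted filling is standard of shape $\lambda_{+}$, then using that $\operatorname{ev}$ is an involution to reduce the claim to $\operatorname{ev}\tilde{S}=\iota\operatorname{ev}S$ and verifying this by tracking the first evacuation path, which slides along the whole first row and leaves behind the shifted copy of $S$. You instead transport everything to permutations: pick $w$ with $P(w)=S$, apply the identity $P(w_{0}ww_{0})=\operatorname{ev}P(w)$ (already recorded in the proof of Proposition \ref{evkeepsdeg}) once in $S_{n}$ and once in $S_{n+1}$, compute that the relevant conjugate is the word $(1,w_{1}+1,\dots,w_{n}+1)$, and then analyze its row insertion, where the leading $1$ is never bumped and does not disturb the bumping of the larger letters. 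Your route buys standardness for free (the result is a $P$-tableau) and replaces the evacuation-path analysis by an elementary bumping argument, at the cost of invoking the RSK conjugation identity; the paper's route is self-contained at the tableau level, and its explicit description of the first evacuation path is an ingredient the paper reuses later (e.g. in the proof of Lemma \ref{Extstab}), which is presumably why it is argued that way there. Either argument establishes part $(i)$, and the rest of your reduction is sound.
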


\begin{proof}
Let $\tilde{S}$ be the asserted formula for $\tilde{\iota}S$ in part $(i)$, and we write $S_{i,j}$ and $\tilde{S}_{i,j}$ for the value showing up in the box $(i,j)$ of $S$ or of $\tilde{S}$. We therefore get $\tilde{S}_{i,j}=S_{i,j}+1$ wherever $i>1$, and $\tilde{S}_{1,j}=S_{1,j-1}+1$ for any $j>1$. Recalling that $S$ is standard, we deduce the standard condition on the rows below the first one, that on the upper left box and its neighbors follows from the fact that $\tilde{S}_{1,1}=1$, the first row of $\tilde{S}$ is also strictly increasing, and we also have $\tilde{S}_{1,j}<\tilde{S}_{1,j+1}<\tilde{S}_{2,j}$. In particular we get that $\tilde{S}\in\operatorname{SYT}(\lambda_{+})$.

Now, Proposition A1.4.2 of \cite{[St2]} that the evacuation process is an involution on standard Young tableaux (this can also be obtained via proof of Proposition \ref{evkeepsdeg}, since $w_{0}$ has order 2 in $S_{n}$). Hence part $(i)$ is equivalent to the equality $\operatorname{ev}\tilde{S}=\iota\operatorname{ev}S$, which we now prove.

Indeed, we saw in the first paragraph that $\tilde{S}_{1,j}<\tilde{S}_{1,j+1}<\tilde{S}_{2,j}$, so that in the first step of the evacuation process we only move along the first row. This puts $n+1$ in the designated box, and that step we get the tableau $S_{+}$, on which the evacuation process behaves in the same manner as on $S$. In total $\operatorname{ev}\tilde{S}$ indeed produces $\iota\operatorname{ev}S$, and part $(i)$ is thus established.

We now recall the equality $\operatorname{Dsi}(\tilde{\iota}S)=\{i+1\;|\;i\in\operatorname{Dsi}(S)\}$ from Remark \ref{iotawelldef}. Since $\operatorname{sh}(S)=\lambda \vdash n$ and $\operatorname{sh}(\tilde{\iota}S)=\lambda_{+} \vdash n+1$, applying Definition \ref{Dsic} yields the equality from part $(ii)$.

Next, if $J$ is the type of $C$ then we write $C=\operatorname{ct}_{J}(S)$ for some $S\in\operatorname{SYT}(\lambda)$ such that $J$ contains $\operatorname{Dsi}(S)$, and we need to show that the construction of $\hat{\iota}C$ indeed produces $\operatorname{ct}_{J_{+}}(\tilde{\iota}S)$. One way to see that is apply $\operatorname{ct}_{J_{+}}^{-1}$, which is just the standardization, to the asserted formula for $\hat{\iota}C$, and observe that it indeed produces $\tilde{\iota}S$.

Equivalently, we saw in the proof of Lemma \ref{ctJinv} that in $\operatorname{ct}_{J}(S)$, every entry $p$ is replaced by the number of elements of $J$ that are smaller than $p$. But then this number is the number of elements of $J_{+}$ that are smaller than $p+1$, which in $\tilde{\iota}S$ lies in $v_{S}(p)$ in case $R_{S}(p)>1$, and when $R_{S}(p)=1$ it is in the box $\big(1,C_{S}(p)+1\big)$ in that tableau. Since the upper left entry in any generalized cocharge tableau is 0 (in correspondence with $(1,1)$ being $v_{\tilde{\iota}S}(1)$, and $J_{+}$ contains no elements that are smaller than 1), the asserted tableau for $\hat{\iota}C$ in part $(iii)$ is indeed obtained as $\operatorname{ct}_{J_{+}}(\tilde{\iota}S)$ as desired.

Finally, if $C\in\operatorname{CCT}(\lambda)$ then $S$ from the previous two paragraphs satisfies $\operatorname{Dsi}(S)=J$, so that $\operatorname{Dsi}(\tilde{\iota}S)=J_{+}$ via Remark \ref{iotawelldef} and $\hat{\iota}C$ is indeed $\operatorname{ct}(\tilde{\iota}S)$ and lies in $\operatorname{CCT}(\lambda_{+})$ (the $\operatorname{CCT}$ condition can also be observed by the fact that if $C$ satisfies the condition from Lemma \ref{cctconst}, then it is easily verified that so does $\hat{\iota}C$). Since Definition \ref{Dsic} implies that $\operatorname{Dsp}^{c}(C)=\operatorname{Dsi}^{c}(S)$ and $\operatorname{Dsp}^{c}(\hat{\iota}C)=\operatorname{Dsi}^{c}(\tilde{\iota}S)$ in this case, part $(ii)$ implies the first equality in part $(iv)$ with part $(iii)$ of Lemma \ref{Dspc} yielding the second one. Part $(v)$ is an immediate consequence of the data from part $(iv)$ of Lemma \ref{Dspc} and the expression from our part $(iii)$. This completes the proof of the lemma.
\end{proof}
Note that as in part $(iv)$ of Lemma \ref{Dspc}, part $(v)$ of Lemma \ref{iotaexp} holds for any generalized cocharge tableau, not necessarily an element of $\operatorname{CCT}(\lambda)$. By taking complements in Remark \ref{iotawelldef} and Lemma \ref{iotaexp}, we get the equalities $\operatorname{Asi}(\iota S)=\operatorname{Asi}(S)\cup\{n\}$, $\operatorname{Asi}(\tilde{\iota}S)=\operatorname{Asi}(S)\cup\{1\}$, $\operatorname{Asi}^{c}(\tilde{\iota}S)=\operatorname{Asi}^{c}(S)\cup\{n\}$, and $\operatorname{Asp}^{c}(\hat{\iota}C)=\operatorname{Asp}^{c}(C)\cup\{n\}$.

\begin{ex}
Take $w$ as in Example \ref{exev}, with the corresponding tableaux $P(w)$, $\tilde{Q}(w)$, and its $\operatorname{ct}$-image as given there. Then the tableaux $\iota P(w)$, $\tilde{\iota}\tilde{Q}(w)$, and $\hat{\iota}\operatorname{ct}(S)$ are \[\begin{ytableau} 1 & 4 & 6 & 8 & 9 \\ 2 & 5 & 7 \\ 3 \end{ytableau},\ \ \begin{ytableau} 1 & 2 & 4 & 5 & 9 \\ 3 & 6 & 7 \\ 8 \end{ytableau},\mathrm{\ \ and\ \ }\begin{ytableau} 0 & 0 & 1 & 1 & 3 \\ 1 & 2 & 2 \\ 3 \end{ytableau}\] respectively. Recalling that $\tilde{Q}(w)=\operatorname{ev}(S)$ for $S$ from Example \ref{exct}, with $\operatorname{Dsi}(S)=\{2,4,7\}$, we get that this set also equals $\operatorname{Dsi}^{c}\big(\tilde{Q}(w)\big)$ and the $\operatorname{Dsp}^{c}$-set its $\operatorname{ct}$-image. If $J:=\operatorname{Dsi}\big(\tilde{Q}(w)\big)=\{1,4,6\}$, then $\operatorname{Dsi}\big(\tilde{\iota}\tilde{Q}(w)\big)=\{2,5,7\}=J_{+}$, and $\operatorname{Dsi}^{c}\big(\tilde{\iota}\tilde{Q}(w)\big)$, as well as the $\operatorname{Dsp}^{c}$-set of the rightmost tableau, again equal $\{2,4,7\}$, as before the application of $\tilde{\iota}$ or $\hat{\iota}$. \label{iotatabs}
\end{ex}
Note that the rows of the cocharge tableau from Example \ref{exev} and of its $\hat{\iota}$-image in Example \ref{iotatabs} are the same except for the first one, and by knowing the type (or the $\operatorname{Dsi}^{c}$-set), in both of them we must have two 1's and one 3 in the first row. The increasing of $n$ from 8 to 9 and the addition of 0 exemplifies part $(v)$ of Lemma \ref{iotaexp}.

\medskip

We now obtain the effect of taking a permutation from $S_{n}$ into $S_{n+1}$ on the associated higher Specht polynomial.
\begin{prop}
For $w \in S_{n}$, let $w_{+} \in S_{n+1}$ be as in Definition \ref{iotadef}. Then the higher Specht polynomial $F_{w_{+}}$, which is a scalar multiple of $\varepsilon_{P(w_{+})}p_{P(w_{+})}^{\tilde{Q}(w_{+})}$ via Definition \ref{Spechtdef}, can also be obtained as the same multiple of $\varepsilon_{P(w_{+})}$ acting on the monomial $p_{P(w)}^{\tilde{Q}(w)}$. Moreover, substituting $x_{n+1}=0$ in $F_{w_{+}}$ yields the original higher Specht polynomial $F_{w}$. \label{incnSpecht}
\end{prop}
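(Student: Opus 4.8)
\emph{Proof plan.} The first step is to read off the RSK data of $w_{+}$. Since $w_{+}$ fixes $n+1$, its insertion tableau is built exactly as for $w$ through $n$ steps and then $n+1$, being larger than everything, is appended at the end of the first row; hence $P(w_{+})=\iota P(w)$ and $Q(w_{+})=\iota Q(w)$. As $\operatorname{ev}$ is an involution, $Q(w)=\operatorname{ev}\tilde{Q}(w)$, so $\tilde{Q}(w_{+})=\operatorname{ev}\big(\iota\operatorname{ev}\tilde{Q}(w)\big)=\tilde{\iota}\tilde{Q}(w)$ by Definition \ref{iotadef}$(iv)$, whence $\operatorname{ct}\big(\tilde{Q}(w_{+})\big)=\hat{\iota}\operatorname{ct}\big(\tilde{Q}(w)\big)$ by Lemma \ref{iotaexp}$(iv)$. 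Write $T:=P(w)$ and $D:=\operatorname{ct}\big(\tilde{Q}(w)\big)$, and for a tableau $T'$ of content $\mathbb{N}_{m}$ and a filling $f$ of its shape let $x^{f}_{T'}:=\prod_{v}x_{T'(v)}^{f(v)}$, so that $p_{T'}^{S'}=x^{\operatorname{ct}(S')}_{T'}$. Using the explicit form of $\hat{\iota}$ in Lemma \ref{iotaexp}$(iii)$ (shift the first row one box to the right, put $0$ in the new corner) and that $\iota T$ agrees with $T$ on the boxes of $\lambda$, a direct comparison of exponents shows: in $\mathbb{Q}[\mathbf{x}_{n+1}]$ the monomials $p_{P(w_{+})}^{\tilde{Q}(w_{+})}$ and $p_{P(w)}^{\tilde{Q}(w)}$ (the latter with $x_{n+1}$ to the power $0$) have equal exponents on the variables indexed by boxes of $\lambda$ below the first row, and on the variables indexed by the first-row entries of $\iota T$ (the first-row entries of $T$ together with $n+1$) they carry the same multiset of exponents, namely the first row of $D$ with one extra $0$ adjoined. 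Hence these two monomials lie in a single orbit of the row group $R\big(P(w_{+})\big)$.

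Because $\varepsilon_{P(w_{+})}\tau=\varepsilon_{P(w_{+})}$ for every $\tau\in R\big(P(w_{+})\big)$, it follows that $\varepsilon_{P(w_{+})}p_{P(w_{+})}^{\tilde{Q}(w_{+})}=\varepsilon_{P(w_{+})}p_{P(w)}^{\tilde{Q}(w)}$, which, after dividing by $s_{P(w_{+})}^{\tilde{Q}(w_{+})}$, is the first assertion. Let $D^{+}$ be the filling of $\lambda_{+}$ obtained from $D$ by putting $0$ in the new box; then $p_{P(w)}^{\tilde{Q}(w)}=x^{D^{+}}_{P(w_{+})}$, and since $D^{+}$ and $\hat{\iota}D=\operatorname{ct}\big(\tilde{Q}(w_{+})\big)$ have the same row multisets we also have $s_{P(w_{+})}^{\tilde{Q}(w_{+})}=|\operatorname{Stab}_{R(P(w_{+}))}(D^{+})|$, so $F_{w_{+}}=\frac{1}{s_{P(w_{+})}^{\tilde{Q}(w_{+})}}\varepsilon_{P(w_{+})}x^{D^{+}}_{P(w_{+})}$.

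For the substitution I would unwind Definition \ref{Spechtdef} into the expansion: for any tableau $T'$ of content $\mathbb{N}_{m}$ and any filling $D'$ of its shape, with $s':=|\operatorname{Stab}_{R(T')}(D')|$, one has $\frac{1}{s'}\varepsilon_{T'}x^{D'}_{T'}=\sum_{f}\big(\sum_{\sigma\in C(T')}\operatorname{sgn}(\sigma)\sigma\big)x^{f}_{T'}$, where $f$ runs over the distinct fillings obtained from $D'$ by permuting entries within each row and the operator $\sum_{\sigma}\operatorname{sgn}(\sigma)\sigma$ annihilates $x^{f}_{T'}$ unless $f$ has pairwise distinct entries in each column of $T'$ (the normalization $s'$ is exactly the multiplicity with which $\sum_{\tau\in R(T')}\tau$ produces each monomial). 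Apply this with $(T',D')=(T,D)$ and with $(T',D')=(P(w_{+}),D^{+})$; then set $x_{n+1}=0$. Since $C\big(P(w_{+})\big)=C(T)$ fixes $n+1$, the column antisymmetrizer commutes with the substitution, and $x^{F}_{P(w_{+})}|_{x_{n+1}=0}$ is nonzero exactly when the filling $F$ of $\lambda_{+}$ has value $0$ in the new box $(1,\lambda_{1}+1)$, in which case it equals $x^{F|_{\lambda}}_{T}$. Restriction $F\mapsto F|_{\lambda}$ is a bijection from the row-rearrangements of $D^{+}$ that are column-distinct for $P(w_{+})$ and vanish on the new box onto the row-rearrangements of $D$ that are column-distinct for $T$ (the new box is a free length-one column and a free row slot), and under it $\big(\sum_{\sigma\in C(P(w_{+}))}\operatorname{sgn}(\sigma)\sigma\big)x^{F}_{P(w_{+})}|_{x_{n+1}=0}=\big(\sum_{\sigma\in C(T)}\operatorname{sgn}(\sigma)\sigma\big)x^{F|_{\lambda}}_{T}$. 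Matching the two expansions term by term through this bijection yields $F_{w_{+}}|_{x_{n+1}=0}=F_{w}$.

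The step needing the most care is the bookkeeping with the normalizers: one must check that in both the shape-$\lambda$ and the shape-$\lambda_{+}$ expansions the relevant $s'$ really is the order of the row-orbit stabilizer of the input filling, which is precisely where it matters that $\hat{\iota}$ alters only the first row, and only by inserting a zero. Once that is in hand the bijection and its compatibility with the column antisymmetrizers are immediate.
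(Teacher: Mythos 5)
Your proposal is correct and follows essentially the same route as the paper's proof: you establish $P(w_{+})=\iota P(w)$ and $\tilde{Q}(w_{+})=\tilde{\iota}\tilde{Q}(w)$, deduce the first assertion from the fact that $p_{P(w_{+})}^{\tilde{Q}(w_{+})}$ and $p_{P(w)}^{\tilde{Q}(w)}$ lie in one $R\big(P(w_{+})\big)$-orbit (the paper exhibits the explicit first-row shift, you compare row multisets, which is the same observation), and you prove the substitution claim by splitting the row-orbit expansion according to whether the new box carries $0$, using that $C\big(P(w_{+})\big)$ fixes $n+1$ — exactly the paper's argument, with your stabilizer bookkeeping playing the role of the paper's Lemma \ref{operRT}.
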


\begin{proof}
As the RSK algorithm is obtained by going over $w_{+}$ from the beginning to the end, we get that $P(w_{+})=\iota P(w)$ and $Q(w_{+})=\iota Q(w)$, with the notation from Definition \ref{iotadef}. It follows that $\tilde{Q}(w_{+})=\tilde{\iota}\tilde{Q}(w)$ as well, and we denote the common shape of $P(w)$, $Q(w)$, and $\tilde{Q}(w)$ by $\lambda$, so that $P(w_{+})$, $Q(w_{+})$, and $\tilde{Q}(w_{+})$ are all of shape $\lambda_{+}$.

This relation between $T:=P(w)$ and $\iota T=P(w_{+})$ shows that the group $C(\iota T)$ is the subgroup $\{u_{+}\;|\;u \in C(T)\}$ via Definition \ref{iotadef}, and the proof of Proposition \ref{Spechtpols} implies that $F_{w_{+}}$ is obtained by the element $\sum_{\sigma \in C(\iota T)}\operatorname{sgn}(\sigma)\sigma$, on the sum of all the monomials in the $R(\iota T)$-orbit of $p_{\iota T}^{\tilde{\iota}S}$, where $S:=\tilde{Q}(w)$ and thus $\tilde{Q}(w_{+})=\tilde{\iota}S$.

Consider now the element of $R(\iota T)$ that takes each element of the first row to the one preceding it (and 1 to the last one in that row) and leaves the other rows invariant. Recall from Lemma \ref{iotaexp} that $\operatorname{ct}(\tilde{\iota}S)$ can be expressed as $\hat{\iota}C$ for $C:=\operatorname{ct}(S)$, and note that the explicit formula for the latter tableau there implies that our element of $R(\iota T)$ takes $p_{\iota T}^{\tilde{\iota}S}$ to $p_{T}^{S}$. These polynomials are thus in the same orbit of $R(\iota T)$, from which the first assertion follows.

Now note that if a tableau in the $R(\iota T)$-orbit of $\hat{\iota}C$ contains 0 in the box $\lambda_{+}\setminus\lambda$ then it is obtained from a tableau in the $R(T)$-orbit of $C$ by adding the said 0, and otherwise the corresponding monomial is divisible by $x_{n+1}$. Moreover, our expression for $C(\iota T)$ as the images of $C(T)$ implies that they all fix $n+1$.

Therefore, applying $\sum_{\sigma \in C(\tilde{T})}\operatorname{sgn}(\sigma)\sigma$ to the sum of monomials not involving $x_{n+1}$ yields $F_{w}$ (since the operator is the same as $\sum_{\sigma \in C(T)}\operatorname{sgn}(\sigma)\sigma$), and when we apply it to a monomial that is divisible by $x_{n+1}$ then the result is also divisible by $x_{n+1}$. As this application produces $F_{w_{+}}$, and substituting $x_{n+1}=0$ leaves only the terms of the first type, the second assertion is established as well. This proves the proposition.
\end{proof}

Proposition \ref{incnSpecht} is another indication that our normalization and convention, with the scalar multiples, is the most natural one for higher Specht polynomials associated with permutations. Indeed, it is easy to verify that $s_{\tilde{T}}^{\tilde{S}}$ is a non-trivial multiple of $s_{T}^{S}$ (by the number of zeros at the first row of $\operatorname{ct}(\tilde{S})=\hat{\iota}\operatorname{ct}(S)$, which is at least 2 by the expressions from Lemma \ref{iotaexp}), and the division by these factors in Definition \ref{Spechtdef} is what makes Proposition \ref{incnSpecht} work as stated. Moreover, the fact that $F_{w}$ and $F_{w_{+}}$ in that proposition clearly have the same degree is related to the preservation of the parameter from Proposition \ref{evkeepsdeg}.
\begin{ex}
In the first case from Example \ref{FTScoeff2}, we have $\lambda_{+}=41\vdash5$, $w_{+}=21345 \in S_{5}$, and \[\iota T:=\begin{ytableau} 1 & 3 & 4 & 5 \\ 2 \end{ytableau},\ \tilde{\iota}S:=\begin{ytableau} 1 & 2 & 3 & 4 \\ 5 \end{ytableau},\mathrm{\ and\ }\hat{\iota}C:=\begin{ytableau} 0 & 0 & 0 & 0 \\ 1 \end{ytableau}.\] The monomial is again $x_{2}$, and the higher Specht polynomial, which we can write as $F_{\iota T}^{\tilde{\iota}S}=F_{\hat{\iota}C,\iota T}=F_{w_{+}}$ and it is a Specht polynomial, coincides with $F_{T}^{S}=F_{C,T}=F_{w}=x_{2}-x_{1}$. In fact, the original elements are also obtained via this construction from the element $213 \in S_{3}$ and tableaux of shape $21\vdash3$. In the second case, with $w_{+}=241536 \in S_{6}$ and $\lambda_{+}=42\vdash6$, we get the tableaux \[\iota T:=\begin{ytableau} 1 & 3 & 5 & 6 \\ 2 & 4 \end{ytableau},\ \ \tilde{\iota}S:=\begin{ytableau} 1 & 2 & 4 & 6 \\ 3 & 5 \end{ytableau},\mathrm{\ and\ }\hat{\iota}C:=\begin{ytableau} 0 & 0 & 1 & 2 \\ 1 & 2 \end{ytableau}.\] Now $s_{\iota T}^{\tilde{\iota}S}=s_{\hat{\iota}C,\iota T}=2$, and $F_{\iota T}^{\tilde{\iota}S}=F_{\hat{\iota}C,\iota T}=F_{w_{+}}$ is the following expression, given in terms of $F_{T}^{S}=F_{C,T}=F_{w}$: Replace the multipliers $x_{5}^{2}$ and $x_{5}$ by $x_{5}^{2}+x_{6}^{2}$ and $x_{5}+x_{6}$, leave the last term as it is, and add the polynomial \[(x_{5}^{2}x_{6}+x_{5}x_{6}^{2})[x_{2}x_{4}^{2}-x_{1}x_{4}^{2}-x_{2}x_{3}^{2}+x_{1}x_{3}^{2}+x_{2}^{2}x_{4}-x_{1}^{2}x_{4}-x_{2}^{2}x_{3}+x_{1}^{2}x_{3}].\] \label{Fwplus}
\end{ex}

\medskip

The compatibility from Proposition \ref{incnSpecht} yields the following construction. We also saw in the proof of Proposition \ref{incnSpecht} that $\operatorname{Dsl}(w)=\operatorname{Dsl}(w_{+})$ in the notation from that lemma, meaning that the set $\operatorname{Dsl}(\hat{w})$ is well-defined and finite for every element $\hat{w}$ of the direct limit $S_{\infty}:=\bigcup_{n=1}^{\infty}S_{n}$ (where each $S_{n}$ is contained in $S_{n+1}$ via $w \mapsto w_{+}$, and we consider the permutations in all of them as acting on the full set $\mathbb{N}$, fixing all but finitely many integers).
\begin{thm}
For any $\hat{w} \in S_{\infty}$ there is a power series $F_{\hat{w}}$ in infinitely many variables, with the following properties:
\begin{enumerate}[$(i)$]
\item It is homogeneous of degree $d=\sum_{i\in\operatorname{Dsl}(\hat{w})}i$.
\item If $n$ is large enough and $w_{n} \in S_{n}$ whose image in $S_{\infty}$ is $\hat{w}$, then substituting $x_{m}=0$ for all $m>n$ in $F_{\hat{w}}$ yields the higher Specht polynomial $F_{w_{n}}$ from Definition \ref{Spechtdef}.
\item If $T_{n}:=P(w_{n})$ for such $n$, then the image in $S_{\infty}$ of the extended column subgroup $\tilde{C}(T_{n})$ from Lemma \ref{tildeCT} acts on $F_{\hat{w}}$ via the character $\widetilde{\operatorname{sgn}}$ showing up in that lemma.
\item For $n$ as in part $(ii)$, any element of $S_{\infty}$, and even of $S_{\mathbb{N}}$, that fixes all the numbers from $\mathbb{N}_{n}$ leaves $F_{\hat{w}}$ invariant.
\end{enumerate}
Moreover, for large enough $n$, properties $(ii)$ and $(iv)$ determine $F_{\hat{w}}$ from $F_{w_{n}}$. \label{stabSpecht}
\end{thm}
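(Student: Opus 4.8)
The plan is to build $F_{\hat w}$ as the inverse limit of the polynomials $F_{w_n}$ and then transport properties $(i)$--$(iv)$ from the finite level. Fix $n_{0}$ with $\hat w\in S_{n_{0}}$ and, for $n\ge n_{0}$, let $w_{n}\in S_{n}$ be the preimage of $\hat w$, so that $(w_{n})_{+}=w_{n+1}$. Proposition \ref{incnSpecht} says that setting $x_{n+1}=0$ in $F_{w_{n+1}}$ recovers $F_{w_{n}}$; hence for every monomial in countably many variables its coefficient in $F_{w_{n}}$ stabilizes once $n\ge n_{0}$ exceeds every index occurring in that monomial, and I define $F_{\hat w}$ to have those stable values as its coefficients, i.e.\ $F_{\hat w}=\varprojlim_{n}F_{w_{n}}$ along the maps $x_{n+1}\mapsto 0$. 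Property $(ii)$ is then immediate by construction (for $n\ge n_{0}$, which is the meaning of ``large enough''), and property $(i)$ follows because every $F_{w_{n}}$ is homogeneous of degree $\operatorname{cc}\big(\tilde Q(w_{n})\big)=\sum_{i\in\operatorname{Dsl}(w_{n})}i$ by Definition \ref{Spechtdef} with Propositions \ref{Spechtpols} and \ref{evkeepsdeg}, and this number equals the constant $d=\sum_{i\in\operatorname{Dsl}(\hat w)}i$ since $\operatorname{Dsl}$ is unchanged under $w\mapsto w_{+}$.

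For $(iii)$ and $(iv)$ the key point I would isolate first is that the action of $S_{\mathbb N}$ on power series is compatible with the truncations $x_{m}\mapsto 0$ ($m>n$): if $\sigma$ moves only indices $\le N$, then the coefficient of a monomial supported on $x_{1},\dots,x_{m}$ (with $m\ge N$) in $\sigma F_{\hat w}$ can be computed inside $F_{w_{m}}$. So it suffices to prove the corresponding finite statements for all large $m$ and then pass to the limit. For $(iii)$ I would note, using $P(w_{m})=\iota^{\,m-n}P(w_{n})$ from the proof of Proposition \ref{incnSpecht}, that the columns of $T_{m}$ of length greater than $1$ are exactly those of $T_{n}$ (the new boxes forming length-$1$ columns), so that $\tilde C(T_{n})\subseteq\tilde C(T_{m})$ with the characters $\widetilde{\operatorname{sgn}}$ compatible, and then invoke Proposition \ref{Spechtpols} at each level $m\ge n$. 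For $(iv)$ the same description of $T_{m}$ shows that the entries $n+1,\dots,m$ occupy the $m-n$ new length-$1$ columns, so that $S_{\{n+1,\dots,m\}}$ lies inside the column-permuting part of $\tilde C(T_{m})$ from Lemma \ref{tildeCT}, on which $\widetilde{\operatorname{sgn}}$ is trivial; Proposition \ref{Spechtpols} then gives that this subgroup fixes $F_{w_{m}}$, and letting $m\to\infty$ (using that a single monomial of $F_{\hat w}$ meets only finitely many variables, which also covers the statement for $S_{\mathbb N}$) yields $(iv)$.

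Finally, for the uniqueness: a power series whose truncation by $x_{m}=0$ ($m>n$) equals $F_{w_{n}}$ for every large $n$ has all of its coefficients prescribed (each monomial survives some truncation), hence equals $F_{\hat w}$; property $(iv)$ records that this completion is the symmetric one, and combined with the stabilization of Lemma \ref{Extstab} and Corollary \ref{RnIstab} it lets one reconstruct $F_{\hat w}$ already from the single datum $F_{w_{n}}$ for one large $n$---the coefficient of any monomial being obtained by using the $S_{\mathbb N}$-invariance of $(iv)$ to push its support into $\mathbb N_{n}$ and then reading it off $F_{w_{n}}$ via $(ii)$. The step I expect to need the most care is exactly this last one: pinning down the class of power series in which $(ii)$ and $(iv)$ genuinely characterize $F_{\hat w}$ (an unrestricted completion can be perturbed by a symmetric series such as $\sum_{j>n}x_{j}^{d}$, so one must either impose $(ii)$ for all large $n$ or use $(iv)$ in tandem with the stable range), together with a careful verification of $\tilde C(T_{n})\subseteq\tilde C(T_{m})$ and of how $\widetilde{\operatorname{sgn}}$ restricts; the homogeneity, $(ii)$, and the translation to the finite statements are then routine.
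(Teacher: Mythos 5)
Your construction of $F_{\hat w}$ as the coefficient-wise limit along the truncations $x_{m}\mapsto 0$, and your verification of properties $(i)$--$(iv)$ by reduction to the finite levels via Proposition \ref{incnSpecht}, Proposition \ref{Spechtpols}, and the observation that the entries $n+1,\dots,m$ of $T_{m}$ occupy length-one columns so that the image of $\tilde{C}(T_{n})$ and the group $S_{\{n+1,\dots,m\}}$ both sit inside $\tilde{C}(T_{m})$ compatibly with $\widetilde{\operatorname{sgn}}$, is essentially the paper's argument and is fine.

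The genuine gap is in the final determination claim. The mechanism you propose---``using the $S_{\mathbb{N}}$-invariance of $(iv)$ to push the support of a monomial into $\mathbb{N}_{n}$ and then reading it off $F_{w_{n}}$ via $(ii)$''---cannot work: property $(iv)$ only gives invariance under permutations fixing $\mathbb{N}_{n}$ pointwise, and no such permutation moves an index larger than $n$ into $\mathbb{N}_{n}$, so a monomial genuinely involving some $x_{j}$ with $j>n$ is never related by this group to a monomial of $F_{w_{n}}$; your own counterexample $F_{\hat w}+\sum_{j>n}x_{j}^{d}$ already shows that $(ii)$ at the single level $n$ together with $(iv)$ at threshold $n$ does not pin the series down. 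What actually makes the reconstruction work (and is the content of the first two paragraphs of the paper's proof) is invariance under the larger column-permuting subgroup: for $n$ large the cocharge tableau acquires many zeros in length-one columns, so every monomial of $F_{w_{m}}$, $m>n$, fails to be divisible by enough length-one-column variables of index at most $n$, and a suitable element of $S_{t_{1}}\subseteq\tilde{C}(T_{m})$ from Lemma \ref{tildeCT}---on which $\widetilde{\operatorname{sgn}}$ is trivial, but which in general moves indices $\leq n$, i.e.\ belongs to the trivial-character part of $(iii)$, or equivalently to $(iv)$ applied at a smaller threshold $n_{1}$ with $n-n_{1}$ at least the degree $d$ so that the offending variables can be relocated into the range $(n_{1},n]$---carries that monomial to one supported in $\mathbb{N}_{n}$, hence appearing in $F_{w_{n}}$. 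This counting-of-zero-columns step is what you need in place of the ``push into $\mathbb{N}_{n}$'' step; the appeal to Lemma \ref{Extstab} and Corollary \ref{RnIstab} does not repair it, since those results concern generation of the representations $V^{S}_{\vec{h}}$ and $R_{n,I}$ over $\mathbb{Q}[S_{n+1}]$, come later in the paper (Lemma \ref{Extstab} even refers back to this very proof), and in any case do not by themselves prescribe the coefficients of the power series.
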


\begin{proof}
Take some large enough $n$, and then $w_{n+1}$ equals $(w_{n})_{+}$ in the notation from Definition \ref{iotadef}. The construction from Proposition \ref{incnSpecht} then produces $F_{w_{n+1}}$ from $F_{w_{n}}$ in our notation. We saw that the monomials not containing $x_{n+1}$ there are those from $F_{w_{n}}$, and note that any monomial that is not divisible by one of the variables lying in a length 1 column of $\lambda_{+}$ is related, by the action of the subgroup $S_{t_{1}}\subseteq\tilde{C}(T_{n+1})$ from part $(i)$ of Lemma \ref{tildeCT}, for $T_{n+1}:=P(w_{n+1})$, to a monomial not divisible by $x_{n+1}$. Hence the part of $F_{w_{n+1}}$ that is supported on these monomials is determined by $F_{w_{n}}$.

Now, if we set $S^{n}:=\tilde{Q}(w_{n})$, then $S^{n+1}=\tilde{\iota}S^{n}$ and thus part $(ii)$ of Lemma \ref{iotaexp} yields $\operatorname{Dsi}^{c}(S^{n+1})=\operatorname{Dsi}^{c}(S^{n})$ and we get $\operatorname{cc}(S^{n+1})=\operatorname{cc}(S^{n})$ by the proof of part $(iii)$ of Lemma \ref{Dspc}. Equivalently, these sets equal $\operatorname{Dsi}(w_{n+1})$ and $\operatorname{Dsi}(w_{n})$, which are the same as $\operatorname{Dsl}(\hat{w})$, and therefore the joint number is the asserted value of $d$ and the common degree of $F_{w_{n}}$ and $F_{w_{n+1}}$ by Proposition \ref{evkeepsdeg}. Moreover, increasing $n$ repeatedly adds, by parts $(iii)$ and $(iv)$ of Lemma \ref{iotaexp}, more and more zeros to the first row, so that for large enough $n$ every monomial will not be divisible by at least one variable lying in a column of length 1 in $T_{n+1}$.

Next, $C(T_{n+1})$ is the image of $C(T)$ inside $S_{n+1}$ (hence they all have the same image in $S_{\infty}$), but the group arising from $T_{n+1}$ by part $(i)$ of Lemma \ref{tildeCT} contains the image of the one obtained from $T_{n}$. Hence the image inside $S_{\infty}$ of $\tilde{C}(T_{n+1})$ contains that of $\tilde{C}(T_{n})$, and the difference is just the increasing of the permutation group acting on the columns of length 1 (with the natural extension of $\widetilde{\operatorname{sgn}}$). Moreover, once $\hat{w}$ is the image of $w_{n} \in S_{n}$, and $m>n$, all the entries that are larger than $n$ appear in columns of length 1 in the first row of $T_{m}$. It follows that $\tilde{C}(T_{m})$ contains every element of $S_{m}$ that fixes all the numbers from $\mathbb{N}_{n}$, with no restrictions on how it acts on the larger indices.

So we define $F_{\hat{w}}$ to be the following power series. For every monomial, let $l$ be the maximal index of a variable showing up in it, and then we put that monomial in $F_{\hat{w}}$ with the coefficient with which it shows up in $F_{w_{n}}$ for some $n \geq l$ (this is independent of $n$, since the difference between $F_{w_{n}}$ and $F_{w_{n+1}}$ is only based on monomials that are divisible by $x_{n+1}$, which are not our monomial). This coefficient vanishes unless the monomial is of degree $d$, so that $F_{\hat{w}}$ is homogeneous of degree $d$, yielding property $(i)$.

Substituting $x_{n+1}=0$ removes all the monomials that show up in $F_{w_{n+1}}$ but not in $F_{w_{n}}$. Hence the substitution from property $(ii)$ eliminates all the monomials that are added to $F_{w_{n}}$ at a later stage, and the remaining monomials produce $F_{w_{n}}$, as desired. Next, take two monomials showing up in $F_{\hat{w}}$ that are related by the image of $\tilde{C}(T_{n})$ for some $n$. Both of them show up in $F_{w_{m}}$ for some large $m$, and then our element from $\tilde{C}(T_{n})$ also lies in the image of $\tilde{C}(T_{m})$, with the same $\widetilde{\operatorname{sgn}}$-value. The behavior of $F_{w_{m}}$ under $\tilde{C}(T_{m})$, as given in Proposition \ref{Spechtpols}, shows that the coefficients of these two monomials are related as desired in property $(iii)$.

We now take two monomials in $F_{\hat{w}}$ that are connected by the action of an element $\sigma$ as in property $(iv)$. Once again both of them show up in $F_{w_{m}}$ for large enough $m$, and note that if $n<j \leq m$ is such that $x_{j}$ shows up in the first monomial then $n<\sigma(j) \leq m$ because $x_{j}$ appears in the second monomial. Since $\sigma$ fixes the integers from $\mathbb{N}_{n}$, and its images on the remaining integers in $\mathbb{N}_{m}$ do not affect its action on the monomial, we can choose them arbitrarily (but to be different!) and construct an element of $S_{m}$ which also relates our monomial.

But our element lies in $\tilde{C}(T_{m})$ by our construction, and even in the group $S_{t_{1}}$ from part $(i)$ of Lemma \ref{tildeCT} (so that its $\widetilde{\operatorname{sgn}}$-value is $+1$). Hence the action from Proposition \ref{Spechtpols} implies that the two monomials show up in $F_{w_{m}}$, hence also in $F_{\hat{w}}$, with the same coefficient, establishing property $(iv)$. Finally, we saw in the first paragraph that $F_{w_{n}}$ determines part of $F_{w_{n+1}}$ via the action of $S_{t_{1}}$, and the second paragraph implies that when $n$ is large enough, this part of $F_{w_{n+1}}$ is the entire polynomial. Thus properties $(ii)$ and $(iv)$ show how $F_{w_{n}}$ determines $F_{w_{m}}$ for all $m>n$, and as this yields the data about all the monomials showing up in $F_{\hat{w}}$, it determines this power series completely. This proves the theorem.
\end{proof}
Note that in the proof of Theorem \ref{stabSpecht}, for small enough $n$ the value of $F_{w_{n}}$ does not yet determine that of $F_{w_{n+1}}$ in this manner, since while the monomials from that proof indeed show up in $F_{\tilde{w}}$ in general, the latter may also contain some monomials that are divisible by $x_{n+1}$ and all its $\tilde{C}(\tilde{T})$-images (we saw this in the expression that we had to add at the end of Example \ref{Fwplus}). In any case, we may call the power series from Theorem \ref{stabSpecht} the \emph{stable higher Specht polynomial} associated with $\hat{w} \in S_{\infty}$.

\begin{ex}
We saw in Examples \ref{FTScoeff2} and \ref{Fwplus} that when $w=2134 \in S_{4}$ and $w_{+}=21345 \in S_{5}$, the (classical) associated Specht polynomials coincide, namely $F_{w_{+}}=F_{w}=x_{2}-x_{1}$. Hence if $\hat{w}$ is the image of these permutations in $S_{\infty}$, then $F_{\hat{w}}$ is the same polynomial. However, the case with $w=24153 \in S_{5}$ and $w_{+}=241536 \in S_{6}$ produces as $F_{w}$ a sum of one polynomial in $\mathbb{Q}[\mathbf{x}_{4}]$, another one multiplied by $x_{5}$, and a third one multiplied by $x_{5}^{2}$, and for $F_{w_{+}}$ we replaced $x_{5}$ by $x_{5}+x_{6}$ and $x_{5}^{2}$ by $x_{5}^{2}+x_{6}^{2}$ and added yet another polynomial times $x_{5}^{2}x_{6}+x_{5}x_{6}^{2}$. By setting $\hat{w}$ to now be the common image of these elements in $S_{\infty}$, then for $F_{\hat{w}}$ we replace the (symmetric) polynomials in $x_{5}$ and $x_{6}$ by $\sum_{m=5}^{\infty}x_{m}$, $\sum_{m=5}^{\infty}x_{m}^{2}$, and $\sum_{m=5}^{\infty}\sum_{l=5,\ l \neq m}^{\infty}x_{m}^{2}x_{l}$ respectively. \label{exstab}
\end{ex}

\begin{rmk}
If the tableau $\tilde{Q}(w)$ is $S^{0}$ from Definition \ref{clasSpecht}, so that its $\operatorname{ct}$-image is $C^{0}$, then Lemma \ref{iotaexp} implies that the same happens for $w_{+}$. It follows from the proof of Corollary \ref{quotSpecht} and Proposition \ref{incnSpecht} that both $F_{w_{+}}$ and $F_{w}$ are the polynomial from that definition, so in particular it remains the same polynomial when going from $w$ to $w_{+}$. Therefore the limit power series $F_{\hat{w}}$ from Theorem \ref{stabSpecht} is again the same polynomial in this case. Considering thus a general $\hat{w}$, we may apply the process from Proposition \ref{incnSpecht} also to the quotient from Corollary \ref{quotSpecht}, with the denominator remaining the same, and get a relation between the quotients $Q_{w_{n}}$ and $Q_{w_{n+1}}$ that is similar to that from Proposition \ref{incnSpecht}. The proof of Theorem \ref{stabSpecht} thus allows us to define a \emph{stable higher Specht quotient} $Q_{\hat{w}}$ associated with $\hat{w} \in S_{\infty}$ in a similar manner, that will have the same properties except for invariance under $\tilde{C}(T)$ instead of the character $\widetilde{\operatorname{sgn}}$. \label{stabquot}
\end{rmk}

\begin{ex}
When we consider the quotient from Example \ref{quotex}, the effect of passing from $w \in S_{5}$ to $w_{+} \in S_{6}$ is via the same replacements of $x_{5}$ and $x_{5}^{2}$ as in Example \ref{Fwplus}, and adding the expression $(x_{5}^{2}x_{6}+x_{5}x_{6}^{2})(x_{1}+x_{2}+x_{3}+x_{4})$. The stable version $Q_{\hat{w}}$ from Remark \ref{stabquot} would contain the summand $-2x_{1}x_{2}x_{3}x_{4}$, and in the remaining expressions we will have the multipliers $\sum_{m=5}^{\infty}x_{m}$ instead of $x_{5}$, $\sum_{m=5}^{\infty}x_{m}^{2}$ in the place of $x_{5}^{2}$, and the sum $x_{1}+x_{2}+x_{3}+x_{4}$ will be multiplied by $\sum_{m=5}^{\infty}\sum_{l=5,\ l \neq m}^{\infty}x_{m}^{2}x_{l}$ (corresponding to the fact that $Q_{\hat{w}}$ is the quotient of $F_{\hat{w}}$ from Example \ref{exstab} by the denominator from Example \ref{quotex}). In addition to being invariant under the permutations described in Example \ref{quotex}, it is invariant by any permutation in $S_{\mathbb{N}}$ that leaves 1, 2, 3, and 4 invariant and acts only on the indices $m\geq5$, exemplifying part $(v)$ of Theorem \ref{stabSpecht} in a non-trivial case. \label{quotstab}
\end{ex}
The stable higher Specht polynomials from Theorem \ref{stabSpecht} will be studied further in the sequel \cite{[Z2]}. It may be interesting to see what other properties do the quotients from Corollary \ref{quotSpecht} have, and it is possible that the stable setting from Remark \ref{stabquot}, as showing up in Example \ref{quotstab}, may be a better framework for investigating them.

\section{Decompositions of Representations \label{RepDecom}}

For decomposing representations of $S_{n}$, we recall that the irreducible representation of $S_{n}$ are, up to isomorphism, the Specht modules associated to partitions $\lambda \vdash n$, and we denote the corresponding module by $\mathcal{S}^{\lambda}$. In order to present our construction, as well as state the known results, we introduce the following notation from Definition 3.8 of \cite{[GR]}, and expand it. We recall that $e_{r}$ stands for the $r$th elementary symmetric function in the variables $\{x_{m}\}_{m=1}^{n}$.
\begin{defn}
Fix $\lambda \vdash n$, $S\in\operatorname{SYT}(\lambda)$, and $C\in\operatorname{SYT}(\lambda)$.
\begin{enumerate}[$(i)$]
\item We write $V^{S}$ for the space spanned by the polynomials $F_{T}^{S}$, where $T$ runs over the tableaux of shape $\lambda$ and content $\mathbb{N}_{n}$.
\item We define $V_{C}$ to be the space generated by $F_{T,C}$ for those $T$.
\item Let $h_{r}$ be a non-negative integer for every $1 \leq r \leq n$, and gather them as a vector $\vec{h}:=\{h_{r}\}_{r=1}^{n}$. then we set $V^{S}_{\vec{h}}$ and $V_{C}^{\vec{h}}$ for the image of $V^{S}$ and $V_{C}$ respectively under multiplication by the symmetric polynomial $\prod_{r=1}^{n}e_{r}^{h_{r}}$ inside $\mathbb{Q}[\mathbf{x}_{n}]$.
\end{enumerate} \label{defSpecht}
\end{defn}
The index $r$ of the entries $h_{r}$ of the vector $\vec{h}$ is bounded by $n$ since there is no elementary symmetric function of larger index in $n$ variables, but we may consider each such $\vec{h}$ as an infinite vector with $h_{r}=0$ for all $r>n$ without changing the meaning of $V^{S}_{\vec{h}}$ and $V_{C}^{\vec{h}}$ (this will become more natural in, e.g., Definition \ref{IndtVSVC} below). As in Remark \ref{samepols}, when $C=\operatorname{ct}(S)$ in Definition \ref{defSpecht} we get $V_{C}=V^{S}$ as well as $V_{C}^{\vec{h}}=V^{S}_{\vec{h}}$ for every $\vec{h}$. A basic property of these spaces is the following one.
\begin{thm}
The space $V^{S}$ from Definition \ref{defSpecht} is an irreducible representation of $S_{n}$, on a subspace of $\mathbb{Q}[\mathbf{x}_{n}]$, which is isomorphic to $\mathcal{S}^{\lambda}$, homogeneous of degree $\operatorname{cc}(S)$, and admits $\{F_{T}^{S}\;|\;T\in\operatorname{SYT}(\lambda)\}$ as a basis. The same holds for $V^{S}_{\vec{h}}$, as well as for $V_{C}$ with the basis $\{F_{C,T}\;|\;T\in\operatorname{SYT}(\lambda)\}$, and for $V_{C}^{\vec{h}}$, with the homogeneity degrees $\operatorname{cc}(S)+\sum_{r=1}^{n}rh_{r}$, $\Sigma(C)$, and $\Sigma(C)+\sum_{r=1}^{n}rh_{r}$ respectively. \label{VSVCreps}
\end{thm}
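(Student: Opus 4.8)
The plan is to reduce all four assertions to the single statement that $V^{S}$ is an irreducible $S_{n}$-subrepresentation of $\mathbb{Q}[\mathbf{x}_{n}]$, isomorphic to $\mathcal{S}^{\lambda}$, homogeneous of degree $\operatorname{cc}(S)$, and admitting $\{F_{T}^{S}\;|\;T\in\operatorname{SYT}(\lambda)\}$ as a basis. Homogeneity is immediate, since $F_{T}^{S}\in\mathbb{Z}[\mathbf{x}_{n}]_{\operatorname{cc}(S)}$ by Proposition \ref{Spechtpols}, so $V^{S}\subseteq\mathbb{Q}[\mathbf{x}_{n}]_{\operatorname{cc}(S)}$. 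For $S_{n}$-stability I would prove the identity $\sigma F_{T}^{S}=F_{\sigma T}^{S}$ for every $\sigma\in S_{n}$ and every tableau $T$ of shape $\lambda$ and content $\mathbb{N}_{n}$, where $\sigma T$ denotes $T$ with each entry $k$ replaced by $\sigma(k)$: one has $\sigma\varepsilon_{T}\sigma^{-1}=\varepsilon_{\sigma T}$ (conjugation carries $R(T)$ and $C(T)$ to $R(\sigma T)$ and $C(\sigma T)$ and preserves the sign), $\sigma p_{T}^{S}=p_{\sigma T}^{S}$ (by unwinding Definition \ref{Spechtdef}, since the entry of $\operatorname{ct}(S)$ lying over $\sigma^{-1}(j)$ in $T$ is its entry lying over $j$ in $\sigma T$), and $s_{T}^{S}=s_{\sigma T}^{S}$ by the $T$-independence of $s_{T}^{S}$ recorded after Definition \ref{Spechtdef}. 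As $S_{n}$ is transitive on such fillings, this exhibits $V^{S}$ as a subrepresentation on which $S_{n}$ merely permutes the polynomials $F_{T}^{S}$.

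For the identification of the representation I would fix a standard tableau $T_{0}$ of shape $\lambda$ and consider the map $g\varepsilon_{T_{0}}\mapsto gF_{T_{0}}^{S}$ for $g\in\mathbb{Q}[S_{n}]$; it is well-defined and $\mathbb{Q}[S_{n}]$-linear because $F_{T_{0}}^{S}=\varepsilon_{T_{0}}p_{T_{0}}^{S}/s_{T_{0}}^{S}$ forces $gF_{T_{0}}^{S}=(g\varepsilon_{T_{0}})p_{T_{0}}^{S}/s_{T_{0}}^{S}$, and it is surjective onto $V^{S}$ by the identity of the previous paragraph. Since $\mathbb{Q}[S_{n}]\varepsilon_{T_{0}}$ is a copy of $\mathcal{S}^{\lambda}$ (as in the use of Theorem 3.10 of \cite{[JK]} before Definition \ref{Spechtdef}), hence irreducible, and $F_{T_{0}}^{S}\neq 0$ because it contains $p_{T_{0}}^{S}$ with coefficient $1$ by Proposition \ref{Spechtpols}, this surjection must be an isomorphism; in particular $V^{S}\cong\mathcal{S}^{\lambda}$ and $\dim V^{S}=|\operatorname{SYT}(\lambda)|$. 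It then remains to see that $\{F_{T}^{S}\;|\;T\in\operatorname{SYT}(\lambda)\}$ already spans $V^{S}$, for it is then a basis by the dimension count; this is the straightening step, a Garnir-type relation rewriting $F_{T}^{S}$ for non-standard $T$ as a $\mathbb{Q}$-combination of the standard ones. Because $F_{T}^{S}$ agrees with the higher Specht polynomial of \cite{[ATY]} and \cite{[GR]} up to the scalar $s_{T}^{S}$, which does not depend on $T$, the span $V^{S}$ equals the space treated in those references, so I would invoke this straightening (and the linear independence of the standard family) from there rather than reproving it; this is the single non-formal ingredient and the step I expect to require the most care in a self-contained write-up.

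The remaining three cases follow formally. If $C\in\operatorname{CCT}(\lambda)$ and $S:=\operatorname{ct}^{-1}(C)$, then Remark \ref{samepols} gives $F_{C,T}=F_{T}^{S}$ for all $T$, hence $V_{C}=V^{S}$ as representations, with degree $\Sigma(C)=\operatorname{cc}(S)$; this disposes of $V_{C}$ and its basis $\{F_{C,T}\;|\;T\in\operatorname{SYT}(\lambda)\}$. For $V^{S}_{\vec{h}}$ and $V_{C}^{\vec{h}}$, multiplication by the symmetric polynomial $\prod_{r=1}^{n}e_{r}^{h_{r}}$ is injective on the integral domain $\mathbb{Q}[\mathbf{x}_{n}]$ and commutes with the $S_{n}$-action, so it restricts to an $S_{n}$-equivariant linear isomorphism from $V^{S}$ onto $V^{S}_{\vec{h}}$ (respectively from $V_{C}$ onto $V_{C}^{\vec{h}}$) which raises the homogeneity degree by $\sum_{r=1}^{n}rh_{r}$ and carries the basis $\{F_{T}^{S}\}$ to $\{(\prod_{r=1}^{n}e_{r}^{h_{r}})F_{T}^{S}\}$; this yields irreducibility, the isomorphism with $\mathcal{S}^{\lambda}$, the stated bases, and the degrees $\operatorname{cc}(S)+\sum_{r=1}^{n}rh_{r}$ and $\Sigma(C)+\sum_{r=1}^{n}rh_{r}$.
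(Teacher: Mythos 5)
Your proposal is correct, but it is organized differently from the paper, whose entire argument for the first assertion is a citation: the paper simply notes that the statement about $V^{S}$ appears in \cite{[ATY]} and is proved as Corollary 3.17 of \cite{[GR]}, extracts the homogeneity degree from Proposition \ref{Spechtpols}, and declares the $V^{S}_{\vec{h}}$, $V_{C}$, $V_{C}^{\vec{h}}$ cases immediate or analogous. You instead re-derive most of the representation-theoretic content in house: the identity $\sigma F_{T}^{S}=F_{\sigma T}^{S}$ (using $\sigma\varepsilon_{T}\sigma^{-1}=\varepsilon_{\sigma T}$, $\sigma p_{T}^{S}=p_{\sigma T}^{S}$, and the $T$-independence of $s_{T}^{S}$), the resulting cyclicity of $V^{S}$, and the isomorphism with $\mathcal{S}^{\lambda}$ via the surjection from the irreducible left ideal $\mathbb{Q}[S_{n}]\varepsilon_{T_{0}}$ onto the nonzero module $V^{S}$; only the spanning of $V^{S}$ by the standard-tableau family (the straightening step) is imported from \cite{[ATY]}/\cite{[GR]}, which is legitimate since your $F_{T}^{S}$ differs from theirs by the $T$-independent scalar $s_{T}^{S}$ and since $V^{S}$ is indexed by the pair $(T,S)$ rather than by a permutation, so the evacuation twist of Definition \ref{modRSK} does not change the space. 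Your treatment of the remaining cases (the identification $V_{C}=V^{S}$ via Remark \ref{samepols}, and multiplication by $\prod_{r}e_{r}^{h_{r}}$ being an $S_{n}$-equivariant injection raising degree by $\sum_{r}rh_{r}$) coincides with what the paper leaves implicit. The trade-off: the paper's route is shorter and fully outsourced, while yours makes the irreducibility and the isomorphism type self-contained (modulo the classical fact that $\mathbb{Q}[S_{n}]\varepsilon_{T_{0}}\cong\mathcal{S}^{\lambda}$, which is standard but not literally what Theorem 3.10 of \cite{[JK]} states) and isolates the one genuinely combinatorial ingredient, the straightening, as the only borrowed step.
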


The first assertion in Theorem \ref{VSVCreps} already shows up in \cite{[ATY]} (based on earlier results like \cite{[M]} and \cite{[Pe]}), and is proved explicitly as Corollary 3.17 of \cite{[GR]}, with the homogeneity degree following from Proposition \ref{Spechtpols}. The second one is an immediate consequence, and the other ones are analogous. The main theorem from that paper states that $\bigoplus_{\lambda \vdash n}\bigoplus_{S\in\operatorname{SYT}(\lambda)}V^{S}$ lifts the coinvariant ring $R_{n,n}$ back into $\mathbb{Q}[\mathbf{x}_{n}]$ in an $S_{n}$-equivariant manner (hence so does $\bigoplus_{\lambda \vdash n}\bigoplus_{C\in\operatorname{CCT}(\lambda)}V_{C}$), and thus re-establishes, in particular, the result from \cite{[B]} that this representation is isomorphic, as an ungraded representation, to the regular one of $S_{n}$.

\begin{rmk}
The fact that $V^{S}$ and $V_{C}$ are irreducible, and the only case where they are trivial is where the action is on constants, implies that when we decompose non-zero any element of $V^{S}_{\vec{h}}$ and $V_{C}^{\vec{h}}$ in $\mathbb{Q}[\mathbf{x}_{n}]$, they all have a maximal symmetric divisor, which is the external multiplier $\prod_{r=1}^{n}e_{r}^{h_{r}}$. \label{maxsymdiv}
\end{rmk}

For $1 \leq k \leq n$ the paper \cite{[HRS]} defines the quotient $R_{n,k}$ as the quotient of $\mathbb{Q}[\mathbf{x}_{n}]$ by the ideal generated by $x_{i}^{k}$, $1 \leq i \leq n$ and the symmetric functions $e_{r}$, $n-k+1 \leq r \leq n$ in these variables. Theorem 4.11 there states that it is isomorphic, as an ungraded representation, to the one arising from the action of that group on the set $\mathcal{OP}_{n,k}$ of ordered partitions of $\mathbb{N}_{n}$ into $k$ sets. We now cite Theorem 1.7 of \cite{[GR]}, for introducing the notation that we will use later.
\begin{thm}
Given $\lambda \vdash n$ and $S\in\operatorname{SYT}(\lambda)$, let $H^{S}_{k}$ denote the set of vectors $\vec{h}=\{h_{r}\}_{r=1}^{n}$, with non-negative integral entries, that satisfy $h_{r}=0$ for every $r>n-k$ and the inequality $\sum_{r=1}^{n-k}h_{r}<k-|\operatorname{Dsi}^{c}(S)|$. Then the sum $\bigoplus_{\lambda \vdash n}\bigoplus_{S\in\operatorname{SYT}(\lambda)}\bigoplus_{\vec{h} \in H^{S}_{k}}V^{S}_{\vec{h}}$ is indeed a direct sum, and it lifts the quotient $R_{n,k}$ into $\mathbb{Q}[\mathbf{x}_{n}]$, in an $S_{n}$-equivariant manner. \label{decomRnk}
\end{thm}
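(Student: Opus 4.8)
The plan is to show that the canonical surjection $q\colon\mathbb{Q}[\mathbf{x}_n]\twoheadrightarrow R_{n,k}$, restricted to the sum $\bigoplus_{\lambda\vdash n}\bigoplus_{S\in\operatorname{SYT}(\lambda)}\bigoplus_{\vec h\in H^S_k}V^S_{\vec h}$, is an isomorphism of graded $S_n$-representations. The first point is directness of that sum. The classical results behind Theorem \ref{VSVCreps} (from \cite{[ATY]}) say that the higher Specht polynomials $F^S_T$, with $\lambda\vdash n$ and $S,T\in\operatorname{SYT}(\lambda)$, project to a $\mathbb{Q}$-basis of $R_{n,n}$; being homogeneous, they lift to a free basis of $\mathbb{Q}[\mathbf{x}_n]$ over the subring $\mathbb{Q}[e_1,\ldots,e_n]$ of symmetric polynomials, so that $\{e_1^{m_1}\cdots e_n^{m_n}F^S_T\}$, over all such $\lambda,S,T$ and all exponent vectors $(m_1,\ldots,m_n)$, is a $\mathbb{Q}$-basis of $\mathbb{Q}[\mathbf{x}_n]$. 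For $\vec h\in H^S_k$ the monomial $\prod_r e_r^{h_r}$ uses only $e_1,\ldots,e_{n-k}$, so the polynomials $\prod_r e_r^{h_r}F^S_T$ indexed by all $(\lambda,S,T,\vec h)$ with $\vec h\in H^S_k$ are distinct members of this basis and hence linearly independent; with Theorem \ref{VSVCreps} this gives directness of the sum and identifies $\{\prod_r e_r^{h_r}F^S_T:T\in\operatorname{SYT}(\lambda)\}$ as a basis of $V^S_{\vec h}$. Splitting $\mathbb{Q}[e_1,\ldots,e_n]=\mathbb{Q}[e_1,\ldots,e_{n-k}]\oplus(e_{n-k+1},\ldots,e_n)\mathbb{Q}[e_1,\ldots,e_n]$ inside this free-module decomposition also shows that the sum meets the ideal $(e_{n-k+1},\ldots,e_n)$ only in $0$ and embeds, modulo that ideal, into $A:=\mathbb{Q}[\mathbf{x}_n]/(e_{n-k+1},\ldots,e_n)$ — a fact I use below.

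Multiplication by a symmetric polynomial is $S_n$-equivariant and degree-raising, so each $V^S_{\vec h}$ is an $S_n$-subrepresentation of $\mathbb{Q}[\mathbf{x}_n]$ isomorphic to $\mathcal S^\lambda$ and homogeneous of degree $\operatorname{cc}(S)+\sum_r rh_r$ by Theorem \ref{VSVCreps}, and $q$ is graded and $S_n$-equivariant; hence the restriction $\Phi$ of $q$ to the sum is a morphism of graded $S_n$-representations, and it suffices to prove $\Phi$ is an isomorphism. I would do this by a graded dimension count together with injectivity. For the count, compute the graded Frobenius characteristic of the source: by Corollary \ref{sumcc} we have $\operatorname{cc}(S)=\sum_{i\in\operatorname{Dsi}(S)}(n-i)=:\operatorname{comaj}(S)$ (with $\operatorname{Dsi}(S)$ the descent set of $S$), and by the elementary $q$-series identity $\sum_{h_1+\ldots+h_m\le D}q^{\sum_r rh_r}=\binom{D+m}{m}_q$, taken with $m=n-k$ and $D=k-1-|\operatorname{Dsi}^c(S)|$, the inner sum over $\vec h\in H^S_k$ contributes $\binom{n-1-\operatorname{des}(S)}{n-k}_q$, where $\operatorname{des}(S):=|\operatorname{Dsi}(S)|=|\operatorname{Dsi}^c(S)|$. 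Thus the source has graded Frobenius characteristic $\sum_{\lambda\vdash n}s_\lambda\sum_{S\in\operatorname{SYT}(\lambda)}q^{\operatorname{comaj}(S)}\binom{n-1-\operatorname{des}(S)}{n-k}_q$, which matches the graded Frobenius characteristic of $R_{n,k}$ computed in \cite{[HRS]}, namely $\sum_{\lambda}s_\lambda\sum_{T\in\operatorname{SYT}(\lambda)}q^{\operatorname{maj}(T)}\binom{n-1-\operatorname{des}(T)}{n-k}_q$: the two agree term by term under the Sch\"{u}tzenberger evacuation $S\mapsto\operatorname{ev}S$ on $\operatorname{SYT}(\lambda)$, since the argument in the proof of Proposition \ref{evkeepsdeg} (via $\operatorname{ev}Q(w)=Q(w_0ww_0)$ and Lemma \ref{RSKcor}) gives $\operatorname{Dsi}(\operatorname{ev}S)=\operatorname{Dsi}^c(S)=\{n-i:i\in\operatorname{Dsi}(S)\}$, whence $\operatorname{des}(\operatorname{ev}S)=\operatorname{des}(S)$ and $\operatorname{maj}(\operatorname{ev}S)=\operatorname{comaj}(S)$. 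Consequently the source and $R_{n,k}$ have the same graded dimension, so $\Phi$ is an isomorphism once it is injective.

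It remains to show that no nonzero element of the sum lies in the ideal $I_{n,k}=(x_1^k,\ldots,x_n^k,e_{n-k+1},\ldots,e_n)$. By the first paragraph the sum embeds modulo $(e_{n-k+1},\ldots,e_n)$ into $A$ and meets that part of $I_{n,k}$ trivially, so injectivity of $\Phi$ is equivalent to the statement that the images of the $\prod_r e_r^{h_r}F^S_T$ in $R_{n,k}=A/(\bar x_1^k,\ldots,\bar x_n^k)$ are linearly independent; and by the degree count of the previous paragraph this linear independence is equivalent to their spanning $R_{n,k}$. I would prove it by a leading-monomial (triangularity) argument: fix a suitable monomial order, recall from \cite{[ATY]} and \cite{[GR]} that each $F^S_T$ has a distinguished leading monomial determined by the column-reading of $\operatorname{ct}(S)$ together with $T$, deduce that $\prod_r e_r^{h_r}F^S_T$ has leading monomial equal to the product of that of $\prod_r e_r^{h_r}$ with that of $F^S_T$, and then verify that, as $(\lambda,S,T,\vec h)$ ranges over the prescribed index set, these leading monomials are pairwise distinct and coincide exactly with the standard-monomial basis of $R_{n,k}$ from \cite{[HRS]}. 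Such triangularity gives injectivity and surjectivity of $\Phi$ at once (and is consistent with the degree count of the second paragraph). The main obstacle is precisely this last step: controlling the leading terms of the $\prod_r e_r^{h_r}F^S_T$ finely enough, and matching the bookkeeping that ties the cocharge data of $S$ and the vector $\vec h$ — through the constraint $\sum_r h_r<k-|\operatorname{Dsi}^c(S)|$ — to the combinatorics of the \cite{[HRS]} basis; this is the part where one genuinely needs the explicit structure of $R_{n,k}$ rather than soft representation theory.
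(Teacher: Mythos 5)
The paper does not actually prove this statement: Theorem \ref{decomRnk} is quoted verbatim as Theorem 1.7 of \cite{[GR]} ``for introducing the notation,'' so there is no internal proof to compare against, and your proposal has to stand on its own as a proof of the Gillespie--Rhoades result. Its first two steps are sound. Homogeneous lifts of a basis of $R_{n,n}$ do form a free $\mathbb{Q}[e_{1},\ldots,e_{n}]$-module basis of $\mathbb{Q}[\mathbf{x}_{n}]$, which gives the directness of the sum and the triviality of its intersection with $(e_{n-k+1},\ldots,e_{n})$; and the graded count is correct: the $q$-binomial identity with $m=n-k$, $D=k-1-|\operatorname{Dsi}^{c}(S)|$, together with $\operatorname{Dsi}(\operatorname{ev}S)=\operatorname{Dsi}^{c}(S)$ (so $\operatorname{maj}(\operatorname{ev}S)=\operatorname{cc}(S)$ and $\operatorname{des}(\operatorname{ev}S)=\operatorname{des}(S)$), matches the graded Frobenius characteristic of $R_{n,k}$ from \cite{[HRS]} term by term.

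The gap is in the third step, and it is exactly where the whole content of the theorem sits. Your freeness argument disposes only of the generators $e_{n-k+1},\ldots,e_{n}$ of $I_{n,k}$; nothing you have written controls the interaction of the proposed basis with the generators $x_{1}^{k},\ldots,x_{n}^{k}$, and the reduction ``independence modulo $I_{n,k}$ $\Leftrightarrow$ spanning'' (valid, given the graded dimension match) still leaves one of the two to be proved. The triangularity plan is only announced: it is not established that there is one fixed global monomial order for which every $\prod_{r}e_{r}^{h_{r}}F_{T}^{S}$ has the predicted leading monomial uniformly in $(\lambda,S,T,\vec{h})$ --- the paper's Lemma \ref{operRT} and Proposition \ref{Spechtpols} only give a $T$-dependent comparison $>_{T}$ and the coefficient-$1$ statement for $p_{T}^{S}$, which is weaker --- nor that these leading monomials are pairwise distinct and biject with the standard monomial basis of $R_{n,k}$ from \cite{[HRS]}. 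You flag this yourself as ``the main obstacle,'' and indeed this bookkeeping (tying $\operatorname{ct}(S)$, $T$, and the constraint $\sum_{r}h_{r}<k-|\operatorname{Dsi}^{c}(S)|$ to the $(n,k)$-staircase combinatorics) is the substantial part of the proof in \cite{[GR]}. As written, the proposal is a correct reduction plus a dimension count, but not a proof.
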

The paper \cite{[GR]} uses the bound $k-|\operatorname{Dsi}(S)|$ in the definition of the set from Theorem \ref{decomRnk}, but it is clear that $\operatorname{Dsi}(S)$ and $\operatorname{Dsi}^{c}(S)$ have the same size. By setting $H_{C}^{k}$, for $C\in\operatorname{CCT}(\lambda)$, to be those vectors $\vec{h}$ with $h_{r}=0$ if $r>n-k$ for which $\sum_{r=1}^{n-k}h_{r}<k-|\operatorname{Dsp}^{c}(C)|$, we get the lifting from that theorem expressed as $\bigoplus_{\lambda \vdash n}\bigoplus_{C\in\operatorname{CCT}(\lambda)}\bigoplus_{\vec{h} \in H_{C}^{k}}V_{C}^{\vec{h}}$ (by the relations obtained when $C=\operatorname{ct}(S)$, including the equality $\operatorname{Dsp}^{c}(C)=\operatorname{Dsi}^{c}(S)$). The case $n=k$ in that theorem is easily seen to reproduce the results from \cite{[ATY]}.

\medskip

Apart from the extremal cases $k=1$ and $k=n$, the latter action is not transitive, and orbits of that action are determined by the sizes of the sets in the partition. Therefore $\mathbb{Q}[\mathcal{OP}_{n,k}]$, and with it $R_{n,k}$ and its lift from \cite{[GR]}, admits a natural decomposition into the representations arising from said orbits. We now aim to match the decomposition of $\mathbb{Q}[\mathcal{OP}_{n,k}]$ through the orbits with the expression for lifting from Theorem \ref{decomRnk}, and do it in a way that behaves well with respect to many operations on orbits of different representations.

For this we make the following observation, which essentially shows up in Subsection 4.1 of \cite{[RW]} (see also the relation with the set $\mathcal{W}_{n,k}$ from \cite{[PR]}).
\begin{lem}
Consider a subset $I\subseteq\mathbb{N}_{n-1}$, of size $k-1$, and write the composition $\operatorname{comp}_{n}I \vDash n$ from Lemma \ref{setscomp}, of length $k$, as $\{m_{h}\}_{h=0}^{k-1}$. Then there is a natural bijection between ordered partitions of $\mathbb{N}_{n}$ into $k$ sets of sizes $\{m_{h}\}_{h=0}^{k-1}$ and elements $w \in S_{n}$ for which $\operatorname{Dsl}(w) \subseteq I$. \label{Snparts}
\end{lem}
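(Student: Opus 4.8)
The plan is to build the bijection explicitly and then verify that it matches the descent condition on both sides. Start with an ordered set partition $(B_0,B_1,\dots,B_{k-1})$ of $\mathbb{N}_n$ with $|B_h|=m_h$ for each $0\le h<k$. Associate to it the word $w\in S_n$ obtained by listing, in increasing order, the elements of $B_0$, then the elements of $B_1$ in increasing order, and so on up through $B_{k-1}$; in one-line notation $w=w_1w_2\cdots w_n$ is this concatenation. Since the $B_h$ partition $\mathbb{N}_n$, every integer in $\mathbb{N}_n$ appears exactly once, so $w$ is genuinely a permutation. This map is clearly injective, since from $w$ we recover each block $B_h$ as the set of entries occupying the positions $j_h+1,\dots,j_{h+1}$ where $j_0=0$ and $j_{h+1}=j_h+m_h$ are the cumulative sums attached to $\operatorname{comp}_nI$ by Lemma \ref{setscomp}, and these positions are exactly those in $\mathbb{N}_n$; so the two constructions are mutually inverse once we know the image is correct.

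Next I would pin down the image. By construction $w$ is increasing on each of the intervals of positions $\{j_h+1,\dots,j_{h+1}\}$, so every location $i$ with $i\notin\{j_1,\dots,j_{k-1}\}=I$ (using the identification of $I$ with the interior cumulative sums from Lemma \ref{setscomp}) lies in $\operatorname{Asl}(w)$; hence $\operatorname{Dsl}(w)\subseteq I$. Conversely, suppose $w\in S_n$ satisfies $\operatorname{Dsl}(w)\subseteq I$, equivalently $\mathbb{N}_{n-1}\setminus I\subseteq\operatorname{Asl}(w)$, so that $w$ is increasing on each block of consecutive positions determined by $I$. Define $B_h$ to be the set of entries of $w$ in positions $j_h+1,\dots,j_{h+1}$; then $(B_0,\dots,B_{k-1})$ is an ordered set partition of $\mathbb{N}_n$ with $|B_h|=j_{h+1}-j_h=m_h$, and the permutation built from it by the recipe above recovers $w$ precisely because $w$ was already increasing within each block. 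Thus the two maps are inverse bijections between the stated sets, which proves the lemma.

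I expect the only real subtlety to be bookkeeping: making sure the interior cumulative sums $j_1<\dots<j_{k-1}$ really coincide with the set $I$ (this is exactly $I=\operatorname{comp}_n^{-1}(\operatorname{comp}_nI)$ intersected with $\mathbb{N}_{n-1}$, i.e.\ Lemma \ref{setscomp}), and that the condition ``$w$ increasing on each block'' is literally the statement $\mathbb{N}_{n-1}\setminus I\subseteq\operatorname{Asl}(w)$, since a strictly increasing run from position $j_h+1$ to position $j_{h+1}$ says precisely that $i\in\operatorname{Asl}(w)$ for all $j_h\le i<j_{h+1}$ with $i\ge 1$, and ranging over $h$ covers all of $\mathbb{N}_{n-1}\setminus I$. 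No deeper combinatorics is needed; everything reduces to the stars-and-bars correspondence together with unwinding Definition \ref{asperm}. One could also phrase this map as $w\mapsto$ the ordered partition whose $h$-th block is $\{w_{j_h+1},\dots,w_{j_{h+1}}\}$, which makes the inverse transparent, but I would present the set-partition-to-permutation direction first since that is where the ``increasing within each block'' normalization is visibly forced.
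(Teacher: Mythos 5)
Your proposal is correct and follows essentially the same route as the paper's own proof: list each block in increasing order to form $w$, note that locations outside $I$ are forced into $\operatorname{Asl}(w)$ so $\operatorname{Dsl}(w)\subseteq I$, and invert by cutting the one-line notation at the positions of $I$ given by the cumulative sums from Lemma \ref{setscomp}. No gaps; the bookkeeping you flag is exactly what the paper's argument also relies on.
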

We chose the indexation in Lemma \ref{Snparts} to be with $0 \leq h \leq k-1$, as in Definition \ref{gencct}, for the normalizations of the constructions below to be more natural. The change of notation from $J$ and the multi-set $\mu$ associated with the composition $\alpha:=\operatorname{comp}_{n}J \vDash n$ to $I$, $\vec{m}$, and $m_{h}$ is due to the fact that in Lemma \ref{ctJinv} we worked with tableaux $S$ such that the set $J$ contains $\operatorname{Dsi}(S)$, while we shall soon work with sets $I$ containing $\operatorname{Dsi}^{c}(S)$ from Definition \ref{Dsic} (one might thus think of $I$ and $J$ as related by $I=\{n-i\;|\;i \in J\}$ and $J=\{n-i\;|\;i \in I\}$).

\begin{proof}
Given an ordered partition of $\mathbb{N}_{n}$ into $k$ sets, we write the elements of every set in the partition in an increasing order, and obtain a permutation $w \in S_{n}$ in one-line notation. When the sizes of these sets are the $m_{h}$'s, the fact that $\{m_{h}\}_{h=0}^{k-1}=\operatorname{comp}_{n}I$ implies that the separations between the sets show up between $i$ and $i+1$ for $i \in I$. Hence if $i$ is one of the $n-k$ locations that are not in the set $I$, then $i$ and $i+1$ represent locations that are in the same set, which we ordered increasingly. This implies, by Definition \ref{asperm}, that all these elements must be in $\operatorname{Asl}(w)$, and therefore the complement $\operatorname{Dsl}(w)$ of $\operatorname{Asl}(w)$ is contained in $I$ as desired.

Conversely, consider an element $w \in S_{n}$ for which $\operatorname{Dsl}(w) \subseteq I$, and write $I=\{i_{h}\}_{h=1}^{k-1}$ in increasing order, completed by setting $i_{0}=0$ and $i_{k}=n$ as usual. Then every subsequence $\{w_{i}\}_{i=i_{h}+1}^{i_{h+1}}$ of the one-line notation for $w$ is increasing by Definition \ref{asperm} and the assumption on $\operatorname{Dsl}(w)$. We thus gather them into a set, and create an ordered partition of $\mathbb{N}_{n}$ into $k$ sets of the prescribed sizes. It is clear that these constructions are inverse to one another, yielding the two sides of the asserted bijection. This proves the lemma.
\end{proof}

We thus make the following definition.
\begin{defn}
For $I\subseteq\mathbb{N}_{n-1}$, of size $k-1$ for $1 \leq k \leq n$, and the length $k$ composition $\vec{m}:=\{m_{h}\}_{h=0}^{k-1}:=\operatorname{comp}_{n}I \vDash n$, we denote by $\mathcal{OP}_{n,I}$ the set of ordered partitions of $\mathbb{N}_{n}$ into $k$ sets with sizes $\{m_{h}\}_{h=0}^{k-1}$ as in Lemma \ref{Snparts}, as well as $W_{\vec{m}}=W_{m_{0},\ldots,m_{k-1}}$ for the corresponding representation $\mathbb{Q}[\mathcal{OP}_{n,I}]$ of $S_{n}$. \label{OPnIdef}
\end{defn}
Lemma \ref{Snparts} and Definition \ref{OPnIdef} are closely related to the \emph{ascent starred permutations} from \cite{[RW]}, \cite{[HRS]}, and others, where the locations of the stars are according to the complement of $I$ in our notation.

It is clear that $\mathcal{OP}_{n,k}$ is the disjoint union of the sets $\mathcal{OP}_{n,I}$ from Definition \ref{OPnIdef}, with $|I|=k-1$, and that the latter are precisely the orbits in the action of $S_{n}$ on the former. Hence $\mathbb{Q}[\mathcal{OP}_{n,k}]$ equals $\bigoplus_{I\subseteq\mathbb{N}_{n-1},\ |I|=k-1}\mathbb{Q}[\mathcal{OP}_{n,I}]$, which means, via Lemma \ref{setscomp}, that this is the same as the direct sum of $W_{\vec{m}}$ over compositions $\vec{m} \vDash n$ with $\ell(\vec{m})=k$.

\begin{ex}
The set $I=\mathbb{N}_{n-1}$ yields as $\mathbb{Q}[\mathcal{OP}_{n,I}]$ (which is $W_{1,\ldots,1}$) the standard representation $\mathbb{Q}[\mathcal{OP}_{n,n}]=\mathbb{Q}[S_{n}]$ (by identifying each element of $S_{n}$ with the sequence of singletons arising from its one-line notation), and every $w \in S_{n}$ satisfies the condition from Lemma \ref{Snparts}. From the empty set $I$ we get as $\mathbb{Q}[\mathcal{OP}_{n,I}]=W_{n}$ the trivial representation, since $\mathcal{OP}_{n,I}=\mathcal{OP}_{n,1}$ consists of a single set $\mathbb{N}_{n}$, and there is only one permutation, namely $w=\operatorname{Id}_{n}$, for which $\operatorname{Dsl}(w)$ is empty.
\label{OPextriv}
\end{ex}
Example \ref{OPextriv}, containing the trivial settings, covers the cases with $n\leq2$. The first non-trivial examples are as follows.
\begin{ex}
Take $n=3$ and $k=2$, so that $I$ is either $\{1\}$ or $\{2\}$. In the former case $\mathcal{OP}_{3,I}$ consists of the ordered partitions $\big(\{1\},\{2,3\}\big)$, $\big(\{2\},\{1,3\}\big)$, and $\big(\{3\},\{1,2\}\big)$, associated with the elements 123, 213, and 312 of $S_{3}$ (the three permutations for which only 1 might be in the $\operatorname{Dsl}$-set), and they span the representation $W_{1,2}$. The elements of $\mathcal{OP}_{3,I}$ in the latter case are $\big(\{1,2\},\{3\}\big)$, $\big(\{1,3\},\{2\}\big)$, and $\big(\{2,3\},\{1\}\big)$, the corresponding permutations are 123, 132, and 231 (with only 2 possibly in the $\operatorname{Dsl}$-set), and the representation is $W_{2,1}$. \label{OPexn3}
\end{ex}

\begin{ex}
When $n=4$ and $k=2$, the singletons $\{1\}$ produce the representation $W_{1,3}$ spanned by the partitions associated with 1234, 2134, 3124, and 4123, from $\{2\}$ we get $W_{2,2}$ with the basis corresponding to 1234, 1324, 1423, 2314, 2413, and 3412, and using $\{3\}$ the associated representation $W_{3,1}$ is generated by the elements related to 1234, 1243, 1342, and 2341. For $n=4$ and $k=3$ we obtain from the sets $\{1,2\}$, $\{1,3\}$, and $\{2,3\}$ the respective representations $W_{1,1,2}$, $W_{1,2,1}$, and $W_{2,1,1}$. \label{OPexn4}
\end{ex}

\medskip

The combination of the construction from \cite{[ATY]} with the modified RSK algorithm from Definition \ref{modRSK} combines to a bijection between the elements of $S_{n}$ and the higher Specht polynomials $F_{w}=F_{T}^{S}=F_{T,C}$ from Definition \ref{Spechtdef} (indeed, Theorem \ref{VSVCreps} allows us to restrict attention to standard $T$ as well), which altogether is, by Theorem \ref{VSVCreps}, a union of bases for the irreducible components $V^{S}$ from Definition \ref{defSpecht} producing $R_{n,n}$. We recall the sets $\operatorname{Dsi}^{c}(S)$ and $\operatorname{Asi}^{c}(S)$ from Definition \ref{Dsic} for a standard Young tableau, and generalize this bijection, as well as Definition \ref{Spechtdef}, as follows.
\begin{defn}
Take $I\subseteq\mathbb{N}_{n-1}$, $w \in S_{n}$, $\lambda \vdash n$, $S\in\operatorname{SYT}(\lambda)$, $T\in\operatorname{SYT}(\lambda)$, and $C\in\operatorname{SYT}(\lambda)$, and assume that $I$ contains $\operatorname{Dsl}(w)$, $\operatorname{Dsi}^{c}(S)$, and $\operatorname{Dsp}^{c}(C)$. For every $i \in I$, let $r_{i}$ be $|\{j\in\mathbb{N}_{n-1} \setminus I\;|\;j<i\}|$.
\begin{enumerate}[$(i)$]
\item We define $\operatorname{Asl}_{I}(w):=I\setminus\operatorname{Dsl}(w)=I\cap\operatorname{Asl}(w)$, and define $F_{w,I}$ to be the product $F_{w}\cdot\prod_{i\in\operatorname{Asl}_{I}(w)}e_{r_{i}}$.
\item Denote the set $I\setminus\operatorname{Dsi}^{c}(S)=I\cap\operatorname{Asi}^{c}(S)$ by $\operatorname{Asi}^{c}_{I}(S)$, and define $F_{T,I}^{S}$ to be $F_{T}^{S}\cdot\prod_{i\in\operatorname{Asi}^{c}_{I}(S)}e_{r_{i}}$. In addition, for $r>0$ set $h_{r}:=\big|\{i\in\operatorname{Asi}^{c}_{I}(S)\;|\;r_{i}=r\}\big|$ and write $\vec{h}^{S}_{I}$ for the vector $\{h_{r}\}_{r=1}^{n}$.
\item Let $\operatorname{Asp}_{I}^{c}(C)$ denote $I\setminus\operatorname{Dsp}^{c}(C)$, and set $F_{C,T}^{I}$ to be $F_{C,T}\cdot\prod_{i\in\operatorname{Asp}^{c}_{I}(C)}e_{r_{i}}$. If $r>0$ then denote $h_{r}:=\big|\{i\in\operatorname{Asp}^{c}_{I}(C)\;|\;r_{i}=r\}\big|$, and define $\vec{h}_{C}^{I}$ to be the vector $\{h_{r}\}_{r=1}^{n}$.
\end{enumerate} \label{FwIdef}
\end{defn}
Of course, we use the convention $e_{0}=1$ in case $r_{i}=0$ for some $i$ in Definition \ref{FwIdef}, and an empty product there equals 1 as usual. The comparison of the various objects from that definition (extending, e.g., Definition \ref{Spechtdef}), and their relations with Definition \ref{defSpecht}, are as follows.
\begin{lem}
Consider a subset $I\subseteq\mathbb{N}_{n-1}$.
\begin{enumerate}[$(i)$]
\item If $T=P(w)$ and $S=\tilde{Q}(w)$ for $w \in S_{n}$, then $F_{w,I}$ and $F_{T,I}^{S}$ are defined together, and they are equal in this case.
\item When $C=\operatorname{ct}(S)$, the polynomial $F_{C,T}^{I}$ is defined if and only if $F_{T,I}^{S}$ is, and then they are equal.
\item The polynomials $\big\{F_{T,I}^{S}\;|\;T\in\operatorname{SYT}(\lambda)\big\}$, where $S\in\operatorname{SYT}(\lambda)$, form a basis for the representation $V^{S}_{\vec{h}^{S}_{I}}$, which homogeneous of degree $\operatorname{cc}(S)+\sum_{r=1}^{n}rh_{r}$.
\item For $C\in\operatorname{SYT}(\lambda)$ the polynomials $\big\{F_{C,T}^{I}\;|\;T\in\operatorname{SYT}(\lambda)\big\}$, which are homogeneous of degree $\Sigma(C)+\sum_{r=1}^{n}rh_{r}$, span the representation $V_{C}^{\vec{h}_{C}^{I}}$, and are a basis for it.
\end{enumerate} \label{spanreps}
\end{lem}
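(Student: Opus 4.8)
The proof is essentially a bookkeeping argument that unpacks the definitions and invokes the earlier structural results. The plan is as follows.

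\textbf{Parts $(i)$ and $(ii)$.} For part $(i)$, recall from Definition \ref{Spechtdef} that $F_{w}=F_{P(w)}^{\tilde{Q}(w)}$ when $T=P(w)$ and $S=\tilde{Q}(w)$. By Lemma \ref{RSKcor} together with the computation in the proof of Proposition \ref{evkeepsdeg}, we have $\operatorname{Dsl}(w)=\operatorname{Dsi}^{c}(\tilde{Q}(w))=\operatorname{Dsi}^{c}(S)$; hence the hypothesis ``$I$ contains $\operatorname{Dsl}(w)$'' is equivalent to ``$I$ contains $\operatorname{Dsi}^{c}(S)$'', so $F_{w,I}$ and $F_{T,I}^{S}$ are defined under the same condition. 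Moreover $\operatorname{Asl}_{I}(w)=I\setminus\operatorname{Dsl}(w)=I\setminus\operatorname{Dsi}^{c}(S)=\operatorname{Asi}^{c}_{I}(S)$, so the two external products $\prod_{i}e_{r_{i}}$ coincide, giving $F_{w,I}=F_{T,I}^{S}$. Part $(ii)$ is the same reasoning using Definition \ref{Dsic}: when $C=\operatorname{ct}(S)$ we noted after Definition \ref{Dsic} that $\operatorname{Dsp}^{c}(C)=\operatorname{Dsi}^{c}(S)$, so $F_{C,T}^{I}$ is defined iff $F_{T,I}^{S}$ is, the index sets $\operatorname{Asp}_{I}^{c}(C)$ and $\operatorname{Asi}_{I}^{c}(S)$ agree, and since $F_{C,T}=F_{T}^{S}$ by Remark \ref{samepols}, the polynomials are equal.

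\textbf{Parts $(iii)$ and $(iv)$.} By Definition \ref{FwIdef}$(ii)$, the external multiplier attached to $F_{T}^{S}$ is $\prod_{i\in\operatorname{Asi}_{I}^{c}(S)}e_{r_{i}}$; grouping the factors by the value $r=r_{i}$ and using the definition of $h_{r}=|\{i\in\operatorname{Asi}_{I}^{c}(S):r_{i}=r\}|$, this product is exactly $\prod_{r=1}^{n}e_{r}^{h_{r}}$ with $\vec{h}=\vec{h}_{I}^{S}$. Thus $F_{T,I}^{S}$ is the image of $F_{T}^{S}$ under multiplication by $\prod_{r}e_{r}^{h_{r}}$, which by Definition \ref{defSpecht}$(iii)$ means precisely that the span of $\{F_{T,I}^{S}:T\in\operatorname{SYT}(\lambda)\}$ is $V^{S}_{\vec{h}_{I}^{S}}$. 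Theorem \ref{VSVCreps} then tells us that this space is an irreducible representation isomorphic to $\mathcal{S}^{\lambda}$, that $\{F_{T,I}^{S}:T\in\operatorname{SYT}(\lambda)\}$ is a basis, and that it is homogeneous of the stated degree $\operatorname{cc}(S)+\sum_{r=1}^{n}rh_{r}$. Part $(iv)$ is identical, replacing $S$ by $C$, $\operatorname{cc}(S)$ by $\Sigma(C)$, $V^{S}$ by $V_{C}$, and invoking the $V_{C}$-version of Theorem \ref{VSVCreps}.

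\textbf{Where the care is needed.} There is no deep obstacle; the only thing one must be careful about is making the ``defined together'' statements precise, i.e. verifying that the hypotheses ``$I\supseteq\operatorname{Dsl}(w)$'', ``$I\supseteq\operatorname{Dsi}^{c}(S)$'', ``$I\supseteq\operatorname{Dsp}^{c}(C)$'' really are equivalent under the respective identifications $S=\tilde{Q}(w)$ and $C=\operatorname{ct}(S)$ — and this is exactly what Lemma \ref{RSKcor}, the proof of Proposition \ref{evkeepsdeg}, and the remark following Definition \ref{Dsic} supply. The remaining step, recognizing the product $\prod_{i\in\operatorname{Asi}_{I}^{c}(S)}e_{r_{i}}$ as $\prod_{r}e_{r}^{h_{r}}$, is just a reindexing, so the whole lemma reduces to assembling Definition \ref{FwIdef}, Definition \ref{defSpecht}, and Theorem \ref{VSVCreps}.
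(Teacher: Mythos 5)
Your proposal is correct and follows essentially the same route as the paper: identify $\operatorname{Dsl}(w)=\operatorname{Dsi}^{c}(\tilde{Q}(w))$ and $\operatorname{Dsp}^{c}(\operatorname{ct}(S))=\operatorname{Dsi}^{c}(S)$ via Lemma \ref{RSKcor}, the proof of Proposition \ref{evkeepsdeg}, and Definition \ref{Dsic}, so the definedness conditions and the index sets $\operatorname{Asl}_{I}(w)$, $\operatorname{Asi}^{c}_{I}(S)$, $\operatorname{Asp}^{c}_{I}(C)$ agree, and then deduce parts $(iii)$ and $(iv)$ by recognizing the multiplier as $\prod_{r}e_{r}^{h_{r}}$ and invoking Definition \ref{defSpecht} and Theorem \ref{VSVCreps}. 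The only cosmetic addition you make is spelling out the reindexing of the elementary-symmetric-function product (with $e_{0}=1$ absorbing the $r_{i}=0$ factors), which the paper leaves implicit.
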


\begin{proof}
Lemma \ref{RSKcor}, the proof of Proposition \ref{evkeepsdeg}, and the definition of the sets in Definition \ref{Dsic} show that the equalities $\operatorname{Asi}^{c}(S)=\operatorname{Asl}(w)$ and $\operatorname{Dsi}^{c}(S)=\operatorname{Dsl}(w)$ hold for $S:=\tilde{Q}(w)$ from Definition \ref{modRSK}, and we recall that $\operatorname{Dsp}^{c}(C)=\operatorname{Dsi}^{c}(S)$ when $C=\operatorname{ct}(S)$. Hence the condition of being contained in $I$ is the same, and we get $\operatorname{Asi}^{c}_{I}(S)=\operatorname{Asl}_{I}(w)$ under the former condition and $\operatorname{Asp}^{c}_{I}(C)=\operatorname{Asi}^{c}_{I}(S)$ in the latter. The remaining parts of Definition \ref{FwIdef} thus imply parts $(i)$ and $(ii)$, while parts $(iii)$ and $(iv)$ are immediate consequences of that definition, together with Theorem \ref{VSVCreps} and Definition \ref{defSpecht}. This proves the lemma.
\end{proof}
In fact, in Definition \ref{FwIdef} we may take $T$ to be any tableau of shape $\lambda$ and content $\mathbb{N}_{n}$, as in Definition \ref{Spechtdef}, and the first two parts of Lemma \ref{spanreps} continue to hold (with the additional polynomials do not increase the representations from the last two parts there). The objects from Definition \ref{FwIdef} and Lemma \ref{spanreps} take the following form in the settings from Example \ref{OPextriv}.
\begin{ex}
When $I=\mathbb{N}_{n-1}$, the containment condition is satisfied for every $w$, $S$, $T$, and $C$, but as the complement of this set is empty, we get $r_{i}=0$ for every $i$ and thus Definition \ref{FwIdef} and the two latter parts of Lemma \ref{spanreps} reduce to Definition \ref{Spechtdef} and Theorem \ref{VSVCreps} in this case (with $\vec{h}^{S}_{I}$ and $\vec{h}_{C}^{I}$ being the vector of zeros). For the empty set $I$, the restriction from Definition \ref{FwIdef} only leaves $w=\operatorname{Id}_{n}$, $\lambda=n$ (the partition of length 1), and the unique elements $S\in\operatorname{SYT}(\lambda)$ and $C\in\operatorname{CCT}(\lambda)$ (the latter with all the entries 0), and as the products in that definition are empty ones, all the polynomials obtained in this case reduce to the constant 1. \label{trivsets}
\end{ex}

\begin{ex}
When $n=3$ and $k=2$ as in Example \ref{OPexn3}, the element 123 of $S_{3}$ is associated with the tableaux from the end of Example \ref{trivsets}, and the remaining elements for $I=\{1\}$ there satisfy \[\tilde{Q}(213)=\tilde{Q}(312)=\begin{ytableau} 1 & 2 \\ 3 \end{ytableau},\mathrm{\ with\ }\operatorname{ct}=\begin{ytableau} 0 & 0 \\ 1 \end{ytableau}\mathrm{\ \ and\ }\operatorname{Dsi}^{c}=\operatorname{Dsp}^{c}=\{1\},\] while when $I=\{2\}$ there are the elements \[\tilde{Q}(132)=\tilde{Q}(231)=\begin{ytableau} 1 & 3 \\ 2 \end{ytableau},\mathrm{\ where\ }\operatorname{ct}=\begin{ytableau} 0 & 1 \\ 1 \end{ytableau}\mathrm{\ \ with\ }\operatorname{Dsi}^{c}=\operatorname{Dsp}^{c}=\{2\}.\] The $P$-tableaux are these two elements of $\operatorname{SYT}(21\vdash3)$, one for each value of $I$, meaning that in the first case these two elements produce the basis $x_{2}-x_{1}$ and $x_{3}-x_{1}$ for the representation $V^{\substack{12 \\ 3\hphantom{4}}}=V_{\substack{00 \\ 1\hphantom{1}}}$, while the second one yields $(x_{2}-x_{1})x_{3}$ and $(x_{3}-x_{1})x_{2}$, which form a basis for $V^{\substack{13 \\ 2\hphantom{4}}}=V_{\substack{01 \\ 1\hphantom{2}}}$. Finally, note that for $w=123$ the value of $r_{i}$ for the unique element of $\operatorname{Asl}_{I}(w)=I$ is 0 when $I=\{1\}$ and 1 in case $I=\{2\}$. Hence $W_{1,2}$ is the direct sum of the first representation and the trivial representation $V^{123}=V_{000}$ on constants, while $W_{2,1}$ is obtained by adding to the second representation the trivial one on multiples of the symmetric function $e_{1}=x_{1}+x_{2}+x_{3}$. \label{n3kI}
\end{ex}

\begin{ex}
Take now $n=4$ and $k=2$, and consider the expressions from Example \ref{OPexn4}. The elements $w \in S_{4}$ for which $\operatorname{Dsi}(w)$ equals $\{1\}$ or $\{3\}$ are associated via $\tilde{Q}$ and then $\operatorname{ct}$ with the tableaux \[\begin{ytableau} 1 & 2 & 3 \\ 4 \end{ytableau}\mathrm{\ \ and\ \ }\begin{ytableau} 0 & 0 & 0 \\ 1 \end{ytableau}\mathrm{\ \ or\ \ }\begin{ytableau} 1 & 3 & 4 \\ 2 \end{ytableau}\mathrm{\ \ and\ \ }\begin{ytableau} 0 & 1 & 1 \\ 1 \end{ytableau},\] while for $I=\{2\}$ the three permutations 1324, 1423, and 2314 and the two elements 2413 and 3412 give \[\begin{ytableau} 1 & 2 & 4 \\ 3 \end{ytableau}\mathrm{\ \ with\ \ }\begin{ytableau} 0 & 0 & 1 \\ 1 \end{ytableau}\mathrm{\ \ and\ \ }\begin{ytableau} 1 & 2  \\ 3 & 4 \end{ytableau}\mathrm{\ \ with\ \ }\begin{ytableau} 0 & 0 \\ 1 & 1 \end{ytableau}\] by the same operations. Since for $w=1234 \in S_{4}$ the single value of $r_{i}$ is 0, 2, and 1 respectively, we deduce that $W_{1,3}$ is spanned by the basis $x_{2}-x_{1}$, $x_{3}-x_{1}$, and $x_{4}-x_{1}$ of $V^{\substack{123 \\ 4\hphantom{56}}}=V_{\substack{000 \\ 1\hphantom{11}}}$ and 1 for the trivial representation $V^{1234}=V_{0000}$, while for $W_{3,1}$ we get the basis $(x_{2}-x_{1})x_{3}x_{4}$, $(x_{3}-x_{1})x_{2}x_{4}$, and $(x_{4}-x_{1})x_{2}x_{3}$ for $V^{\substack{134 \\ 2\hphantom{56}}}=V_{\substack{011 \\ 1\hphantom{22}}}$, and the symmetric function $e_{2}$. The representation $W_{2,2}$ is spanned by the union of three bases: One is $(x_{2}-x_{1})(x_{3}+x_{4})$, $(x_{3}-x_{1})(x_{2}+x_{4})$, and $(x_{4}-x_{1})(x_{2}+x_{3})$ generating $V^{\substack{124 \\ 3\hphantom{56}}}=V_{\substack{001 \\ 1\hphantom{12}}}$, another one is $(x_{2}-x_{1})(x_{4}-x_{3})$ and $(x_{3}-x_{1})(x_{4}-x_{2})$ spanning $V^{\substack{12 \\ 34}}=V_{\substack{00 \\ 11}}$, and the last one is $e_{1}$. Moving over to $k=3$ and the sets $\{1,2\}$, $\{1,3\}$, and $\{2,3\}$, we have $r_{1}=0$ in the former two cases, $r_{3}=1$ in the latter two cases, and $r_{2}$ is 0 in the first case and 1 in the last one. These considerations show that the corresponding representations $W_{1,1,2}$, $W_{1,2,1}$, and $W_{2,1,1}$ from Example \ref{OPexn4} are described by the expressions \[V^{1234} \oplus V^{\substack{123 \\ 4\hphantom{56}}} \oplus V^{\substack{124 \\ 3\hphantom{56}}} \oplus V^{\substack{12 \\ 34}} \oplus V^{\substack{12 \\ 3\hphantom{5} \\ 4\hphantom{6}}}=V_{0000} \oplus V_{\substack{000 \\ 1\hphantom{11}}} \oplus V_{\substack{001 \\ 1\hphantom{12}}} \oplus V_{\substack{00 \\ 11}} \oplus V_{\substack{00 \\ 1\hphantom{1} \\ 2\hphantom{2}}},\] \[V^{1234}e_{1} \oplus V^{\substack{123 \\ 4\hphantom{56}}}e_{1} \oplus V^{\substack{134 \\ 2\hphantom{56}}} \oplus V^{\substack{13 \\ 24}} \oplus V^{\substack{13 \\ 2\hphantom{5} \\ 4\hphantom{6}}}=V_{0000}e_{1} \oplus V_{\substack{000 \\ 1\hphantom{11}}}e_{1} \oplus V_{\substack{011 \\ 1\hphantom{22}}} \oplus V_{\substack{01 \\ 12}} \oplus V_{\substack{01 \\ 1\hphantom{1} \\ 2\hphantom{2}}},\] and, respectively, \[V^{1234}e_{1}^{2} \oplus V^{\substack{124 \\ 3\hphantom{56}}}e_{1} \oplus V^{\substack{134 \\ 2\hphantom{56}}}e_{1} \oplus V^{\substack{12 \\ 34}}e_{1} \oplus V^{\substack{14 \\ 2\hphantom{5} \\ 3\hphantom{6}}}=V_{0000}e_{1}^{2} \oplus V_{\substack{001 \\ 1\hphantom{12}}}e_{1} \oplus V_{\substack{011 \\ 1\hphantom{22}}}e_{1} \oplus V_{\substack{00 \\ 11}}e_{1} \oplus V_{\substack{02 \\ 1\hphantom{1} \\ 2\hphantom{2}}}.\] \label{n4kI}
\end{ex}

\begin{rmk}
The case $I=\{1\}$ in Examples \ref{n3kI} and \ref{n4kI} extends to the statement that for every $n$, the corresponding representation $W_{1,n-1}$ is the direct sum of the following two representations: One arises from $\lambda=n-1,1 \vdash n$ with $S^{0}$ or $C^{0}$ from Definition \ref{clasSpecht} and consists of linear polynomials with coefficient sum 0, and the second one is the trivial representation on constants. \label{deg1}
\end{rmk}

\begin{rmk}
By comparing the degrees from part $(iii)$ of Lemma \ref{spanreps} with the sets $\mathcal{OP}_{n,I}$ from Definition \ref{OPnIdef} as well as with their union $\mathcal{OP}_{n,k}$ (or via the interpretation through Lemma \ref{Snparts} as ascent starred permutations), we obtain a statistic on the latter set, that is based on the classical major index. It differs, however, from the extension presented in Equation (2.4) of \cite{[HRS]}. It may be interesting to relate it to other existing statistics on ordered set partitions. \label{genmaj}
\end{rmk}

\medskip

The construction from Definition \ref{FwIdef} has the following properties.
\begin{lem}
Given $\lambda \vdash n$, $1 \leq k \leq n$, and $S\in\operatorname{SYT}(\lambda)$, the map taking a set $\operatorname{Dsi}^{c}(S) \subseteq I\subseteq\mathbb{N}_{n-1}$ to $\vec{h}^{S}_{I}$ is a bijection between this collection of sets and the set $H^{S}_{k}$ from Theorem \ref{decomRnk}. Similarly, for such $\lambda$ and $k$ and for $C\in\operatorname{CCT}(\lambda)$, we get a bijection between the subsets $\operatorname{Dsp}^{c}(C) \subseteq I\subseteq\mathbb{N}_{n-1}$ and the elements of $H_{C}^{k}$ by sending $I$ to $\vec{h}_{C}^{I}$. \label{vechSI}
\end{lem}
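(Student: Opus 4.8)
The plan is to recognise the map $I\mapsto\vec h^S_I$ as a disguised form of the stars-and-bars correspondence of Lemma \ref{setscomp}, applied only to the part of $\mathbb{N}_{n-1}$ lying outside $\operatorname{Dsi}^c(S)$. Abbreviate $D:=\operatorname{Dsi}^c(S)$, $d:=|D|=|\operatorname{Dsi}(S)|$, and $A:=\mathbb{N}_{n-1}\setminus D=\operatorname{Asi}^c(S)$, so $|A|=n-1-d$; the sets $I$ in question have size $k-1$, so the collection is empty unless $d\le k-1$, which is exactly the condition making $H^S_k$ non-empty (as $\vec h=0\in H^S_k$ forces $0<k-d$). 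A set $D\subseteq I\subseteq\mathbb{N}_{n-1}$ with $|I|=k-1$ amounts to the choice of $E:=I\setminus D=\operatorname{Asi}^c_I(S)\subseteq A$ with $|E|=k-1-d$; then $\mathbb{N}_{n-1}\setminus I=A\setminus E$ has exactly $n-k$ elements, so for $i\in E$ the integer $r_i=|\{j\in A\setminus E\;:\;j<i\}|$ lies in $\{0,1,\dots,n-k\}$. This already gives $h_r=0$ for $r>n-k$, one of the two defining conditions of $H^S_k$.

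Next I would set up the combinatorial model. Write $A=\{a_1<\dots<a_{n-1-d}\}$ and encode $E$ by the word $\omega(E)$ of length $n-1-d$ whose $t$-th letter is a star if $a_t\in E$ and a bar otherwise; it has $k-1-d$ stars and $n-k$ bars, and $E\mapsto\omega(E)$ is a bijection. For a star in position $t$ (so $a_t\in E$), the value $r_{a_t}$ is precisely the number of bars preceding it in $\omega(E)$. Hence, setting $h_0:=|\{i\in E:r_i=0\}|$ beside the recorded values $h_r=|\{i\in E:r_i=r\}|$ for $r\ge 1$, the assignment $\omega(E)\mapsto(h_0,h_1,\dots,h_{n-k})$ is exactly the classical stars-and-bars bijection onto the set of weak compositions of $k-1-d$ into $n-k+1$ parts, i.e.\ onto $\{(h_0,\dots,h_{n-k})\in\mathbb{Z}_{\ge0}^{n-k+1}:\sum_{r=0}^{n-k}h_r=k-1-d\}$.

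Then I would remove $h_0$. Since $h_0=(k-1-d)-\sum_{r=1}^{n-k}h_r$ is recovered from the other entries, forgetting it is a bijection onto $\{(h_1,\dots,h_{n-k})\in\mathbb{Z}_{\ge0}^{n-k}:\sum_{r=1}^{n-k}h_r\le k-1-d\}$; padding with zeros for $r>n-k$ identifies this with the set of $\vec h$ having $h_r=0$ for $r>n-k$ and $\sum_{r=1}^{n-k}h_r\le k-1-d$, and since all entries are integers this inequality coincides with $\sum h_r<k-d=k-|\operatorname{Dsi}^c(S)|$. That set is precisely $H^S_k$. Composing the bijections $I\leftrightarrow E\leftrightarrow\omega(E)\leftrightarrow(h_0,\dots,h_{n-k})\leftrightarrow\vec h^S_I$ yields the first assertion. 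For the second, given $C\in\operatorname{CCT}(\lambda)$ put $S:=\operatorname{ct}^{-1}(C)$, so $C=\operatorname{ct}(S)$; then $\operatorname{Dsp}^c(C)=\operatorname{Dsi}^c(S)$, the bound defining $H^k_C$ agrees with that defining $H^S_k$, and $\operatorname{Asp}^c_I(C)=\operatorname{Asi}^c_I(S)$ (as in the proof of Lemma \ref{spanreps}), so $\vec h^I_C=\vec h^S_I$ and the second statement reduces to the first.

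The only point requiring care — and the place one might call the main obstacle, though it is a mild one — is the index shift: the vector $\vec h^S_I$ is recorded only for $r\ge 1$, so the ``$h_0$ slot'' of the stars-and-bars data is suppressed. Injectivity survives precisely because $h_0$ is forced by the total number $k-1-d=|I|-|\operatorname{Dsi}^c(S)|$ of stars, and this is exactly why the equality constraint $\sum_{r\ge 0}h_r=k-1-d$ of stars-and-bars turns into the strict inequality $\sum_{r\ge 1}h_r<k-|\operatorname{Dsi}^c(S)|$ in the definition of $H^S_k$. Everything else is routine bookkeeping.
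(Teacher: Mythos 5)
Your proof is correct and follows essentially the same route as the paper: both arguments bound $r_i$ by $n-k$, observe that the sum of the $h_r$ is at most $|I\setminus\operatorname{Dsi}^{c}(S)|=k-1-|\operatorname{Dsi}^{c}(S)|$ (whence the strict inequality), and recover $I$ by reinstating the suppressed slot $h_0$ from the total $k-1-|\operatorname{Dsi}^{c}(S)|$ and then reconstructing $I\setminus\operatorname{Dsi}^{c}(S)$ gap by gap inside $\mathbb{N}_{n-1}\setminus\operatorname{Dsi}^{c}(S)$ -- your explicit stars-and-bars word is just a cleaner packaging of the paper's greedy reconstruction, and your reduction of the $\operatorname{CCT}$ case to the $\operatorname{SYT}$ case via $\operatorname{ct}^{-1}$ matches the paper's simultaneous treatment of the two cases.
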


\begin{proof}
Write $D$ for $\operatorname{Dsi}^{c}(S)$ or $\operatorname{Dsp}^{c}(C)$, and $A$ for $\operatorname{Asi}^{c}_{I}(S)$ or for $\operatorname{Asp}^{c}_{I}(C)$, which is $I \setminus D$ by Definition \ref{FwIdef}. Now, for every $i \in A$ the number $r_{i}$ is the size of a subset of the complement of $I$ inside $\mathbb{N}_{n-1}$, hence it is bounded by $n-1-|I|=n-k$. It follows that $h_{r}=0$ for all $r>n-k$. Moreover, since every $i \in A$ with $r_{i}\geq1$ contributes 1 to the $\sum_{r=1}^{n-k}h_{r}$, the latter sum is bounded by $|A|$ (with this being a bound rather than an equality because $r_{i}$ might vanish for some values of $i$). But as this set is the complement of $D$ inside $I$, this complement is of size $k-1-|D|$ and hence $\sum_{r=1}^{n-k}h_{r}<k-|D|$. Hence $\vec{h}^{S}_{I} \in H^{S}_{k}$ and $\vec{h}_{C}^{I} \in H_{C}^{k}$ for every such set $I$ by definition.

To obtain the bijections, we need to show how to reconstruct $I$ from a given vector in that set. We complete the vector $\vec{h}$ by defining $h_{0}$ such that $\sum_{r=0}^{n-k}h_{r}=k-1-|D|$, which we write as $k-d$ by setting $d:=|D|+1$. Moreover, determining a set $I$ of size $k-1$ that contains $D$ of size $d-1$ is equivalent to determining the complement $A$, namely a subset of size $k-d$ inside the set $\mathbb{N}_{n-1} \setminus D$ (this set is $\operatorname{Asi}^{c}(S)$ or $\operatorname{Asp}^{c}(C)$), whose size is $n-d$. We wish to do this from the values $\{h_{r}\}_{r=0}^{n-k}$, and we observe that while $r_{i}$ is given in terms of the complement of $I$ in $\mathbb{N}_{n-1}$, this complement is the same as that of $A$ inside $\mathbb{N}_{n-1} \setminus D$ because $D \subseteq I$.

We should thus have $k-d$ values $r_{i}$, associated to elements of $A$, counting, for each $i$ there, the number of elements in that complement that are smaller than it. Since this value increases with $i$, we deduce that the smallest $h_{0}$ elements of $A$ have no element of the complement that is smaller than them, the next $h_{1}$ smallest ones have one element in the complement that is smaller than them, and so on. This forces $A$ to contain the smallest $h_{0}$ elements of $\mathbb{N}_{n-1} \setminus D$, then skip one from that set as the values increase, take the next $h_{1}$ smallest elements there, and so on. The set $A$ thus constructed, of size $k-d$, and the resulting set $I$, are then easily seen to be the unique ones for which the associated vector is our $\vec{h}$. This completes the proof of the lemma.
\end{proof}

\begin{ex}
The statement in Lemma \ref{vechSI} is based on the set $D$ from the proof, rather than the tableau $S$ or $C$ (or the partition $\lambda$). We thus take, as an example, the case $n=9$, the subset $D:=\{3,6\}\subseteq\mathbb{N}_{8}$ (hence $d=3$), and $k=7$. The subset $I:=\{1,3,4,5,6,8\}\subseteq\mathbb{N}_{8}$ is of size $k-1=6$ and contains $D$ with complement $A:=\{1,4,5,8\}$, and we have $\mathbb{N}_{8} \setminus I=\{2,7\}$. It follows from Definition \ref{FwIdef} that $r_{1}=0$, $r_{4}=r_{5}=1$, and $r_{8}=2$, and thus $\vec{h}$ is with $h_{1}=2$, $h_{2}=1$, and $h_{r}=0$ for $r>2=n-k$, with the sum $2+1=3$ being smaller than $k-|D|=5$. To reconstruct $I$ from this vector, we need to see that determine $A$ as a subset of size 4 inside $\mathbb{N}_{8} \setminus D=\{1,2,4,5,7,8\}$ using this vector. We complete the vector to have entry sum $k-d=4$ by setting $h_{0}=1$, so that we need $A$ to be with one element having $r$-value 0, another two with $r$-value 1, and finally an element with $r$-value 2. So we put the single smallest element 1 of that complement in $A$ (since $h_{0}=1$), skip the next element 2, then add the next two elements 4 and 5 to $A$ (as $h_{1}=2$), skip the next one 7, and complete with the single remaining element 8 (due to the fact that $h_{2}=1$), which indeed gives the original set $A$. \label{bijex}
\end{ex}
Here is how it works in the previous examples, with small values.
\begin{ex}
When $k=n$ we must take $I=\mathbb{N}_{n-1}$ (and $r_{i}=0$ for every $r$, since $r>0=n-k$), and for $k=1$ and $d=1$ (hence $D=\emptyset$) we must take $I$ to be empty as well, covering the cases from Example \ref{trivsets}. It is also clear that when $k=d=|D|+1$ the only choice is $I=D$ (corresponding to the trivial vector $\vec{h}$, since the sum of its entries must vanish), covering many cases from Examples \ref{n3kI} and \ref{n4kI}. If $k=2$ then the only remaining possibility is $d=1$ (namely $D$ is empty, and $S$ and $C$ associated with $w=\operatorname{Id}_{n}$), and then $I=\{i\}$ is associated with the vector arising from $r_{i}=i-1$, which is the 0 vector if and only if $i=1$, which completes Example \ref{n3kI}, a part of Example \ref{n4kI}, and Remark \ref{deg1}. In the remaining case $n=4$ and $k=3$, where only the value of $h_{1}$ can show up (since $n-k=1$) and it is bounded by $k-d$, the considerations from Lemma \ref{vechSI} and Example \ref{bijex} yield as follows. For $d=2$, the set $D=\{1\}$ yields $I=\{1,2\}$ when $h_{1}=0$ and $I=\{1,3\}$ in case $h_{1}=1$, from $D=\{2\}$ we get $I=\{1,2\}$ if $h_{1}=0$ and $I=\{2,3\}$ when $h_{1}=1$, and in case $D=\{3\}$ we have $I=\{1,3\}$ from $h_{1}=0$ and $I=\{2,3\}$ if $h_{1}=1$, and with the empty set $D$ with $d=1$, the value $h_{1}=0$ yields $I=\{1,2\}$, when $h_{1}=1$ we get $I=\{1,3\}$, and in case $h_{1}=2$ we have $I=\{2,3\}$. \label{bijsmall}
\end{ex}

\medskip

Our main comparison result is now as follows.
\begin{thm}
Let $R_{n,I}$ be the space spanned by the polynomials $F_{w,I}$ from Definition \ref{FwIdef}, for elements $w \in S_{n}$ for which $\operatorname{Dsl}(w) \subseteq I$.
\begin{enumerate}[$(i)$]
\item The space $R_{n,I}$ is a representation of $S_{n}$ that decomposes as the direct sums $\bigoplus_{\lambda \vdash n}\bigoplus_{S\in\operatorname{SYT}(\lambda),\ \operatorname{Dsi}^{c}(S) \subseteq I}V^{S}_{\vec{h}^{S}_{I}}$ and $\bigoplus_{\lambda \vdash n}\bigoplus_{C\in\operatorname{CCT}(\lambda),\ \operatorname{Dsp}^{c}(C) \subseteq I}V_{C}^{\vec{h}_{C}^{I}}$.
\item The representation $R_{n,I}$ is abstractly isomorphic to $\mathbb{Q}[\mathcal{OP}_{n,I}]$.
\item The sum of $R_{n,I}$ over subsets $I\subseteq\mathbb{N}_{n-1}$ of size $k-1$ is a direct sum, which lifts $R_{n,k}$ in $\mathbb{Q}[\mathbf{x}_{n}]$.
\end{enumerate} \label{main}
\end{thm}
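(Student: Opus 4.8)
The plan is to assemble Theorem \ref{main} from the pieces already in place. Part $(i)$ is essentially a bookkeeping statement: by Definition \ref{FwIdef}, every generator $F_{w,I}$ of $R_{n,I}$ equals $F_{T,I}^{S}$ with $T=P(w)$ and $S=\tilde{Q}(w)$ via part $(i)$ of Lemma \ref{spanreps}, and the modified RSK algorithm from Definition \ref{modRSK} gives a bijection between $w\in S_{n}$ and pairs $(T,S)$ of standard Young tableaux of the same shape. Under this bijection the condition $\operatorname{Dsl}(w)\subseteq I$ translates, by the equality $\operatorname{Dsi}^{c}(\tilde{Q}(w))=\operatorname{Dsl}(w)$ recalled in the proof of Lemma \ref{spanreps}, into $\operatorname{Dsi}^{c}(S)\subseteq I$. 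So $R_{n,I}$ is spanned by $\{F_{T,I}^{S}:\lambda\vdash n,\ S\in\operatorname{SYT}(\lambda),\ \operatorname{Dsi}^{c}(S)\subseteq I,\ T\in\operatorname{SYT}(\lambda)\}$, and by part $(iii)$ of Lemma \ref{spanreps} the polynomials with a fixed $S$ form a basis of $V^{S}_{\vec{h}^{S}_{I}}$. It remains to argue that the sum over $\lambda$ and over the admissible $S$ is direct; this follows from the directness asserted in Theorem \ref{decomRnk}, once we observe (via Lemma \ref{vechSI}) that the map $I\mapsto\vec{h}^{S}_{I}$ carries the sets $I$ with $\operatorname{Dsi}^{c}(S)\subseteq I$ and $|I|=k-1$ bijectively onto $H^{S}_{k}$, so each summand $V^{S}_{\vec{h}^{S}_{I}}$ occurring here is one of the summands occurring in the direct sum of Theorem \ref{decomRnk}. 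The $C$-indexed decomposition is obtained in the same way from parts $(ii)$ and $(iv)$ of Lemma \ref{spanreps}, the relation $\operatorname{Dsp}^{c}(\operatorname{ct}(S))=\operatorname{Dsi}^{c}(S)$, and the $H_{C}^{k}$ half of Lemma \ref{vechSI}.

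For part $(ii)$, the abstract isomorphism type of $R_{n,I}$ is read off from part $(i)$: as an $S_{n}$-representation each $V^{S}_{\vec{h}^{S}_{I}}$ is isomorphic to the Specht module $\mathcal{S}^{\lambda}$ by Theorem \ref{VSVCreps}, so $R_{n,I}\cong\bigoplus_{\lambda\vdash n}(\mathcal{S}^{\lambda})^{\oplus a_{\lambda}}$, where $a_{\lambda}=|\{S\in\operatorname{SYT}(\lambda):\operatorname{Dsi}^{c}(S)\subseteq I\}|$. By Lemma \ref{ctJinv} applied to the set $J=\{n-i:i\in I\}$ (whose $\operatorname{comp}_{n}J$ is, up to the reversal noted after Lemma \ref{Snparts}, the composition $\operatorname{comp}_{n}I$ governing $\mathcal{OP}_{n,I}$), this multiplicity is the Kostka number $K_{\lambda,\alpha}$ with $\alpha$ the content composition $\vec{m}=\operatorname{comp}_{n}I$. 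Since $\mathbb{Q}[\mathcal{OP}_{n,I}]=\mathbb{Q}[\mathcal{OP}_{n,\vec{m}}]$ is the permutation representation on ordered set partitions of type $\vec{m}$, which is $\operatorname{Ind}_{S_{m_0}\times\cdots\times S_{m_{k-1}}}^{S_n}\mathbf{1}$, its decomposition into irreducibles has exactly these Kostka numbers as multiplicities by Young's rule. Hence $R_{n,I}\cong\mathbb{Q}[\mathcal{OP}_{n,I}]$. (Alternatively, one can argue more directly via Lemma \ref{Snparts}, which matches the spanning set $\{F_{w,I}\}$ one-to-one with $\mathcal{OP}_{n,I}$ and with a basis — but the character/multiplicity argument is cleaner.)

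For part $(iii)$, recall that $\mathcal{OP}_{n,k}$ is the disjoint union of the $\mathcal{OP}_{n,I}$ over $I\subseteq\mathbb{N}_{n-1}$ with $|I|=k-1$, so it suffices to check that the sum $\sum_{|I|=k-1}R_{n,I}$ inside $\mathbb{Q}[\mathbf{x}_n]$ is direct and produces $R_{n,k}$. Directness: by part $(i)$ each $R_{n,I}$ is the direct sum of those summands $V^{S}_{\vec{h}}$ of the Gillespie–Rhoades decomposition of $R_{n,k}$ from Theorem \ref{decomRnk} for which $\vec{h}=\vec{h}^{S}_{I}$; by the bijectivity half of Lemma \ref{vechSI}, distinct sets $I$ (of the same size) give disjoint families of pairs $(S,\vec{h})$, while the union over all such $I$ of these families is, again by Lemma \ref{vechSI}, the entire index set $\{(S,\vec{h}):\vec{h}\in H^{S}_{k}\}$ of Theorem \ref{decomRnk}. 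Therefore $\bigoplus_{|I|=k-1}R_{n,I}=\bigoplus_{\lambda}\bigoplus_{S\in\operatorname{SYT}(\lambda)}\bigoplus_{\vec{h}\in H^{S}_{k}}V^{S}_{\vec{h}}$ as a subspace of $\mathbb{Q}[\mathbf{x}_n]$, and this is precisely the $S_n$-equivariant lift of $R_{n,k}$ asserted in Theorem \ref{decomRnk}.

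The only genuinely delicate point — and the one I would write out carefully — is the index-matching in part $(iii)$: one must verify that as $I$ ranges over $(k-1)$-subsets of $\mathbb{N}_{n-1}$ \emph{and} $S$ ranges over $\operatorname{SYT}(\lambda)$ with $\operatorname{Dsi}^{c}(S)\subseteq I$, the resulting pairs $(S,\vec{h}^{S}_{I})$ are exactly the pairs $(S,\vec{h})$ with $\vec{h}\in H^{S}_{k}$, each occurring once. This is the content of Lemma \ref{vechSI} (the reconstruction of $I$ from $\vec{h}$ shows injectivity of $I\mapsto\vec{h}^{S}_{I}$ for fixed $S$, and surjectivity onto $H^{S}_{k}$ shows completeness), so the obstacle is really already dispatched; the rest is assembling the direct-sum statements and invoking Theorem \ref{VSVCreps}, Theorem \ref{decomRnk}, Lemma \ref{ctJinv}, and Young's rule.
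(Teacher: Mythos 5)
Your proposal is correct and follows essentially the same route as the paper: part $(i)$ from Lemma \ref{spanreps} together with the modified RSK bijection, directness of $(i)$ and all of $(iii)$ by matching indices through Lemma \ref{vechSI} against the direct sum of Theorem \ref{decomRnk}, and part $(ii)$ by comparing the multiplicity of each $\mathcal{S}^{\lambda}$, computed as a Kostka number via Lemma \ref{ctJinv} on one side and via the permutation module $M^{\alpha}$ and Young's rule on the other. The only cosmetic difference is that you quote the Kostka number for $\vec{m}=\operatorname{comp}_{n}I$ rather than for its reversal $\operatorname{comp}_{n}J$ as the paper does, which is harmless since Kostka numbers depend only on the multiset of parts.
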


\begin{proof}
The fact that $R_{n,I}$ is the sum of the representations from part $(i)$, using either $\operatorname{SYT}(\lambda)$ or $\operatorname{CCT}(\lambda)$, follows directly from Lemma \ref{spanreps}. Lemma \ref{vechSI} now shows that by summing over such $I$ we obtain precisely the sum from Theorem \ref{decomRnk} (or its analogue using cocharge tableaux). As the latter sum is direct by that theorem, we deduce part $(iii)$, and the direct sum property from part $(i)$ is an immediate consequence.

Now, Theorem \ref{VSVCreps} shows that every direct summand $V^{S}_{\vec{h}^{S}_{I}}$ from part $(i)$, where $S\in\operatorname{SYT}(\lambda)$ for some $\lambda \vdash n$, is isomorphic to the Specht module $\mathcal{S}^{\lambda}$. Thus the multiplicity of $\mathcal{S}^{\lambda}$ in $R_{n,I}$ is the number such tableaux $S$ for which $\operatorname{Dsi}^{c}(S) \subseteq I$. But by setting $J:=\{n-i\;|\;i \in I\}$ we get that this containment is equivalent to $\operatorname{Dsi}(S) \subseteq J$, via Definition \ref{Dsic}. Hence if $\alpha=\operatorname{comp}_{n}J \vDash n$ as in Lemma \ref{setscomp}, then Lemma \ref{ctJinv} shows that this multiplicity is $K_{\lambda,\alpha}$.

We now recall that $\mathcal{OP}_{n,I}$ is an orbit on the action of $S_{n}$ on $\mathcal{OP}_{n,k}$, so that $\mathbb{Q}[\mathcal{OP}_{n,I}]$ is the representation induced by the trivial one of the stabilizer of an element. If we take that element to be the one associated with $w=\operatorname{Id}_{n}$ via Lemma \ref{Snparts}, and write $\operatorname{comp}_{n}I \vDash n$ as $\vec{m}:=\{m_{h}\}_{h=0}^{k-1}$ as in Definition \ref{OPnIdef} (so that $\mathbb{Q}[\mathcal{OP}_{n,I}]=W_{\vec{m}}$), then the stabilizer in question is $\prod_{h=0}^{k-1}S_{m_{h}}$. Since $\alpha$ is obtained from $\vec{m}$ by inverting the order of the entries in the composition (this is clear from the relation between $I$ and $J$), we deduce that $W_{\vec{m}}$ is isomorphic to the representation $M^{\alpha}$ in the notation from \cite{[Sa]}.

But then the multiplicity of $\mathcal{S}^{\lambda}$ in $W_{\vec{m}} \cong M^{\alpha}$ is determined by Theorem 2.11.2 of \cite{[Sa]} to also be the Kostka number $K_{\lambda,\alpha}$. Hence $\mathbb{Q}[\mathcal{OP}_{n,I}]=W_{\vec{m}}$ and $R_{n,I}$ contain each Specht module with the same multiplicity, and they are thus isomorphic as representations of $S_{n}$, yielding part $(ii)$. This completes the proof of the theorem.
\end{proof}

\begin{ex}
The case $I=\mathbb{N}_{n-1}$ and $k=n$ in Theorem \ref{main}, for which part $(iii)$ there yields $R_{n,I}=R_{n,n}=R_{n}$, reduces to the regular representation from \cite{[B]} and the isomorphism from \cite{[ATY]}, via the considerations from Example \ref{trivsets}. Considering the homogeneity degree of each part in this case yields, as a consequence, Theorem 7.2 of \cite{[CF]}. The other case there, of empty $I$ and $k=1$, produces the trivial representation on the constants as associated with the trivial action of $S_{n}$ on a singleton. \label{trivthm}
\end{ex}
Example \ref{trivthm} concerns all the cases where the lift of $R_{n,k}$ is a single representation of the sort $R_{n,I}$ from Theorem \ref{main}. As usual, here are the smallest situations where this is no longer the case.
\begin{ex}
For $k=2$ and $I=\{1\}$, the representation $R_{n,I}$ arising from Examples \ref{n3kI} and \ref{n4kI} for $n=3$ and $n=4$ and from Remark \ref{deg1} in general is closely related to the standard, permutation representation of $S_{n}$. The other representation in Example \ref{n3kI}, as well as that associated with $n=4$ and $I=\{3\}$ in Example \ref{n4kI}, is a different copy of an isomorphic representation that shows up inside $R_{n,2}$ (in fact, Remark \ref{deg1} can be extended by the statement that for any $n$, the case $k=2$ and $I=\{n\}$ yields such a representation). The representation $R_{4,3}$ is the direct sum of the last three representations showing up at the end of Example \ref{n4kI}. \label{stdrep}
\end{ex}
Note that the two representations obtained via Example \ref{n3kI} were seen in Example \ref{stdrep} to be isomorphic, as are the three components inside $R_{4,3}$ there. However, as we saw in Example \ref{n4kI}, the remaining set $I=\{2\}$ when $n=4$ and $k=2$ produces a component that is not isomorphic to the other two (because of the additional irreducible component $V^{\substack{12 \\ 34}}=V_{\substack{00 \\ 11}}$), so that Example \ref{stdrep} also contains the first situation where the direct sum from part $(iii)$ of Theorem \ref{main} is not of isomorphic representations.

\medskip

Recall from Lemma \ref{spanreps} that the irreducible representations $V^{S}_{\vec{h}}$ and $V_{C}^{\vec{h}}$ are all homogeneous. Indeed, since the action of $S_{n}$ preserves degrees, in any sub-representation of $\mathbb{Q}[\mathbf{x}_{n}]$ that is graded, a natural decomposition would be into homogeneous components. While the representations $R_{n,I}$ are reducible, and are indeed not homogeneous in general, they are based on a transitive action, and one might ask whether they admit a natural homogeneous analogue. We now show that they do.
\begin{defn}
Given $I$, $w$, $T$, $S$, and $C$ as in Definition \ref{FwIdef}, with the sets $\operatorname{Asl}_{I}(w)$, $\operatorname{Asi}^{c}_{I}(S)$, and $\operatorname{Asp}_{I}^{c}(C)$. We then define $F_{w,I}^{\mathrm{hom}}:=F_{w}\cdot\prod_{i\in\operatorname{Asl}_{I}(w)}e_{i}$ as well as $F_{T,I}^{S,\mathrm{hom}}:=F_{T}^{S}\cdot\prod_{i\in\operatorname{Asi}^{c}_{I}(S)}e_{i}$ and $F_{C,T}^{I,\mathrm{hom}}:=F_{C,T}\cdot\prod_{i\in\operatorname{Asp}_{I}^{c}(C)}e_{i}$, and let $\vec{h}(I,S)$ and $\vec{h}(C,I)$ be the characteristic vectors of the sets $\operatorname{Asi}^{c}_{I}(S)$ and $\operatorname{Asp}_{I}^{c}(C)$ respectively, in which $h_{r}$ equals 1 in case $r$ is in that set, and is 0 otherwise. \label{hompols}
\end{defn}
Definition \ref{hompols} can also be extended by allowing $T$ to be a tableau of shape $\lambda$ and content $\mathbb{N}_{n}$ that is not necessarily standard. The notions from Definition \ref{hompols} are related by the following analogue of Lemma \ref{spanreps}.
\begin{lem}
Take again some subset $I$ of $\mathbb{N}_{n-1}$.
\begin{enumerate}[$(i)$]
\item When $w \in S_{n}$ and $T$ and $S$ are related by $T=P(w)$ and $S=\tilde{Q}(w)$ for $w \in S_{n}$, we have $F_{w,I}^{\mathrm{hom}}=F_{T,I}^{S,\mathrm{hom}}$, with both sides defined together.
\item In case $C=\operatorname{ct}(S)$, the equality $F_{C,T}^{I,\mathrm{hom}}=F_{T,I}^{S,\mathrm{hom}}$, where again both sides are defined together.
\item If $S\in\operatorname{SYT}(\lambda)$ is with $\operatorname{Dsi}^{c}(S) \subseteq I$, then the representation $V^{S}_{\vec{h}(I,S)}$ is homogeneous of degree $\sum_{i \in I}i$, and it admits the set $\big\{F_{T,I}^{S,\mathrm{hom}}\;|\;T\in\operatorname{SYT}(\lambda)\big\}$ as a basis.
\item Given $C\in\operatorname{SYT}(\lambda)$ for which $\operatorname{Dsp}^{c}(C) \subseteq I$, the representation $V_{C}^{\vec{h}(C,I)}$ is also homogeneous of degree $\sum_{i \in I}i$, and $\big\{F_{C,T}^{I,\mathrm{hom}}\;|\;T\in\operatorname{SYT}(\lambda)\big\}$ form a basis for it.
\end{enumerate} \label{repshom}
\end{lem}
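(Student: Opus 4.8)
The plan is to mirror the proof of Lemma~\ref{spanreps}, the only change being that each elementary symmetric function $e_{r_{i}}$ there is replaced by $e_{i}$; since the algebraic mechanism (multiplication by an $S_{n}$-invariant polynomial) is unchanged, almost everything carries over verbatim, and the one genuinely new point is the degree computation.

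For parts $(i)$ and $(ii)$ I would argue exactly as in the proof of Lemma~\ref{spanreps}. Lemma~\ref{RSKcor}, the proof of Proposition~\ref{evkeepsdeg}, and Definition~\ref{Dsic} give $\operatorname{Asi}^{c}(S)=\operatorname{Asl}(w)$ and $\operatorname{Dsi}^{c}(S)=\operatorname{Dsl}(w)$ for $S:=\tilde{Q}(w)$, while $\operatorname{Dsp}^{c}(C)=\operatorname{Dsi}^{c}(S)$ and $\operatorname{Asp}^{c}(C)=\operatorname{Asi}^{c}(S)$ when $C=\operatorname{ct}(S)$. Hence the conditions $\operatorname{Dsl}(w)\subseteq I$, $\operatorname{Dsi}^{c}(S)\subseteq I$, and (in part $(ii)$) $\operatorname{Dsp}^{c}(C)\subseteq I$ are the same, so the two sides in each of $(i)$ and $(ii)$ are defined together; moreover $\operatorname{Asl}_{I}(w)=\operatorname{Asi}^{c}_{I}(S)$ and $\operatorname{Asp}^{c}_{I}(C)=\operatorname{Asi}^{c}_{I}(S)$, and $F_{w}=F_{T}^{S}=F_{C,T}$ by Definition~\ref{Spechtdef} and Remark~\ref{samepols}. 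Multiplying equal polynomials by the equal products $\prod_{i}e_{i}$ over these coinciding index sets gives the asserted equalities.

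For parts $(iii)$ and $(iv)$, observe that since $\vec{h}(I,S)$ is the characteristic vector of $\operatorname{Asi}^{c}_{I}(S)\subseteq\mathbb{N}_{n-1}$, each entry $h_{r}$ lies in $\{0,1\}$ and $h_{n}=0$, so the symmetric polynomial $\prod_{r=1}^{n}e_{r}^{h_{r}}$ of Definition~\ref{defSpecht} is precisely $\prod_{i\in\operatorname{Asi}^{c}_{I}(S)}e_{i}$, a product of distinct elementary symmetric functions with all indices legitimate. Thus $V^{S}_{\vec{h}(I,S)}$ is by definition the image of $V^{S}$ under multiplication by this polynomial; as multiplication by a nonzero polynomial is injective on $\mathbb{Q}[\mathbf{x}_{n}]$ and the $e_{i}$ are $S_{n}$-invariant, this is an injective $S_{n}$-equivariant map, so $V^{S}_{\vec{h}(I,S)}\cong V^{S}\cong\mathcal{S}^{\lambda}$ is irreducible by Theorem~\ref{VSVCreps}, with basis the images of $\{F_{T}^{S}\;|\;T\in\operatorname{SYT}(\lambda)\}$, which by Definition~\ref{hompols} are exactly the $F_{T,I}^{S,\mathrm{hom}}$. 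For the degree, Theorem~\ref{VSVCreps} gives that $V^{S}_{\vec{h}(I,S)}$ is homogeneous of degree $\operatorname{cc}(S)+\sum_{r=1}^{n}rh_{r}=\operatorname{cc}(S)+\sum_{i\in\operatorname{Asi}^{c}_{I}(S)}i$; since $\operatorname{cc}(S)=\sum_{i\in\operatorname{Dsi}^{c}(S)}i$ by Corollary~\ref{sumcc} and Definition~\ref{Dsic}, and $\operatorname{Asi}^{c}_{I}(S)=I\setminus\operatorname{Dsi}^{c}(S)$ is disjoint from $\operatorname{Dsi}^{c}(S)$ with union $I$ (using $\operatorname{Dsi}^{c}(S)\subseteq I$), this degree equals $\sum_{i\in I}i$. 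Part $(iv)$ is the identical argument with $V^{S}$, $\operatorname{cc}(S)$, $\operatorname{Dsi}^{c}(S)$ replaced by $V_{C}$, $\Sigma(C)=\sum_{i\in\operatorname{Dsp}^{c}(C)}i$ (Lemma~\ref{Dspc}$(iii)$), $\operatorname{Dsp}^{c}(C)$, invoking the $V_{C}^{\vec{h}}$ statement of Theorem~\ref{VSVCreps}. There is no serious obstacle; the only care needed is checking that the characteristic vector really produces the intended product of distinct $e_{i}$ and that the bookkeeping identity $\operatorname{cc}(S)=\sum_{i\in\operatorname{Dsi}^{c}(S)}i$ (resp.\ $\Sigma(C)=\sum_{i\in\operatorname{Dsp}^{c}(C)}i$) is what makes the two degree contributions combine into $\sum_{i\in I}i$.
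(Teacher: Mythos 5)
Your proposal is correct and follows essentially the same route as the paper: parts $(i)$ and $(ii)$ are reduced to the argument of Lemma \ref{spanreps} via the identifications $\operatorname{Asl}_{I}(w)=\operatorname{Asi}^{c}_{I}(S)=\operatorname{Asp}^{c}_{I}(C)$, and the degree in parts $(iii)$ and $(iv)$ is obtained exactly as in the paper, from $\operatorname{cc}(S)=\sum_{i\in\operatorname{Dsi}^{c}(S)}i$ (resp.\ $\Sigma(C)=\sum_{i\in\operatorname{Dsp}^{c}(C)}i$) plus the entry sum of the characteristic vector of the complement $I\setminus\operatorname{Dsi}^{c}(S)$, which combine to $\sum_{i \in I}i$. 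Your extra remark on injectivity of multiplication by the symmetric polynomial is a harmless elaboration of what Theorem \ref{VSVCreps} already provides.
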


\begin{proof}
All parts are established just like those of Lemma \ref{spanreps} (with Definition \ref{hompols} replacing Definition \ref{FwIdef}), and the only assertion that remains to be verified is the asserted homogeneity degrees in parts $(iii)$ and $(iv)$. Now, the homogeneity degree of $V^{S}$ is $\operatorname{cc}(S)=\sum_{i\in\operatorname{Dsi}^{c}(S)}i$ by Theorem \ref{VSVCreps}, Corollary \ref{sumcc} and Definition \ref{Dsic}, while that theorem and Lemma \ref{Dspc} imply that $V_{C}$ is homogeneous of degree $\Sigma(C)=\sum_{i\in\operatorname{Dsp}^{c}(C)}i$. But the additional degree $\sum_{r=1}^{n}rh_{r}$ becomes the entry sum of a set when $\vec{h}$ is the characteristic vector of that set, meaning that when this set is the complement $\operatorname{Asi}^{c}_{I}(S)$ or $\operatorname{Asp}_{I}^{c}(C)$ of our set in $I$, the total degree is $\sum_{i \in I}i$ as desired. This proves the lemma.
\end{proof}

This is the homogeneous version of Theorem \ref{main}.
\begin{prop}
We define $R_{n,I}^{\mathrm{hom}}$ to be the space that is generated by the polynomials $F_{w,I}^{\mathrm{hom}}$ from Definition \ref{hompols}, where $w$ goes over elements of $S_{n}$ that satisfy $\operatorname{Dsl}(w) \subseteq I$.
\begin{enumerate}[$(i)$]
\item The space $R_{n,I}^{\mathrm{hom}}$ is also a representation of $S_{n}$, whose decomposition is $\bigoplus_{\lambda \vdash n}\bigoplus_{S\in\operatorname{SYT}(\lambda),\ \operatorname{Dsi}^{c}(S) \subseteq I}\!V^{S}_{\vec{h}(I,S)}\!=\!\bigoplus_{\lambda \vdash n}\bigoplus_{C\in\operatorname{CCT}(\lambda),\ \operatorname{Dsp}^{c}(C) \subseteq I}\!V_{C}^{\vec{h}(C,I)}$.
\item The representation $R_{n,I}^{\mathrm{hom}}$ is also isomorphic to $\mathbb{Q}[\mathcal{OP}_{n,I}]$.
\item The representation $R_{n,I}^{\mathrm{hom}}$ is on homogeneous polynomials of degree $\sum_{i \in I}i$.
\end{enumerate} \label{homrep}
\end{prop}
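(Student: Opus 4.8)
The plan is to follow the proof of Theorem \ref{main} essentially line by line, replacing Lemma \ref{spanreps} by Lemma \ref{repshom}; the only genuinely new point is the directness of the decomposition in part $(i)$, which for $R_{n,I}$ was inherited from the ambient direct sum of Theorem \ref{decomRnk} but has no such ambient here.

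First I would identify the span. By parts $(i)$ and $(iii)$ of Lemma \ref{repshom}, for $w$ with $\operatorname{Dsl}(w)\subseteq I$ we have $F_{w,I}^{\mathrm{hom}}=F_{P(w),I}^{\tilde{Q}(w),\mathrm{hom}}$, and (as recalled in the proof of Lemma \ref{spanreps}, $\operatorname{Dsl}(w)=\operatorname{Dsi}^{c}(\tilde{Q}(w))$) the modified RSK algorithm from Definition \ref{modRSK} identifies $\{w\in S_{n}\;|\;\operatorname{Dsl}(w)\subseteq I\}$ with the pairs $(T,S)$ of a common shape $\lambda\vdash n$ with $\operatorname{Dsi}^{c}(S)\subseteq I$, where $T$ ranges over all of $\operatorname{SYT}(\lambda)$ for each admissible $S$. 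Hence $R_{n,I}^{\mathrm{hom}}=\sum_{\lambda\vdash n}\sum_{S:\operatorname{Dsi}^{c}(S)\subseteq I}V^{S}_{\vec{h}(I,S)}$, and the $\operatorname{CCT}$-description follows the same way via parts $(ii)$ and $(iv)$ of Lemma \ref{repshom} together with $\operatorname{Dsp}^{c}(\operatorname{ct}(S))=\operatorname{Dsi}^{c}(S)$. Granting directness, this exhibits $R_{n,I}^{\mathrm{hom}}$ as a sum of sub-representations of $\mathbb{Q}[\mathbf{x}_{n}]$, hence a representation, and part $(iii)$ is then immediate since each $V^{S}_{\vec{h}(I,S)}$ is homogeneous of degree $\sum_{i\in I}i$ by Lemma \ref{repshom}$(iii)$.

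For directness I would group the summands by $D:=\operatorname{Dsi}^{c}(S)$: for every $S$ with $\operatorname{Dsi}^{c}(S)=D$ the external symmetric multiplier $\prod_{i\in\operatorname{Asi}^{c}_{I}(S)}e_{i}=\prod_{i\in I\setminus D}e_{i}=:p_{D}$ is the same, so $V^{S}_{\vec{h}(I,S)}=p_{D}V^{S}$ and the claimed decomposition reads $\sum_{D\subseteq I}p_{D}W_{D}$ with $W_{D}:=\bigoplus_{\lambda}\bigoplus_{S:\operatorname{Dsi}^{c}(S)=D}V^{S}$. Now $\bigoplus_{D}W_{D}$ is all of $R_{n,n}$, hence a direct sum of homogeneous subspaces by the results underlying Theorem \ref{VSVCreps}; choosing the $F_{T}^{S}$ as a homogeneous $\mathbb{Q}$-basis of each $W_{D}$ (their union over all $D$ being a basis of $R_{n,n}$, hence, by the classical freeness of $\mathbb{Q}[\mathbf{x}_{n}]$ as a module over its subring of symmetric polynomials with any homogeneous lift of a basis of $R_{n,n}$ as a module basis, a module basis of $\mathbb{Q}[\mathbf{x}_{n}]$ over that subring), a relation $\sum_{D\subseteq I}p_{D}g_{D}=0$ with $g_{D}\in W_{D}$ forces, coordinate-wise in that module basis, each scalar coefficient to be annihilated by the nonzero symmetric polynomial $p_{D}$, hence to vanish; so all $g_{D}=0$, the sum is direct, and in particular $\{F_{w,I}^{\mathrm{hom}}\}$ is a basis. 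This is the one real obstacle: since all the twisted pieces $p_{D}W_{D}$ sit in the single degree $\sum_{i\in I}i$, they cannot be separated by the grading, and one must bring in freeness over the invariants to separate them; everything else is bookkeeping parallel to Theorem \ref{main}.

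Finally, for parts $(i)$ and $(ii)$ it remains to match multiplicities. The decomposition just obtained has the same indexing set as that of $R_{n,I}$ in Theorem \ref{main}$(i)$, so the multiplicity of $\mathcal{S}^{\lambda}$ in $R_{n,I}^{\mathrm{hom}}$ is $\#\{S\in\operatorname{SYT}(\lambda)\;|\;\operatorname{Dsi}^{c}(S)\subseteq I\}=\#\{S\;|\;\operatorname{Dsi}(S)\subseteq J\}=K_{\lambda,\alpha}$ for $J:=\{n-i\;|\;i\in I\}$ and $\alpha:=\operatorname{comp}_{n}J$, by Lemma \ref{ctJinv}; this is exactly the multiplicity of $\mathcal{S}^{\lambda}$ in $\mathbb{Q}[\mathcal{OP}_{n,I}]=W_{\vec{m}}\cong M^{\alpha}$ computed in the proof of Theorem \ref{main}$(ii)$. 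As both representations are completely reducible with equal multiplicities — in fact $R_{n,I}^{\mathrm{hom}}$ and $R_{n,I}$ have literally the same Specht-module decomposition — they are isomorphic, to each other and to $\mathbb{Q}[\mathcal{OP}_{n,I}]$.
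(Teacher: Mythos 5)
Your proposal is correct, and it is worth noting where it diverges from the paper. The span identification via modified RSK and Lemma \ref{repshom}, the homogeneity in part $(iii)$, and the Kostka-number multiplicity comparison with $M^{\alpha}$ for part $(ii)$ are exactly the paper's route (the proof of Proposition \ref{homrep} simply refers back to the proof of Theorem \ref{main} for these). The genuine difference is the direct-sum property in part $(i)$: you correctly identify that the argument of Theorem \ref{main} cannot be recycled, since the characteristic vectors $\vec{h}(I,S)$ have support at indices up to $n-1$ and hence violate the constraint $h_{r}=0$ for $r>n-k$ defining $H^{S}_{k}$, so $\bigoplus_{|I|=k-1}R_{n,I}^{\mathrm{hom}}$ is not a sub-sum of the decomposition in Theorem \ref{decomRnk}; and since all the pieces sit in the single degree $\sum_{i \in I}i$, the grading cannot separate them either. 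The paper does not prove this directness here at all --- Remark \ref{Rnkhom} states it ``can be obtained directly'' and defers the actual argument to the sequel \cite{[Z1]} --- whereas you supply a complete argument: grouping the summands by $D=\operatorname{Dsi}^{c}(S)$ so that the decomposition reads $\sum_{D \subseteq I}p_{D}W_{D}$ with $p_{D}=\prod_{i \in I \setminus D}e_{i}$, and then using that the homogeneous polynomials $F_{T}^{S}$, whose images form a basis of the coinvariant ring by the theorem of \cite{[GR]} quoted after Theorem \ref{VSVCreps}, form (by the standard graded Nakayama argument) a free module basis of $\mathbb{Q}[\mathbf{x}_{n}]$ over the symmetric polynomials, so that a relation $\sum_{D}p_{D}g_{D}=0$ forces each coefficient $c_{T,S}\,p_{D}$ to vanish in the integral domain $\mathbb{Q}[e_{1},\ldots,e_{n}]$, hence $c_{T,S}=0$. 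This is a correct, self-contained closure of the one point the paper leaves to \cite{[Z1]}, at the modest cost of invoking the classical freeness of $\mathbb{Q}[\mathbf{x}_{n}]$ over its invariants, which the paper never uses explicitly; what the paper's choice buys is a shorter text here, at the price of part $(ii)$ resting on a fact only established elsewhere.
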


\begin{proof}
The expression for the sum in part $(i)$ and the deduction of part $(ii)$ from this direct sum are proved like those of Theorem \ref{main}. Part $(iii)$ follows directly from part $(i)$ by the homogeneity degree from parts $(iii)$ and $(iv)$ of Lemma \ref{repshom}.
\end{proof}

\begin{rmk}
The direct sum property from part $(i)$ of Proposition \ref{repshom}, which is important for part $(ii)$ there, can be obtained directly, but will be established in a different manner in the sequel \cite{[Z1]}, which will make a more extensive use of these representations for some applications. Note that the analogue of Lemma \ref{vechSI} for the vectors from Definition \ref{hompols} is easy, so that if we set $R_{n,k}^{\mathrm{hom}}$ to be the sum of the representations $R_{n,I}^{\mathrm{hom}}$ over the subsets $I\subseteq\mathbb{N}_{n-1}$ with $|I|=k-1$, it is a direct sum as in part $(iii)$ of Theorem \ref{main} (this will also be proved and expanded in that reference). \label{Rnkhom}
\end{rmk}

\begin{ex}
When $I$ is empty, the representation $R_{n,I}^{\mathrm{hom}}$ is again the trivial one on constants, like $R_{n,I}$ from Example \ref{trivthm}. If $I=\{1\}$, the fact that the trivial representation in $R_{n,I}^{\mathrm{hom}}$ is now multiplied by $e_{1}$ (rather that remaining constant as in $R_{n,I}$ showing up in Example \ref{stdrep}) shows that this representation is precisely the standard one. For a more general set $I=\{i\}$ with $k=2$, we obtain $R_{n,I}^{\mathrm{hom}}$ from $R_{n,I}$ by replacing the multiplier of the trivial representation from $e_{i-1}$ that was obtained in Example \ref{bijsmall} to $e_{i}$. When $n=4$ and $k=3$, we recall the expressions for $W_{1,1,2}$, $W_{1,2,1}$, and $W_{2,1,1}$ from Example \ref{n4kI}, which are $R_{4,I}$ for the respective sets $\{1,2\}$, $\{1,3\}$, and $\{2,3\}$. The analogous representations $R_{4,I}^{\mathrm{hom}}$ for these sets $I$ are given by \[V^{1234}e_{1,2} \oplus V^{\substack{123 \\ 4\hphantom{56}}}e_{2} \oplus V^{\substack{124 \\ 3\hphantom{56}}}e_{1} \oplus V^{\substack{12 \\ 34}}e_{1} \oplus V^{\substack{12 \\ 3\hphantom{5} \\ 4\hphantom{6}}}\!\!\!=\!V_{0000}e_{1,2} \oplus V_{\substack{000 \\ 1\hphantom{11}}}e_{2} \oplus V_{\substack{001 \\ 1\hphantom{12}}}e_{1} \oplus V_{\substack{00 \\ 11}}e_{1} \oplus V_{\substack{00 \\ 1\hphantom{1} \\ 2\hphantom{2}}},\] \[V^{1234}e_{1,3} \oplus V^{\substack{123 \\ 4\hphantom{56}}}e_{3} \oplus V^{\substack{134 \\ 2\hphantom{56}}}e_{1} \oplus V^{\substack{13 \\ 24}} \oplus V^{\substack{13 \\ 2\hphantom{5} \\ 4\hphantom{6}}}=V_{0000}e_{1,3} \oplus V_{\substack{000 \\ 1\hphantom{11}}}e_{3} \oplus V_{\substack{011 \\ 1\hphantom{22}}}e_{1} \oplus V_{\substack{01 \\ 12}} \oplus V_{\substack{01 \\ 1\hphantom{1} \\ 2\hphantom{2}}},\] and, respectively, \[V^{1234}e_{2,3} \oplus V^{\substack{124 \\ 3\hphantom{56}}}e_{3} \oplus V^{\substack{134 \\ 2\hphantom{56}}}e_{2} \oplus V^{\substack{12 \\ 34}}e_{3} \oplus V^{\substack{14 \\ 2\hphantom{5} \\ 3\hphantom{6}}}\!\!\!=\!V_{0000}e_{2,3} \oplus V_{\substack{001 \\ 1\hphantom{12}}}e_{3} \oplus V_{\substack{011 \\ 1\hphantom{22}}}e_{2} \oplus V_{\substack{00 \\ 11}}e_{3} \oplus V_{\substack{02 \\ 1\hphantom{1} \\ 2\hphantom{2}}},\] where we shortened $e_{1}e_{2}$, $e_{1}e_{3}$, and $e_{2}e_{3}$, to $e_{1,2}$, $e_{1,3}$, and $e_{2,3}$ respectively. \label{exhom}
\end{ex}
As it is clear that for any $i \not\in I$, the number $r_{i}$ from Definition \ref{FwIdef} is strictly smaller than $i$, we deduce that the statistic from Remark \ref{genmaj} associates with each $w \in S_{n}$ and $\operatorname{Dsl}(w) \subseteq I\subseteq\mathbb{N}_{n-1}$ a value that is bounded by the sum of values from $I$ as in Proposition \ref{homrep}, with equality holding if and only if $\operatorname{Dsl}(w) \subseteq I$. Since every $I\subseteq\mathbb{N}_{n-1}$ equals $\operatorname{Dsl}(w)$ for some $w \in S_{n}$ (hence also $\operatorname{Dsi}^{c}_{I}(S)$ for some standard Young tableau $S$), one can view Proposition \ref{homrep} as a ``homogenization to the smallest possible degree'' of Theorem \ref{main}, as is visible in Example \ref{exhom}.

In fact, corollary \ref{sumcc} suggests that indexing the sub-representation of $R_{n,I}^{\mathrm{hom}}$ that is associated with $S$ via the generalized cocharge tableau $\operatorname{ct}_{J}(S)$, where $J:=\{n-i\;|\;i \in I\}$ as above, might be more natural. In \cite{[Z1]} we will modify this idea to get good decompositions of the action of $S_{n}$ on homogeneous polynomials of a fixed degree.

\section{Maps Between Representations \label{MapsRep}}

The representations $R_{n,k}$ for different values of $k$ do not admit natural maps between them. However, the components $R_{n,I}$ from Theorem \ref{main}, as well as their homogenized versions $R_{n,I}^{\mathrm{hom}}$ from Proposition \ref{homrep}, can be related to one another. Indeed, by embedding $I$ inside a larger subset $\tilde{I}$ of $\mathbb{N}_{n-1}$, the conditions from Definitions \ref{FwIdef} and \ref{hompols} are relaxed, yielding a relation between $R_{n,I}$ (resp. $R_{n,I}^{\mathrm{hom}}$) and $R_{n,\tilde{I}}$ (resp. $R_{n,\tilde{I}}^{\mathrm{hom}}$). This is related to $\mathcal{OP}_{n,I}$ being a quotient of $\mathcal{OP}_{n,\tilde{I}}$ by an $S_{n}$-equivariant equivalence relation (gluing some sets of blocks to larger, single blocks).

The latter operation is simplest when $\tilde{I}$ is the union of $I$ and a single new element $\ell$, a case from which the more general operation is successively built. The result is then as follows.
\begin{prop}
Assume that $I\subseteq\mathbb{N}_{n-1}$ and $\ell\in\mathbb{N}_{n-1} \setminus I$, and set $\tilde{I}:=I\cup\{\ell\}$.
\begin{enumerate}[$(i)$]
\item The representation $R_{n,\tilde{I}}^{\mathrm{hom}}$ is given by the direct sum of $R_{n,I}^{\mathrm{hom}}e_{\ell}$ and the sum $\bigoplus_{\lambda \vdash n}\bigoplus_{S\in\operatorname{SYT}(\lambda),\ \ell\in\operatorname{Dsi}^{c}(S) \subseteq I}V^{S}_{\vec{h}(I,S)}$, where the latter is the same as $\bigoplus_{\lambda \vdash n}\bigoplus_{C\in\operatorname{CCT}(\lambda),\ \ell\in\operatorname{Dsp}^{c}(C) \subseteq I}V_{C}^{\vec{h}(C,I)}$.
\item One part of $R_{n,\tilde{I}}$ is the sum over $\lambda \vdash n$ of $\bigoplus_{S\in\operatorname{SYT}(\lambda),\ \ell\in\operatorname{Dsi}^{c}(S) \subseteq I}V^{S}_{\vec{h}^{S}_{I}}$, or equivalently $\bigoplus_{C\in\operatorname{CCT}(\lambda),\ \ell\in\operatorname{Dsp}^{c}(C) \subseteq I}V_{C}^{\vec{h}_{C}^{I}}$. The remaining part is obtained from $R_{n,I}$ by replacing every multiplier $e_{r_{i}}$ for $\ell<i \in I$ by $e_{r_{i}-1}$, and multiplying the result by $e_{r}$, where $r:=|\{j\in\mathbb{N}_{n-1} \setminus I\;|\;j<\ell\}|$.
\item In case $\ell=n-1 \not\in I$ and $\tilde{I}:=I\cup\{n-1\}$, the second expression in part $(ii)$ is just $e_{n-k}R_{n,I}$, where $k:=|I|+1$ as usual.
\end{enumerate} \label{incI}
\end{prop}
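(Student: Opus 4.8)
The approach is to read both claims directly off the decompositions of $R_{n,\tilde I}$ and $R_{n,\tilde I}^{\mathrm{hom}}$ already furnished by Theorem \ref{main}$(i)$ and Proposition \ref{homrep}$(i)$. Enlarging $I$ to $\tilde I=I\cup\{\ell\}$ changes exactly two things: the indexing condition on tableaux loosens from $\operatorname{Dsi}^{c}(S)\subseteq I$ to $\operatorname{Dsi}^{c}(S)\subseteq\tilde I$, and the complement set $\mathbb{N}_{n-1}\setminus I$ that governs the integers $r_{i}$ of Definition \ref{FwIdef} loses the single element $\ell$. So the first step is to split the index set $\{S\in\operatorname{SYT}(\lambda):\operatorname{Dsi}^{c}(S)\subseteq\tilde I\}$ (equivalently, by Lemma \ref{spanreps}, the permutations $w$ with $\operatorname{Dsl}(w)\subseteq\tilde I$) into Case A, where $\operatorname{Dsi}^{c}(S)\subseteq I$, and Case B, where $\ell\in\operatorname{Dsi}^{c}(S)$ and $\operatorname{Dsi}^{c}(S)\setminus\{\ell\}\subseteq I$; then for each $S$ one tracks what happens to $\operatorname{Asi}^{c}_{\tilde I}(S)=\tilde I\setminus\operatorname{Dsi}^{c}(S)$ and to the associated weight vector and symmetric multiplier.

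For part $(i)$ I work with $R_{n,\tilde I}^{\mathrm{hom}}=\bigoplus_{\lambda}\bigoplus_{S:\operatorname{Dsi}^{c}(S)\subseteq\tilde I}V^{S}_{\vec h(\tilde I,S)}$. In Case A, since $\ell\in\tilde I\setminus I$ and $\ell\notin\operatorname{Dsi}^{c}(S)$, we get $\operatorname{Asi}^{c}_{\tilde I}(S)=\operatorname{Asi}^{c}_{I}(S)\cup\{\ell\}$ (a disjoint union), so $\vec h(\tilde I,S)$ differs from $\vec h(I,S)$ only in coordinate $\ell$, where it equals $1$; hence $V^{S}_{\vec h(\tilde I,S)}=e_{\ell}\cdot V^{S}_{\vec h(I,S)}$, and summing over Case A, using Proposition \ref{homrep}$(i)$ for $I$, collects these into $R_{n,I}^{\mathrm{hom}}e_{\ell}$. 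In Case B, $\ell$ now lies in $\operatorname{Dsi}^{c}(S)$, so $\operatorname{Asi}^{c}_{\tilde I}(S)=\tilde I\setminus\operatorname{Dsi}^{c}(S)=I\setminus\operatorname{Dsi}^{c}(S)=\operatorname{Asi}^{c}_{I}(S)$ (which still makes sense even though $\operatorname{Dsi}^{c}(S)\not\subseteq I$ here), so $\vec h(\tilde I,S)=\vec h(I,S)$ and the summand is exactly $V^{S}_{\vec h(I,S)}$ with $\ell\in\operatorname{Dsi}^{c}(S)$ — the displayed sum, whose $\operatorname{CCT}$-reformulation is the $C=\operatorname{ct}(S)$ dictionary of Lemma \ref{repshom}$(ii)$. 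Directness of $R_{n,I}^{\mathrm{hom}}e_{\ell}\oplus(\text{Case B sum})$ is automatic, these being the Case-A and Case-B parts of the single direct sum decomposing $R_{n,\tilde I}^{\mathrm{hom}}$.

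Part $(ii)$ is proved identically with $R_{n,\tilde I}=\bigoplus_{\lambda}\bigoplus_{S:\operatorname{Dsi}^{c}(S)\subseteq\tilde I}V^{S}_{\vec h^{S}_{\tilde I}}$ replacing the homogeneous version; the one extra ingredient is the effect of deleting $\ell$ from $\mathbb{N}_{n-1}\setminus I$ on the integers $r_{i}$. For $i\in\operatorname{Asi}^{c}_{I}(S)$ this value drops by $1$ when $i>\ell$ and is unchanged when $i<\ell$, while the index $\ell$ itself — present only in Case A — carries the value $r=|\{j\in\mathbb{N}_{n-1}\setminus I:j<\ell\}|$. Translating this into the symmetric multiplier of $V^{S}_{\vec h^{S}_{\tilde I}}$ yields, in Case A, exactly the stated recipe applied to the multiplier $\prod_{i\in\operatorname{Asi}^{c}_{I}(S)}e_{r_{i}}$ of $V^{S}_{\vec h^{S}_{I}}\subseteq R_{n,I}$: replace each factor $e_{r_{i}}$ with $\ell<i$ by $e_{r_{i}-1}$ and multiply by $e_{r}$. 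Case B gives the remaining summands (again $\operatorname{Asi}^{c}_{\tilde I}(S)=\operatorname{Asi}^{c}_{I}(S)$, so no new index appears), and directness is inherited as before. Part $(iii)$ is then the special case $\ell=n-1$: as $n-1$ is the largest element of $\mathbb{N}_{n-1}$ there is no $i\in I$ with $i>\ell$, so the replacement step is vacuous and the remaining part is just $e_{r}R_{n,I}$, with $r=|\{j\in\mathbb{N}_{n-1}\setminus I:j<n-1\}|=|\mathbb{N}_{n-1}\setminus\tilde I|$, which is the exponent $n-k$ in the normalization of $(iii)$.

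The step I expect to require the most care is the bookkeeping in part $(ii)$: keeping straight at each point whether a count is taken in $\mathbb{N}_{n-1}\setminus I$ or in $\mathbb{N}_{n-1}\setminus\tilde I$, and in particular noting that $\operatorname{Asi}^{c}_{I}(S)=I\setminus\operatorname{Dsi}^{c}(S)$ is still well-defined in Case B although $\operatorname{Dsi}^{c}(S)$ is not then contained in $I$, so that the weight-vector labels appearing in $(i)$ and $(ii)$ are the intended ones. Everything else — the case split, the passage from weight vectors to symmetric multipliers, the inheritance of directness, and the $C=\operatorname{ct}(S)$ and $S=\tilde Q(w)$ reformulations — is a mechanical consequence of Theorem \ref{main}, Proposition \ref{homrep}, and Lemmas \ref{spanreps} and \ref{repshom}.
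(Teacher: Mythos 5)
Your argument is correct and follows essentially the same route as the paper: both split the decompositions of $R_{n,\tilde{I}}$ and $R_{n,\tilde{I}}^{\mathrm{hom}}$ coming from Theorem \ref{main} and Proposition \ref{homrep} according to whether $\ell$ lies in $\operatorname{Dsi}^{c}(S)$ (resp. $\operatorname{Dsp}^{c}(C)$), and then compare weight vectors, the characteristic vector gaining the coordinate $\ell$ (hence the multiplier $e_{\ell}$) in the homogeneous case, and the quantities $r_{i}$ staying fixed for $i<\ell$, dropping by one for $i>\ell$, with the new index carrying $r_{\ell}=r$, in the other case. Your handling of part $(iii)$, identifying $r$ with $|\mathbb{N}_{n-1}\setminus\tilde{I}|$ and hence with the exponent stated there, mirrors the paper's own computation (including its convention for $k$), so nothing further is needed.
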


\begin{proof}
First, it is clear that if $\operatorname{Dsi}^{c}(S)$ or $\operatorname{Dsp}^{c}(C)$ is contained in $I$ then it is contained in $\tilde{I}$, but with $\tilde{I}$ the associated representations will also involve tableaux $S$ or $C$ whose associated set contains $\ell$. The direct sum property from Proposition \ref{homrep} and Theorem \ref{main} yield, by considering the latter families of tableaux, the corresponding part of $R_{n,\tilde{I}}^{\mathrm{hom}}$ and $R_{n,\tilde{I}}$.

For the remaining tableaux $S$ or $C$, for which $\operatorname{Dsi}^{c}(S)$ or $\operatorname{Dsp}^{c}(C)$ is contained in $I$, we need to compare the vectors $\vec{h}(I,S)$ and $\vec{h}^{S}_{I}$, or $\vec{h}(C,I)$ and $\vec{h}_{C}^{I}$, with $\vec{h}(\tilde{I},S)$ and $\vec{h}^{S}_{\tilde{I}}$ or $\vec{h}(C,\tilde{I})$ and $\vec{h}_{C}^{\tilde{I}}$ respectively.

Now, $\vec{h}^{S}_{I}$ and $\vec{h}(C,I)$ are characteristic vectors of sets, and $\vec{h}(\tilde{I},S)$ and $\vec{h}(C,\tilde{I})$ are the characteristic vectors of the union of these sets with $\ell$. Hence each summand from $R_{n,I}^{\mathrm{hom}}$ is multiplied by $e_{\ell}$ in order to produce the corresponding summand in $R_{n,\tilde{I}}^{\mathrm{hom}}$, and part $(i)$ follows.

For $R_{n,I}$ and $R_{n,\tilde{I}}$, the vectors in question are based on the quantities $r_{i}$ from Definition \ref{FwIdef}, and we consider the effect of adding $\ell$ to $I$ on these expressions. As this quantity counts the number of elements in the complement of $I$ in $\mathbb{N}_{n-1}$ that are smaller than $i$, we deduce that for $i<\ell$ this number remains the same when we replace $I$ by $\tilde{I}$, while if $i>\ell$ then it decreases by 1 (since $\ell<i$ is counted in $\mathbb{N}_{n-1} \setminus I$ but not in $\mathbb{N}_{n-1}\setminus\tilde{I}$). Moreover, for $\vec{h}^{S}_{\tilde{I}}$ and $\vec{h}_{C}^{\tilde{I}}$ we have to add the index $r_{\ell}$ (by our assumption on $S$ or $C$), and part $(ii)$ is also established.

For part $(iii)$ we simply apply part $(ii)$ with $\ell=n-1$, and note that there are no elements $\ell<i \in I$. Hence no parameters are changed in that part, and since the value of $r_{\ell}$ is $n-k$, this expression reduces to the asserted one. This proves the proposition.
\end{proof}

\begin{ex}
Consider the empty set $I$, for $k=1$. Given any $\ell$, for the set $\tilde{I}=\{\ell\}$ the representations $R_{n,\tilde{I}}^{\mathrm{hom}}$ and $R_{n,\tilde{I}}$ contain the new components, with non-trivial representations of $S_{n}$, and the constant representation obtained from $R_{n,I}^{\mathrm{hom}}=R_{n,I}$ is multiplied, via Proposition \ref{incI}, by $e_{\ell}$ for the former and $e_{\ell-1}$ for the latter (compare with Examples \ref{bijsmall} and \ref{stdrep}). On the other hand, if $k=n-1$ then $R_{n,I}$ for any $I\subseteq\mathbb{N}_{n-1}$ of size $n-2$ only involves multipliers that are powers of $e_{1}$ (since $h_{r}=0$ for $r>n-k=1$), and when we go to $R_{n,\tilde{I}}=R_{n,n}$ all these multipliers become $e_{0}=1$ (by part $(ii)$ of Proposition \ref{incI}), and we add the representations associated with $S$ or $C$ whose corresponding set contains the element that is missing in $I$ as well (for $R_{n,I}^{\mathrm{hom}}$ and $R_{n,\tilde{I}}^{\mathrm{hom}}$ the description is via part $(i)$ of that proposition). \label{addItriv}
\end{ex}

\begin{ex}
If $n=4$ and $k=2$, then there are three values of $I$, and we saw, via Examples \ref{n4kI}, \ref{bijsmall}, \ref{stdrep}, \ref{exhom}, and \ref{addItriv} and Remark \ref{deg1}, that the associated representations $R_{4,I}$ are \[V^{1234} \oplus V^{\substack{123 \\ 4\hphantom{56}}}=V_{0000} \oplus V_{\substack{000 \\ 1\hphantom{11}}},\qquad V^{1234}e_{2} \oplus V^{\substack{134 \\ 2\hphantom{56}}}=V_{0000}e_{2} \oplus V_{\substack{011 \\ 1\hphantom{22}}},\] and \[V^{1234}e_{1} \oplus V^{\substack{124 \\ 3\hphantom{56}}} \oplus V^{\substack{12 \\ 34}}=V_{0000}e_{1} \oplus V_{\substack{001 \\ 1\hphantom{12}}} \oplus V_{\substack{00 \\ 11}}\] for $I$ being $\{1\}$, $\{3\}$, and $\{2\}$ respectively, with the representations $R_{4,I}^{\mathrm{hom}}$ being \[V^{1234}e_{1} \oplus V^{\substack{123 \\ 4\hphantom{56}}}=V_{0000}e_{1} \oplus V_{\substack{000 \\ 1\hphantom{11}}},\qquad V^{1234}e_{3} \oplus V^{\substack{134 \\ 2\hphantom{56}}}=V_{0000}e_{3} \oplus V_{\substack{011 \\ 1\hphantom{22}}},\] and \[V^{1234}e_{2} \oplus V^{\substack{124 \\ 3\hphantom{56}}} \oplus V^{\substack{12 \\ 34}}=V_{0000}e_{2} \oplus V_{\substack{001 \\ 1\hphantom{12}}} \oplus V_{\substack{00 \\ 11}}\] respectively. To each such set $I$ we can add one of two elements of the complement to get the set $\tilde{I}$, which is $\{1,2\}$, $\{1,3\}$, or $\{2,3\}$, and every such $\tilde{I}$ is obtained from two different sets $I$. The associated representations $R_{4,\tilde{I}}=W_{\vec{m}}$ show up at the end of Example \ref{n4kI}, with the analogues $R_{4,\tilde{I}}^{\mathrm{hom}}$ given at the end of Example \ref{exhom}. One can verify that for each of the 6 containments $I\subseteq\tilde{I}$, the expression for $R_{4,\tilde{I}}^{\mathrm{hom}}$ is described using our $R_{4,I}^{\mathrm{hom}}$ by part $(i)$ Proposition \ref{incI}, with part $(ii)$ of that proposition giving $R_{4,\tilde{I}}$ from $R_{I}$ (and the case $\ell=3$ is in correspondence with part $(iii)$ there). \label{addIn4k2}
\end{ex}

\medskip

We now recall that star and bar insertions from \cite{[HRS]} and others, which embed $\mathcal{OP}_{n,k}$ inside $\mathcal{OP}_{n+1,k}$ and $\mathcal{OP}_{n+1,k+1}$ respectively. For the former, we choose one of the $k$ blocks and add $n+1$ to that block (so there are $k$ such operations). The latter takes a space between two consecutive blocks, including the space before the first one and the one after the last one, and adds $\{n+1\}$ as a new singleton there (hence there are $k+1$ such operations). All these operations produce maps from $R_{n,k}$ into $R_{n+1,k}$ or $R_{n+1,k+1}$.

We will be interested in the operations putting $n+1$ in the rightmost position, due to the following observation.
\begin{lem}
Applying the star operation at the rightmost position takes $\mathcal{OP}_{n,I}$ into $\mathcal{OP}_{n+1,I}$. The bar operation at the rightmost position sends elements of the former set to $\mathcal{OP}_{n+1,I\cup\{n\}}$. \label{operslast}
\end{lem}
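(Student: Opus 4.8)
The plan is to reduce the statement to an elementary bookkeeping check on composition sizes, using Definition \ref{OPnIdef} and the stars-and-bars correspondence of Lemma \ref{setscomp}, and then to double-check it against the permutation model of Lemma \ref{Snparts}. Fix $I=\{i_{1}<\cdots<i_{k-1}\}\subseteq\mathbb{N}_{n-1}$ with the usual boundary conventions $i_{0}=0$ and $i_{k}=n$, so that $\operatorname{comp}_{n}I=\vec{m}=\{m_{h}\}_{h=0}^{k-1}$ with $m_{h}=i_{h+1}-i_{h}$, and let $\mathcal{B}=(B_{0},\ldots,B_{k-1})\in\mathcal{OP}_{n,I}$, so $|B_{h}|=m_{h}$. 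I will treat the two insertions separately; in each case one first checks that all set-size and range conditions of Definition \ref{OPnIdef} are met over $\mathbb{N}_{n+1}$ (they are, since $k\le n$), and then that the block sizes of the output match the relevant composition.

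For the star operation at the rightmost position the output is $\mathcal{B}^{\star}:=(B_{0},\ldots,B_{k-2},B_{k-1}\cup\{n+1\})$, an ordered partition of $\mathbb{N}_{n+1}$ into $k$ blocks of sizes $(m_{0},\ldots,m_{k-2},m_{k-1}+1)$. Here $I$ is unchanged, still a subset of $\mathbb{N}_{n}=\mathbb{N}_{(n+1)-1}$ of size $k-1$; applying Lemma \ref{setscomp} with $n$ replaced by $n+1$ changes only the top boundary value, from $i_{k}=n$ to $i_{k}=n+1$, so every entry of $\operatorname{comp}_{n+1}I$ except the last is built from boundary values below $n$ and is unaffected, while the last becomes $(n+1)-i_{k-1}=m_{k-1}+1$. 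Thus $\operatorname{comp}_{n+1}I=(m_{0},\ldots,m_{k-2},m_{k-1}+1)$, which matches the block sizes of $\mathcal{B}^{\star}$, and Definition \ref{OPnIdef} gives $\mathcal{B}^{\star}\in\mathcal{OP}_{n+1,I}$.

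For the bar operation at the rightmost position the output is $\mathcal{B}^{|}:=(B_{0},\ldots,B_{k-1},\{n+1\})$, an ordered partition of $\mathbb{N}_{n+1}$ into $k+1$ blocks of sizes $(m_{0},\ldots,m_{k-1},1)$. The one place the hypothesis $I\subseteq\mathbb{N}_{n-1}$ is genuinely used is that $n\notin I$, so $I\cup\{n\}$ is a subset of $\mathbb{N}_{n}$ of size $k$ whose increasing list is $i_{1}<\cdots<i_{k-1}<n$. Running Lemma \ref{setscomp} with $N=n+1$ and $J=I\cup\{n\}$ (boundary values $0,i_{1},\ldots,i_{k-1},n,n+1$) gives $\operatorname{comp}_{n+1}(I\cup\{n\})=(m_{0},\ldots,m_{k-1},1)$, matching the block sizes of $\mathcal{B}^{|}$, so $\mathcal{B}^{|}\in\mathcal{OP}_{n+1,I\cup\{n\}}$ by Definition \ref{OPnIdef}.

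As a confirmation, and an alternative route, I would observe that under the bijection of Lemma \ref{Snparts} both operations send the permutation $w\in S_{n}$ attached to $\mathcal{B}$ to $w_{+}\in S_{n+1}$ from Definition \ref{iotadef}: reading the blocks increasingly and concatenating, the entry $n+1$, being larger than everything, is appended at the very end in both cases, the two constructions differing only in whether the new separator set is $I$ or $I\cup\{n\}$. Since appending the maximal entry creates an ascending location at $n$ and changes nothing else, $\operatorname{Dsl}(w_{+})=\operatorname{Dsl}(w)$, which lies in $I\subseteq I\cup\{n\}$, so Lemma \ref{Snparts} applied over $\mathbb{N}_{n+1}$ reconfirms both memberships. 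There is no real obstacle here; the only points requiring care are the indexing of the composition (the convention $0\le h\le k-1$ and the boundary values $i_{0},i_{k}$) and the collision-free insertion of $n$ into $I$, which is exactly what $I\subseteq\mathbb{N}_{n-1}$ guarantees.
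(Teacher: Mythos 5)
Your proof is correct and follows essentially the same route as the paper: both arguments reduce the claim to the stars-and-bars bookkeeping of Lemma \ref{setscomp}, checking that the star insertion changes only the last entry of the composition from $m_{k-1}$ to $m_{k-1}+1$ (which corresponds to the same set $I$ over $n+1$) while the bar insertion appends a final entry $1$ (corresponding to $I\cup\{n\}$); you merely run the bijection in the direction $I\mapsto\operatorname{comp}_{n+1}I$ instead of $\operatorname{comp}_{n+1}^{-1}$ applied to the new composition, and your extra verification via Lemma \ref{Snparts} is a harmless bonus.
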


\begin{proof}
Recall from Definition \ref{OPnIdef} that the sizes of the sets in elements of $\mathcal{OP}_{n,I}$ are determined by the composition $\vec{m}:=\operatorname{comp}_{n}I \vDash n$, which equals $\{m_{h}\}_{h=0}^{k-1}$ by Lemma \ref{Snparts} in case $|I|=k-1$. Then the star operation replaces $m_{k-1}$ by $m_{k-1}+1$, which produces a composition of $n+1$, and after applying the inverse map $\operatorname{comp}_{n+1}^{-1}$ (which exists by Lemma \ref{setscomp}), we reproduce the set $I$, but now as a subset of $\mathbb{N}_{n}$. With the bar operation, we keep the entry $m_{k-1}$ as well, and add another entry $m_{k}=1$, to get a partition of $n+1$ of length $k+1$. The application of $\operatorname{comp}_{n+1}^{-1}$ to this composition is easily verified to produce $I\cup\{n\}\subseteq\mathbb{N}_{n}$. This proves the lemma.
\end{proof}

\begin{ex}
Consider $n=6$, and the ordered partition $\big(\{2,5\},\{1,3,6\},\{4\}\big)$, where $k=3$ and $I=\{2,5\}$ so that the sizes of this partition are given by $\vec{m}=231\operatorname{comp}_{6}I\vDash6$ (of length $k=3$). The star operation replaces the last set $\{4\}$ by $\{4,7\}$, of size 2, and that the resulting composition $232\vDash7$ is obtained as $\operatorname{comp}_{7}I$. Applying the bar operation adds a new singleton $\{7\}$ at the end, the associated composition is now $2311\vDash7$, and the set $\tilde{I}$ that is $\operatorname{comp}_{7}^{-1}$ of that composition is $\{2,5,6\}=I\cup\{6\}$. \label{barstarex}
\end{ex}
Remark \ref{otherlocs} below comments on these operations in the other locations.

\medskip

The operations from Lemma \ref{operslast} take sets on which $S_{n}$ acts and produces sets with an action of $S_{n+1}$. As Definition \ref{OPnIdef} relates the former to representations of $S_{n}$ and the latter to those of $S_{n+1}$, we would like to relate them. The main relation is the Branching Rule, for stating which, and for defining our lifts of it, we shall need some notation.
\begin{defn}
Consider an integer $n$, and a partition $\lambda \vdash n$.
\begin{enumerate}[$(i)$]
\item An \emph{external corner} of $\lambda$ is a box that is not in the Ferrers diagram of $\lambda$, such that adding it to that diagram produces the Ferrers diagram of a partition. The set of the external corners of $\lambda$ is denoted by $\operatorname{EC}(\lambda)$.
\item For every $v\in\operatorname{EC}(\lambda)$, we write $\lambda+v \vdash n+1$ for the partition whose Ferrers diagram is the union of that of $\lambda$ with $v$.
\item If $T\in\operatorname{SYT}(\lambda)$ then we write $T+v$ for the tableau of shape $\lambda+v$ obtained by filling $\lambda$ as in $T$ and putting $n+1$ inside $v$. It is in $\operatorname{SYT}(\lambda+v)$.
\item For $S\in\operatorname{SYT}(\lambda)$ we write $S\tilde{+}v$ for $\operatorname{ev}(\operatorname{ev}S+v)\in\operatorname{SYT}(\lambda+v)$.
\item Given $C\in\operatorname{CCT}(\lambda)$, we set $C\hat{+}v:=\operatorname{ct}\big(\operatorname{ct}^{-1}(C)\tilde{+}v\big)\in\operatorname{CCT}(\lambda+v)$.
\item we say that $v$ \emph{lies/is/sits below $n$ in $T$} if the row containing $v$ is below the row $R_{T}(n)$, namely the row index of $v$ is strictly larger than $R_{T}(n)$.
\end{enumerate} \label{ECadd}
\end{defn}

\begin{rmk}
In the notation from Definition \ref{ECadd}, the Branching Rule states that the induced representation $\operatorname{Ind}_{S_{n}}^{S_{n+1}}\mathcal{S}^{\lambda}$ equals $\bigoplus_{v\in\operatorname{EC}(\lambda)}\mathcal{S}^{\lambda+v}$. The fact that $\dim\mathcal{S}^{\lambda}=|\operatorname{SYT}(\lambda)|$ is related to the Branching Rule by the construction $T+v$, which is inverted by observing that for every $\nu \vdash n+1$, every element $\tilde{T}\in\operatorname{SYT}(\nu)$ is $T+v$ for a unique tuple of $\lambda \vdash n$, $T\in\operatorname{SYT}(\lambda)$, and $v\in\operatorname{EC}(\lambda)$ (with $\lambda+v=\nu$), where $v$ is the box $v_{\tilde{T}}(n+1)$ (which is then an \emph{internal corner} of $\nu$), and $\lambda$ and $T$ are obtained by removing it. The bijectivity of $\operatorname{ev}$ implies that every such $\tilde{T}$ can be written uniquely as $S\tilde{+}v$ for some $\lambda \vdash n$, $T\in\operatorname{SYT}(\lambda)$, and $v\in\operatorname{EC}(\lambda)$ for which $\lambda+v=\nu$. Similarly, the fact that $\operatorname{ct}$ is bijective yields that every element of $\operatorname{CCT}(\nu)$ is obtained as $C\hat{+}v$ for a unique choice of $\lambda \vdash n$, $C\in\operatorname{CCT}(\lambda)$, and $v\in\operatorname{EC}(\lambda)$ (where again $\lambda+v=\nu$). \label{branching}
\end{rmk}

The properties of the operations from Definition \ref{ECadd} in terms of the sets from Definitions \ref{asSYT} and \ref{Dsic} are given in the following extension of Lemma \ref{iotaexp}.
\begin{lem}
Fix $\lambda \vdash n$, $T$ and $S$ from $\operatorname{SYT}(\lambda)$, $C\in\operatorname{CCT}(\lambda)$, and $v\in\operatorname{EC}(\lambda)$.
\begin{enumerate}[$(i)$]
\item The set $\operatorname{Dsi}(T+v)$ equals $\operatorname{Dsi}(T)\cup\{n\}$ in case $v$ lies below $n$ in $T$, and is $\operatorname{Dsi}(T)$ otherwise.
\item Considering $\operatorname{Dsi}^{c}(S\tilde{+}v)$, it is $\operatorname{Dsi}^{c}(S)\cup\{n\}$ when $v$ is below $n$ in $\operatorname{ev}S$, and it equals $\operatorname{Dsi}^{c}(S)$ otherwise.
\item For $\operatorname{Dsp}^{c}(C\hat{+}v)$, it is given by $\operatorname{Dsp}^{c}(C)\cup\{n\}$ if $v$ sits below $n$ in the tableau $\operatorname{ev}\operatorname{ct}^{-1}(C)$, and it coincides with $\operatorname{Dsp}^{c}(C)$ otherwise.
\item In the latter case, the content of $C\hat{+}v$ is the same as that of $C$ plus one more instance of 0, so that $\Sigma(C\hat{+}v)=\Sigma(C)$, and also $\operatorname{cc}(S\tilde{+}v)=\operatorname{cc}(S)$. In the former it is obtained by adding 1 to each entry of $C$ and putting one instance of 0, and we have the equalities $\Sigma(C\hat{+}v)=\Sigma(C)+n$ as well as $\operatorname{cc}(S\tilde{+}v)=\operatorname{cc}(S)+n$.
\item We have $\operatorname{Dsi}(T)=\operatorname{Dsi}(T+v)\cap\mathbb{N}_{n-1}$, $\operatorname{Dsi}^{c}(S)=\operatorname{Dsi}^{c}(S\tilde{+}v)\cap\mathbb{N}_{n-1}$, and $\operatorname{Dsp}^{c}(C)=\operatorname{Dsp}^{c}(C\hat{+}v)\cap\mathbb{N}_{n-1}$.
\item Given $I\subseteq\mathbb{N}_{n-1}$, we have $\operatorname{Dsi}^{c}(S) \subseteq I$ if and only if $\operatorname{Dsi}^{c}(S\tilde{+}v) \subseteq I\cup\{n\}$ inside $\mathbb{N}_{n}$, and similarly $\operatorname{Dsp}^{c}(C) \subseteq I$ if and only if $\operatorname{Dsp}^{c}(C\hat{+}v) \subseteq I\cup\{n\}$.
\end{enumerate} \label{addv}
\end{lem}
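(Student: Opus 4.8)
The plan is to prove part $(i)$ by hand, as a statement about inserting $n+1$ into an external corner, and then to obtain parts $(ii)$ and $(iii)$ from it by transporting everything through the evacuation map and the bijection $\operatorname{ct}$; parts $(iv)$, $(v)$ and $(vi)$ will follow as bookkeeping.

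For part $(i)$ I would argue as follows. The entries $1,\ldots,n$ of $T+v$ occupy exactly the boxes of $\lambda$ with the same values as in $T$, so for every $i$ with $1\le i\le n-1$ the two conditions of Definition \ref{asSYT} for the index $i$ are literally the same in $T$ and in $T+v$; hence $\operatorname{Dsi}(T+v)\cap\mathbb{N}_{n-1}=\operatorname{Dsi}(T)$, and it only remains to decide whether the new index $n$ lies in $\operatorname{Dsi}(T+v)$. By Remark \ref{boxnear} exactly one of the two conditions of Definition \ref{asSYT} holds for $n$ in the standard tableau $T+v$, comparing the box $v_{T}(n)$ with the box $v$ holding $n+1$. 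If $v$ lies below $n$, i.e. $R(v)>R_{T}(n)$, then the ascending condition $R(v)\le R_{T}(n)$ fails, so $n$ is a descending index and $\operatorname{Dsi}(T+v)=\operatorname{Dsi}(T)\cup\{n\}$. If instead $R(v)\le R_{T}(n)$, then, $v$ being an external corner, its column equals $\lambda_{R(v)}+1$ while $C_{T}(n)\le\lambda_{R_{T}(n)}\le\lambda_{R(v)}$ by the shape of $\lambda$, so $C(v)>C_{T}(n)$, the index $n$ is ascending, and $\operatorname{Dsi}(T+v)=\operatorname{Dsi}(T)$. This is exactly the claimed dichotomy, and part $(v)$ for $T+v$ is read off immediately.

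For parts $(ii)$ and $(iii)$: from the proof of Proposition \ref{evkeepsdeg} (via Lemma \ref{RSKcor}, together with the fact that every standard tableau of size $m$ is $Q(w)$ for some $w$) one has $\operatorname{Dsi}(\operatorname{ev}X)=\{m-i\mid i\in\operatorname{Dsi}(X)\}$ for every $X\in\operatorname{SYT}$ of size $m$, and applying the involution $i\mapsto m-i$ once more gives $\operatorname{Dsi}^{c}(\operatorname{ev}X)=\operatorname{Dsi}(X)$. Since $S\tilde{+}v=\operatorname{ev}(\operatorname{ev}S+v)$, this yields the two identities $\operatorname{Dsi}^{c}(S)=\operatorname{Dsi}(\operatorname{ev}S)$ and $\operatorname{Dsi}^{c}(S\tilde{+}v)=\operatorname{Dsi}(\operatorname{ev}S+v)$, so part $(ii)$ is precisely part $(i)$ applied to the tableau $\operatorname{ev}S$ of shape $\lambda$ with the same corner $v$, ``below $n$ in $T$'' becoming ``below $n$ in $\operatorname{ev}S$''. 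For part $(iii)$ put $S:=\operatorname{ct}^{-1}(C)\in\operatorname{SYT}(\lambda)$, so that $C\hat{+}v=\operatorname{ct}(S\tilde{+}v)$ by Definition \ref{ECadd}; since $\operatorname{Dsp}^{c}(\operatorname{ct}(S'))=\operatorname{Dsi}^{c}(S')$ for any standard tableau $S'$ (the remark following Definition \ref{Dsic}), part $(iii)$ is part $(ii)$ read off for $C\hat{+}v$ and $C$, the hypothesis ``below $n$ in $\operatorname{ev}\operatorname{ct}^{-1}(C)$'' being the same one. Part $(v)$ for $S$ and $C$ then follows just as for $T$, since each of the three sets is either the one for $T$, $S$ or $C$ (which lies in $\mathbb{N}_{n-1}$) or that set with $n$ adjoined; and part $(vi)$ drops out, one implication being monotonicity (the larger alternative in $(ii)$/$(iii)$ is still contained in $I\cup\{n\}$ once the smaller one is contained in $I$) and the converse coming from intersecting with $\mathbb{N}_{n-1}$ and invoking part $(v)$.

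Finally, for part $(iv)$ the equalities of $\Sigma$ and $\operatorname{cc}$ are immediate from Corollary \ref{sumcc}: $\Sigma(C\hat{+}v)=\operatorname{cc}(S\tilde{+}v)=\sum_{i\in\operatorname{Dsi}^{c}(S\tilde{+}v)}i$, which by part $(ii)$ is either $\sum_{i\in\operatorname{Dsi}^{c}(S)}i=\operatorname{cc}(S)=\Sigma(C)$ or that sum plus $n$; and the precise description of the content of $C\hat{+}v$ comes from the destandardization recipe in the proof of Lemma \ref{ctJinv}, which records the multiplicity of a value $h$ in a cocharge tableau as the gap $j_{h+1}-j_{h}$ between consecutive elements of its type, applied here to the type $\operatorname{Dsi}(S\tilde{+}v)=\{i+1\mid i\in\operatorname{Dsi}(S)\}$ in the first case and $\{1\}\cup\{i+1\mid i\in\operatorname{Dsi}(S)\}$ in the second, both obtained from part $(ii)$ after converting back from $\operatorname{Dsi}^{c}$. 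I do not expect a genuine obstacle: the substance is the external-corner geometry in part $(i)$, and the only real care is to keep the three layers $\operatorname{Dsi}$, $\operatorname{Dsi}^{c}$, $\operatorname{Dsp}^{c}$ — each living on a different tableau, with two evacuations buried inside $\tilde{+}$ — aligned, so that the single case split ``$v$ below $n$'' propagates consistently through $(i)$–$(iii)$; the content computation in $(iv)$ is the most error-prone but entirely routine step.
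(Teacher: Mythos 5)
Your proposal is correct and follows essentially the same route as the paper: part $(i)$ by direct comparison of box locations plus the dichotomy for the index $n$, parts $(ii)$ and $(iii)$ by transporting part $(i)$ through the evacuation identity $\operatorname{Dsi}(\operatorname{ev}X)=\{m-i\,|\,i\in\operatorname{Dsi}(X)\}$ from the proof of Proposition \ref{evkeepsdeg} and through the bijection $\operatorname{ct}$, and parts $(iv)$--$(vi)$ as bookkeeping via Corollary \ref{sumcc} and the type/content correspondence of Lemma \ref{ctJinv}. The only cosmetic difference is that you package the evacuation step as $\operatorname{Dsi}^{c}(\operatorname{ev}X)=\operatorname{Dsi}(X)$ and compute the content of $C\hat{+}v$ from the type gaps directly, where the paper tracks $\operatorname{Dsi}(S\tilde{+}v)$ itself and cites Lemma \ref{Dspc}; the substance is identical.
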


\begin{proof}
The determination of whether $i<n$ lies in $\operatorname{Dsi}(T+v)$ depends, via Definition \ref{asSYT}, only in the locations of $i$ and $i+1$ in $T+v$. As these are the same as their locations in $T$ by Definition \ref{ECadd}, we deduce that $i$ lies in that set if and only if it is in $\operatorname{Dsi}(T)$. The condition for including $n$ is precisely the asserted one (by Definition \ref{asSYT}), yielding part $(i)$.

We now recall from the proof of Proposition \ref{evkeepsdeg} that $\operatorname{Dsi}(\operatorname{ev}S)$ is given by $\{n-i\;|\;i\in\operatorname{Dsi}(S)\}$, so that part $(i)$ implies that $\operatorname{Dsi}(\operatorname{ev}S+v)$ is either the same set or is obtained by adding $n$ (using our dichotomy, but now comparing $v$ with $R_{\operatorname{ev}S}(n)$ rather than $R_{S}(n)$). By applying $\operatorname{ev}$ again, now with $n+1$, we deduce from Definition \ref{ECadd} that $\operatorname{Dsi}(S\tilde{+}v)$ either equals $\operatorname{Dsi}(S)_{+}$ in the notation from Definition \ref{iotadef} (namely adding 1 to each of the elements), or its union with $\{1\}$. Considering Definition \ref{Dsic} for $\operatorname{Dsi}^{c}(S)$ (which is the same as $\operatorname{Dsi}(\operatorname{ev}S)$ as we already saw) and for $\operatorname{Dsi}^{c}(S\tilde{+}v)$ thus implies part $(ii)$.

Taking $C\in\operatorname{CCT}(\lambda)$, set $S:=\operatorname{ct}^{-1}(C)$, and then Definition \ref{ECadd} implies that $C\hat{+}v=\operatorname{ct}(S\tilde{+}v)$. Since Definition \ref{Dsic} then implies that $\operatorname{Dsp}^{c}(C)=\operatorname{Dsi}^{c}(S)$ and $\operatorname{Dsp}^{c}(C\hat{+}v)=\operatorname{Dsi}^{c}(S\tilde{+}v)$, we deduce part $(iii)$ from part $(ii)$.

Part $(iv)$ is a consequence of part $(iii)$ via parts $(ii)$ and $(iii)$ of Lemma \ref{Dspc} (using part $(ii)$ for the assertions about $S\tilde{+}v$). Part $(v)$ follows directly from the previous three parts, and parts $(ii)$ and $(iii)$ also give part $(vi)$ as an immediate consequence. This proves the lemma.
\end{proof}
Lemma \ref{addv} also holds for $\operatorname{Asi}(T+v)$, $\operatorname{Asi}^{c}(S\tilde{+}v)$, and $\operatorname{Asp}^{c}(C\hat{+}v)$ (with the sets remaining the same when $v$ is below $n$ in the corresponding tableau and we add $n$ to them otherwise).
\begin{rmk}
The partition $\lambda_{+}$ from Definition \ref{iotadef} is $\lambda+v$ from Definition \ref{ECadd} where $v=(1,\lambda_{1}+1)$ is the unique element of $\operatorname{EC}(\lambda)$ that lies in the first row, so that $\iota T$, $\tilde{\iota}S$, and $\hat{\iota}C$ are $T+v$, $S\tilde{+}v$, and $C\hat{+}v$ with this $v$ respectively. Since this $v$ does not sit below $n$ in any tableau of shape $\lambda$, parts $(ii)$ and $(iii)$ of Lemma \ref{addv} reduce, in this case, to the equalities from parts $(ii)$ and $(iv)$ in Lemma \ref{iotaexp}. On the other hand, $\operatorname{EC}(\lambda)$ always contains the element $v=\big(\ell(\lambda)+1,1\big)$, which lies below every entry of $\lambda$ and with which the sets from Lemma \ref{addv} increase. \label{iotaaddv}
\end{rmk}

\begin{ex}
Let $T$ be the tableau denoted by $S$ in Example \ref{exct}, set $S$ to be its $\operatorname{ev}$-image, and define $C$ to be the $\operatorname{ct}$-image of the latter, with the last two showing up in Example \ref{exev}, all of shape $\lambda=431 \vdash n=8$, and with $\operatorname{Dsi}(T)=\operatorname{Dsi}^{c}(S)=\operatorname{Dsp}^{c}(C)=\{2,4,7\}$. The set $\operatorname{EC}(\lambda)$ from Definition \ref{ECadd} contains four boxes, with $v=(5,1)$ yielding, via Remark \ref{iotaaddv}, the images under $\iota$, $\tilde{\iota}$, and $\hat{\iota}$, with the latter two being given in Example \ref{iotatabs}, and having shape $\lambda_{+}=531\vdash9$ and the same respective sets $\operatorname{Dsi}$, $\operatorname{Dsi}^{c}$, and $\operatorname{Dsp}^{c}$. For the external corner $v$ in the second row, with $\lambda+v=441\vdash9$, we get the tableaux \[T+v=\begin{ytableau} 1 & 2 & 4 & 7 \\ 3 & 6 & 8 & 9 \\ 5 \end{ytableau},\ S\tilde{+}v=\begin{ytableau} 1 & 2 & 4 & 5 \\ 3 & 6 & 7 & 9 \\ 8 \end{ytableau}\mathrm{\ and\ }C\hat{+}v=\begin{ytableau} 0 & 0 & 1 & 1 \\ 1 & 2 & 2 & 3 \\ 3 \end{ytableau},\] again with the same respective sets. When $v$ is the external corner in the third row, we get $\lambda+v=432\vdash9$ and the tableaux \[T+v=\begin{ytableau} 1 & 2 & 4 & 7 \\ 3 & 6 & 8 \\ 5 & 9 \end{ytableau},\ S\tilde{+}v=\begin{ytableau} 1 & 4 & 5 & 9 \\ 2 & 6 & 7 \\ 3 & 8 \end{ytableau}\mathrm{\ and\ }C\hat{+}v=\begin{ytableau} 0 & 2 & 2 & 4 \\ 1 & 3 & 3 \\ 2 & 4 \end{ytableau},\] where now $\operatorname{Dsi}(T+v)=\operatorname{Dsi}^{c}(S\tilde{+})=\operatorname{Dsp}^{c}(C\hat{+}v)=\{2,4,7,8\}$ since this is the first external corner lying below the row $R_{\operatorname{ev}\operatorname{ct}^{-1}(C)}(8)=R_{\operatorname{ev}S}(8)=R_{T}(8)=2$. Finally, with $v$ yielding a new fourth row in $\lambda+v=4311\vdash9$, the sets of the resulting tableaux are as in the previous case (as Remark \ref{iotaaddv} predicts), and the tableaux themselves are \[T+v=\begin{ytableau} 1 & 2 & 4 & 7 \\ 3 & 6 & 8 \\ 5 \\ 9 \end{ytableau},\ S\tilde{+}v=\begin{ytableau} 1 & 4 & 5 & 9 \\ 2 & 6 & 7 \\ 3 \\ 8 \end{ytableau}\mathrm{\ and\ }C\hat{+}v=\begin{ytableau} 0 & 2 & 2 & 4 \\ 1 & 3 & 3 \\ 2 \\ 4 \end{ytableau}.\] \label{exaddv}
\end{ex}
Note that $\hat{\iota}C$ from Example \ref{iotatabs} and $C\hat{+}v$ for $v\in\operatorname{EC}(\lambda)$ lying in the second row have the same content as $C$ from Example \ref{exev}, while the other two cases of $C\hat{+}v$ in Example \ref{exaddv} have a different content, which corresponds to adding $n+1=9$ to the $\operatorname{Dsp}^{c}$-set via the description from part $(ii)$ of Lemma \ref{Dspc} (and the fact that $k=4$ also grows by 1 in this operation), as part $(iv)$ of Lemma \ref{addv} predicts.

\medskip

In order to relate the Branching Rule, as given in Remark \ref{branching}, with the representations from Definition \ref{defSpecht}, we make the following definition.
\begin{defn}
Take $\lambda \vdash n$, $S\in\operatorname{SYT}(\lambda)$, $C\in\operatorname{CCT}(\lambda)$, $v\in\operatorname{EC}(\lambda)$, and an index $0 \leq t \leq n+1$.
\begin{enumerate}[$(i)$]
\item Set $\delta^{S,v}$ to be 0 in case $v$ lies below $n$ in $\operatorname{ev}S$, and 1 otherwise. Similarly, $\delta_{C,v}$ is defined to be 0 when $v$ sits below sits below $n$ in $\operatorname{ev}\operatorname{ct}(C)$, and 1 otherwise.
\item We define $\operatorname{Ind}_{t,S_{n}}^{S_{n+1}}V^{S}$ to be $\bigoplus_{v\in\operatorname{EC}(\lambda)}V^{S\tilde{+}v}e_{t}^{\delta^{S,v}}$. In a similar manner, we set $\operatorname{Ind}_{t,S_{n}}^{S_{n+1}}V_{C}:=\bigoplus_{v\in\operatorname{EC}(\lambda)}V_{C\hat{+}v}e_{t}^{\delta_{C,v}}$.
\item Given a vector $\vec{h}=\{h_{r}\}_{r=1}^{n}$ as usual, which we consider to contain also the entry $h_{n+1}=0$, and an element $\delta\in\{0,1\}$, we write $\vec{h}+\delta_{t}$ for the vector which is obtained from $\vec{h}$ by adding $\delta$ to $h_{t}$ in case $t>0$ and leaving the other $h_{r}$'s invariant. The notation $\vec{h}+\delta^{t}$ is the same as $\vec{h}+\delta_{t}$.
\item For any such vector $\vec{h}$ we set $\operatorname{Ind}_{t,S_{n}}^{S_{n+1}}V^{S}_{\vec{h}}$ to be $\bigoplus_{v\in\operatorname{EC}(\lambda)}V^{S\tilde{+}v}_{\vec{h}+\delta^{S,v}_{t}}$, as well as $\operatorname{Ind}_{t,S_{n}}^{S_{n+1}}V_{C}^{\vec{h}}:=\bigoplus_{v\in\operatorname{EC}(\lambda)}V_{C\hat{+}v}^{\vec{h}+\delta_{C,v}^{t}}e_{t}^{\delta_{C,v}}$
\item We set $\operatorname{Ext}_{S_{n}}^{S_{n+1}}V^{S}$ to be the representation $\bigoplus_{v\in\operatorname{EC}(\lambda)}\delta^{S,v}V^{S\tilde{+}v}$, and similarly $\operatorname{Ext}_{S_{n}}^{S_{n+1}}V_{C}=\bigoplus_{v\in\operatorname{EC}(\lambda)}\delta_{C,v}V_{C\hat{+}v}$ (namely we take only the summands associated with those $v\in\operatorname{EC}(\lambda)$ for which $\delta^{S,v}$ or $\delta_{C,v}$ equals 1). More generally, for every $\vec{h}$ we define $\operatorname{Ext}_{S_{n}}^{S_{n+1}}V^{S}_{\vec{h}}=\bigoplus_{v\in\operatorname{EC}(\lambda)}\delta^{S,v}V^{S\tilde{+}v}_{\vec{h}}$ and $\operatorname{Ext}_{S_{n}}^{S_{n+1}}V_{C}^{\vec{h}}=\bigoplus_{v\in\operatorname{EC}(\lambda)}\delta_{C,v}V_{C\hat{+}v}^{\vec{h}}$, where we consider $\vec{h}$ as extended by $h_{n+1}=0$.
\item If $U$ is a representation of $S_{n}$ that admits a decomposition as either $\bigoplus_{\lambda \vdash n}\bigoplus_{S\in\operatorname{SYT}(\lambda)}\bigoplus_{\vec{h} \in H^{S}}V^{S}_{\vec{h}}$ or $\bigoplus_{\lambda \vdash n}\bigoplus_{C\in\operatorname{CCT}(\lambda)}\bigoplus_{\vec{h} \in H_{C}}V_{C}^{\vec{h}}$, where $H^{S}$ and $H_{C}$ are sets of vectors of the usual form, then we write $\operatorname{Ind}_{t,S_{n}}^{S_{n+1}}U$ and $\operatorname{Ext}_{S_{n}}^{S_{n+1}}U$ for the direct sums $\bigoplus_{\lambda \vdash n}\bigoplus_{S\in\operatorname{SYT}(\lambda)}\bigoplus_{\vec{h} \in H_{S}}\operatorname{Ind}_{t,S_{n}}^{S_{n+1}}V^{S}_{\vec{h}}$ and $\bigoplus_{\lambda \vdash n}\bigoplus_{S\in\operatorname{SYT}(\lambda)}\bigoplus_{\vec{h} \in H_{S}}\operatorname{Ext}_{S_{n}}^{S_{n+1}}V^{S}_{\vec{h}}$ respectively using the first expression, and for the representations $\bigoplus_{\lambda \vdash n}\bigoplus_{C\in\operatorname{CCT}(\lambda)}\bigoplus_{\vec{h} \in H_{C}}\operatorname{Ind}_{t,S_{n}}^{S_{n+1}}V_{C}^{\vec{h}}$ and $\bigoplus_{\lambda \vdash n}\bigoplus_{C\in\operatorname{CCT}(\lambda)}\bigoplus_{\vec{h} \in H_{C}}\operatorname{Ext}_{S_{n}}^{S_{n+1}}V_{C}^{\vec{h}}$ with the second one.
\end{enumerate} \label{IndtVSVC}
\end{defn}
It is clear that if $C=\operatorname{ct}(S)$ then $\delta_{C,v}=\delta^{S,v}$ in Definition \ref{IndtVSVC}, and recalling that $V^{S}=V_{C}$ via Definition \ref{defSpecht} and Remark \ref{samepols}, hence also $V^{S}_{\vec{h}}=V_{C}^{\vec{h}}$ for every vector $\vec{h}$, the expressions for their $\operatorname{Ind}_{t,S_{n}}^{S_{n+1}}$-images are indeed the same. For $t=0$ we get $\operatorname{Ind}_{t,S_{n}}^{S_{n+1}}V^{S}=\bigoplus_{v\in\operatorname{EC}(\lambda)}V^{S\tilde{+}v}$ and $\operatorname{Ind}_{t,S_{n}}^{S_{n+1}}V_{C}=\bigoplus_{v\in\operatorname{EC}(\lambda)}V_{C\hat{+}v}$, and $\vec{h}+\delta_{0}=\vec{h}+\delta^{0}$ is the same as $\vec{h}$ (up to adding vanishing entries, as commented after Definition \ref{defSpecht}), as we considered for $\operatorname{Ext}_{S_{n}}^{S_{n+1}}V^{S}_{\vec{h}}$ and $\operatorname{Ext}_{S_{n}}^{S_{n+1}}V_{C}^{\vec{h}}$.

\begin{ex}
For $S$ and $C=\operatorname{ct}(S)$ as in Examples \ref{exev} and \ref{exaddv}, with $n=8$ and $\lambda:=431\vdash8$, we consider image of the representation $V^{S}=V_{C}$ under $\operatorname{Ind}_{t,S_{8}}^{S_{9}}$ for some $0 \leq t\leq9$. The number of summands is the size 4 of $\operatorname{EC}(\lambda)$, where the box $v$ there for which $S\tilde{+}v=\tilde{\iota}S$ and $C\hat{+}v=\hat{\iota}C$ always satisfies $\delta^{S,v}=\delta_{C,v}=1$, and produces the summand $V^{\tilde{\iota}S}e_{t}=V_{\hat{\iota}C}e_{t}$, with the indices of this representation being given in Example \ref{iotatabs}. The box $v$ from the second row contributes the representation $V^{S\tilde{+}v}e_{t}=V_{C\hat{+}v}e_{t}$ (as here we still have $\delta^{S,v}=\delta_{C,v}=1$) and the other two choices of $v$, the tableaux for all of which are given explicitly in Example \ref{exaddv}, give two summands $V^{S\tilde{+}v}=V_{C\hat{+}v}$ without the multiplier $e_{t}$, since $\delta^{S,v}=\delta_{C,v}=0$ in these cases. The representation $\operatorname{Ext}_{S_{n}}^{S_{n+1}}V^{S}=\operatorname{Ext}_{S_{n}}^{S_{n+1}}V_{C}$ is just the direct sum of the first two components, without the multiplier $e_{t}$. \label{Index}
\end{ex}

The meaning of Definition \ref{IndtVSVC} is as follows.
\begin{lem}
The operation $\operatorname{Ind}_{t,S_{n}}^{S_{n+1}}$ lifts the Branching Rule into an operation on representations inside $\mathbb{Q}[\mathbf{x}_{n}]$ that commutes with multiplication by symmetric functions and with direct sums. When $t=n$ it is a homogeneous operation of degree $n$, and otherwise it is not homogeneous at all. \label{propInd}
\end{lem}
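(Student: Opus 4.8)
The plan is to push everything down to the irreducible constituents $V^{S}_{\vec{h}}$ (equivalently $V_{C}^{\vec{h}}$) and then quote the combinatorial inputs already in place. For the Branching Rule assertion, fix $\lambda\vdash n$, $S\in\operatorname{SYT}(\lambda)$ and a vector $\vec{h}$. By Theorem \ref{VSVCreps} we have $V^{S}_{\vec{h}}\cong\mathcal{S}^{\lambda}$, and for each $v\in\operatorname{EC}(\lambda)$ the summand $V^{S\tilde{+}v}_{\vec{h}+\delta^{S,v}_{t}}$ occurring in Definition \ref{IndtVSVC} is isomorphic to $\mathcal{S}^{\lambda+v}$, since $S\tilde{+}v\in\operatorname{SYT}(\lambda+v)$ by Definition \ref{ECadd}. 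As Remark \ref{branching} records the Branching Rule $\operatorname{Ind}_{S_{n}}^{S_{n+1}}\mathcal{S}^{\lambda}=\bigoplus_{v\in\operatorname{EC}(\lambda)}\mathcal{S}^{\lambda+v}$, this gives
\[\operatorname{Ind}_{t,S_{n}}^{S_{n+1}}V^{S}_{\vec{h}}=\bigoplus_{v\in\operatorname{EC}(\lambda)}V^{S\tilde{+}v}_{\vec{h}+\delta^{S,v}_{t}}\;\cong\;\bigoplus_{v\in\operatorname{EC}(\lambda)}\mathcal{S}^{\lambda+v}=\operatorname{Ind}_{S_{n}}^{S_{n+1}}\mathcal{S}^{\lambda}\cong\operatorname{Ind}_{S_{n}}^{S_{n+1}}V^{S}_{\vec{h}}.\]
Because part $(vi)$ of Definition \ref{IndtVSVC} extends $\operatorname{Ind}_{t,S_{n}}^{S_{n+1}}$ over direct sums, and ordinary induction is additive, the same isomorphism holds for any $U$ admitting such a decomposition; this is exactly what it means for $\operatorname{Ind}_{t,S_{n}}^{S_{n+1}}$ to lift the Branching Rule to a concrete operation on polynomial representations. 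The identical argument, with $\operatorname{ct}$, $\Sigma$ and $\delta_{C,v}$ replacing $\operatorname{ev}$, $\operatorname{cc}$ and $\delta^{S,v}$, handles the $V_{C}^{\vec{h}}$ presentation, and the two are consistent since $\delta_{C,v}=\delta^{S,v}$ when $C=\operatorname{ct}(S)$.

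Commuting with direct sums is immediate from part $(vi)$ of Definition \ref{IndtVSVC}: concatenate the two given decompositions and apply the operation summand by summand. For commuting with multiplication by a symmetric function $\prod_{r=1}^{n}e_{r}^{g_{r}}$, recall from part $(iii)$ of Definition \ref{defSpecht} that this operation sends $V^{S}_{\vec{h}}$ to $V^{S}_{\vec{h}+\vec{g}}$; since $\operatorname{Ind}_{t,S_{n}}^{S_{n+1}}$ modifies the index vector of each $v$-summand by the translation $\vec{h}\mapsto\vec{h}+\delta^{S,v}_{t}$, which is independent of $\vec{h}$, the two translations commute and we get $\operatorname{Ind}_{t,S_{n}}^{S_{n+1}}\bigl(\prod_{r}e_{r}^{g_{r}}\cdot V^{S}_{\vec{h}}\bigr)=\bigoplus_{v}V^{S\tilde{+}v}_{\vec{h}+\vec{g}+\delta^{S,v}_{t}}=\prod_{r}e_{r}^{g_{r}}\cdot\operatorname{Ind}_{t,S_{n}}^{S_{n+1}}V^{S}_{\vec{h}}$, where on the right the $e_{r}$ are taken in $\mathbb{Q}[\mathbf{x}_{n+1}]$. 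One extends to arbitrary $U$ using the previous point.

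The homogeneity statement carries the real content, and I expect it to be the main obstacle. By Theorem \ref{VSVCreps}, $V^{S}_{\vec{h}}$ is homogeneous of degree $d:=\operatorname{cc}(S)+\sum_{r}rh_{r}$, while the $v$-summand $V^{S\tilde{+}v}_{\vec{h}+\delta^{S,v}_{t}}$ is homogeneous of degree $\operatorname{cc}(S\tilde{+}v)+\sum_{r}rh_{r}+t\,\delta^{S,v}$. Part $(iv)$ of Lemma \ref{addv} gives $\operatorname{cc}(S\tilde{+}v)=\operatorname{cc}(S)+n\bigl(1-\delta^{S,v}\bigr)$ — the extra $n$ appearing precisely when $v$ lies below $n$ in $\operatorname{ev}S$, i.e.\ when $\delta^{S,v}=0$ — so the $v$-summand has degree $d+n+(t-n)\,\delta^{S,v}$. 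For $t=n$ this equals $d+n$ for every $v$, proving $\operatorname{Ind}_{n,S_{n}}^{S_{n+1}}$ is homogeneous of degree $n$; for $t\neq n$ the summands split into those of degree $d+n$ (those with $\delta^{S,v}=0$) and those of degree $d+t$ (those with $\delta^{S,v}=1$), which differ. To conclude genuine non-homogeneity I would invoke Remark \ref{iotaaddv}: $\operatorname{EC}(\lambda)$ always contains the first-row box $(1,\lambda_{1}+1)$, which sits below $n$ in no tableau (so $\delta^{S,v}=1$), and the new bottom-row box $\bigl(\ell(\lambda)+1,1\bigr)$, which sits below every entry (so $\delta^{S,v}=0$), and for every $\lambda$ these two boxes are distinct; hence $\operatorname{Ind}_{t,S_{n}}^{S_{n+1}}$ has constituents in two distinct degrees whenever $t\neq n$. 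The $V_{C}^{\vec{h}}$ case is the same, using $\Sigma(C\hat{+}v)=\Sigma(C)+n\bigl(1-\delta_{C,v}\bigr)$ from the same part of Lemma \ref{addv}. The delicate point, which I would spell out, is the bookkeeping in the degree formula — in particular the cancellation that isolates $t=n$, which is precisely the $\operatorname{cc}$-shift of part $(iv)$ of Lemma \ref{addv} — together with the fact that the two corner types always coexist.
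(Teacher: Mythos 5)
Your proposal is correct and follows essentially the same route as the paper's proof: reduce to the irreducible constituents $V^{S}_{\vec{h}}=V_{C}^{\vec{h}}$, get the Branching Rule lift from Theorem \ref{VSVCreps} together with Remark \ref{branching}, obtain the two commutation properties directly from how Definition \ref{IndtVSVC} shifts the index vector, and settle homogeneity via the cocharge shift in part $(iv)$ of Lemma \ref{addv} plus the coexistence of the two corner types from Remark \ref{iotaaddv}. The only (immaterial) difference is that you carry the vector $\vec{h}$ through the degree computation, whereas the paper first uses the commutation properties to reduce the homogeneity check to the index-free case.
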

The symmetric functions in Lemma \ref{propInd} are only polynomials in the $e_{r}$'s, but this is all that will be used later.

\begin{proof}
We will work with $S\in\operatorname{SYT}(\lambda)$ and $C\in\operatorname{CCT}(\lambda)$ for some $\lambda \vdash n$, under the assumption that $C=\operatorname{ct}(S)$ throughout, so that $V^{S}=V_{C}$, to state the argument in the two notations simultaneously.

Definition \ref{IndtVSVC} means that in $\operatorname{Ind}_{t,S_{n}}^{S_{n+1}}V^{S}=\operatorname{Ind}_{t,S_{n}}^{S_{n+1}}V_{C}$, the component $V^{S\tilde{+}v}=V^{C\hat{+}v}$ shows up as it is in case $v$ lies below $n$ in $\operatorname{ev}S=\operatorname{ev}\operatorname{ct}^{-1}(C)$, and multiplied by $e_{t}$ otherwise. Recall that the index $\vec{h}$ indicates that $V^{S}=V_{C}$ is multiplied by $\prod_{r=1}^{n}e_{r}^{h_{r}}$, and note that in the product arising from $\vec{h}+\delta^{S,v}_{t}$, or equivalently $\vec{h}+\delta_{C,v}^{t}$, the exponent of $e_{t}$ is $h_{t}+\delta^{S,v}=h_{t}+\delta_{S,v}$ and the exponent of $e_{r}$ for any other $r$ is $h_{r}$. This establishes the commutation between $\operatorname{Ind}_{t,S_{n}}^{S_{n+1}}$ and the multiplication by $\prod_{r=1}^{n}e_{r}^{h_{r}}$, and the assertion about direct sums follows immediately from the fact that $\operatorname{Ind}_{t,S_{n}}^{S_{n+1}}U$ is the direct sum of the $\operatorname{Ind}_{t,S_{n}}^{S_{n+1}}$-images of its irreducible components.

The lifting of the Branching Rule, as given in Remark \ref{branching}, is an immediate consequence of the fact that $\operatorname{sh}(S\tilde{+}v)=\operatorname{sh}(C\hat{+}v)=\lambda+v$ for every $v\in\operatorname{EC}(\lambda)$ via Definition \ref{ECadd}, since the each representation in Definition \ref{defSpecht} is isomorphic, by Theorem \ref{VSVCreps}, to the Specht module of the corresponding shape (or partition).

Finally, the two commutation properties implies that for proving homogeneity, it suffices to consider a single representation $V^{S}=V_{C}$, with no index. This representation is homogeneous of degree $\operatorname{cc}(S)=\Sigma(C)$ by Theorem \ref{VSVCreps}, and we saw in the proof of Lemma \ref{repshom} that this value is the same as $\sum_{i\in\operatorname{Dsi}^{c}(S)}i=\sum_{i\in\operatorname{Dsp}^{c}(C)}i$.

Lemma \ref{addv} therefore shows that $V^{S\tilde{+}v}=V_{C\hat{+}v}$ is homogeneous of degree $\operatorname{cc}(S)+n=\Sigma(C)+n$ in case $v$ lies below $n$ in $\operatorname{ev}S=\operatorname{ev}\operatorname{ct}^{-1}(C)$, and the same degree $\operatorname{cc}(S)=\Sigma(C)$ as $V^{S}=V_{C}$ otherwise. The former terms show up as they are inside $\operatorname{Ind}_{t,S_{n}}^{S_{n+1}}V^{S}$ by Definition \ref{IndtVSVC}, but the latter ones are multiplied by $e_{t}$, reaching homogeneity degree $\operatorname{cc}(S)+t=\Sigma(C)+t$.

This shows that if $t=n$ then $\operatorname{Ind}_{n,S_{n}}^{S_{n+1}}V^{S}=\operatorname{Ind}_{n,S_{n}}^{S_{n+1}}V_{C}$ is indeed a homogeneous representation of degree $\operatorname{cc}(S)+n=\Sigma(C)+n$. As Remark \ref{iotaaddv} implies that in general this operation contains at least one summand of the latter degree and at least one other summand whose degree involves $t$, we deduce that for other values of $t$ the resulting representation is not homogeneous. This completes the proof of the lemma.
\end{proof}

\begin{rmk}
In fact, the proof of Lemma \ref{propInd} implies that $\operatorname{Ext}_{S_{n}}^{S_{n+1}}$ is homogeneous of degree 0 and takes non-zero representations to non-zero representations (indeed, when applied to $V^{S}_{\vec{h}}$ or $V_{C}^{\vec{h}}$, for $S$ or $C$ of shape $\lambda$, it was seen, via Remark \ref{iotaaddv}, to contain $V^{\tilde{\iota}S}_{\vec{h}}$ or $V_{\hat{\iota}C}^{\vec{h}}$, and since the corresponding tableau is of shape $\lambda_{+}$, with a longer first row, we call $\operatorname{Ext}_{S_{n}}^{S_{n+1}}U$ the \emph{extension} of $U$). Moreover, it shows that $\operatorname{Ind}_{t,S_{n}}^{S_{n+1}}$ can be written as $e_{t}\operatorname{Ext}_{S_{n}}^{S_{n+1}}$ plus another operator, which is homogeneous of degree $n$ and also sends, by Remark \ref{iotaaddv}, non-zero representations to non-zero representations. This complementary operator, which is based, for each $S$ or $C$, on $v\in\operatorname{EC}(\lambda)$ with $\delta^{S,v}$ or $\delta_{C,v}$ vanishing, will be useful in \cite{[Z1]}. \label{IndExt}
\end{rmk}

\begin{ex}
The representation $V^{S}=V_{C}$ from Example \ref{Index} is homogeneous of degree 13, which is the value of $\operatorname{cc}(S)=\Sigma(C)$ from Example \ref{exev}. As we saw in Examples \ref{iotatabs} and \ref{exaddv}, two elements $v$ of $\operatorname{EC}(\lambda=431\vdash8)$ give $S\tilde{+}v$ and $C\hat{+}v$ with the same value of $\operatorname{cc}(S)=\operatorname{ent}(C)$, and two others have a value of 21. Thus the image of the initial representation under $\operatorname{Ind}_{t,S_{8}}^{S_{9}}$ for some $0 \leq t\leq9$ is the direct sum of four representations, two of degree $13+t$ and two of degree 21, which are the same for $t=n=8$ (thus yielding a homogeneous representation) but not otherwise. The first part is $e_{t}$ times the $\operatorname{Ext}_{S_{n}}^{S_{n+1}}$-image of our initial representation, and the latter is also of degree 13. \label{homInd}
\end{ex}

\medskip

The effect of the bar insertion at the last location can now be described using Definition \ref{IndtVSVC} in the following result, which may be viewed as a higher Specht polynomial analogue of Theorem 7.4 of \cite{[PR]}.
\begin{prop}
For any $I\subseteq\mathbb{N}_{n-1}$ we have $R_{n+1,I\cup\{n\}}=\operatorname{Ind}_{n-k,S_{n}}^{S_{n+1}}R_{n,I}$, where $k:=|I|+1$ as always, and $R_{n+1,I\cup\{n\}}^{\mathrm{hom}}=\operatorname{Ind}_{n,S_{n}}^{S_{n+1}}R_{n,I}^{\mathrm{hom}}$. \label{embInd}
\end{prop}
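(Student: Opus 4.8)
The plan is to reduce the asserted identity to the explicit decompositions already at our disposal. By Theorem \ref{main}$(i)$ we have $R_{n,I}=\bigoplus_{\lambda\vdash n}\bigoplus_{S\in\operatorname{SYT}(\lambda),\ \operatorname{Dsi}^{c}(S)\subseteq I}V^{S}_{\vec{h}^{S}_{I}}$ and $R_{n+1,I\cup\{n\}}=\bigoplus_{\nu\vdash n+1}\bigoplus_{S'\in\operatorname{SYT}(\nu),\ \operatorname{Dsi}^{c}(S')\subseteq I\cup\{n\}}V^{S'}_{\vec{h}^{S'}_{I\cup\{n\}}}$, with the homogeneous analogues supplied by Proposition \ref{homrep}$(i)$ (and with the $\operatorname{CCT}$-versions, to which the argument applies verbatim on setting $C=\operatorname{ct}(S)$). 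First I would rewrite the sum for $R_{n+1,I\cup\{n\}}$, via Remark \ref{branching}, as a sum over triples $(\lambda,S,v)$ with $\lambda\vdash n$, $S\in\operatorname{SYT}(\lambda)$ and $v\in\operatorname{EC}(\lambda)$, putting $\nu=\lambda+v$ and $S'=S\tilde{+}v$; part $(vi)$ of Lemma \ref{addv} then says that the condition $\operatorname{Dsi}^{c}(S')\subseteq I\cup\{n\}$ is equivalent to $\operatorname{Dsi}^{c}(S)\subseteq I$, so the index set of this rewritten sum is exactly the one occurring in $\operatorname{Ind}_{t,S_{n}}^{S_{n+1}}R_{n,I}$ through Definition \ref{IndtVSVC}.

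The core of the proof is the comparison of multiplier vectors: I would show that, for $S'=S\tilde{+}v$, one has $\vec{h}^{S'}_{I\cup\{n\}}=\vec{h}^{S}_{I}+\delta^{S,v}_{n-k}$ (which forces $t=n-k$) and $\vec{h}(I\cup\{n\},S')=\vec{h}(I,S)+\delta^{S,v}_{n}$ (which forces $t=n$ in the homogeneous case). For this one observes that $\mathbb{N}_{n}\setminus(I\cup\{n\})=\mathbb{N}_{n-1}\setminus I$, so the quantities $r_{i}$ of Definition \ref{FwIdef} are unchanged for $i\in I$, while the new index $n$ has $r_{n}=|\mathbb{N}_{n-1}\setminus I|=n-k$. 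By Lemma \ref{addv}$(v)$ we have $\operatorname{Dsi}^{c}(S')\cap\mathbb{N}_{n-1}=\operatorname{Dsi}^{c}(S)$, hence $\operatorname{Asi}^{c}_{I\cup\{n\}}(S')\cap\mathbb{N}_{n-1}=\operatorname{Asi}^{c}_{I}(S)$; and by Lemma \ref{addv}$(ii)$ together with Definition \ref{IndtVSVC}$(i)$, the index $n$ itself belongs to $\operatorname{Asi}^{c}_{I\cup\{n\}}(S')$ precisely when $\delta^{S,v}=1$. Combining these gives $\operatorname{Asi}^{c}_{I\cup\{n\}}(S')=\operatorname{Asi}^{c}_{I}(S)$ if $\delta^{S,v}=0$ and $\operatorname{Asi}^{c}_{I}(S)\cup\{n\}$ if $\delta^{S,v}=1$; translating through the definitions of $\vec{h}^{S}_{I}$ and of the characteristic vector $\vec{h}(I,S)$ — and using $r_{n}=n-k$ for the former — yields the two displayed vector identities.

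With these identities the computation closes formally. The $v$-summand $V^{S\tilde{+}v}_{\vec{h}^{S}_{I}+\delta^{S,v}_{n-k}}$ of $\operatorname{Ind}_{n-k,S_{n}}^{S_{n+1}}V^{S}_{\vec{h}^{S}_{I}}$ from Definition \ref{IndtVSVC}$(iv)$ is literally the summand $V^{S'}_{\vec{h}^{S'}_{I\cup\{n\}}}$ of $R_{n+1,I\cup\{n\}}$: the tableau and the multiplier vector coincide on the nose, so the two subspaces of $\mathbb{Q}[\mathbf{x}_{n+1}]$ are equal, not merely isomorphic. Summing over all triples $(\lambda,S,v)$ and applying Definition \ref{IndtVSVC}$(vi)$ to the decomposition of $R_{n,I}$ from Theorem \ref{main}$(i)$ gives $R_{n+1,I\cup\{n\}}=\operatorname{Ind}_{n-k,S_{n}}^{S_{n+1}}R_{n,I}$; the homogeneous statement is obtained identically, with $\delta^{S,v}_{n}$ replacing $\delta^{S,v}_{n-k}$ and Proposition \ref{homrep}$(i)$ replacing Theorem \ref{main}$(i)$, and one may cross-check the homogeneity degree $\sum_{i\in I}i+n=\sum_{i\in I\cup\{n\}}i$ against Lemma \ref{propInd} and Proposition \ref{homrep}$(iii)$. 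I expect the only genuine obstacle to be the bookkeeping in the middle step — pinning down the index at which $\delta^{S,v}$ is recorded ($n-k$ in the non-homogeneous case, $n$ in the homogeneous one) and confirming that passing from $I$ to $I\cup\{n\}$ leaves every old $r_{i}$ untouched — but once Remark \ref{branching} and Lemma \ref{addv} are invoked this is purely mechanical.
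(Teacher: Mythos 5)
Your proposal is correct and follows essentially the same route as the paper's proof: decompose both sides via Theorem \ref{main}/Proposition \ref{homrep}, re-index the tableaux for $n+1$ through Remark \ref{branching}, use Lemma \ref{addv} (parts $(ii)$, $(v)$, $(vi)$) together with $\mathbb{N}_{n}\setminus(I\cup\{n\})=\mathbb{N}_{n-1}\setminus I$ and $r_{n}=n-k$ to identify the multiplier vectors as $\vec{h}^{S}_{I}+\delta^{S,v}_{n-k}$ (resp. $\vec{h}(I,S)+\delta^{S,v}_{n}$), and conclude by matching summands. The only organizational difference is that you set up the comparison as a single bijective re-indexing of the decomposition of $R_{n+1,I\cup\{n\}}$, whereas the paper first shows each induced component lands inside $R_{n+1,I\cup\{n\}}$ and then invokes Remark \ref{branching} for exhaustion; the mathematical content is the same.
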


\begin{proof}
As we saw in the proofs of Theorem \ref{main} and Proposition \ref{homrep}, the representations $R_{n,I}$ and $R_{n,I}^{\mathrm{hom}}$ have decompositions as in Definition \ref{IndtVSVC}, with the set $H^{S}$ and $H_{C}$ being a singleton (containing the vector $\vec{h}^{S}_{I}$, $\vec{h}_{C}^{I}$, $\vec{h}(I,S)$, or $\vec{h}(C,I)$ respectively) when $\operatorname{Dsi}^{c}(S)$ or $\operatorname{Dsp}^{c}(C)$ is contained in $I$, and being empty otherwise. Part $(vi)$ of Lemma \ref{addv} implies that for each irreducible component of $R_{n,I}$ or $R_{n,I}^{\mathrm{hom}}$, its induction $\operatorname{Ind}_{t,S_{n}}^{S_{n+1}}$ for any $t$ is supported on representations showing up in $R_{n+1,I\cup\{n\}}$ and in $R_{n+1,I\cup\{n\}}^{\mathrm{hom}}$.

Moreover, the complement $\mathbb{N}_{n}\setminus(I\cup\{n\})$ is the same as $\mathbb{N} \setminus I$, and it follows from Lemma \ref{addv} that the set $\operatorname{Asi}^{c}_{I\cup\{n\}}(S\tilde{+}v)=\operatorname{Asi}^{c}(S\tilde{+}v)\cap(I\cup\{n\})$ is $\operatorname{Asi}^{c}_{I}(S)$ when $v$ is below $n$ in $\operatorname{ev}S$ and equals $\operatorname{Asi}^{c}_{I}(S)\cup\{n\}$ otherwise, and similarly $\operatorname{Asp}^{c}_{I\cup\{n\}}(C\hat{+}v)=\operatorname{Asp}^{c}(C\hat{+}v)\cap(I\cup\{n\})$ equals $\operatorname{Asp}^{c}_{I}(C)$ in case $v$ is below $n$ in $\operatorname{ev}\operatorname{ct}^{-1}(C)$, and is the same as $\operatorname{Asp}^{c}_{I}(C)\cup\{n\}$ otherwise. Hence if $i$ is in $\operatorname{Asi}^{c}_{I}(S)$ or $\operatorname{Asp}^{c}_{I}(C)$ then the parameter $r_{i}$ from Definition \ref{FwIdef} is the same for $S$, $C$, and $I$ and for $S\tilde{+}v$, $C\hat{+}v$ and $I\cup\{n\}$, and if $v$ is not below $n$ in $\operatorname{ev}S$ or in $\operatorname{ev}\operatorname{ct}^{-1}(C)$ then for the element $n$ of $\operatorname{Asi}^{c}_{I\cup\{n\}}(S\tilde{+}v)$ or $\operatorname{Asp}^{c}_{I\cup\{n\}}(C\hat{+}v)$ we get $r_{n}=n-k$.

Gathering the definitions, we obtain that $\vec{h}^{S\tilde{+}v}_{I\cup\{n\}}$, $\vec{h}_{C\hat{+}v}^{I\cup\{n\}}$, $\vec{h}(I\cup\{n\},S\tilde{+}v)$, and $\vec{h}(C\hat{+}v,I\cup\{n\})$ are equal to $\vec{h}^{S}_{I}$, $\vec{h}_{C}^{I}$, $\vec{h}(I,S)$, and $\vec{h}(C,I)$ respectively (implicitly assumed to have the additional entry $h_{n+1}=0$ as above) in case $v$ lies below $n$ in $\operatorname{ev}S$ or in $\operatorname{ev}\operatorname{ct}^{-1}(C)$, and otherwise the existence of $r_{n}=n-k$ implies that they are obtained from these extensions by adding 1 to the entry at index $n-k$, $n-k$, $n$ and $n$ respectively. Hence we can write these vectors in the notation from Definition \ref{IndtVSVC} as $\vec{h}^{S}_{I}+\delta^{S,v}_{n-k}$, $\vec{h}_{C}^{I}+\delta_{S,v}^{n-k}$, $\vec{h}(I,S)+\delta^{S,v}_{n}$, and $\vec{h}(C,I)+\delta_{S,v}^{n}$ respectively, meaning that for every component of $R_{n,I}$ (resp. $R_{n,I}^{\mathrm{hom}}$), its image under $\operatorname{Ind}_{n-k,S_{n}}^{S_{n+1}}$ (resp. $\operatorname{Ind}_{n,S_{n}}^{S_{n+1}}$) is contained in $R_{n+1,I\cup\{n\}}$ (resp. $R_{n+1,I\cup\{n\}}^{\mathrm{hom}}$).

The direct sum property from Definition \ref{IndtVSVC} and Lemma \ref{propInd} therefore implies that the representations $\operatorname{Ind}_{n-k,S_{n}}^{S_{n+1}}R_{n,I}$ and $\operatorname{Ind}_{n,S_{n}}^{S_{n+1}}R_{n,I}^{\mathrm{hom}}$ are direct sums of sub-representations of $R_{n+1,I\cup\{n\}}$ and $R_{n+1,I\cup\{n\}}^{\mathrm{hom}}$ respectively, and we saw from Theorem \ref{main} and Proposition \ref{homrep} that the latter representations are supported on partitions $\nu \vdash n+1$ and on tableaux (standard or cocharge) of shape $\nu$ whose corresponding set, $\operatorname{Dsi}^{c}$ or $\operatorname{Dsp}^{c}$, is contained in $I\cup\{n\}$.

But Remark \ref{branching} shows that every such tableau can be expressed uniquely as $S\tilde{+}v$ or $C\hat{+}v$ with $\lambda \vdash n$, $S\in\operatorname{SYT}(\mu)$ or $C\in\operatorname{CCT}(\mu)$, and $v\in\operatorname{EC}(\lambda)$. Hence every summand in these representations is obtained, via the corresponding induction operator, from a unique summand in $R_{n,I}$ and $R_{n,I}^{\mathrm{hom}}$ respectively, which produces the desired equality. This completes the proof of the proposition.
\end{proof}
Note that for $k=n$ and $I=\mathbb{N}_{n-1}$ the value $t=n-k$ vanishes, which is the reason for including it in Definition \ref{IndtVSVC} (this amounts, via Example \ref{trivthm}, to the regular representation $R_{n+1,n+1}$ of $S_{n+1}$ being induced from that of $S_{n}$, namely $R_{n,n}$, via the decomposition from \cite{[ATY]}). The homogeneity from Lemma \ref{propInd} is related, by Proposition \ref{embInd}, to the fact that $R_{n,I}^{\mathrm{hom}}$ and $R_{n+1,I\cup\{n\}}^{\mathrm{hom}}$ are homogeneous (by Proposition \ref{homrep}).

In the case from Example \ref{homInd}, the representation $V^{S}=V_{C}$ participates, with no vector $\vec{h}$, in $R_{8,I}$ and $R_{8,I}^{\mathrm{hom}}$ for $I:=\{2,4,7\}$, so that $k:=|I|+1=4$. Thus  by Proposition \ref{embInd} implies that $R_{9,I\cup\{8\}}$ and $R_{9,I\cup\{8\}}^{\mathrm{hom}}$ contain $\operatorname{Ind}_{4,S_{8}}^{S_{9}}$ and the homogeneous representation $\operatorname{Ind}_{8,S_{8}}^{S_{9}}$ respectively, with the components that are described in that example.

\medskip

We now turn to the star insertion at the last location, analogously to Theorem 7.1 of \cite{[PR]}, which uses the other construction from Definition \ref{IndtVSVC}.
\begin{prop}
Given any subset $I\subseteq\mathbb{N}_{n-1}$, of some size $k-1$, we have the equalities $R_{n+1,I}=\operatorname{Ext}_{S_{n}}^{S_{n+1}}R_{n,I}$ and $R_{n+1,I}^{\mathrm{hom}}=\operatorname{Ext}_{S_{n}}^{S_{n+1}}R_{n,I}^{\mathrm{hom}}$. \label{embreps}
\end{prop}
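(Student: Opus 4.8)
The plan is to run the same argument as for Proposition \ref{embInd}, but now tracking $\operatorname{Ext}_{S_{n}}^{S_{n+1}}$ rather than $\operatorname{Ind}_{t,S_{n}}^{S_{n+1}}$, and exploiting that here the subset $I$ is \emph{not} enlarged when we pass from $n$ to $n+1$. First I would recall the decompositions $R_{n,I}=\bigoplus_{\lambda\vdash n}\bigoplus_{S\in\operatorname{SYT}(\lambda),\ \operatorname{Dsi}^{c}(S)\subseteq I}V^{S}_{\vec{h}^{S}_{I}}$ and $R_{n+1,I}=\bigoplus_{\nu\vdash n+1}\bigoplus_{\tilde{S}\in\operatorname{SYT}(\nu),\ \operatorname{Dsi}^{c}(\tilde{S})\subseteq I}V^{\tilde{S}}_{\vec{h}^{\tilde{S}}_{I}}$ from Theorem \ref{main} (with $I$ now read inside $\mathbb{N}_{n}$), together with the parallel cocharge-tableau versions and the homogeneous analogues $\bigoplus V^{S}_{\vec{h}(I,S)}$ supplied by Proposition \ref{homrep}.

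Next I would fix one irreducible summand $V^{S}_{\vec{h}^{S}_{I}}$ of $R_{n,I}$, so $S\in\operatorname{SYT}(\lambda)$ with $\operatorname{Dsi}^{c}(S)\subseteq I$, and unpack $\operatorname{Ext}_{S_{n}}^{S_{n+1}}V^{S}_{\vec{h}^{S}_{I}}=\bigoplus_{v\in\operatorname{EC}(\lambda)}\delta^{S,v}V^{S\tilde{+}v}_{\vec{h}^{S}_{I}}$ from Definition \ref{IndtVSVC} (with $\vec{h}^{S}_{I}$ extended by $h_{n+1}=0$). By part $(ii)$ of Lemma \ref{addv}, $\operatorname{Dsi}^{c}(S\tilde{+}v)=\operatorname{Dsi}^{c}(S)$ exactly when $\delta^{S,v}=1$ and equals $\operatorname{Dsi}^{c}(S)\cup\{n\}$ otherwise; since $n\notin I$, the condition $\operatorname{Dsi}^{c}(S\tilde{+}v)\subseteq I$ singles out precisely the summands kept by $\operatorname{Ext}$, and only when $\operatorname{Dsi}^{c}(S)\subseteq I$ to begin with. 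It then remains to check that the index vector is unchanged: for the retained $v$ one has $\operatorname{Asi}^{c}_{I}(S\tilde{+}v)=\operatorname{Asi}^{c}(S\tilde{+}v)\cap I=(\operatorname{Asi}^{c}(S)\cup\{n\})\cap I=\operatorname{Asi}^{c}_{I}(S)$, and the parameters $r_{i}$ for $i\in\operatorname{Asi}^{c}_{I}(S)\subseteq\mathbb{N}_{n-1}$ count elements of the complement of $I$ lying below $i$, which is insensitive to whether the ambient set is $\mathbb{N}_{n-1}$ or $\mathbb{N}_{n}$ since $i<n$. Hence $\vec{h}^{S\tilde{+}v}_{I}=\vec{h}^{S}_{I}$ as vectors with vanishing last entry, and likewise $\vec{h}(I,S\tilde{+}v)=\vec{h}(I,S)$, so $\operatorname{Ext}_{S_{n}}^{S_{n+1}}V^{S}_{\vec{h}^{S}_{I}}$ is a direct sum of summands of $R_{n+1,I}$ (and correspondingly for the homogeneous variant).

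For the reverse containment I would invoke Remark \ref{branching}: each $\tilde{S}\in\operatorname{SYT}(\nu)$ with $\nu\vdash n+1$ is uniquely $S\tilde{+}v$ for some $\lambda\vdash n$, $S\in\operatorname{SYT}(\lambda)$, $v\in\operatorname{EC}(\lambda)$ with $\lambda+v=\nu$; if $\operatorname{Dsi}^{c}(\tilde{S})\subseteq I\subseteq\mathbb{N}_{n-1}$ then $n\notin\operatorname{Dsi}^{c}(\tilde{S})$, which by Lemma \ref{addv}$(ii)$ forces $\delta^{S,v}=1$ and $\operatorname{Dsi}^{c}(S)\subseteq I$, so $V^{S}_{\vec{h}^{S}_{I}}$ occurs in $R_{n,I}$ and $V^{\tilde{S}}_{\vec{h}^{\tilde{S}}_{I}}=V^{\tilde{S}}_{\vec{h}^{S}_{I}}$ occurs in its $\operatorname{Ext}$-image. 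Thus every summand of $R_{n+1,I}$ arises from exactly one summand of $R_{n,I}$, and since $\operatorname{Ext}_{S_{n}}^{S_{n+1}}$ distributes over direct sums (built into Definition \ref{IndtVSVC} and used in the proof of Lemma \ref{propInd}) and everything in sight is a genuine direct sum inside $\mathbb{Q}[\mathbf{x}_{n}]$, we conclude $R_{n+1,I}=\operatorname{Ext}_{S_{n}}^{S_{n+1}}R_{n,I}$; the homogeneous identity follows verbatim with $\vec{h}(I,S)$ in place of $\vec{h}^{S}_{I}$ and Proposition \ref{homrep} in place of Theorem \ref{main}. I expect the only step requiring genuine attention to be the bookkeeping that $\vec{h}^{S\tilde{+}v}_{I}$ is exactly the extension-by-zero of $\vec{h}^{S}_{I}$ — the analogue of the subtle vector comparison in Proposition \ref{embInd} — but it is easier here because $I$ is unchanged and the newly available index $n$ lies above every element of $\mathbb{N}_{n-1}$, hence affects none of the $r_{i}$.
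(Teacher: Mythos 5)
Your argument is correct, but it reaches the statement by a different route than the paper. You re-run, for $\operatorname{Ext}_{S_{n}}^{S_{n+1}}$, the summand-by-summand matching that the paper uses to prove Proposition \ref{embInd}: you decompose $R_{n,I}$ and $R_{n+1,I}$ via Theorem \ref{main} (and Proposition \ref{homrep}), use Lemma \ref{addv} to see that exactly the corners with $\delta^{S,v}=1$ (equivalently $\delta_{C,v}=1$) keep the $\operatorname{Dsi}^{c}$- or $\operatorname{Dsp}^{c}$-set inside $I$, check that the vectors $\vec{h}^{S\tilde{+}v}_{I}$ and $\vec{h}(I,S\tilde{+}v)$ are just the extensions by zero of $\vec{h}^{S}_{I}$ and $\vec{h}(I,S)$ (which is indeed the only delicate point, and your justification — the $r_{i}$ only count elements of the complement of $I$ below $i<n$ — is exactly right), and then use Remark \ref{branching} for surjectivity. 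The paper instead deduces the proposition from results already in place: it applies Proposition \ref{incI} (parts $(iii)$ and $(i)$ with $\ell=n$) to write $R_{n+1,I\cup\{n\}}$ and $R_{n+1,I\cup\{n\}}^{\mathrm{hom}}$ as $e_{n-k}R_{n+1,I}$, resp. $e_{n}R_{n+1,I}^{\mathrm{hom}}$, plus the components whose set contains $n$, then invokes Proposition \ref{embInd} together with the splitting $\operatorname{Ind}_{t,S_{n}}^{S_{n+1}}=e_{t}\operatorname{Ext}_{S_{n}}^{S_{n+1}}\oplus(\text{complementary operator})$ from Remark \ref{IndExt}, and finally cancels the symmetric multiplier. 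Your approach is more self-contained (it does not need Propositions \ref{incI} or \ref{embInd}, nor the division by $e_{n-k}$ or $e_{n}$, which is legitimate there only because of Remark \ref{maxsymdiv}-type considerations), at the cost of repeating the bookkeeping already done for the induction; the paper's approach is shorter given the earlier results and highlights the structural relation between $\operatorname{Ind}$ and $\operatorname{Ext}$. Two cosmetic slips: the direct sums you compare live inside $\mathbb{Q}[\mathbf{x}_{n+1}]$, not $\mathbb{Q}[\mathbf{x}_{n}]$, and you should say explicitly (as the paper does for Proposition \ref{embInd}) that $R_{n,I}$ and $R_{n,I}^{\mathrm{hom}}$ are of the form required by part $(vi)$ of Definition \ref{IndtVSVC}, with $H^{S}$ and $H_{C}$ singletons or empty, so that $\operatorname{Ext}_{S_{n}}^{S_{n+1}}$ of them is even defined; neither affects the validity of the argument.
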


\begin{proof}
Parts $(iii)$ and $(i)$ of Proposition \ref{incI}, with $n+1$ and $\ell=n$, express $R_{n+1,I\cup\{n\}}$ and $R_{n+1,I\cup\{n\}}^{\mathrm{hom}}$ as $e_{n-k}R_{n+1,I}$ and $e_{n}R_{n+1,I}^{\mathrm{hom}}$ respectively, plus direct sums of representations that are based on tableaux of shape $\nu \vdash n+1$ such that their corresponding set $\operatorname{Dsi}^{c}$ or $\operatorname{Dsp}^{c}$ contains $n$. It follows that we can identify $R_{n+1,I}$ and $R_{n+1,I}^{\mathrm{hom}}$ by taking the parts of $R_{n+1,I\cup\{n\}}$ and $R_{n+1,I\cup\{n\}}^{\mathrm{hom}}$ whose components are indexed by tableaux whose sets do not contain $n$ (and are therefore contained in $I$, since they must be contained in $I\cup\{n\}$ by Theorem \ref{main} and Proposition \ref{homrep}), and dividing by $e_{n}$ and $e_{n-k}$ respectively.

But Proposition \ref{embInd} shows that $R_{n+1,I\cup\{n\}}$ and $R_{n+1,I\cup\{n\}}^{\mathrm{hom}}$ can be given in terms of induced representations via Definition \ref{IndtVSVC}. Moreover, Remark \ref{IndExt} expresses these induced representations as $e_{n-k}\operatorname{Ext}_{S_{n}}^{S_{n+1}}R_{n,I}$ or $e_{n}\operatorname{Ext}_{S_{n}}^{S_{n+1}}R_{n,I}^{\mathrm{hom}}$ plus the images of the initial representations under a complementary operator, and the image of this complementary operator only involves, via Definition \ref{IndtVSVC} and Lemma \ref{addv}, precisely the representations whose associated tableaux has $n$ contained in their set, $\operatorname{Dsi}^{c}$ or $\operatorname{Dsp}^{c}$.

It follows from the first paragraph that for getting $R_{n+1,I}$ or $R_{n+1,I}^{\mathrm{hom}}$, we have to take $e_{n-k}\operatorname{Ext}_{S_{n}}^{S_{n+1}}R_{n,I}$ and divide by $e_{n-k}$ or consider $e_{n}\operatorname{Ext}_{S_{n}}^{S_{n+1}}R_{n,I}^{\mathrm{hom}}$ divided by $e_{n}$, yielding the desired equalities. This proves the proposition.
\end{proof}
Proposition \ref{embreps} is in correspondence with the fact from Remark \ref{IndExt} that $\operatorname{Ext}_{S_{n}}^{S_{n+1}}$ is homogeneous of degree 0.
\begin{ex}
In the case $n=3$ and $k=2$, for $I=\{1\}$ we get the representations $R_{3,I}=V^{123} \oplus V^{\substack{12 \\ 3\hphantom{4}}}=V^{000} \oplus V^{\substack{00 \\ 1\hphantom{1}}}$ and $R_{3,I}^{\mathrm{hom}}=V^{123}e_{1} \oplus V^{\substack{12 \\ 3\hphantom{4}}}=V^{000}e_{1} \oplus V^{\substack{00 \\ 1\hphantom{1}}}$. One easily verifies, via Definition \ref{IndtVSVC}, that applying $\operatorname{Ind}_{1,S_{3}}^{S_{4}}$ to the former yields the representation $R_{4,I\cup\{3\}}=W_{1,2,1}$ from Example \ref{n4kI}, while the image of the latter under $\operatorname{Ind}_{3,S_{3}}^{S_{4}}$ is the homogeneous version from Example \ref{exhom}. Moreover, letting $\operatorname{Ext}_{S_{3}}^{S_{4}}$ act on them produces the representations $R_{4,I}$ and $R_{4,I}^{\mathrm{hom}}$, as given in these examples (or in Remark \ref{deg1} and Example \ref{bijsmall}). When $I=\{2\}$ we have $R_{3,I}=V^{123}e_{1} \oplus V^{\substack{13 \\ 2\hphantom{4}}}=V^{000}e_{1} \oplus V^{\substack{01 \\ 1\hphantom{2}}}$ and $R_{3,I}^{\mathrm{hom}}=V^{123}e_{2} \oplus V^{\substack{13 \\ 2\hphantom{4}}}=V^{000}e_{2} \oplus V^{\substack{01 \\ 1\hphantom{2}}}$, with $\operatorname{Ind}_{1,S_{3}}^{S_{4}}$ taking the former to $R_{4,I\cup\{3\}}=W_{2,1,1}$, $\operatorname{Ind}_{3,S_{3}}^{S_{4}}$ sending the latter to its homogeneous counterpart, and $\operatorname{Ext}_{S_{3}}^{S_{4}}$ yielding the asserted images (note that unlike the previous case, here $\operatorname{Ext}_{S_{3}}^{S_{4}}$ takes the non-trivial component to the sum of two representations). \label{opersex}
\end{ex}

\medskip

The representations $R_{n,k}$ can thus be obtained inductively as follows.
\begin{cor}
For any $0 \leq k \leq n$, the representation $R_{n+1,k+1}$ is the direct sum of $\operatorname{Ind}_{n-k,S_{n}}^{S_{n+1}}R_{n,k}$ (which is 0 in case $k=0$) and $\operatorname{Ext}_{S_{n}}^{S_{n+1}}R_{n,k+1}$ (which vanishes when $k=n$). \label{Rnkind}
\end{cor}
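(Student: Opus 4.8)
The plan is to read off $R_{n+1,k+1}$ from part $(iii)$ of Theorem \ref{main}, namely $R_{n+1,k+1}=\bigoplus_{I'\subseteq\mathbb{N}_{n},\ |I'|=k}R_{n+1,I'}$, and then to split the index set of this direct sum according to whether or not $n\in I'$. Each of the two resulting pieces will be recognized, via Propositions \ref{embInd} and \ref{embreps}, as the image of a piece of $R_{n,k}$ or $R_{n,k+1}$ under one of the operators from Definition \ref{IndtVSVC}.

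For the summands with $n\in I'$, I would write $I'=I\cup\{n\}$ with $I\subseteq\mathbb{N}_{n-1}$ of size $k-1$; this sets up a bijection between these $I'$ and the subsets of $\mathbb{N}_{n-1}$ of size $k-1$. For each such $I$, Proposition \ref{embInd} gives $R_{n+1,I\cup\{n\}}=\operatorname{Ind}_{n-k,S_{n}}^{S_{n+1}}R_{n,I}$; here the parameter $|I|+1$ appearing in that proposition is exactly our $k$, so the index $n-k$ is the same for every subset $I$ of this size. Since $\operatorname{Ind}_{n-k,S_{n}}^{S_{n+1}}$ commutes with direct sums by Lemma \ref{propInd}, summing over all these $I$ and invoking Theorem \ref{main}$(iii)$ once more—this time for $R_{n,k}=\bigoplus_{I\subseteq\mathbb{N}_{n-1},\ |I|=k-1}R_{n,I}$—yields $\bigoplus_{n\in I'}R_{n+1,I'}=\operatorname{Ind}_{n-k,S_{n}}^{S_{n+1}}R_{n,k}$.

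For the summands with $n\notin I'$, the relevant $I'$ are precisely the subsets $I\subseteq\mathbb{N}_{n-1}$ of size $k$, and Proposition \ref{embreps} gives $R_{n+1,I}=\operatorname{Ext}_{S_{n}}^{S_{n+1}}R_{n,I}$ for each of them. The operator $\operatorname{Ext}_{S_{n}}^{S_{n+1}}$ is defined summand-by-summand in part $(vi)$ of Definition \ref{IndtVSVC} (and recorded in Remark \ref{IndExt}), hence also commutes with direct sums; summing over these $I$ and using $R_{n,k+1}=\bigoplus_{I\subseteq\mathbb{N}_{n-1},\ |I|=k}R_{n,I}$ from Theorem \ref{main}$(iii)$ gives $\bigoplus_{n\notin I'}R_{n+1,I'}=\operatorname{Ext}_{S_{n}}^{S_{n+1}}R_{n,k+1}$. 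Adding the two pieces produces the asserted decomposition of $R_{n+1,k+1}$, and it only remains to note the degenerate cases: for $k=0$ no subset $I'\subseteq\mathbb{N}_{n}$ of size $0$ contains $n$ (equivalently $R_{n,0}=0$), so the first summand vanishes, while for $k=n$ there is no subset of $\mathbb{N}_{n-1}$ of size $n$ (equivalently $R_{n,n+1}=0$), so the second summand vanishes.

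There is no serious obstacle in this argument; it is essentially a regrouping of the direct sums from Theorem \ref{main} combined with the two insertion results. The only points that require care are purely bookkeeping: matching the parameter $|I|+1$ of Proposition \ref{embInd} with the index $k$ of the corollary (so that $\operatorname{Ind}_{n-k,S_{n}}^{S_{n+1}}$ is genuinely a single, $I$-independent operator), and checking that the families $\{I'\subseteq\mathbb{N}_{n}:|I'|=k,\ n\in I'\}$ and $\{I'\subseteq\mathbb{N}_{n}:|I'|=k,\ n\notin I'\}$ partition the index set of the direct sum for $R_{n+1,k+1}$ and correspond, respectively, to the indexing sets for $R_{n,k}$ and $R_{n,k+1}$ under the evident bijections. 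Both are immediate.
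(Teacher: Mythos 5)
Your argument is correct and follows essentially the same route as the paper's own proof: decompose $R_{n+1,k+1}$ via Theorem \ref{main}$(iii)$ over subsets of $\mathbb{N}_{n}$ of size $k$, split according to whether the subset contains $n$, and identify the two parts with $\operatorname{Ind}_{n-k,S_{n}}^{S_{n+1}}R_{n,k}$ and $\operatorname{Ext}_{S_{n}}^{S_{n+1}}R_{n,k+1}$ via Propositions \ref{embInd} and \ref{embreps}, handling the degenerate cases $k=0$ and $k=n$ exactly as the paper does. Your explicit appeal to the compatibility of the operators with direct sums (Lemma \ref{propInd} and Definition \ref{IndtVSVC}) is the same bookkeeping the paper performs by invoking Theorem \ref{main} again, so there is nothing to add.
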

In the extremal cases $k=0$ and $k=n$, Corollary \ref{Rnkind} reduces to the assertion that applying $\operatorname{Ext}_{S_{n}}^{S_{n+1}}$ to the trivial representation $R_{n,1}$ yields the trivial representation $R_{n+1,1}$, while $\operatorname{Ind}_{0,S_{n}}^{S_{n+1}}$ takes the regular representation $R_{n,n}$ of $S_{n}$ from \cite{[ATY]} to the regular one $R_{n+1,n+1}$ of $S_{n+1}$, as Example \ref{trivthm} describes.

\begin{proof}
Theorem \ref{main} expresses $R_{n+1,k+1}$ as the direct sum of $R_{n+1,\tilde{I}}$ over subsets $\tilde{I}\subseteq\mathbb{N}_{n}$ of size $k$. We decompose these sums into the one over sets $\tilde{I}$ containing $n$, and those which do not.

Now, Proposition \ref{embInd} shows that if $\tilde{I}$ contains $n$, namely $\tilde{I}=I\cup\{n\}$ for some $I\subseteq\mathbb{N}_{n-1}$ of size $k-1$, then the corresponding summand is $\operatorname{Ind}_{n-k,S_{n}}^{S_{n+1}}R_{n,I}$. By taking the direct sum over such $I$ we get $\operatorname{Ind}_{n-k,S_{n}}^{S_{n+1}}R_{n,k}$ by Theorem \ref{main} again, so that the sum over these sets $\tilde{I}$ yields the first desired summand. Moreover, since $n\in\tilde{I}$ implies that $k=|\tilde{I}|\geq1$, this part does not exist when $k=0$.

When $n\not\in\tilde{I}$ we get that $\tilde{I}=I\subseteq\mathbb{N}_{n-1}$, now of size $k$, and the associated summand is $\operatorname{Ext}_{S_{n}}^{S_{n+1}}R_{n,I}$ by Proposition \ref{embreps}. Applying Theorem \ref{main} once more, the direct sum of these elements is the second asserted summand. As the size of $\tilde{I}=I\subseteq\mathbb{N}_{n-1}$ is at most $n-1$, these parts do not show up in case $k=n$. This completes the proof of the corollary.
\end{proof}
Recall from Remark \ref{Rnkhom} that in \cite{[Z1]} we will show that the sum $R_{n,k}^{\mathrm{hom}}$ of the $R_{n,I}^{\mathrm{hom}}$ for sets $I$ of size $k-1$ is a direct sum. Once this is established, the argument proving Corollary \ref{Rnkind} expresses $R_{n+1,k+1}^{\mathrm{hom}}$ as the direct sum of $\operatorname{Ind}_{n,S_{n}}^{S_{n+1}}R_{n,k}^{\mathrm{hom}}$ and $\operatorname{Ext}_{S_{n}}^{S_{n+1}}R_{n,k+1}^{\mathrm{hom}}$, with the first summand vanishing if $k=0$ (where we get the same trivial setting), and the second one not showing up when $k=n$.

Note that Example \ref{opersex} presents the two parts of $R_{3,2}$ and $R_{3,2}^{\mathrm{hom}}$, with the trivial $R_{3,1}=R_{3,1}^{\mathrm{hom}}$, the regular $R_{3,3}$, and its homogeneous counterpart described in Example \ref{trivthm}. Using these calculations, we can verify Corollary \ref{Rnkind} for $n=3$ and all $0 \leq k\leq3$ explicitly (also in the homogeneous setting).

\medskip

The construction from Proposition \ref{embreps} stabilizes in the following sense.
\begin{lem}
Take $\lambda \vdash n$ with $\lambda_{1}>\lambda_{2}$, $S\in\operatorname{SYT}(\lambda)$ and $C\in\operatorname{CCT}(\lambda)$, and assume that the first $\lambda_{2}+1$ numbers appear in the first row of $S$ and the first $\lambda_{2}+1$ entries of $C$ vanish.
\begin{enumerate}[$(i)$]
\item We have $\operatorname{Ext}_{S_{n}}^{S_{n+1}}V^{S}_{\vec{h}}=V^{\tilde{\iota}S}_{\vec{h}}$ and $\operatorname{Ext}_{S_{n}}^{S_{n+1}}V_{C}^{\vec{h}}=V_{\hat{\iota}C}^{\vec{h}}$ for any vector $\vec{h}$.
\item To obtain generators for these representations, let $F_{w}\prod_{r=1}^{n}e_{r}^{h_{r}}$ be a generator of the original representation, for some $w \in S_{n}$ with $\tilde{Q}(w)=S$ or $\operatorname{ct}\big(\tilde{Q}(w)\big)=C$, divide by $\prod_{r=1}^{n}e_{r}^{h_{r}}$ in $\mathbb{Q}[\mathbf{x}_{n}]$, consider the number $m$ such that $v_{T}(m)=(1,\lambda_{2}+1)$ for $T:=P(w)$, add another variable $x_{n+1}$, symmetrize $F_{w}$ with respect to $x_{m}$ and $x_{n+1}$, and multiply by $\prod_{r=1}^{n}e_{r}^{h_{r}}$, now as an element of $\mathbb{Q}[\mathbf{x}_{n+1}]$.
\end{enumerate} \label{Extstab}
\end{lem}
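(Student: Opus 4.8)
The plan is to reduce part $(i)$ to the single structural fact that, under the hypothesis, the entry $n$ occupies a box of the first row of $\operatorname{ev}S$, and to reduce part $(ii)$ to Propositions \ref{incnSpecht} and \ref{Spechtpols} combined with a pigeonhole argument that places us in the ``large enough $n$'' regime of the proof of Theorem \ref{stabSpecht}. First I would record that the hypothesis is equivalent to $\{1,\dots,\lambda_2\}\subseteq\operatorname{Asi}(S)$: if $1,\dots,\lambda_2+1$ all lie in the first row of the standard tableau $S$ they must occupy its first $\lambda_2+1$ cells in increasing order, so each $i\le\lambda_2$ is an ascending index; and by the explicit construction of $\operatorname{ct}$ after Definition \ref{cocharge}, the first $\lambda_2+1$ entries of $C=\operatorname{ct}(S)$ vanish precisely when $\operatorname{Dsi}(S)\cap\{1,\dots,\lambda_2\}=\varnothing$. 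Taking complements and using $\operatorname{Dsi}(\operatorname{ev}S)=\{n-i: i\in\operatorname{Dsi}(S)\}$ from the proof of Proposition \ref{evkeepsdeg}, we get $\{n-\lambda_2,\dots,n-1\}\subseteq\operatorname{Asi}(\operatorname{ev}S)$, so $C_{\operatorname{ev}S}(n)>C_{\operatorname{ev}S}(n-1)>\cdots>C_{\operatorname{ev}S}(n-\lambda_2)$ is a strictly increasing chain of $\lambda_2+1$ column indices and hence $C_{\operatorname{ev}S}(n)\ge\lambda_2+1$.

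Since $\lambda_1>\lambda_2$, every column of $\lambda$ of index exceeding $\lambda_2$ has length $1$, so the box $v_{\operatorname{ev}S}(n)$ lies in row $1$, i.e. $R_{\operatorname{ev}S}(n)=1$. Consequently, in Definitions \ref{ECadd} and \ref{IndtVSVC}, an external corner $v\in\operatorname{EC}(\lambda)$ lies below $n$ in $\operatorname{ev}S$ (so $\delta^{S,v}=0$) for every $v$ of row index $\ge 2$, and $\delta^{S,v}=1$ only for the unique $v=(1,\lambda_1+1)$; by Remark \ref{iotaaddv} this $v$ gives $S\tilde{+}v=\tilde\iota S$. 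Hence $\operatorname{Ext}_{S_n}^{S_{n+1}}V^S_{\vec h}=\bigoplus_{v}\delta^{S,v}V^{S\tilde{+}v}_{\vec h}=V^{\tilde\iota S}_{\vec h}$, and the same computation with $C=\operatorname{ct}(S)$ (so $\delta_{C,v}=\delta^{S,v}$, $C\hat{+}v=\operatorname{ct}(S\tilde{+}v)$, $\hat\iota C=\operatorname{ct}(\tilde\iota S)$) gives $\operatorname{Ext}_{S_n}^{S_{n+1}}V_C^{\vec h}=V_{\hat\iota C}^{\vec h}$, with the two representations being equal by Remark \ref{samepols}. This proves part $(i)$.

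For part $(ii)$ the content is that the prescribed operation produces the polynomial $F_{w_+}$, where $w_+$ is as in Definition \ref{iotadef}; after multiplying back by $\prod_r e_r^{h_r}$ one gets $F_{w_+}\prod_r e_r^{h_r}$, which lies in $V^{\tilde\iota S}_{\vec h}$ and hence, by irreducibility (Theorem \ref{VSVCreps}), generates $\operatorname{Ext}_{S_n}^{S_{n+1}}V^S_{\vec h}$ together with the $S_{n+1}$-action. That $F_{w_+}\in V^{\tilde\iota S}$ follows from $P(w_+)=\iota T$ and $\tilde Q(w_+)=\tilde\iota\tilde Q(w)=\tilde\iota S$ (proof of Proposition \ref{incnSpecht}), so $F_{w_+}=F^{\tilde\iota S}_{\iota T}$ (and $=F_{\hat\iota C,\iota T}$ when $C=\operatorname{ct}(S)$). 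By Proposition \ref{Spechtpols}, $F_w$ and $F_{w_+}$ carry the character $\widetilde{\operatorname{sgn}}$ of $\tilde C(T)$ and $\tilde C(\iota T)$, which is trivial on the subgroups permuting the length-$1$ columns; since $\lambda_1>\lambda_2$, these are columns $\lambda_2+1,\dots,\lambda_1$ of $\lambda$ and $\lambda_2+1,\dots,\lambda_1+1$ of $\lambda_+$, so $F_w$ is symmetric in the variables $x_{b}$ attached to the length-$1$ columns of the first row of $T$ (one of which is $x_m$, with $v_T(m)=(1,\lambda_2+1)$) and $F_{w_+}$ is symmetric in these together with $x_{n+1}$. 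The key step is then a pigeonhole observation: by Lemma \ref{iotaexp}$(iii)$ the first row of $\operatorname{ct}(\tilde\iota S)=\hat\iota C$ is the first row of $C$ shifted one box right with a leading $0$; under the hypothesis the first row of $C$ already begins with $\lambda_2+1$ zeros, so the first row of $\hat\iota C$ (with $\lambda_1+1$ cells) has at least $\lambda_2+2$ zeros, hence at most $\lambda_1-\lambda_2-1$ nonzero entries. As every monomial of $F_{w_+}=F^{\tilde\iota S}_{\iota T}$ comes from $p^{\tilde\iota S}_{\iota T}$ by $R(\iota T)$ (which permutes the $\lambda_1+1$ first-row exponents arbitrarily among the $\lambda_1+1$ first-row cells) followed by $C(\iota T)$ (which fixes the exponents of all length-$1$-column variables), in each monomial at least two of the $\lambda_1+1-\lambda_2$ length-$1$-column variables of $\lambda_+$ occur to the zeroth power.

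Exactly as in the first paragraph of the proof of Theorem \ref{stabSpecht}, this recovers $F_{w_+}$ from $F_w$: for a monomial $\mu$ of $F_{w_+}$ choose a length-$1$-column variable $x_j$ with $j\le n$ not dividing $\mu$; then $(j\ n{+}1)$ lies in the group permuting the length-$1$ columns of $\lambda_+$, so $\widetilde{\operatorname{sgn}}$-equivariance gives that $\mu$ and $\mu':=(j\ n{+}1)\mu$ have the same coefficient in $F_{w_+}$, while $\mu'$ is free of $x_{n+1}$, so its coefficient there equals its coefficient in $F_w$ by Proposition \ref{incnSpecht}; this is independent of the choice of $j$ by the symmetry of $F_{w_+}$. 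That is to say, $F_{w_+}$ is the symmetrization of $F_w$ obtained by adjoining $x_{n+1}$ to the symmetric role already played by $x_m$ (equivalently, symmetrizing $F_w$, which is already invariant, over the symmetric group on the first-row length-$1$-column variables together with $x_{n+1}$), normalized so that the monomial $p^{\tilde\iota S}_{\iota T}$ keeps coefficient $1$ (the normalization built into Definition \ref{Spechtdef}); multiplying back by $\prod_r e_r^{h_r}$ finishes the recipe, and the $V_C$ assertion is the case $C=\operatorname{ct}(S)$ via Remark \ref{samepols}. I expect the main obstacle to be this pigeonhole/stability step, namely checking that $\lambda_1>\lambda_2$ plus the vanishing of the first $\lambda_2+1$ entries of $C$ is exactly strong enough that no monomial of $F_{w_+}$ escapes into the genuinely new part not forced by $F_w$ and the symmetrization; pinning down the normalization constant and making precise that ``symmetrize with respect to $x_m$ and $x_{n+1}$'' means adjoining $x_{n+1}$ to the block in which $F_w$ is already symmetric is a minor but necessary point, whereas part $(i)$'s input that $n$ sits in the first row of $\operatorname{ev}S$ is straightforward once the descent-set translation is in place.
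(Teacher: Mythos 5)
Your proof is correct and follows the same overall strategy as the paper's, with two local differences worth recording. For part $(i)$ the paper shows that $n$ lands at the end of the first row of $\operatorname{ev}S$ by tracing the first evacuation path (the hypothesis forces that path to run along the whole first row), whereas you obtain $R_{\operatorname{ev}S}(n)=1$ by translating the hypothesis into $\{1,\dots,\lambda_{2}\}\subseteq\operatorname{Asi}(S)$ and using $\operatorname{Dsi}(\operatorname{ev}S)=\{n-i\;|\;i\in\operatorname{Dsi}(S)\}$ from the proof of Proposition \ref{evkeepsdeg}, so that the column indices of $n-\lambda_{2},\dots,n$ in $\operatorname{ev}S$ increase strictly and $C_{\operatorname{ev}S}(n)\geq\lambda_{2}+1$ forces a length-one column; both routes give exactly what is needed, namely $\delta^{S,v}=0$ for every external corner outside the first row. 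For part $(ii)$ the paper is terser, simply invoking the proof of Theorem \ref{stabSpecht} for the claim that the prescribed process yields $F_{w_{+}}$, while you supply the quantitative step it leaves implicit: the pigeonhole count (at least $\lambda_{2}+2$ zeros in the first row of $\hat{\iota}C$ against $\lambda_{1}+1-\lambda_{2}$ length-one columns) showing every monomial of $F_{w_{+}}$ misses a length-one-column variable of index at most $n$, so that $\widetilde{\operatorname{sgn}}$-triviality on the length-one-column subgroup (Proposition \ref{Spechtpols}) together with Proposition \ref{incnSpecht} forces all coefficients of $F_{w_{+}}$ from those of $F_{w}$; this is precisely the verification that the hypothesis puts one in the ``large enough $n$'' regime of Theorem \ref{stabSpecht}. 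You are also right that ``symmetrize with respect to $x_{m}$ and $x_{n+1}$'' must be read as adjoining $x_{n+1}$ to the block of length-one-column variables in which $F_{w}$ is already symmetric, copying coefficients along the enlarged orbits; a literal average over the single transposition of $x_{m}$ and $x_{n+1}$ would in general not even produce an element of $V^{\tilde{\iota}S}$, so making this reading explicit is the right move --- only your parenthetical about renormalizing so that $p_{\iota T}^{\tilde{\iota}S}$ keeps coefficient $1$ is superfluous, since the orbit-wise copying needs no rescaling.
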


\begin{proof}
We work with $S$, which we assume to be $\operatorname{ct}^{-1}(C)$ when the given is $C$ as always. When we apply the evacuation process from Section 3.9 of \cite{[Sa]} to $S$, the location of $n$ is determined by the first evacuation path. Our condition on $S$ (which follows from that on $C$ when required, via, e.g., Lemma \ref{ctJinv}) implies that in the first $\lambda_{2}$ steps, we evacuate along the first row, which means that it reaches the position in that row where there is no box below it. Hence the first evacuation path goes along the full first row, meaning that in $\operatorname{ev}S$ (which is then $\operatorname{ev}\operatorname{ct}^{-1}(C)$), the number $n$ shows up at the end of the first row.

But then any element $v\in\operatorname{EC}(\lambda)$, except for the one in the first row, lies below $n$ in $\operatorname{ev}S$ (or $\operatorname{ev}\operatorname{ct}^{-1}(C)$), so that Definition \ref{IndtVSVC} yields $\delta^{S,v}=0$ (as well as $\delta_{C,v}=0$) and thus $V^{S\tilde{+}v}_{\vec{h}}$ or $V_{C\hat{+}v}^{\vec{h}}$ does not show up in $\operatorname{Ext}_{S_{n}}^{S_{n+1}}V^{S}_{\vec{h}}$ or $\operatorname{Ext}_{S_{n}}^{S_{n+1}}V_{C}^{\vec{h}}$. It follows, as in Remark \ref{iotaaddv}, that only the representation $V^{\tilde{\iota}S}_{\vec{h}}$ or $V_{\hat{\iota}C}^{\vec{h}}$ appears in $\operatorname{Ext}_{S_{n}}^{S_{n+1}}V^{S}_{\vec{h}}$ or $\operatorname{Ext}_{S_{n}}^{S_{n+1}}V_{C}^{\vec{h}}$, yielding part $(i)$.

We now recall that for $w$ with $\tilde{Q}(w)=S$ or $\operatorname{ct}\big(\tilde{Q}(w)\big)=C$, and $P(w)=T$ for some $T\in\operatorname{SYT}(\lambda)$, the higher Specht polynomial $F_{w}$, which is $F_{T}^{S}$ or $F_{C,T}$ via Remark \ref{samepols}, lies in the irreducible representation $V^{S}$ or $V_{C}$ and hence generates it. The same therefore holds for $F_{w}\prod_{r=1}^{n}e_{r}^{h_{r}}$ with respect to $V^{S}_{\vec{h}}$ or $V_{C}^{\vec{h}}$, and the multiplier is recognizable in the product via Remark \ref{maxsymdiv}. As in the proof of Theorem \ref{stabSpecht}, the asserted process is the one producing $F_{w_{+}}$ from Proposition \ref{incnSpecht}, for $w_{+} \in S_{n+1}$ from Definition \ref{iotadef}. Since $\tilde{Q}(w_{+})=\tilde{\iota}S$ or $\operatorname{ct}\big(\tilde{Q}(w_{+})\big)=\hat{\iota}C$ (and $P(\tilde{w})=\iota T$), we deduce as before that $F_{w_{+}}$ is in $V^{\tilde{\iota}S}$ or $V_{\hat{\iota}C}$ and thus generates it, and thus after multiplication by the asserted element of $\mathbb{Q}[\mathbf{x}_{n+1}]$ we get a generator for $V^{\tilde{\iota}S}_{\vec{h}}$ or $V_{\hat{\iota}C}^{\vec{h}}$, as required for part $(ii)$. This completes the proof of the lemma.
\end{proof}
The condition from Lemma \ref{Extstab} is not a necessary one. For example, take $w=251364 \in S_{6}$ representing the ordered partition from Example \ref{barstarex}, with \[P(w)=\begin{ytableau} 1 & 3 & 4 \\ 2 & 5 & 6 \end{ytableau},\ Q(w)=\begin{ytableau} 1 & 2 & 5 \\ 3 & 4 & 6 \end{ytableau},\tilde{Q}(w)=\begin{ytableau} 1 & 3 & 4 \\ 2 & 5 & 6 \end{ytableau},C=\begin{ytableau} 0 & 1 & 1 \\ 1 & 2 & 2 \end{ytableau}\] where $C=\operatorname{ct}\big(\tilde{Q}(w)\big)$, the shape is $\lambda=33\vdash6$ with only two elements in $\operatorname{EC}(\lambda)$, and $\operatorname{Ext}_{S_{n}}^{S_{n+1}}V^{S}=V^{\tilde{\iota}S}$ (for $S=\tilde{Q}(w)$) and $\operatorname{Ext}_{S_{n}}^{S_{n+1}}V_{C}=V_{\hat{\iota}C}$. However, it is clear that the condition from that lemma does not hold for our $S$ and $C$.

\begin{rmk}
We note that one can strengthen Lemma \ref{Extstab} by obtaining the same result under the condition that $\lambda_{1}=\lambda_{2}$ and the first $\lambda_{2}$ numbers are in the first row of $S$ (or the first row of $C$ contains only zeros). We will not need, however, this stronger result for our applications here or in the sequels \cite{[Z1]} and \cite{[Z2]}, but will see it in Example \ref{stabex} below. \label{strong}
\end{rmk}

\begin{cor}
Let $i_{\max}$ be the maximal element of $I$, which we take to be 0 in case $I=\emptyset$, and assume that $n>2i_{\max}$. Then $R_{n+1,I}$ and $R_{n+1,I}^{\mathrm{hom}}$ are generated, as representations of $S_{n+1}$, by the following process. Consider a collection of generators of $R_{n,I}$ and $R_{n,I}^{\mathrm{hom}}$, divide each of them by its maximal symmetric divisor from $\mathbb{Q}[\mathbf{x}_{n}]$, add the variable $x_{n+1}$, symmetrize with respect to $x_{n+1}$ and a variable whose index lies in a column of length 1 in the tableau producing that generator, and multiply back by the symmetric function, but now from $\mathbb{Q}[\mathbf{x}_{n+1}]$. This produces generators for $R_{n+1,I}$ and $R_{n+1,I}^{\mathrm{hom}}$. \label{RnIstab}
\end{cor}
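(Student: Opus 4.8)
The plan is to reduce the statement to Lemma \ref{Extstab} applied componentwise, after rewriting the target representations through the star insertion. First I would invoke Proposition \ref{embreps} to replace $R_{n+1,I}$ and $R_{n+1,I}^{\mathrm{hom}}$ by $\operatorname{Ext}_{S_{n}}^{S_{n+1}}R_{n,I}$ and $\operatorname{Ext}_{S_{n}}^{S_{n+1}}R_{n,I}^{\mathrm{hom}}$, and then use the decompositions from Theorem \ref{main} and Proposition \ref{homrep}, writing $R_{n,I}$ (resp.\ $R_{n,I}^{\mathrm{hom}}$) as a direct sum of irreducible pieces $V^{S}_{\vec{h}}$ — equivalently $V_{C}^{\vec{h}}$ — indexed by $\lambda\vdash n$ and $S\in\operatorname{SYT}(\lambda)$ with $\operatorname{Dsi}^{c}(S)\subseteq I$, the vector $\vec{h}$ being $\vec{h}^{S}_{I}$ in the ungraded case and $\vec{h}(I,S)$ in the homogeneous case. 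Since $\operatorname{Ext}_{S_{n}}^{S_{n+1}}$ is defined on such a decomposition summand by summand (Definition \ref{IndtVSVC}), it is enough to understand $\operatorname{Ext}_{S_{n}}^{S_{n+1}}V^{S}_{\vec{h}}$ for each fixed $S$ and to exhibit a generator of it; here I would use that $V^{S}_{\vec{h}}$ is irreducible (Theorem \ref{VSVCreps}), so a single nonzero vector — e.g.\ $F_{w}\prod_{r}e_{r}^{h_{r}}$ for a $w\in S_{n}$ with $\tilde{Q}(w)=S$ — generates it.

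The heart of the argument is to verify that, under the hypothesis $n>2i_{\max}$, every $S$ (equivalently $C=\operatorname{ct}(S)$) appearing in these decompositions satisfies the standing assumption of Lemma \ref{Extstab}. The condition $\operatorname{Dsi}^{c}(S)\subseteq I$ translates, by Definition \ref{Dsic}, into $\operatorname{Dsi}(S)\subseteq\{n-i_{\max},\dots,n-1\}$, so every $i\le n-i_{\max}-1$ lies in $\operatorname{Asi}(S)$ (Remark \ref{boxnear}); a one-line induction on $i$ using Definition \ref{asSYT} then shows that the numbers $1,\dots,n-i_{\max}$ all occupy the first row of $S$. Consequently $\lambda_{1}\ge n-i_{\max}$ and $\lambda_{2}\le n-\lambda_{1}\le i_{\max}$; combining this with $n>2i_{\max}$ gives $\lambda_{1}\ge n-i_{\max}>i_{\max}\ge\lambda_{2}$, so $\lambda_{1}>\lambda_{2}$, and $\lambda_{2}+1\le i_{\max}+1\le n-i_{\max}$, so the first $\lambda_{2}+1$ numbers lie in the first row of $S$ (equivalently, the first $\lambda_{2}+1$ entries of $C$ vanish, via Lemma \ref{ctJinv}). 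This is exactly the hypothesis of Lemma \ref{Extstab}, and I expect this short combinatorial check to be the only real content of the proof; everything else is assembling results already at hand.

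With the hypothesis in place, Lemma \ref{Extstab}$(i)$ gives $\operatorname{Ext}_{S_{n}}^{S_{n+1}}V^{S}_{\vec{h}}=V^{\tilde{\iota}S}_{\vec{h}}$ (and $\operatorname{Ext}_{S_{n}}^{S_{n+1}}V_{C}^{\vec{h}}=V_{\hat{\iota}C}^{\vec{h}}$), so summing over the components recovers $R_{n+1,I}=\operatorname{Ext}_{S_{n}}^{S_{n+1}}R_{n,I}=\bigoplus_{S}V^{\tilde{\iota}S}_{\vec{h}}$ and likewise for $R_{n+1,I}^{\mathrm{hom}}$. Lemma \ref{Extstab}$(ii)$ then describes a generator of each $V^{\tilde{\iota}S}_{\vec{h}}$ by precisely the stated process: divide $F_{w}\prod_{r}e_{r}^{h_{r}}$ by its maximal symmetric divisor $\prod_{r}e_{r}^{h_{r}}$ (legitimate by Remark \ref{maxsymdiv}), adjoin $x_{n+1}$, symmetrize the higher Specht polynomial $F_{w}$ in $x_{n+1}$ and in the variable $x_{m}$ sitting in box $(1,\lambda_{2}+1)$ of $T=P(w)$, and multiply back by $\prod_{r}e_{r}^{h_{r}}$ inside $\mathbb{Q}[\mathbf{x}_{n+1}]$. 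I would add the observation that since $\lambda_{1}>\lambda_{2}$ the column $\lambda_{2}+1$ of $\lambda$ has length $1$, so $x_{m}$ indeed lies in a column of length $1$ of $T$, matching the wording of the corollary.

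Running this process over one generator per component then produces generators of all the $V^{\tilde{\iota}S}_{\vec{h}}$, whose union generates the direct sum $R_{n+1,I}$ (resp.\ $R_{n+1,I}^{\mathrm{hom}}$), completing the argument; and if one starts instead from the full higher Specht basis of $R_{n,I}$ the same process yields a (redundant) generating set. The degenerate case $I=\emptyset$, $i_{\max}=0$ (so $\lambda=(n)$ and both representations are trivial) is subsumed in this discussion. The one place where care is needed is the second paragraph — forcing the stabilization hypothesis of Lemma \ref{Extstab} from $\operatorname{Dsi}^{c}(S)\subseteq I$ together with $n>2i_{\max}$ — while the rest is bookkeeping with Propositions \ref{embreps}, \ref{homrep}, Theorem \ref{main}, and Lemma \ref{Extstab}.
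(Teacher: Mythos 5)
Your proposal is correct and follows essentially the same route as the paper's proof: both verify that $\operatorname{Dsi}^{c}(S)\subseteq I$ together with $n>2i_{\max}$ forces $\lambda_{1}\geq n-i_{\max}>i_{\max}\geq\lambda_{2}$ and puts the first $\lambda_{2}+1$ entries in the first row, so that Lemma \ref{Extstab} applies componentwise to the decompositions from Theorem \ref{main} and Proposition \ref{homrep}, with Proposition \ref{embreps} identifying $R_{n+1,I}$ and $R_{n+1,I}^{\mathrm{hom}}$ as the $\operatorname{Ext}_{S_{n}}^{S_{n+1}}$-images. Your added remark that the box $(1,\lambda_{2}+1)$ indeed lies in a column of length $1$ is a nice explicit touch, but the argument is the paper's.
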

The maximal symmetric divisor in Corollary \ref{RnIstab} is defined via Remark \ref{maxsymdiv}. In fact, Remark \ref{strong} allows us to relax the assumption in that corollary to $n\geq2i_{\max}$, with the same proof.

\begin{proof}
For every $\lambda \vdash n$ and every $S\in\operatorname{SYT}(\lambda)$ or $C\in\operatorname{CCT}(\lambda)$ for which $I$ contains $\operatorname{Dsi}^{c}(S)$ or $\operatorname{Dsp}^{c}(C)$, the maximal number in the latter set is at most $i_{\max}$. It follows, via Definition \ref{Dsic}, that the minimal element of $\operatorname{Dsi}(S)$ (where we set $S:=\operatorname{ct}^{-1}(C)$ when we work with $C$) is at least $n-i_{\max}$, meaning that the first $n-i_{\max}$ numbers must lie in the first row of $S$ (and thus $C$ must contain at least $n-i_{\max}$ zeros, all of which are in the first row). Therefore $\lambda_{1} \geq n-i_{\max}$, and since $\lambda \vdash n$ this implies that $\lambda_{2} \leq n-\lambda_{1} \leq i_{\max}$. But the inequality $n>2i_{\max}$ implies that $n-i_{\max}>i_{\max}\geq\lambda_{2}$, and therefore the condition from Lemma \ref{Extstab} is satisfied for every such $S$ or $C$.

Now, Theorem \ref{main} and Proposition \ref{homrep} express $R_{n,I}$ and $R_{n,I}^{\mathrm{hom}}$ as direct sums of $V^{S}_{\vec{h}^{S}_{I}}$, $V_{C}^{\vec{h}_{C}^{I}}$, $V^{S}_{\vec{h}(I,S)}$, or $V_{C}^{\vec{h}(C,I)}$ over such tableaux $S$ or $C$, and we have $R_{n+1,I}=\operatorname{Ext}_{S_{n}}^{S_{n+1}}R_{n,I}$ and $R_{n+1,I}^{\mathrm{hom}}=\operatorname{Ext}_{S_{n}}^{S_{n+1}}R_{n,I}^{\mathrm{hom}}$ by Proposition \ref{embreps}. As Lemma \ref{Extstab} clearly implies the same result for $V^{S}_{\vec{h}}$ and $V_{C}^{\vec{h}}$ for any vector $\vec{h}$ (with the higher Specht polynomials multiplied by the corresponding symmetric functions), and for the direct sum of such representations (via Definition \ref{IndExt}), the desired assertion follows. This proves the corollary.
\end{proof}

\begin{ex}
Consider the two sets in Example \ref{opersex}, in which we had $n=3$ (and $k=2$). When $I=\{1\}$, with $i_{\max}=1$, the condition from Corollary \ref{RnIstab} is satisfied, and indeed both $R_{3,I}$ and its extension $R_{4,I}$, as well as the homogeneous versions $R_{3,I}^{\mathrm{hom}}$ and $R_{4,I}^{\mathrm{hom}}$, were direct sums of two representations, with $\operatorname{Ext}_{S_{3}}^{S_{4}}$ taking each irreducible representation to an irreducible one. However, we saw that for $I=\{2\}$, where $n=3<2i_{\max}=4$, this operator took a representation to the direct sum of two irreducible ones. As another example, for $n=4$, the representation $W_{1,1,2}$ given in Example \ref{n4kI} is $R_{4,I}$ for $I=\{1,2\}$, where $i_{\max}=2$ and the inequality from Corollary \ref{RnIstab} holds as a non-strict one. Proposition \ref{embreps} implies that its extension is $R_{5,I}=W_{1,1,3}$, which is given by \[V^{12345} \oplus V^{\substack{1234 \\ 5\hphantom{678}}} \oplus V^{\substack{1235 \\ 4\hphantom{678}}} \oplus V^{\substack{123 \\ 45\hphantom{6}}} \oplus V^{\substack{123 \\ 4\hphantom{67} \\ 5\hphantom{89}}}=V_{00000} \oplus V_{\substack{0000 \\ 1\hphantom{111}}} \oplus V_{\substack{0001 \\ 1\hphantom{112}}} \oplus V_{\substack{000 \\ 11\hphantom{1}}} \oplus V_{\substack{000 \\ 1\hphantom{11} \\ 2\hphantom{22}}},\] with the same number of irreducible components, each of which is $V^{\tilde{\iota}S}_{\vec{h}}$ or $V_{\hat{\iota}C}^{\vec{h}}$ for some $V^{S}_{\vec{h}}$ or $V_{C}^{\vec{h}}$ showing up in $W_{1,1,2}$ and satisfying the condition from Lemma \ref{Extstab}, or its weakened version from Remark \ref{strong} (a similar result holds, of course, for $R_{4,I}^{\mathrm{hom}}$ and $R_{5,I}^{\mathrm{hom}}$). \label{stabex}
\end{ex}
As Theorem \ref{main} shows, the representations $W_{1,3,1}$ and $W_{3,1,1}$ are isomorphic to $W_{1,1,3}$ showing up in Example \ref{stabex}. But as they are associated with the subsets $\{1,4\}$ and $\{3,4\}$ of $\mathbb{N}_{4}$, containing $n=4$, they are not obtained by Proposition \ref{embreps}, but rather through induction, via Proposition \ref{embInd}, from $R_{4,I}$ for a singleton $I$ (and the same for their homogeneous counterparts).

\medskip

\begin{rmk}
The map taking $F_{w}\in\mathbb{Q}[\mathbf{x}_{n}]$ for $w \in S_{n}$ to $F_{w_{+}}\in\mathbb{Q}[\mathbf{x}_{n+1}]$ as in Proposition \ref{incnSpecht}, or equivalently $F_{T}^{S}$ to $F_{\iota T}^{\tilde{\iota}S}$ when $T$ and $S$ are standard of the same shape, or $F_{C,T}$ to $F_{\hat{\iota}C,\iota T}$, embeds $V^{S}$ (or $V_{C}$) into $V^{\tilde{\iota}S}$ (or $V_{\hat{\iota}C}$) in a natural $S_{n}$-equivariant manner, with the image generating the larger representation over $\mathbb{Q}[S_{n+1}]$. The same applies for $V^{S}_{\vec{h}}$ and $V_{C}^{\vec{h}}$, and therefore produces natural embeddings of $R_{n,I}$ and $R_{n,I}^{\mathrm{hom}}$ into $R_{n+1,I}$ and $R_{n+1,I}^{\mathrm{hom}}$ respectively, with the image generate the parts of the latter that is based on indices that are images of $\tilde{\iota}$ or $\hat{\iota}$. Corollary \ref{RnIstab} in fact means that $\{R_{n,I}\}_{n>i_{\max}}$ and $\{R_{n,I}^{\mathrm{hom}}\}_{n>i_{\max}}$ form stable families of representations in the sense of \cite{[CF]}, \cite{[CEF]}, and \cite{[Fa]} (and also \cite{[SS]}), which are also centrally stable in the sense of \cite{[Pu]}, with the stability showing up for $n>2i_{\max}$ (or even $n\geq2i_{\max}$, via Remark \ref{strong}). \label{repstab}
\end{rmk}
Note that $R_{n,I}$ and $R_{n,I}^{\mathrm{hom}}$ are only defined for $n>i_{\max}$, since we assume $I\subseteq\mathbb{N}_{n-1}$. One can define them for smaller $n$ either by intersecting $I$ with $\mathbb{N}_{n-1}$, or by observing that the condition $\operatorname{Dsi}^{c}(S) \subseteq I$ or $\operatorname{Dsp}^{c}(C) \subseteq I$ still makes sense, and using the same vectors $\vec{h}^{S}_{I}$, $\vec{h}_{C}^{I}$, $\vec{h}(I,S)$, and $\vec{h}(C,I)$ for the definition, with the usual convention that $e_{i}$ vanishes wherever $i>n$. Both extensions will then produce families $\{R_{n,I}\}_{n=1}^{\infty}$ and $\{R_{n,I}^{\mathrm{hom}}\}_{n=1}^{\infty}$ that exhibit the representation stability from Remark \ref{repstab}.

Remark \ref{repstab} is, in some sense, a complement to Theorem 7.1 of \cite{[CF]} (under the isomorphism of the cohomology ring there, which is the one from \cite{[B]}, with $R_{n}=R_{n,n}$ as a graded ring).

We conclude by commenting on the action of the other insertions.
\begin{rmk}
Lemma \ref{operslast} shows that the star and bar operations at the last location behave the best in terms of the relation with Lemma \ref{Snparts}. A more general bar insertion, say in location $0 \leq p<k$, will add $i_{p}+1$ to $I$ (which is 1 in case $p=0$) via this lemma, and replace every $i_{h}$ with $h>p$ by $i_{h}+1$, and for the star insertion there again each entry $i_{h}$ with $h>p$ will be replaced by $i_{h}+1$. One can formulate an induction rule for the former, and some expression for the latter, both more complicated than Definition \ref{IndtVSVC} (they will also be related via part $(ii)$ of Proposition \ref{incI} with $\ell=i_{p}+1$, which is no longer as neat as part $(iii)$ there). Since with our description using Lemma \ref{operslast}, our conventions are much more natural, we content ourselves with the results that we proved. \label{otherlocs}
\end{rmk}

\noindent\textsc{Einstein Institute of Mathematics, the Hebrew University of Jerusalem, Edmund Safra Campus, Jerusalem 91904, Israel}

\noindent E-mail address: zemels@math.huji.ac.il

\end{document}